\theoremstyle:=definition,remark,plain\do{%
        \expandafter\g@addto@macro\csname th@\theoremstyle\endcsname{%
            \addtolength\thm@preskip\parskip
            }%
        }
\DeclareMathAlphabet{\mathpzc}{OT1}{pzc}{m}{it}
\newcommand{\desc}{\alpha} %parámetro de descuento
\newcommand{\F}{\mathcal{F}}
\newcommand{\FF}{\{\F_t\colon t\geq 0\}}%{\mathbb{F}}
\newcommand{\T}{\bf T}
\newcommand{\R}{\mathds{R}}
\newcommand{\B}{\mathcal{B}}
\newcommand{\CR}{\mathcal{C}} %continuation region
\newcommand{\SR}{\mathcal{S}} %stopping region
\newcommand{\Ga}{G_{\desc}}
\newcommand{\Ra}{R_{\desc}}
\newcommand{\Va}{V_{\desc}}
\newcommand{\Wa}{W_{\desc}}
\newcommand{\phia}{\varphi_\desc}
\newcommand{\psia}{\psi_\desc} 
\newcommand{\wa}{w_\desc}
\newcommand{\al}{(\desc-L)}
\newcommand{\hit}[1]{\mathpzc{h}_{#1}}
\newcommand{\ind}[1]{\mathds{1}_{#1}}
\newcommand{\Ig}[1]{\I_{>#1}}
\newcommand{\Ige}[1]{\I_{\geq #1}}
\newcommand{\Il}[1]{\I_{<#1}}
\newcommand{\Ile}[1]{\I_{\leq #1}}
\newcommand{\g}{{g}}
\newcommand{\gi}{\tilde{g}}
\newcommand{\gpsi}{\frac{g}{\psia}}
\newcommand{\D}{\mathcal{D}}
\newcommand{\Da}{\mathcal{D}_{\desc}}
\newcommand{\CC}{\mathscr{C}} %funciones continuas C^2...
\newcommand{\I}{\mathcal{I}}
\newcommand{\ea}[1]{e^{-\desc{#1}}}
\newcommand{\sfp}{(SF)}
\newcommand{\ssfp}{(SSF)}  %% smooth fit
\newcommand{\psfp}{($\desc$-SF)} %% alpha smooth fit
\newcommand{\estados}{\mathcal{E}}
\newcommand{\sigalg}{\mathscr{E}}
\newcommand{\Aa}{A_{\desc}}
\newcommand{\Ex}[2]{\E_{#1}\left(#2\right)}
\newcommand{\be}{\begin{equation}}
\newcommand{\ee}{\end{equation}}
\newcommand{\bd}{\begin{equation*}}
\newcommand{\ed}{\end{equation*}}
\newcommand{\ve}{\mathbf{v}}
\newcommand{\vx}{x_v}
\newcommand{\vy}{y_v}
\newcommand{\vz}{z_v}
\newcommand{\we}{\mathbf{w}}
\newcommand{\se}{\mathbf{s}}
\newcommand{\sx}{x_s}
\newcommand{\sy}{y_s}
\newcommand{\sz}{z_s}
\newcommand{\mug}{\nu}
\def\P{\operatorname{\mathds{P}}}
\def\E{\operatorname{\mathds{E}}}
\def\erf{\operatorname{erf}}
\newtheorem{teo}{Theorem}[chapter]
\newaliascnt{cor}{teo}
\newaliascnt{remark}{teo}
\newaliascnt{remarks}{teo}
\newaliascnt{lem}{teo}
\newaliascnt{prop}{teo}
\newaliascnt{obs}{teo}
\newaliascnt{defn}{teo}
\newaliascnt{example}{teo}
\newaliascnt{notation}{teo}
\newaliascnt{cond}{teo}
\newtheorem{cor}[cor]{Corolary}
\newtheorem{lem}[lem]{Lemma}
\newtheorem{prop}[prop]{Proposition}
\theoremstyle{definition}
\newtheorem{remark}[remark]{Remark}
\newtheorem{remarks}[remarks]{Remarks}
\newtheorem{defn}[defn]{Definition}
\newtheorem{example}[example]{Example}
\newtheorem{cond}[cond]{Condition}
\newtheorem{algorithm}{Algorithm}[chapter]
\newtheorem{hip}{\quad H.\hspace{-4pt}}[chapter]
\begin{document}

\pagestyle{empty}

\quad
\vspace{.5cm}
\begin{center}
\Large {\textsc{Doctoral Thesis}} \\

\vspace{3.5cm}

\begin{LARGE}
 \textbf {Optimal Stopping for Strong Markov Processes:}
\end{LARGE}

\begin{large}
\emph{Explicit solutions and verification theorems for diffusions,\\
\vspace{-.2cm}
multidimensional diffusions, and jump-processes.}
\end{large}

\vspace{1.8cm}

\large{Fabián Crocce}\\

\vspace{0.6cm}

\large{\textit{Advisor: Ernesto Mordecki\\
Centro de Matemática}}

\end{center}
\vspace{3.6cm}
\begin{normalsize}
\begin{tabular*}{20in}{lr}
&\textsc Doctorado en Matemática (PEDECIBA)\\
&Facultad de Ciencias\\
&Universidad de la República\\
7th December 2012 \hspace{2.4cm} & Uruguay
\end{tabular*}

\end{normalsize}
\newpage
\quad
\newpage
\newpage
\begin{center} \textbf{Abstract} \end{center}

We consider the optimal stopping problem consisting in, given a strong Markov process $X=\{X_t\}$ taking values in $\estados$, and a reward function $g\colon \estados \to \R$, finding the optimal stopping time $\tau^*$ and the value function $\Va$ such that:
\bd
\Va(x)=\Ex{x}{\ea{\tau^*} \g(X_{\tau^*})} = \sup_{\tau}\Ex{x}{\ea{\tau} \g(X_{\tau})},
\ed
where the supremum is taken over the class of all stopping times, $\desc$ is a positive discount rate, and $x$ is the starting point of $X$. The approach we follow, has two main components: the Dynkin's characterization of the value function as the smallest $\desc$-excessive function dominating $g$; and the Riesz representation of $\desc$-excessive functions in terms of the Green kernel, the main reference being \cite{salminen85}.
In the context of one-dimensional diffusions we give a complete characterization of the solution under some assumptions on $g$. If the optimal stopping problem is one-sided (the optimal stopping region is $[x^*,\infty)$ or $(-\infty,x^*]$) we provide a simple equation to find the threshold $x^*$ and discuss the validity of the smooth fit principle. 
We include some new examples as the optimal stopping of the skew Brownian motion and the sticky Brownian motion. In particular, we consider cases in which the smooth fit principle fails. In the general case, we propose an algorithm that finds the optimal stopping region when it is a disjoint union of intervals. We also give a simple formula for the value function. Using this algorithm we solve some examples including polynomial rewards. 
For general Markov processes with continuous sample paths (for instance multidimensional diffusions) we provide a verification theorem and use it to solve a particular problem. Finally we consider one-dimensional strong Markov processes with only positive (or only negative) jumps, and provide another verification theorem for right-sided (left-sided) problems. As applications of our results we address the problem of pricing an American put option in a Lévy market, and also solve an optimal stopping problem for a Lévy driven Ornstein-Uhlenbeck process.

%\selectlanguage{spanish}
%\begin{abstract}
\newpage

\quad
\newpage
\begin{center} \textbf{Resumen} \end{center}

Consideramos el problema de parada óptima que consiste en, dados un proceso de Markov fuerte $X=\{X_t\}$ a valores en $\estados$, y una función de pago $g\colon \estados \to \R$, encontrar el tiempo de parada óptima $\tau^*$ y la función de valor $\Va$ que verifican:
\bd
\Va(x)=\Ex{x}{\ea{\tau^*} \g(X_{\tau^*})} = \sup_{\tau}\Ex{x}{\ea{\tau} \g(X_{\tau})},
\ed
donde el supremo es tomado sobre la clase de todos los tiempos de parada, $\desc$ es una tasa de descuento positiva, y $x$ es el estado del que parte el proceso. El enfoque que seguimos se basa en dos componentes: la caracterización de  Dynkin de la función de valor como la mínima función $\desc$-excesiva que domina $g$; y la representación de Riesz de las funciones $\desc$-excesivas en términos del núcleo de Green. La principal referencia para este enfoque es \cite{salminen85}.
En el contexto de las difusiones unidimensionales damos una caracterización completa de la solución, asumiendo algunas condiciones sobre $g$. Si el problema de parada óptima es tal que la región de parada es de la forma $[x^*,\infty)$ o de la forma $(-\infty,x^*]$, damos una ecuación sencilla para encontrar el valor crítico $x^*$ y discutimos la validez del principio de pegado suave.
También incluimos algunos ejemplos nuevos como ser la parada óptima del movimiento browniano asimétrico (skew) y del movimiento browniano pegajoso (sticky); en particular damos ejemplos en que no vale el principio de pegado suave. En el caso general, proponemos un algoritmo que encuentra la región de parada óptima cuando ésta es una unión disjunta de intervalos, dando también una fórmula sencilla para la función de valor. Usando el algoritmo mencionado resolvemos algunos ejemplos que incluyen funciones de pago polinomiales.
Para procesos de Markov generales con trayectorias continuas (como ser las difusiones multidimensionales) damos un teorema de verificación que luego usamos para resolver un problema concreto. Por último, consideramos un proceso de Markov fuerte unidimensional solo con saltos positivos (o solo con saltos negativos) y damos un teorema de verificación para problemas en que la región de parada es de la forma $[x^*,\infty)$ (de la forma $(-\infty,x^*]$). Como aplicación de los resultados obtenidos consideramos el problema de la valuación de una opción americana de tipo put en un mercado de Lévy, y también resolvemos un problema de parada óptima en que el proceso subyacente es un proceso de Ornstein-Uhlenbeck combinado con un proceso de Lévy.

\newpage
\pagestyle{plain}

\thispagestyle{empty}
\ \\[10em]
{\large a Karen\ldots}
\newpage
\chapter*{Agradecimientos}\addcontentsline{toc}{chapter}{Agradecimientos}
Empiezo por expresar mi más profundo agradecimiento a Ernesto Mordecki, quien desde mis primeros días en la facultad siempre ha estado presente.
Como orientador ha sido incondicional, dispuesto a trabajar hasta un domingo o en vacaciones con la misma pasión y dedicación de siempre. 
En los momentos de sequía, en que el avance en la tesis resultaba difícil, nunca faltó su confianza y su aliento, sin duda imprescindibles para para seguir adelante. 
Su vocación y entusiasmo por la matemática siempre me resultaron admirables y motivadores. Trabajar con el Cacha ha sido, y confío en que seguirá siendo, un gran placer.

Por acompañarme en todo, por apoyarme en cada decisión, por alegrarme cada día y porque sin ella nada tendría sentido, no alcanzan las palabras para agradecerle a Karen.

Le agradezco infinitamente a mis padres. A mi madre por su continua lucha para que cada uno de nosotros consiga siempre lo que quiere, por hacerme sentir que nada es imposible, por su optimismo, por la confianza y por la generosidad. A mi padre por enseñarme a vivir con alegría, por motivarme a hacer lo que me gusta y por compartir conmigo sus pasiones.

Parte de hacer este trabajo con entusiasmo tiene que ver con la gente que me rodea día a día, que se enorgullece de mis logros y con eso me da ánimo para seguir adelante. En ese sentido le quiero agradecer a mi familia y a mis amigos. En particular a mis hermanas y cuñados porque son el estandarte de la alegría cotidiana. También quiero nombrar en particular a la abuela, a Ana, a los padres de Karen, a la tía Elsa y a la tía Nelly porque siempre están presentes. A la barra del 12, que es como mi familia, y mis amigos de ingeniería con quienes he compartido y espero seguir compartiendo momentos inolvidables. A Pilar por su generosidad y alegría contagiosa. Otra mención especial se merece Rodri, mi primer sobrino, que en este momento de gran presión nos inundó de felicidad.

Aprovechar para reconocer que mi acercamiento a la matemática se lo debo a Fernando Peláez, que en febrero de 2002 en San Luis, mientras lo acribillaba a preguntas para el examen de matemática A (con Parisi), me recomendó hacer las matemáticas en la Facultad de Ciencias, cuando yo no dudaba en estudiar Ingeniería. 

Quiero agradecerle a mis compañeros de trabajo, y en especial a mis compañeros del seminario de Finanzas, por escucharme tantas veces, por los consejos y por hacer que uno se sienta parte de un equipo. A Claudia, Sandra y Lydia, les agradezco la paciencia que me tienen y el hacer más simple y alegre el día a día en el CMAT. 

El desarrollo de este trabajo no habría sido posible sin el apoyo económico de la Agencia Nacional de Investigación e Innovación. También quiero reconocer el apoyo del Programa de Desarrollo de Ciencias Básicas y del Centro de Matemática.

Quiero agradecer especialmente a Paavo Salminen. En primer lugar por haberme dado un borrador de su trabajo sobre difusiones \citep{notasPaavo}, que resultó de gran ayuda. En segundo lugar por haberme recibido tan cálidamente en Turku. En mi visita a \r{A}bo Akademi avancé mucho en mi trabajo sobre difusiones, las discusiones con Salmien fueron de gran provecho, en particular él me propuso considerar el interesante ejemplo del “sticky Brownian motion”.  Por último quiero agradecerle su visita a Uruguay y el haber aceptado venir nuevamente desde Finlandia para participar del tribunal de tesis.

Por último les agradezco a  Enrique Cabaña, Ricardo Fraiman, Gustavo Guerberoff, Matilde Martínez, Gonzalo Perera, Raúl Tempone por haber aceptado integrar el tribunal evaluador.
\selectlanguage{british}
\tableofcontents 
\listoffigures 

\chapter*{Introduction}\addcontentsline{toc}{chapter}{Introduction}

\subsection*{About optimal stopping}\addcontentsline{toc}{section}{About optimal stopping}
The theory of optimal stopping is concerned with the problem of choosing the moment
to take certain action with the purpose of maximizing an expected reward or minimizing 
an expected cost. 
It has applications in many fields such as theory of probability, mathematical statistics, economics, mathematical finance and control theory.

One of the most famous problems in this area is the \emph{secretary problem} (also known as \emph{marriage problem}, \emph{the sultan's dowry problem}, \emph{the fussy suitor problem}, etc.) in which an administrator aims to maximize the probability of hiring the best of $N$ applicants to the position of secretary. 
In this problem the applicants are interviewed sequentially in random order and they can be ranked during the interview and compared only 
with those already interviewed; after each interview, the administrator has to decide whether the interviewee is chosen (without this assumption the problem is trivially solved by interviewing all the applicants and selecting the best). 
According to \cite{ferguson1989secretary} the secretary problem appears for the first time in print, in Martin Gardner's February 1960 column in Scientific American, where it was called the game of googol.

\subsubsection*{Historical comments}
%\section*{Related works}\addcontentsline{toc}{section}{Related works}
Optimal stopping problems have a long %va long no large
 history in literature. 
The first problems arose 
%In the context of continuous time processes the story began 
within the framework of statistics in the late 40s, %framework va con within, en vez de in
 more precisely, in the context of sequential analysis with the works of \cite{wald,wald2}, and \cite{wald3,wald4}. %(we refer to the book by Wald \cite{wald} for discrete time observations), 
A few years later, \cite{snell1952applications} proposed a general optimal stopping problem, for discrete-time stochastic processes, and characterized its solution as the smallest supermartingale dominating the gain process --known as \emph{Snell envelope}--; 
this result, which stands relevant today, can be considered one of the first major theoretical results in optimal stopping. 
This approach, sometimes called \emph{the martingale approach}, is also treated in the book by \cite{chow1971great}.

A later impulse to optimal stopping theory is related to mathematical finance, where arbitrage considerations show that 
in order to price an American option, 
one has to solve an optimal stopping problem. 
The first results in this direction were provided by \cite{mckean1965appendix} and \cite{merton}, 
who respectively solved the perpetual put and call option pricing problem, by solving the corresponding optimal stopping problems in the
context of the Black and Scholes model \citep{bs}. 
Hedging arguments justifying the use of optimal stopping in option pricing were provided by \cite{bensoussan1984theory} and \cite{karatzas1988pricing}.
The work by \cite{jacka1991optimal} also treats the relationship between the option pricing and optimal stopping.
Mathematical finance problems continue to motivate works on optimal stopping, also regarding processes with jumps, which intend to model  turbulences of the markets. Without meaning to be exhaustive, we cite the books: \cite{boyarchenko2002non,boy-lev, cont-tan}; and the articles:  \citet{gerber1999ruin,mordeckiOSPOLP,boyarchenko2002perpetual,alili-kyp, christensen2009note}.

\subsubsection*{Verification vs Theoretical approach}
When considering works on optimal stopping problems we typically find two approaches.
In the first approach, in order to solve a concrete optimal stopping problem, 
one has to somehow guess the solution and prove that the guessed candidate, 
in fact, solves the optimization problem; this approach is known as \emph{verification}.
The second one is the \emph{theoretical} approach, that typically includes results about properties of the solution. 
But these two approaches seldom meet, as frequently in concrete problems the assumptions of the theoretical studies are not fulfilled, 
and, what is more important, 
the theoretical studies do not provide concrete ways to find solutions. 
%In what concerns the first approach, 

As far as the first approach is concerned, the relationship between optimal stopping and free boundary problems is of key importance. A common procedure, to delimit the region in which is optimal to stop, is to apply the \emph{principle of smooth fit}, used for the first time by \cite{mikhalevish1958Ukranian}, that 
generally leads to the solution of two equations: 
the \emph{continuous fit}  and the \emph{smooth fit} equations. 
Once these equations are solved, 
a verification procedure is needed to prove that the candidate is the 
%effective
actual solution to the problem itself \citep[for more details see][chapter IV]{ps}. This approach, when an explicit solution can be found, is very effective.
%In what concerns 

With regard to the second approach, the most important results may be: Dynkin's characterization of the value function $\Va$ as the least $\desc$-excessive (or $\desc$-superharmonic) dominating the reward \citep[see][]{dynkin:1963} in the continuous-time case; and the already exposed Snell's characterization in discrete-time. 
The book by \cite{shiryaev} (which is a reprint of \cite{shirBook1978}, whose first version in English is \cite{shirBook1973}) provides a comprehensive treatment of Dynkin's characterization.

Other ways of classifying approaches in the study of optimal stopping problems include the Martingale-Markovian dichotomy. The Martingale approach is treated in the monograph \citep{chow1971great} for discrete time processes. As for the Markovian approach, the monograph \citep{shiryaev} was, for a long time, the main reference. In the more recent book \citep{ps}, both approaches are extensively analysed and compared.

% a good exposition of this application to statistics may be found in the book by \citet{shiryaev} and the more recent book by \cite{ps}.

\subsection*{The problem}\addcontentsline{toc}{section}{The problem}
A general continuous time optimal stopping problem can be stated as follows: given a gain process $\{G_t\}_{t\geq 0}$, defined in a filtered probability space $(\Omega,\F,\{\F_t\},\P)$, find the stopping time $\tau^*$ that maximizes the expected gain, i.e.
\bd
\E(G_{\tau^*})=\sup_{\tau} \E(G_\tau),
\ed
where the supremum is taken over all stopping times that are less than or equal to certain horizon $T$ that could be either a positive real number, or $T=\infty$. In the former case, the problem is said to be of \emph{finite horizon}, while in the latter, it is said to be an \emph{infinite-horizon} or \emph{perpetual} problem.

In this work (that follows the Markovian approach) we consider continuous-time infinite-horizon problems, where $G_t$ is given by $\ea{t}\g(X_t)$ with:
\begin{itemize}
\item a Markov process $\{X_t\}$ taking values in a certain topological state  space $\estados$; 
\item a Borel function $\g\colon \estados \to \R$, called the reward function; and
\item a strictly positive discount rate $\desc$.
\end{itemize}
%\subsection*{The problem}\addcontentsline{toc}{subsection}{The problem}
We aim to find the optimal stopping time $\tau^*$ and the value function $\Va\colon \estados \to \R$ that satisfy
\bd
\Va(x)=\Ex{x}{\ea{\tau^*}\g(X_{\tau^*})}=\sup_{\tau}\Ex{x}{\ea{\tau}\g(X_{\tau})},
\ed
where the supremum is taken over the class of all stopping times.
%where $\{X_t\}_{t\geq 0}$ is a given time-homogeneous Markov process with state space $\estados$, and $\g\colon \estados\to \R$ is the reward function.

\subsection*{The approach}\addcontentsline{toc}{section}{The approach}
We start by giving an informal description of our approach.
\begin{itemize}

\item Our starting point is Dynkin's characterization of the value function $\Va$ as the least $\desc$-excessive function that dominates the reward $\g$ \citep{dynkin:1963}.
Once the value function $\Va$ is known, so is the optimal stopping time $\tau^*$, which is the first time at which the process hits the set $\SR$, so-called \emph{stopping region}, given by
\bd
\SR:=\{x\in \estados:g(x)=\Va(x)\}.
\ed

\item The second step uses Riesz decomposition of an $\desc$-excessive function, according to which
\begin{equation}\label{eq:green2}
\Va(x)=\int_{\estados} \Ga(x,y)\nu(dy) + \text{(a harmonic function)},
\end{equation}
where $\Ga$ is the Green function of the process with respect to some reference measure $m(dy)$, and
$\nu$ is a non-negative Radon measure \citep{kw}. In this introduction we assume that the harmonic function in \eqref{eq:green2} is zero.

\item The third step is based on the fact that the resolvent and the infinitesimal generator of a Markov process are inverse operators. With this idea in mind, suppose that we could write
\begin{equation}\label{eq:inversionV}
\Va(x)=\int_{\estados} \Ga(x,y)\al \Va(y)m(dy),
\end{equation}
where $L$ is the infinitesimal generator, and $m(dy)$ some reference measure. 

\item Finally, we observe that $\Va=\g$ within the stopping region $\SR$, being $\desc$-harmonic in its complement. Therefore
\bd
\Va(x)=
\begin{cases}
\g(x),& x\in \SR,\\
\desc\text{-harmonic},& \mbox{else.}
\end{cases}
\ed
Assuming that the infinitesimal generator at $x$ depends only on the values of the function in a neighbourhood of $x$ we would get
\bd
\al \Va(x)=
\begin{cases}
\al \g(x),& x\in \SR,\\
0,& \mbox{else.}
\end{cases}
\ed
Comparing \eqref{eq:green2} and \eqref{eq:inversionV}, and considering the previous equation, we would obtain that $\nu$ is supported on $\SR$, which is in fact a known result in some cases \citep[see][]{mordecki-salminen}, and $\nu(dx)=\al g(x) m(dx)$.
%(the same holds for $\kappa$ in \eqref{eq:martin}).
\end{itemize}

From the previous considerations we obtain that
\be
\label{eq:candidateVa}
\Va(x):=\int_\SR \Ga(x,y)\al g(y) m(dy),
\ee
with $\SR$ a stopping region, is a plausible value function.

This approach was initiated by \cite{salminen85}, and used by \cite{mordecki-salminen}. In \cite{alvarez1998exit} some applications can be found. These articles are in fact the starting point of our work. 
According to Salminen's approach,
once the excessive function is represented as an integral with respect to the Martin Kernel --related with the Green Kernel--, 
\begin{equation}\label{eq:martin}
\Va(x)=\int_{\I} M(x,y)\kappa(dy)
\end{equation}
one has to find the representing measure $\kappa$. In \citep{salminen85} the author provides a way to express $\kappa(dy)$ in terms of the derivatives (with respect to the scale function of a diffusion) of the value function.

\subsection*{Our contributions}\addcontentsline{toc}{section}{Our contributions}
 In this work we apply the exposed methodology in three different 
situations: (i) one-dimensional diffusions; (ii) general (multidimensional) strong Markov processes with continuous sample paths; and (iii) strong Markov processes with one-sided (only positive or only negative) jumps.
%kind of Markov processes, trying to find out whether the previous formula holds. 

In all these three cases formula \eqref{eq:candidateVa} is used to obtain solutions of the corresponding optimal stopping problem. The more general the problems are, the harder is to solve them explicitly. Hence, different degrees of explicitness are obtained, depending on the process and on the regularity of reward function.

In the case (i) of one-dimensional diffusions, we give a quite comprehensive explicit solution of the optimal stopping problem. The corresponding stopping regions can be either a half-line (one-sided case) or also a union of disjoint intervals. 

In the second and third cases we provide verification theorems in a quite general framework.

As applications, several new concrete examples are explicitly solved. In what follows we discuss each of the three cases in detail.

\subsubsection*{On one-dimensional diffusions}\addcontentsline{toc}{subsection}{On one-dimensional diffusions}
In the context of one-dimensional diffusions whose state space is an interval $\I$ of $\R$, we provide a complete solution, under mild regularity conditions, for problems that are \emph{one-sided}, i.e. the optimal stopping rule has either the form 
\bd
\tau^*=\inf\{t\geq 0\colon X_t\geq x^*\}
\ed
or the form
\bd
\tau^*=\inf\{t\geq 0\colon X_t\leq x^*\}
\ed
for some optimal threshold $x^*$. In the former case, the problem is said to be \emph{right-sided}, while in the latter, is said to be \emph{left-sided}. We prove that the threshold, in the right-sided case, is the solution of a simple equation, which with enough regularity is
\be \label{eq:introx*}
\frac{\g(x)}{\psia(x)}=\frac{g'(x)}{\psia'(x)},
\ee
where $\psia$ is the increasing fundamental solution of $\al g=0$; for left-sided problems the equation is analogous (with the decreasing fundamental solution $\phia$ of $\al g=0$, instead of $\psia$). An equation close to \eqref{eq:introx*} is provided in \cite{salminen85}.
It should be observed that \eqref{eq:introx*} is a combination of both, continuous fit, and smooth fit equations.
An interesting by-product of our work has to do with the \emph{smooth fit principle}.
Our results are in fact independent of the smooth fit principle, although
we obtain sufficient conditions in order to 
guarantee it. Some examples of application of our results are also provided: in particular we solve optimal stopping problems for both the Skew Brownian motion and the Sticky Brownian motion.

Also in the context of one-dimensional diffusions, we consider problems in which the optimal stopping region is two-sided or has a more general form. We prove that the value function has the form given in \eqref{eq:candidateVa}; where the continuation region $\CR=\I \setminus \SR$ is a union of disjoint intervals $J_k$ that satisfy
\bd 
\int_{J_k} \psia(y)\al \g(y) m(dy)=\int_{J_k} \phia(y)\al \g(y) m(dy)=0.
\ed
We also provide an algorithm to compute the continuation region, which then use to solve an example with a polynomial reward function. Some examples with non-differentiable reward are also provided.

\subsubsection*{On general continuous-paths Markov processes} \addcontentsline{toc}{subsection}{On general continuous-paths Markov processes}
Markov processes with continuous sample paths, taking values in abstract topological spaces, including multidimensional diffusions, are considered in \autoref{chap:contMarkov}. In this context, we prove some verification theorems, which are similar in certain sense to the results given for one-dimensional diffusions, but, as expected, given the generality of the framework, considerably weaker. We also solve an optimal stopping problem for a three-dimensional Brownian motion.

\subsubsection*{On Markov processes with jumps} \addcontentsline{toc}{subsection}{On Markov processes with jumps}

We consider processes with one-sided jumps, proving that the kind of representation given in \eqref{eq:candidateVa} can be used in the case of right-sided problems with positive jumps (and also in the case of left-sided problems with negative jumps). It should be noted that right-sided optimal stopping problems (e.g. call options) for processes with negative jumps are easier to solve, as the process hits the stopping region at the border, without overshot.
As applications of this result we consider the problem of pricing American put options in a Lévy market with \emph{positive} jumps. We also solve an optimal stopping problem for a diffusion with jumps, used to model prices in electricity markets \citep{electricity}, that is not a Lévy process.

The organization of this work is as follows: 
In \autoref{chap:markov} we briefly present Markov processes and the subclasses of them in which we are interested. In particular we include some results on potential theory and on optimal stopping, and also some preliminary results further needed. 
Chapters \ref{chap:diffonesided} and \ref{chap:diffGeneral} treat optimal stopping problems for one-dimensional diffusions. 
Problems whose solution is one-sided are solved in \autoref{chap:diffonesided}, while the general case is considered in \autoref{chap:diffGeneral}. 
In \autoref{chap:contMarkov} Markov processes with continuous sample paths in general topological state spaces are considered. 
Chapter \ref{chap:jumps} has results concerning spectrally one-sided Markov processes.
\chapter{Introduction to Markov processes and optimal stopping }
\label{chap:markov}

In the previous introduction we provided a heuristic basis to illustrate our approach for solving optimal stopping problems. This first chapter aims to formalize the concepts already exposed and also present some necessary preliminary results for this monograph. Some of the results of this chapter are well-known, but we include it for the reader's convenience.

\section{Markov processes}
Given a probability space $(\Omega,\F,\P)$ and a measurable space $(\estados,\sigalg)$, such that for all $x\in \estados$ the unitary set $\{x\}$ belongs to $\sigalg$, consider a family of random variables $X=\{X_t\colon t\in Z\}$, where $X_t\colon\Omega\to \estados$. We call $X$ a \emph{stochastic process} with state space $(\estados,\sigalg)$. In this work we only consider continuous time stochastic processes indexed in $Z=[0,\infty)$.

The family of $\sigma$-algebras $\{\F_t: t\geq 0\}$ (also denoted by $\{\F_t\}$) is said to be a \emph{filtration} of the $\sigma$-algebra $\F$ if the following inclusions holds:
\begin{equation*}
 \F_s \subseteq \F_t \subseteq \F \quad \text{for every $s\leq t$}.
\end{equation*}
The stochastic process $X$ is said to be \emph{adapted} to the filtration $\{\F_t\}$ if for every $t\in Z$ the random variable $X_t$ is $\F_t$-measurable.

Consider a stochastic process $X$, defined on $(\Omega,\F,\P)$, with state space $(\estados,\sigalg)$, adapted to $\{\F_t\}$, a filtration of $\F$. Let $\{\P_x:x\in \estados\}$ be a family of probability measures defined on $(\Omega,\F)$. The system $(\{X_t\},\{\F_t\},\{\P_x\})$ is called a (time-homogeneous, non-terminating) \emph{Markov process} if the following conditions are fulfilled:
\begin{enumerate}[(i)]
 \item For every $A \in \F$, the map $x\mapsto \P_x(A)$ is $\sigalg$-measurable.
 \item \label{markovProperty} For all $x\in \estados$, $B \in \sigalg$, $s,t \geq 0$,
    \bd
      \P_x(X_{t+s}\in B|\F_t)=\P_{X_t}(X_s \in B) \quad (P_x-a.s.).
    \ed
 \item For every $x\in \estados$, $\P_x(X_0=x)=1$.
 \item For each $t>0$, and for all $\omega \in \Omega$, there exists a unique $\omega'\in \Omega$ such that
 \begin{equation*}
   X_s(\omega')=X_{t+s}(\omega)\quad (\forall s\geq 0).
 \end{equation*}
\end{enumerate}

Condition \eqref{markovProperty} is known as Markov property, being its intuitive meaning that the future of the process depends only on the present, but not on the past behaviour.

A stochastic process $X$ is said to be \emph{progressively-measurable} with respect to a filtration $\{\F_t\}$ of $\F$ if the map
\begin{equation*}
(t,\omega) \mapsto X_t(\omega)
\end{equation*}
is measurable with respect to $\B \times \F_t$, where $\B$ denotes the Borel $\sigma$-algebra of $[0,\infty)$.

Given the filtration $\{\F_t\}$ of $\F$, a random variable $\tau$ in $(\Omega,\F)$, taking values in $[0,\infty]$ and such that $\{\omega:\tau(\omega)<t\}\in \F_t$, for all $t\geq 0$ is known as a \emph{stopping time} with respect to the filtration $\{\F_t\}$.

%A \emph{stopping time} with respect to a filtration $\{\F_t\}$ is a stochastic process $\tau$ in $(\Omega,\F)$ taking values in $[0,\infty]$ such that $\{\omega:\tau(\omega)<t\}\in \F_t$, for all $t\geq 0$.

A progressively measurable Markov process $X=(\{X_t\},\{\F_t\},\{\P_x\})$ is said to verify the \emph{strong Markov property} if for all stopping times $\tau$ with respect to $\{\F_t\}$, for all $x\in \estados$, for all $B \in \sigalg$, and for any $s \geq 0$,
    \begin{equation} \label{strongMarkovProperty}
      \P_x(X_{\tau+s}\in B|\F_\tau)=\P_{X_\tau}(X_s \in B) \quad (\P_x-a.s.).
    \end{equation}
    
Note that the \emph{strong Markov property}, as we call the previous condition, includes the Markov property by taking deterministic stopping times ($\tau=t$). A progressively-measurable Markov process that satisfies this stronger condition is called a \emph{strong Markov process}.

A filtration $\{\F_t\}$ is said to be \emph{right-continuous} if for all $t\geq 0$, we have that $\F_t=\F_{t^+}$, where $\F_{t^+}$ is the $\sigma$-algebra defined by
\begin{equation} \label{def:F_t^+}
\F_{t^+}:=\bigcap_{s>0}\F_{t+s}.
\end{equation}

Given a stopping time $\tau$ with respect to a filtration $\{\F_t\}$ of $\F$, the family of sets $\F_\tau$ defined by
\begin{equation*}
 \F_\tau:=\{A \in \F\colon \forall t\geq 0, A\cap\{\omega:\tau(\omega)\leq t\} \in \F_t \}
\end{equation*}
is a sub$\sigma$-algebra of $\F$.

A progressively measurable Markov process $X=(\{X_t\},\{\F_t\},\{\P_x\})$ is said to be \emph{left-quasi-continuous} if for any stopping time $\tau$ with respect to $\{\F_t\}$, the random variable $X_{\tau}$ is $\F_{\tau}$-measurable and for any non-decreasing sequence of stopping times $\tau_n \to \tau$
\begin{equation*}
 X_{\tau_n}\to X_{\tau} \quad (\P_x-a.s\ \text{in the set}\ \{\tau<\infty\}).
\end{equation*}
for all $x\in\estados$.

\begin{defn} \label{def:standardMarkov} A left quasi-continuous strong Markov process $X$
%=(\{X_t\}, \{\F_t\},\{\P_x\})$ 
with state space $(\estados, \sigalg)$ is said to be a \emph{standard Markov process} if:
\begin{itemize}
 \item The paths are right continuous, that is to say, for every $\omega \in \Omega$, and for all $t>0$, \bd \lim_{h\to 0^+} X_{t+h}(\omega)=X_t(\omega). \ed
 \item The paths have left-hand limits almost surely, that is, for almost all $\omega \in \Omega$, the limit $\lim_{h\to 0^+} X_{t-h}(\omega)$ exists for all $t>0$.
 \item The filtration $\{\F_t\}$ is right-continuous, and $\F_t$ is $\P_x$-complete for all $t$ and for all $x$.
 \item The state space $\estados$ is semi-compact, and $\sigalg$ is the Borel $\sigma$-algebra.
\end{itemize}
\end{defn}

All the processes we consider in this work are, in fact, standard Markov processes. As we just did in the previous definition, we use the notation $X$, getting rid of $(\{X_t\},\{F_t\},\{\P_x\})$ when it is not strictly necessary, to denote a standard Markov process.
%Given a filtered probability space $(\Omega,\F,\{\F_t\},\P)$
%A function $k\colon \estados \times \sigalg \to [0,\infty]$ is said to be a \emph{kernel} in $(\estados, \sigalg)$ if 
%\begin{itemize}
% \item for every $x\in \estados$ the map $A \mapsto k(x,A)$ is a positive measure in $\sigalg$;
% \item for every $A \in \sigalg$ the map $x \mapsto k(x,A)$ is $\sigalg$-measurable.
%\end{itemize}

For general reference about Markov processes we refer to \cite{dynkin:books,dynkin1969theorems,karlin-tay,rogers2000diffusions,blumenthal-ge,revuz} 
\subsection{Resolvent and infinitesimal generator}
Given a standard Markov process $X$ and an $\sigalg$-measurable function $f\colon \estados \to \R$, 
we say that $f$ belongs to the domain $\D$ of the extended infinitesimal generator of $X$, if there exists an $\sigalg$-measurable function $Af\colon \estados\to \R$ such that  $\int_0^t|Af(x_s)|ds<\infty$ almost surely for every $t$, and
$$f(X_t)-f(X_0)-\int_0^t Af(X_s) ds$$
is a right-continuous martingale with respect to the filtration $\{\F_t\}$ and the probability $\P_x$, for every $x\in \estados$ \citep[see][chap. VII, sect. 1]{revuz}.

The $\desc$-Green kernel of the process $X$ is defined by
\bd
\Ga(x,H):=\int_0^{\infty} \ea{t} \P_x(X_t\in H)dt,
\ed
for $x\in \estados$ and $H\in \sigalg$.

Consider the operator $\Ra$, given by
\be \label{eq:resolvent}
\Ra f(x):=\int_0^\infty \ea{t}\Ex{x}{f(X_t)}dt,
\ee
which can be defined for all $\sigalg$-measurable functions such that the previous integral makes sense for all $x\in \estados$. Note that if, for instance
\bd
\Ra f(x)=\int_0^\infty \ea{t}\E_x|f(X_t)|dt<\infty \quad (x\in \estados)
\ed
then, using Fubini's theorem we may conclude that
\be
\Ra f(x)=\int_{\estados} f(y)\Ga(x,dy). \label{eq:RafGa}
\ee

Considering $T_\desc$ as a random variable with an exponential distribution with parameter $\desc$ (i.e. $\P(T_\desc\leq t)=1-\ea{t}$ for $t\geq 0$) and independent of $X$, define the process $Y$ with state space $\estados \cup \{\Delta\}$ --where $\Delta$ is an isolated point-- by
\bd Y_t:=
\begin{cases}
X_t & \text{if $t<T_\desc$,}\\
\Delta &\text{else.}
\end{cases}
\ed
Given a function $f:\estados\to\R$, we extend its domain by considering $f(\Delta):=0$. Observe that $\Ex{x}{f(Y_t)}=\ea{t}\Ex{x}{f(X_t)}$. We call $Y$ \emph{the $\desc$-killed process} with respect to $X$.

Consider a function $f$ that belongs to the domain $\Da$ of the extended infinitesimal generator of the $\desc$-killed process $Y$. In this case, there is a function $\Aa f\colon \estados \to \R$ such that
\bd
f(Y_t)-f(Y_0)-\int_0^t \Aa f(Y_s) ds
\ed
is a right-continuous martingale with respect to the filtration $\{\F_t\}$ and the probability $\P_x$, for every $x\in \estados$. Bearing the equality $\Ex{x}{f(Y_t)}=\ea{t}\Ex{x}{f(X_t)}$ in mind, it can be seen that
\bd
\ea{t}f(X_t)-f(X_0)-\int_0^t \ea{s}\Aa f(X_s) ds
\ed
is also a right-continuous martingale with respect to the filtration $\{\F_t\}$ and the probability $\P_x$, for every $x\in \estados$. Then
\bd
\Ex{x}{\ea{t}f(X_t)}-f(x)-\Ex{x}{\int_0^t \ea{s}\Aa f(X_s) ds}=0.
\ed
From the previous equation, and assuming that for all $x\in \estados$
\begin{itemize}
\item $\lim_{t\to \infty}\Ex{x}{\ea{t}f(X_t)}=0$ and
\item $\Ex{x}{\int_0^{\infty} \ea{s}|\Aa f(X_s)| ds}<\infty$,
\end{itemize}
we obtain, by taking the limit as $t\to \infty$ and using Lebesgue dominated convergence theorem, that
\bd
f(x)=\int_0^{\infty} \ea{s}\Ex{x}{-\Aa f(X_s)} ds.
\ed
Note that the right-hand side of the previous equation is $\Ra(-\Aa f)(x)$; from this fact and the previous equation we obtain, by \eqref{eq:RafGa},
\be
\label{eq:invExtended}
f(x)=\int_{\estados} {-\Aa f(y)}\Ga(x,dy).
\ee

It can be proved that if the function $f$ belongs to $\D$, it also belongs to $\Da$ and $\Aa f=Af-\desc f$.

\subsection{Dynkin's formula} 
Given a standard Markov process $X$ and a stopping time $\tau$, if $f=\Ra h$, we have that (see e.g. \cite{dynkin:books}, Theorem 5.1 or \cite{karlin-tay} equation (11.36))
\be \label{eq:dynFormMarkov}
f(x)=\Ex{x}{\int_0^\tau \ea{t}h(X_t) dt}+\Ex{x}{\ea{\tau}f(X_\tau)}.
\ee
As we will see further on, this formula has an important corollary in the analysis of optimal stopping problems.
Observe that it can be written in terms of $\Aa f$, when $f\in\Da$ by 
\bd
\Ex{x}{\ea{\tau}f(X_\tau)}-f(x)=\Ex{x}{\int_0^\tau \ea{t} \Aa f(X_t)dt};
\ed
being its validity a direct consequence of the Doob's optional sampling  theorem.

\subsection{$\desc$-Excessive functions}
\label{sec:excessive}
Consider a standard Markov process X. In a few words it may be said that $\desc$-excessive functions are those $f$ such that $f(X_t)$ is a supermartingale. In optimal stopping theory, the reward being a $\desc$-excessive function, means that, in order to maximize the expected discounted reward, the process should be stopped immediately.

A  non-negative measurable function $f\colon \estados \to \R$ is called \emph{$\desc$-excessive} --with respect to the process $X$-- provided that:
\begin{itemize}
\item $\ea{t} \E_x(f(X_t))\leq f(x)$ for all $x\in \estados$ and $t\geq 0$; and
\item $\lim_{t \to 0} \E_x(f(X_t))= f(x)$ for all $x\in \estados$.
\end{itemize}
A $0$-excessive function is just called \emph{excessive}.

If $h\colon \estados \to \R$ is an $\sigalg$-measurable non-negative function, then
$\Ra f$ is $\desc$-excessive \citep[see for instance][]{dynkin:1969}.
\section{Optimal stopping}

%\subsection{the OSP}
\label{theOSP}
To state the optimal stopping problem we consider in this work, which we call \emph{the optimal stopping problem}, or more briefly \emph{the OSP}, we need: 
\begin{itemize}
\item a standard Markov process $X=\{X_t\}_{t\geq 0}$, whose state space we denote by $\estados$ in general and also by $\I$ when it is an interval of $\R$;
\item a reward function $g\colon \estados \to \R$;
\item a discount rate $\desc$, which we assume to be positive (in some specific cases it can be 0).
\end{itemize}
The problem we face is to find a stopping time $\tau^*$ and a value function $\Va$ such that
\bd
\Va(x)=\Ex{x}{\ea{\tau^*}\g(X_{\tau^*})}=\sup_{\tau}\Ex{x}{\ea{\tau}\g(X_{\tau})}\quad (x\in \estados).
\ed
where the supremum is taken over all stopping times with respect to the filtration $\{\F_t\}$ of the standard Markov process $X$. We consider $\g(X_{\tau})=0$ if $\tau=\infty$. With this assumption in mind we conclude that $\Va$ is non-negative. 
Another consequence of our assumption is that the optimal stopping with reward $g$ has the same solution as the problem with reward $g^+=\max\{g,0\}$, as it is never optimal to stop if $g(X_t)<0$. We might, without loss of generality, consider non-negative functions $g$, however, in some examples it is convenient to allow negative values. Observe that if $g$ is a non-positive function the OSP is trivially solved, being $\Va\equiv 0$ and $\tau^*=\infty$ a solution. From now on we assume that $g(x)>0$ for some $x$.

It can be seen that the $\desc$-discounted optimal stopping problem for the process $X$ is equivalent to the non-discounted problem associated with the $\desc$-killed process $Y$. Both problems have the same solution (value function and optimal stopping time).

\subsection{Dynkin's characterization for the OSP}
\label{sec:DynkinChar}

The Dynkin's characterization, proposed in \cite{dynkin:1963}, states that, given the reward function $g\colon \estados \to \R$ satisfying some mild regularity conditions \cite[see][Chapter III, Theorem 1]{shiryaev} the value function $V$, defined by
$$V(x):=\sup_\tau \Ex{x}{g(X_\tau)},$$
is an excessive function, satisfying $V(x)\geq g(x)$ for all $x\in \estados$, and such that if $W$ is another excessive function dominating $g$ then $V(x)\leq W(x)$ for all $x\in \estados$. In this sense $V$ is the \emph{minimal excessive majorant} of the reward.

Observe as well that function $f$ is excessive with respect to $Y$ (the $\desc$-killed process already defined) if and only if it is $\desc$-excessive with respect to $X$. 
Applying the Dynkin's characterization to the value function $V$ and to the process $Y$ and, taking the previous considerations into account, a discounted version of the Dynkin's characterization may be established: the value function 
$$\Va(x):=\sup_\tau \Ex{x}{\ea{\tau}g(X_\tau)}$$
of the $\desc$-discounted optimal stopping problem is the minimal $\desc$-excessive function %that is a majorant of 
that dominates
$g$. 
From this point on we also refer to this result as Dynkin's cha\-rac\-te\-ri\-za\-tion, as is usual done in the optimal stopping literature.

In our context, in order to Dynkin's characterization hold, it is sufficient to consider Borel-measurable reward functions $g$ such that 
\bd
\liminf_{x \to a} g(x)\geq g(a)
\ed
for all $a\in \estados$.

\subsection{Stopping and continuation region}
\label{sec:regions}
Once the value function $\Va$ is found as the minimal $\desc$-excessive majorant, or by any other method, the optimal stopping problem \eqref{eq:osp} is completely solved, as the optimal stopping time is the first time at which the process hits the stopping region. Therefore, $\tau^*$ is defined by
\bd
\tau^*:=\inf\{t\geq 0\colon X_t\in \SR \},
\ed
with $\SR$, the stopping region, defined by
\bd
\SR:=\{x \in \estados\colon \Va(x)=\g(x)\}.
\ed
From the stopping region we can define naturally \emph{the continuation region}, which is its complement, $\CR=\estados\setminus \SR$. With this criteria the state space is divided into stopping states and continuation states. However, it could be the case that there are some states of $\SR$ in which is as good to stop as to continue, so a smaller stopping region $\SR'$ could be defined and the stopping time $\tau'$, defined by
\bd
\tau':=\inf\{t\geq 0\colon X_t\in \SR' \},
\ed
would be optimal as well. In this case
\bd
\Va = \Ex{x}{\ea{\tau^*}g(X_{\tau^*})}= \Ex{x}{\ea{\tau'}g(X_{\tau'})}.
\ed
From now on, we ignore this ambiguity and denote by $\SR$, not only the set in which $\Va$ and $\g$ coincide, but also any set satisfying that to stop at the first time the process hits it, is actually optimal. In any of these cases, we call $\SR$ the \emph{stopping region}.

An important corollary of the Dynkin's formula has to do with the continuation region $\CR$ and the sign of $h$, when the reward function $g$ satisfy $g=\Ra h$. Assuming that $h$ is negative in $A$, a neighbourhood of $x$, it can be proved that $x\in \CR$: by Dynkin's formula \eqref{eq:dynFormMarkov}
\bd
g(x)=\Ex{x}{\int_0^{\hit{A^c}} \ea{t}h(X_t)dt}+\Ex{x}{\ea{\hit{A^c}}g(X_{\hit{A^c}})},
\ed
where $\hit{A^c}$ states for the first time at the process hits the set $S\setminus A$; from the made assumptions it follows that $\Ex{x}{\int_0^{\hit{A^c}}\ea{t}h(X_t)dt}<0$, so 
\bd
\Ex{x}{\ea{\hit{A^c}}g(X_{\hit{A^c}})}\geq g(x),
\ed
proving that is better to stop at $\hit{A^c}$ than to stop at $x$, this implying that the optimal action at $x$ is not to stop, therefore $x\in \CR$.
%suppose $f$ is the reward function in the optimal stopping problem and, assuming that $(\desc - A)g$ is continuous in $x$ and $(\desc - A)g(x)<0$, then there is a neighbourhood $N$ of $x$ such that $(\desc - A)g(y)<0$ for $y\in N$; considering $\tau$ the exit time from the set $N$, we get:
%$$\E_x \left(\int_0^\tau \ea{t}(\desc-A)f(X_t)dt\right)<0$$
%and, consequently $\E_x\left(\ea{\tau}f(X_\tau)\right)>f(x)$ and the optimal action --when the process is in the state $x$-- is not to stop.

\section{Preliminary results}

The following lemmas constitute relevant results in the approach considered in this work. We use the notation $\hit{S}$ for the hitting time of $S$,
\bd
\hit{S}:=\inf\{t\geq 0: X_t\in S\}
\ed

\begin{lem} \label{harmonic} Let $X$ be a standard Markov process, and consider $S \in \sigalg$. Then the Green kernel satisfies;
\bd
\Ga(x,H)=\Ex{x}{\ea{\hit{S}}\Ga(X_{\hit{S}},H )},
\ed
for all $x$ in $\estados$ and for all $H\in \sigalg$, $H\subseteq S$.

In other words, for every $x\in \estados$, both $\Ga(x,dy)$ and $\Ex{x}{\ea{\hit{S}}\Ga(X_{\hit{S}},dy)}$, are the same measure in $S$.
\end{lem}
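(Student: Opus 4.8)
The plan is to unwind the definition $\Ga(x,H)=\int_0^\infty\ea{t}\P_x(X_t\in H)\,dt$ and exploit that, since $H\subseteq S$, the process can visit $H$ only after it has hit $S$; the strong Markov property applied at the stopping time $\hit{S}$ then yields the identity directly.

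First I would rewrite the Green kernel as an occupation-time expectation. Using the joint measurability of $(t,x)\mapsto\P_x(X_t\in H)$ (which rests on condition (i) in the definition of a Markov process, namely that $x\mapsto\P_x(A)$ is $\sigalg$-measurable, together with right-continuity of the paths) and Tonelli's theorem,
$$\Ga(x,H)=\Ex{x}{\int_0^\infty\ea{t}\ind{X_t\in H}\,dt}.$$
Since $H\subseteq S$ we have $X_t\in H\Rightarrow t\geq\hit{S}$, so the integrand vanishes for $t<\hit{S}$ and the inner integral equals $\int_{\hit{S}}^\infty\ea{t}\ind{X_t\in H}\,dt$; this holds on $\{\hit{S}=\infty\}$ as well, where both integrals are $0$. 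On $\{\hit{S}<\infty\}$ the substitution $t=\hit{S}+u$ gives
$$\int_{\hit{S}}^\infty\ea{t}\ind{X_t\in H}\,dt=\ea{\hit{S}}\int_0^\infty\ea{u}\ind{X_{\hit{S}+u}\in H}\,du,$$
and, since $\ea{\hit{S}}=0$ on $\{\hit{S}=\infty\}$, I may keep the right-hand side as written, understanding the expression to be $0$ there.

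Next I would condition on $\F_{\hit{S}}$. Here I use that $\hit{S}$ is a stopping time for the right-continuous, $\P_x$-complete filtration $\{\F_t\}$ — a standard fact for standard Markov processes — so that the factor $\ea{\hit{S}}$ is $\F_{\hit{S}}$-measurable and can be pulled out of the conditional expectation. Applying the strong Markov property \eqref{strongMarkovProperty} in the form $\Ex{x}{\ind{X_{\hit{S}+u}\in H}\mid\F_{\hit{S}}}=\P_{X_{\hit{S}}}(X_u\in H)$ (valid on $\{\hit{S}<\infty\}$) for each fixed $u$, and integrating against $\ea{u}\,du$ via the conditional form of Tonelli's theorem, I obtain
$$\Ex{x}{\int_0^\infty\ea{u}\ind{X_{\hit{S}+u}\in H}\,du\;\Big|\;\F_{\hit{S}}}=\int_0^\infty\ea{u}\P_{X_{\hit{S}}}(X_u\in H)\,du=\Ga(X_{\hit{S}},H)$$
on $\{\hit{S}<\infty\}$. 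Combining the displays above with the tower property gives $\Ga(x,H)=\Ex{x}{\ea{\hit{S}}\Ga(X_{\hit{S}},H)}$, which is the claim; the final sentence of the statement is merely the remark that this holds for every measurable $H\subseteq S$, so the two set functions agree as measures on $S$.

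The main obstacle I anticipate is not the computation but the measure-theoretic scaffolding: justifying that $\hit{S}$ is a stopping time and that $X_{\hit{S}}$ is well-defined and $\F_{\hit{S}}$-measurable for an arbitrary $S\in\sigalg$, which genuinely uses the standardness hypotheses (right-continuity of paths and of the filtration, $\P_x$-completeness, left-quasi-continuity) together with the classical section/projection theory of hitting times, and treating the degenerate event $\{\hit{S}=\infty\}$ consistently with the cemetery convention $f(\Delta)=0$. The interchanges of expectation and integral are harmless because every integrand involved is non-negative.
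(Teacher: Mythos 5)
Your proposal is correct and follows essentially the same route as the paper's own proof: rewrite $\Ga(x,H)$ as an occupation-time expectation, observe that the integrand vanishes before $\hit{S}$ because $H\subseteq S$, shift time by $\hit{S}$, and apply the strong Markov property at $\hit{S}$ together with the tower property. The only difference is that you make explicit some measure-theoretic points (measurability of $\hit{S}$ and $X_{\hit{S}}$, the treatment of $\{\hit{S}=\infty\}$) that the paper passes over silently; this is a matter of exposition, not of substance.
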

\begin{proof}

	For $x\in S$ the assertion is clearly valid, since $\hit{S}\equiv 0$. Let us consider $x\in \estados\setminus S$. By the definition of $\Ga$ and some manipulation, we obtain
	\begin{align*}
		\Ga(x,H)&=\Ex{x}{\int_0^{\infty} \ea{t}\ind{H}(X_t)dt}\\
		&= \Ex{x}{\int_0^{\infty} \ea{t}\ind{H}(X_t)dt\ \ind{\{\hit{S}<\infty\}} } \\
		&=\Ex{x}{\int_0^{\hit{S}} \ea{t}\ind{H}(X_t)dt \ \ind{\{\hit{S}<\infty\}}} \\
		& \qquad +\Ex{x}{\int_{\hit{S}}^{\infty} \ea{t}\ind{H}(X_t)dt \ \ind{\{\hit{S}<\infty\}}},
	\end{align*}
	where the second equality holds, because if $\hit{S}$ is infinite, then $X_t$ does not hit $S$, therefore, $\ind{H}(X_t)=0$  for all $t$. In the third equality we simply split the integral in two parts. 
Note that the first term on the right-hand side of the previous equality vanishes, since $\ind{H}(X_t)$ is $0$ when $X_t$ is out of $S$ for the previously exposed argument. It remains to be proven that the second term is equal to $\Ex{x}{\ea{\hit{S}} \Ga(X_{\hit{S}},H )}$; the following chain of equalities completes the proof:
	\begin{align*}
		\Ex{x}{\int_{\hit{S}}^{\infty} \ea{t}\ind{H}(X_t)dt}
		&=\Ex{x}{e^{-\desc \hit{S}}\int_{0}^{\infty} \ea{t}\ind{H}(X_{t+\hit{S}})dt }\\ 
		&=\Ex{x}{\ea{\hit{S}} \Ex{x}{ \int_{0}^{\infty} \ea{t}\ind{H}(X_{t+\hit{S}})dt \big| \F_{\hit{S}}}}\\ 
		&=\Ex{x}{\ea{\hit{S}} \Ex{X_{\hit{S}}}{\int_{0}^{\infty} \ea{t}\ind{H}(X_{t})dt}}\\ 
		&= \Ex{x}{\ea{\hit{S}} \Ga(X_{\hit{S}},H)};
	\end{align*}
the first equality is a change of variable; in the second one, we take the conditional expectation into the expected value, and consider that $\hit{S}$ is measurable with respect to $\F_{\hit{S}}$; the third equality is a consequence of the strong Markov property; while in the last one, we use the definition of $\Ga$.

\end{proof}

As well as the previous lemma, the following one also considers standard Markov processes, even without continuous sample paths.

\begin{lem}
\label{lem:generalMin}
Let $X$ be a standard Markov process. Given $f\colon \estados \to \R$ and $S\in\sigalg$ such that
\begin{itemize}
\item $f$ is a $\sigalg$-measurable function, and
\item for all $x$ in $\estados$, $\int_{S} |f(y)| \Ga(x,dy)<\infty$;
\end{itemize}  
denote by $F_S \colon\estados \to \R$ the function 
\begin{equation*}
\label{eq:defF}
F_S(x):=\int_{S} f(y) \Ga(x,dy).
\end{equation*}
Then
\begin{equation*}
F_S(x)=\Ex{x}{\ea{\hit{S}}F_S(X_{\hit{S}})}.
\end{equation*}
\end{lem}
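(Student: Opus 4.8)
The plan is to reduce this statement to \autoref{harmonic} by a Fubini-type interchange of the integral over $S$ with the expectation $\Ex{x}{\ea{\hit{S}}\,\cdot\,}$.

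First I would note that for $x\in S$ we have $\hit{S}\equiv 0$ ($\P_x$-a.s.), so the identity is trivial; hence assume $x\in\estados\setminus S$. For such $x$, \autoref{harmonic} tells us that $\Ga(x,dy)$ and $\Ex{x}{\ea{\hit{S}}\Ga(X_{\hit{S}},dy)}$ coincide as measures on $S$. Integrating the $\sigalg$-measurable function $f$ against this common measure on $S$ gives
\bd
F_S(x)=\int_S f(y)\,\Ga(x,dy)=\int_S f(y)\,\Ex{x}{\ea{\hit{S}}\Ga(X_{\hit{S}},dy)}.
\ed
It then remains to move the outer expectation outside the $y$-integral, i.e. to justify
\bd
\int_S f(y)\,\Ex{x}{\ea{\hit{S}}\Ga(X_{\hit{S}},dy)}
=\Ex{x}{\ea{\hit{S}}\int_S f(y)\,\Ga(X_{\hit{S}},dy)}
=\Ex{x}{\ea{\hit{S}}F_S(X_{\hit{S}})},
\ed
where in the last step I use that on $\{\hit{S}<\infty\}$ the point $X_{\hit{S}}$ lies in $S$ (so $F_S(X_{\hit{S}})=\int_S f(y)\Ga(X_{\hit{S}},dy)$ is exactly the inner integral), and on $\{\hit{S}=\infty\}$ the factor $\ea{\hit{S}}=0$ kills the term, consistently with the convention $\g(X_\tau)=0$ for $\tau=\infty$.

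The interchange is the only real point requiring care, and it is a routine application of Fubini--Tonelli: decompose $f=f^+-f^-$ and apply Tonelli to each nonnegative part, viewing the combined object as an integral of the jointly measurable nonnegative function $(y,\omega)\mapsto \ea{\hit{S}(\omega)} f^{\pm}(y)\ind{\{\hit{S}(\omega)<\infty\}}$ against the product of $\P_x$ and the kernel $\Ga(X_{\hit{S}(\omega)},dy)$ on $S$. The finiteness hypothesis $\int_S|f(y)|\Ga(x,dy)<\infty$, together with $\int_S|f(y)|\Ga(x,dy)=\Ex{x}{\ea{\hit{S}}\int_S|f(y)|\Ga(X_{\hit{S}},dy)}$ (the already-established identity applied to $|f|$), guarantees that $\Ex{x}{\ea{\hit{S}}\int_S|f(y)|\Ga(X_{\hit{S}},dy)}<\infty$, so that $F_S(X_{\hit{S}})$ is $\P_x$-a.s.\ finite on $\{\hit{S}<\infty\}$ and the positive and negative parts can be recombined without an $\infty-\infty$ ambiguity. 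This also justifies interchanging over $f^+$ and $f^-$ separately and subtracting.

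The main obstacle, such as it is, lies in the measurability bookkeeping for the Fubini step: one must check that $(y,\omega)\mapsto f(y)\Ga(X_{\hit{S}(\omega)},dy)$ is set up as a genuine kernel-valued map so that Tonelli applies — i.e.\ that $\omega\mapsto \int_S f^{\pm}(y)\Ga(X_{\hit{S}(\omega)},dy)$ is $\F$-measurable. This follows because $X$ is progressively measurable, $\hit{S}$ is a stopping time, $X_{\hit{S}}$ is $\F_{\hit{S}}$-measurable (left-quasi-continuity, part of the standard Markov hypothesis), and $x\mapsto \int_S f^{\pm}(y)\Ga(x,dy)=F_S^{\pm}(x)$ is $\sigalg$-measurable by construction; composing gives the required measurability. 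Once this is in place the proof is essentially the display chain above.
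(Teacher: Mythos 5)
Your proof is correct and follows essentially the same route as the paper's: apply \autoref{harmonic} to identify $\Ga(x,dy)$ with $\Ex{x}{\ea{\hit{S}}\Ga(X_{\hit{S}},dy)}$ on $S$, integrate $f$ against this common measure, and then interchange the $y$-integral with the expectation. The paper states the interchange in one line, whereas you supply the Fubini--Tonelli and measurability justification explicitly; this is a welcome elaboration, not a different argument.
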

\begin{proof}
From \autoref{harmonic} we get that
$$F_S(x)=\int_{S} f(y) \Ex{x}{e^{-\desc \hit{S}} \Ga(X_{\hit{S}},dy)}.$$
Changing the integration sign with the expected value on the right-hand side of the equation, we complete the proof.

\end{proof}

%
%The following lemma is, in some way, equivalent to \autoref{lem:generalMin}. The difference is that now we consider processes with Green function and the definition of the function $F_B$ is slightly different.
%\begin{lem}
%\label{lem:Waconmedida}
%Consider $X$ a standard Markov process with Green function. Let $F_B \colon\estados \to \R$ be defined by
%\begin{equation*}
%F_B=\int_{B} \Ga(x,y)\mu(dy),
%\end{equation*}
%where $B$ is an open set and $\mu$ is a measure over $\sigalg$.
%Then
%\begin{equation*}
%F_B(x)=\Ex{x}{e^{-\desc \hit{B}}F_B(X_{\hit{B}})}
%\end{equation*}
%\end{lem}
%\begin{proof}
%\end{proof}

\section{One-dimensional diffusions}
\label{sec:one-dim-dif}
In this section we introduce the concept of \emph{one-dimensional (time-homogeneous) diffusion}, a very important sub-class of the strong Markov processes already presented. 
In a first approach, we may say that one-dimensional diffusions are strong Markov processes with continuous sample paths, whose state space is included in the real line. 
The aim of this section is to give some necessary results for chapters 2 and 3. 
Our main references in this topic are: \cite{borodin,itoMcKean}. 

To give a formal definition of one-dimensional diffusions we start by considering 
the set $\Omega$ of all functions $\omega\colon [0,\infty)\to \I$, 
where $\I$, to be the state space of the diffusion, is an interval of $\R$. 
We denote by $\ell$ and $r$ the infimum and supremum of $\I$ 
($\ell$ could be $-\infty$ and $r$ could be $\infty$).
For every non-negative $t$, consider the shift operator $\theta_t\colon \Omega \to \Omega$ such that
\bd
\theta_t\omega(s)=\omega(t+s).
\ed
Let $\F$ be the smallest $\sigma$-algebra over $\Omega$ such that the coordinate mappings $(\omega \mapsto \omega(t))$ are measurable for all $t\in [0,+\infty)$, and let $\{\F_t:t\geq 0\}$ be the filtration of $\F$ defined by
\bd
\F_t:=\sigma\left(\omega(s)\colon s\leq t\right).
\ed
Given a stopping time $\tau$ with respect to $\{\F_{t^+}\}_{t\geq 0}$, we denote by $\F_{\tau^+}$ the $\sigma$-algebra
\begin{equation*}
 \F_{\tau^+}:=\{A \in \F\colon \forall t> 0, A\cap\{\omega:\tau(\omega)< t\} \in \F_t \}.
\end{equation*}
Consider a family $\{\P_x\colon x\in\I\}$ of probability measures over $(\Omega,\F)$ such that for every $A\in\F$, the map $x\mapsto \P_x(A)$ is Borel-measurable, and  $\P_x(\omega(0)=x)=1$ for every $x$ in $\I$.

Over $\Omega$ consider the process $\{X_t\}_{t\geq 0}$ such that $X_t(\omega)=\omega(t)$. We say that $X=(X_t,\F_t,\P_x)$ is a \emph{one-dimensional diffusion} if:
\begin{itemize} 
\item for all $x$ in $\I$, the map $t\mapsto X_t(\omega)$ is continuous $\P_x$-$a.s$; and 
\item
for all $x$ in $\I$, for all $\{\F_{t^+}\}$-stopping time $\tau$, and for all $B\in \F$
\bd 
      \P_x(X_{\tau+s}\in B|\F_{\tau^+})=\P_{x_\tau}(X_s \in B) \quad (\P_x-a.s.).
\ed
\end{itemize}

A one-dimensional diffusion $X$ is said to be \emph{regular} \cite[see][vol 2, p. 121]{dynkin:books} if for all $x,y \in \I$:
\bd
\P_x(\hit{y}<\infty) > 0,
\ed
where $\hit{y}:=\inf\{t\geq 0\colon X_t=y\}$ is the hitting time of level $y$. All the one-dimensional diffusions considered in this work are assumed to be regular.

The most important example of one-dimensional diffusion is the well-known Wiener process --or Brownian motion-- and more generally the class of time-homogeneous Itô diffusions, which are solutions of stochastic differential equations of the form
\bd
dX_t=b(X_t)dt+\sigma(X_t)dB_t.
\ed
In fact, the term ``diffusion" in some literature is used to refer It\^o diffusions. Some basic references about It\^o diffusions are: \cite{oksendal, ks,ikeda1989stochastic}. Some applications to optimal stopping are also included in \cite{oksendal}.

\subsection{Differential and resolvent operators}\label{section:definitions}

%In the present section we introduce the main necessary concepts following mainly 
%\cite{karatzas91Brownianmotionstochasticcalculus}, were more details can be found. 

%\marginpar{agregar cita a Revuz y Yor, en particular aparecen resultados útiles, como que la speed es de radon, y que es positiva en todo abierto}

%\subsection{Infinitesimal generator and resolvent}
Let us denote by $\CC_b(\I)$ the family of all continuous and bounded functions $f\colon\I\to \R$.

The \emph{infinitesimal generator} of a diffusion $X$ is the operator $L$ defined by
\begin{equation}\label{eq:l}
Lf(x)=\lim_{h \to 0}\frac{ \Ex{x}{f(X_h)}-f(x)}{ h},
\end{equation}
applied to  the functions $f\in\CC_b(\I)$ for which the limit exists pointwise, is in $\CC_b(\I)$, and
\bd
 \sup_{h>0} \frac{\|\Ex{x}{f(X_h)-f(x)}\|}{ h}<\infty.
\ed
Let $\D_L$ denote this set of functions. The extended infinitesimal generator, already defined for more general Markov processes, is, in fact, an extension of $L$ \citep[see][chapter VII, section 1]{revuz}.

Consider the \emph{resolvent operator}, as was defined for Markov processes in \eqref{eq:resolvent}, with domain restricted to $\CC_b(\I)$. The range of the operator $\Ra$ is independent of  $\desc>0$ and coincides with the domain of the infinitesimal generator $\D_L$. Moreover, for any $f\in \D_L$, $\Ra \al f = f$ and for any $u\in \CC_b(\I)$, $\al \Ra u = u;$ in other words $\Ra$ and $\al$ are inverse operators.

Since $f=\Ra \al f$ we can apply the Dynkin's formula \eqref{eq:dynFormMarkov} to $f\in \D_L$; obtaining that for any stopping time $\tau$ and for any $f\in \D_L$:
\begin{equation}
\label{eq:dynkinFormula}
 f(x)=\Ex{x}{ \int_0^{\tau} \ea{t} \al f(X_t) dt}+ \Ex{x}{\ea{\tau}f(X_\tau)}.
\end{equation}

Denoting by $s$ and $m $ the scale function and the speed measure of the diffusion $X$ respectively, 
and by
\begin{equation*}
 \frac{\partial^+ f}{\partial s}(x)=\lim_{h \to 0^+} \frac{g(x+h)-g(x)}{s(x+h)-s(x)},
\quad 
\frac{\partial^- f}{\partial s}(x)=\lim_{h \to 0^+} \frac{g(x-h)-g(x)}{s(x-h)-s(x)},
\end{equation*}
the right and left derivatives of $f$ with respect to $s$; we have that for any $f \in \D_L$,
the lateral derivatives with respect to the scale function exist for every $x\in (\ell,r)$.
Furthermore, they satisfy 
\begin{equation}\label{eq:atom} 
\frac{\partial^+ f}{\partial s}(x)- \frac{\partial^- f}{\partial s}(x)=  m(\{x\}) Lf(x),
\end{equation}
and the following identity holds:
\begin{equation}
\label{eq:diferentialop}
 \frac{\partial^+ f}{\partial s}(z)-\frac{\partial^+ f}{\partial s}(y)=\int_{(y,z]} Lg(x) m(dx).
\end{equation}
%that, in the shorter notation, can be written as $Lf=\frac{\partial}{\partial m} \frac{\partial^+}{\partial s}f$.

This last formula allows us to define the \emph{differential operator} of the diffusion for a given function $u$ 
(not necessarily in $\D_L$) at $x\in\I$ by the equation
\begin{equation*}
Lf(x)=\frac{\partial}{\partial m} \frac{\partial^+}{\partial s}f(x),
\end{equation*}
when this expression exists, that extends the infinitesimal generator $L$.

%It is useful to extend the meaning of $L$ in \eqref{eq:l} even if a function $g$ is not in the domain $\D_L$. 
%In this case, when the 
%we give to $L$, if it make sense, the meaning given in equations \eqref{eq:diferentialop}; and we call it \emph{differential operator}. 

There exist two continuous functions $\phia \colon \I \to \R^+$ and $\psia\colon \I\to \R^+$, such that $\phia$ is decreasing, $\psia$ is increasing, both are solution of $\desc u = Lu$, and any other continuous function $u$ is solution of the differential equation if and only if there exist $a\in\R$ and $b\in \R$ such that $u=a\phia+b\psia$. Denoting by $\hit{z}=\inf \{t\colon X_t = z\}$, the hitting time of level $z\in\I$, we have that
	\begin{equation}
	\label{eq:hitting}
	\Ex{x}{ e^{-\desc \hit{z}}}=
	\begin{cases}
	\frac{\psia(x)}{\psia(z)},\quad x\leq z;\\
	\frac{\phia(x)}{\phia(z)},\quad x\geq z.
	\end{cases}
	\end{equation}
Functions  $\phia$ and $\psia$, though not necessarily in $\D_L$, also satisfy \eqref{eq:atom}
%it is valid %Like for $f\in \D_L$, for
%\begin{equation*} \frac{\partial^+ f}{\partial s}(x)- \frac{\partial^- f}{\partial s}(x)=  m(\{x\}) Lf(x),
%\end{equation*}
for all $x\in (\ell,r),$ which allows us to conclude that in case $m(\{x\})=0$ 
the derivative of both functions with respect to the scale at $x$ exists. 
It is easy to see that
\begin{equation*}
0<\frac{\partial^- \psia}{\partial s}(x) \leq \frac{\partial^+ \psia}{\partial s}(x)<\infty
\end{equation*}
and
\begin{equation*}
-\infty<\frac{\partial^- \phia}{\partial s}(x) \leq \frac{\partial^+ \phia}{\partial s}(x)<0.
\end{equation*}

The \emph{Green function} of the process $X$ with discount factor $\desc$ is defined by
\begin{equation*}
\Ga(x,y):=\int_0^\infty e^{-\desc t}p(t;x,y)dt,
\end{equation*}
where $p(t;x,y)$ is the transition density of the diffusion with respect to the speed measure $m(dx)$
(this density always exists, see \cite{borodin}). 
The Green function may be expressed in terms of $\phia$ and $\psia$ as follows:
\begin{equation}
\label{eq:Garepr}
G_\desc(x,y)=
\begin{cases}
w_\desc ^{-1} \psia(x) \phia (y),\quad  & x\leq y; \\
w_\desc ^{-1} \psia(y) \phia (x),  & x\geq y.
\end{cases}
\end{equation}
where $w_\desc$ --the \emph{Wronskian}-- is given by
\begin{equation*}
 w_\desc=\frac{\partial \psia^+}{\partial s}(x)\phia(x)-\psia(x)\frac{\partial \phia^+}{\partial s}(x)
\end{equation*}
and it is a positive constant independent of $x$. It should be observed that the relation between the Green kernel (already defined for Markov processes) and the Green function is given by
\bd
\Ga(x,dy)=\Ga(x,y)m(dy).
\ed
Then, considering  $f\colon\I\to \R$ under the condition $\int_{\I} \Ga(x,y) |f(y)| m(dy)<\infty$, by applying \eqref{eq:RafGa}, we obtain
% \begin{equation} \label{eqhipotesisfubini} \int_{\I}\Ga(x,y)|f(y)|m(dy)<\infty, \end{equation}
% or
% \begin{equation} \label{eqhipotesisfubini} \Ra |f(y)|m(dy)<\infty, \end{equation}
% for all $x\in\I$, then
\begin{equation}
\label{eq:Ra=Ga}
\Ra f(x)=  \int_{\I} \Ga(x,y) f(y) m(dy).
\end{equation}

%A  non-negative Borel function $u\colon\I\to \R$ is called \emph{$\desc$-excessive} for the process $X$ if
%\begin{itemize}
%\item[1.] $e^{-\desc t}\Ex{x}{u(X_t)}\leq u(x)$ for all $x\in \I$ and $t\geq 0$,
%\item[2.] $\lim_{t \to 0} \Ex{x}{u(X_t)}= u(x)$ for all $x\in \I$.
%\end{itemize}
%A 0-excessive function is said to be simply \emph{excessive}. 
%Considering $\desc$-killed process $Y$ already defined it is easy to see that the green function $G^Y$ of the process $Y$ coincides with $\Ga$; a Borel function $u\colon \I \to \R$ is excessive for $Y$ if and only if it is $\desc$-excessive for $X$.
%In fact, the non-discounted optimal stopping problem for the process $Y$ has the very same solution (value function and stopping time) as the $\desc$-discounted optimal stopping problem for $X$.

%Dynkin's characterization of the solution of the optimal stopping problem for Markov processes states that, under certain conditions,  $V$ is the value function of the non-discounted optimal stopping problem with reward $g$ if and only if $V$ is the least excessive function such that $V(x)\geq g(x)$ for all $x\in \I$. Applying this result for the process $Y$ and taking into account the relation between $X$ and $Y$ we obtain that $\Va$, the value function of the discounted problem with parameter $\desc$ could be characterized as the least $\desc$-excessive majorant of $g$.

We recall Riesz decomposition for excessive functions in our context
(see \cite{kw,kw1,dynkin:1969}). 
Every real valued excessive function $f$ has a unique decomposition in the form
\begin{equation}
\label{eq:excessiverepr}
 f(x)=\int_{(\ell,r)}G(x,y)\mu(dy) + h(x),
\end{equation}
where $h$ is a harmonic function. Morover, for every measure $\mu$ over $\I$ and for every harmonic function $h$, the function $f$ of \eqref{eq:excessiverepr} is excessive. 
%Furthermore, we have that the excessive function $u$ is $A$-harmonic if and only if measure $\mu$ does not charge the set $A$. 

Considering $\desc$-killed process $Y$ already defined, it is easy to see that the Green function $G^Y$ of the process $Y$ coincides with $\Ga$. 
Taking this into account and considering that $\desc$-excessive functions for $X$ are excessive functions for $Y$, %ed process $Y$ %and taking into account the analogy with the discounted problem for $X$ 
we get that any $\desc$-excessive function has a unique decomposition in the form
\begin{equation}
\label{eq:alphaexcessive}
 f(x)=\int_{(\ell,r)}\Ga(x,y)\mu(dy) + h(x),
\end{equation}
where $h$ is an $\desc$-harmonic function; and any function defined by \eqref{eq:alphaexcessive} is $\desc$-excessive. The measure $\mu$ is called the representing measure of $f$.

For general reference on diffusion we recommend: \cite{borodin,itoMcKean,revuz,dynkin:books,ks}.

\chapter{Optimal stopping for one-dimensional diffusions:\\ \textit{the one-sided case}}
\label{chap:diffonesided}
\section{Introduction}

Throughout this chapter, consider a non-terminating and regular one-dimen\-sional diffusion $X=\{X_t\colon t\geq 0\}$ as defined in \autoref{sec:one-dim-dif}. The state space of $X$ is denoted by $\I$ and it is an interval of the real line $\R$, with
left endpoint $\ell=\inf\I$ and right endpoint $r=\sup\I$, where $-\infty\leq\ell<r\leq\infty$.
Denote by $\P_x$ the probability measure associated with $X$ when starting from $x$, 
and by $\E_x$ the corresponding mathematical expectation. 
Denote by $\T$ the set of all stopping times with respect to $\FF$, the natural filtration generated by $X$.

Given a reward function $\g\colon\I\to \R$ and a discount factor $\desc >0$, 
consider the optimal stopping problem consisting in finding a function $\Va$ and a stopping time $\tau^*$ such that
\begin{equation}\label{eq:osp}
\Va(x)=\Ex{x}{\ea{\tau^*} \g(X_{\tau^*})} = \sup_{\tau}\Ex{x}{\ea{\tau} \g(X_{\tau})},
\end{equation}
where the supremum is taken over all stopping times.
The elements $\Va(x)$ and $\tau^*$, the solution to the problem, 
are called the \emph{value function} and the \emph{optimal stopping time} respectively. 

A large number of works considering the optimal stopping of one-dimensional diffusions were developed. See the comprehensive book by \cite{ps} and the references therein. 
%\cite{salminen85,christensen2011harmonic,dayanik}. The article \cite{dayanik} is based in Dayanik's PhD thesis \citep{dayanik2002thesis} and states a characterization of the solution of the OSP in terms of concave functions, departing from the Dynkin's characterization. The recent paper \cite{christensen2011harmonic} provides an approach based on harmonic functions. ?ver

Our approach in addressing this problem firmly adheres to Salminen's work \citep{salminen85} and, as pointed in the general introduction, is based on the Dynkin's characterization of the value function and on the Riesz representation of $\desc$-excessive functions.

To give a quick idea of the kind of results we prove in this chapter, suppose that
\begin{itemize}
\item the speed measure $m$ of the diffusion $X$ has not atoms, and
\item the reward function $g$ satisfies
\begin{equation}
\label{eq:ginversion}
g(x)=\int_{\I} \Ga(x,y) \al g(y) m(dy);
\end{equation}
\end{itemize} 
which is more than we really need in our theorems. We manage to prove the equivalence of the following three assertions.
\begin{enumerate}
\item The problem is right-sided with threshold $x^*$, or what is the same, the optimal stopping time $\tau^*$ is given by
\bd \tau^*=\inf\{t\geq 0\colon X_t\geq x^*\}\ed
\item $x^*$ satisfies
\begin{itemize}
\item[2.a.]\quad $g(x^*)=\int_{x^*}^r \Ga(x^*,y) \al g(y) m(dy)$,
\item[2.b.]\quad $\al g(x)\geq 0$ for all $x\geq x^*$,
\item[2.c.]\quad $\psia(x)\gpsi(x^*) \geq g(x)$ for all $x\leq x^*$.
\end{itemize}
\item The same as 2. but substituting 2.a. by $\gpsi(x^*)=\frac{\partial g}{\partial \psia}(x^*)$.

\end{enumerate}

The main byproduct of the previous result is the following recipe to solve the optimal stopping problem \eqref{eq:osp}, which is effective when the problem is right-sided:
\begin{itemize}
\item
Find the root $x^*$ of either the equation in 2.a. or the equation
\begin{equation*}
\gpsi(x)=\frac{\partial g}{\partial \psia}(x).
\end{equation*}
\item Verify conditions 2.b. and 2.c.
\end{itemize}
If these steps are fulfilled, from our results we conclude that the problem is right-sided with optimal threshold $x^*$. As a direct consequence of this conclusion we obtain that the value function fulfils
\begin{equation*}
 \Va(x)=
\begin{cases}
 \Ex{x}{\ea{\hit{x^*}} g(x^*)},  &x<x^*,\\
 g(x),& x\geq x^*,
\end{cases}
\end{equation*}
being greater or equal that the reward function. In virtue of equation \eqref{eq:hitting}, we obtain
\begin{equation}
\label{eq:VaCall}
 \Va(x)=
\begin{cases}
 \frac{\psia(x)}{\psia(x^*)}g(x^*),  &x<x^*,\\
 g(x),& x\geq x^*.
\end{cases}
\end{equation}

Some of the contents of this chapter are contained in \cite{crocceMordeckiDiff}.

\section{Main results}\label{section:main}

We formulate the hypothesis of the main result, weaker than the inversion formula 
\eqref{eq:ginversion}.

%\noindent \textbf{Condition RRC.}
\begin{cond} \label{cond:RRC}
We say that a function $g\colon\I \to \R$ satisfies the  \emph{right regularity condition} (RRC) for $x^*$ if there exists a function $\gi$ such that $\gi(x)=\g(x)$ for $x\geq x^*$ and 
 %the following inversion formula holds:
\bd
\gi(x) = \int_{\I} \Ga(x,y) \al \gi(y) m(dy)\quad (x\in\I),
\ed
which is \eqref{eq:ginversion} for $\gi$.
\end{cond}

Informally speaking, the RRC is fulfilled by functions $g$ that satisfy all the local conditions --regularity conditions-- to belong to $\D_L$ for $x\geq x^*$, and does not increase as quick as $\psia$ does when approaching $r$. Observe that if $\g$ satisfies the RRC for certain $x^*$ it also satisfies it for any greater threshold; and of course, if $\g$ itself satisfy \eqref{eq:ginversion} then it satisfies the RRC for all $x^*$ in $\I$. 
In \autoref{sec:aboutInv} we discuss conditions that ensures the validity of the inversion formula \eqref{eq:ginversion}.

During this chapter we use the notation $\Ig{a}$ to refer to the set $\I\cap \{x\colon x>a\}$. Symbols $\Ige{a}$, $\Il{a}$, $\Ile{a}$ are used in the same sense.

The equation
\begin{equation}
\label{eq:defx*teo}
g(x)= \wa^{-1} \psia(x)\int_{\Ig{x}} \phia(y)\al g(y) m(dy),
\end{equation}
plays an important role in our approach to solve the optimal stopping problem. Observe that it can be also written in terms of $\Ga(x,y)$ by 
\begin{equation*}
g(x)=\int_{\Ig{x}}\Ga(x,y)\al g(y) m(dy).
\end{equation*}

\begin{lem} \label{lem:Vequalsg1}
Assume $x^*$ is a root of equation \eqref{eq:defx*teo}, and the RRC (\autoref{cond:RRC}) is fulfilled for $x^*$. Then, for all $x \in \Ige{x^*}$, we have that
\begin{equation*}
g(x)=\int_{\Ig{x^*}}\Ga(x,y)\al g(y) m(dy).
\end{equation*}
\end{lem}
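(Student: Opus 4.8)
The plan is to reduce the claimed identity for arbitrary $x \in \Ige{x^*}$ to the case $x = x^*$, which is precisely the hypothesis that $x^*$ solves equation \eqref{eq:defx*teo}. Fix $x \geq x^*$ and split the domain of integration in the asserted formula as $\Ig{x^*} = (\Il{x} \cap \Ig{x^*}) \cup \Ige{x}$, i.e. write
\begin{equation*}
\int_{\Ig{x^*}}\Ga(x,y)\al g(y) m(dy) = \int_{(x^*,x]}\Ga(x,y)\al g(y) m(dy) + \int_{\Ig{x}}\Ga(x,y)\al g(y) m(dy).
\end{equation*}
The second term equals $g(x)$ by equation \eqref{eq:defx*teo} applied at the point $x$ — but note \eqref{eq:defx*teo} is only assumed to hold at $x^*$, not at every point, so this is where the RRC must do the work: since $\gi$ agrees with $g$ on $\Ige{x^*}$ and satisfies the inversion formula \eqref{eq:ginversion} globally, I will instead work throughout with $\gi$, for which $\al\gi(y) = \al g(y)$ for $y > x^*$, and only convert back to $g$ at the end using $\gi = g$ on $\Ige{x^*}$.

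The key computation is therefore: starting from $\gi(x) = \int_{\I}\Ga(x,y)\al\gi(y)m(dy)$, subtract off the contribution of $\Ile{x^*}$. Using the representation \eqref{eq:Garepr} of $\Ga$ in terms of $\psia$ and $\phia$, for $y \leq x^* \leq x$ we have $\Ga(x,y) = \wa^{-1}\psia(y)\phia(x)$, so
\begin{equation*}
\int_{\Ile{x^*}}\Ga(x,y)\al\gi(y)m(dy) = \wa^{-1}\phia(x)\int_{\Ile{x^*}}\psia(y)\al\gi(y)m(dy),
\end{equation*}
and the integral factor is a constant independent of $x$. The same manipulation at the point $x^*$ itself, combined with the hypothesis that $x^*$ solves \eqref{eq:defx*teo} (equivalently $\gi(x^*) = \int_{\Ig{x^*}}\Ga(x^*,y)\al\gi(y)m(dy)$, using $\gi = g$ near $x^*$), shows that $\int_{\Ile{x^*}}\psia(y)\al\gi(y)m(dy) = 0$: indeed $\gi(x^*) = \int_{\I}\Ga(x^*,y)\al\gi(y)m(dy) = \wa^{-1}\phia(x^*)\int_{\Ile{x^*}}\psia(y)\al\gi(y)m(dy) + \int_{\Ig{x^*}}\Ga(x^*,y)\al\gi(y)m(dy)$, and the hypothesis forces the first summand to vanish; since $\wa^{-1}\phia(x^*) > 0$ the integral is zero. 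Hence $\int_{\Ile{x^*}}\Ga(x,y)\al\gi(y)m(dy) = 0$ for every $x \geq x^*$ as well, and subtracting this (zero) quantity from $\gi(x) = \int_{\I}\Ga(x,y)\al\gi(y)m(dy)$ gives $\gi(x) = \int_{\Ig{x^*}}\Ga(x,y)\al\gi(y)m(dy)$. Finally, for $x \geq x^*$ we have $\gi(x) = g(x)$, and on the range of integration $y > x^*$ we have $\al\gi(y) = \al g(y)$, yielding exactly the claimed identity.

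The main obstacle I anticipate is bookkeeping with the integrability and the role of the RRC: one must make sure that $\int_{\I}\Ga(x,y)|\al\gi(y)|m(dy) < \infty$ so that the splitting of the integral is legitimate and Fubini-type manipulations apply — this is exactly what \autoref{cond:RRC} (together with $\gi$ satisfying \eqref{eq:ginversion}) guarantees, so the point is to invoke it cleanly rather than to prove anything new. A secondary subtlety is the treatment of a possible atom of $m$ at $x^*$ (whether $y = x^*$ belongs to $\Ig{x^*}$ or contributes separately); since the statement is phrased with the open set $\Ig{x^*}$ and with $x \in \Ige{x^*}$, and since for $x > x^*$ the point $y = x^*$ sits in the "$\Ile{x^*}$" piece that we showed integrates to zero, while for $x = x^*$ the identity is just \eqref{eq:defx*teo} itself, the atom causes no real trouble — but it is worth a sentence to dispatch it. Everything else is the routine substitution of \eqref{eq:Garepr} and rearrangement, which I would not spell out in full.
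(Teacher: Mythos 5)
Your argument is correct and is essentially the paper's own: the paper derives this lemma as the $k=0$ case of \autoref{lem:Vequalsg}, whose proof likewise replaces $g$ by the RRC extension $\gi$, subtracts the $\Ile{x^*}$ contribution from the inversion formula, and uses that for $x\geq x^*$ this contribution equals $\wa^{-1}\phia(x)\int_{\Ile{x^*}}\psia(y)\al\gi(y)\,m(dy)$, a constant multiple of $\phia(x)$ that vanishes because it vanishes at $x=x^*$ by \eqref{eq:defx*teo}. Your executed computation (the second paragraph, not the initial splitting at $x$) matches this exactly, and your remarks on integrability and the possible atom at $x^*$ are appropriate.
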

\begin{proof}
This lemma is a particular case of the slightly more general \autoref{lem:Vequalsg}.

\end{proof}

We are ready to present the first of our main results previously outlined.

\begin{teo}
\label{teo:verif1} Consider a diffusion $X$ and a reward function $g$. Let $x^*$ be a root of equation \eqref{eq:defx*teo} such that the RRC (\autoref{cond:RRC}) is fulfilled. Suppose
\begin{hip} \label{hyp:algpositive} for all $x>x^*$, $\al g(x)\geq 0$, and
\end{hip}
\begin{hip} \label{hyp:mayorizq} for all $x<x^*$, $\frac{g(x^*)}{\psia(x^*)}\psia(x)\geq g(x)$.
\end{hip}
\noindent Then, the optimal stopping problem \eqref{eq:osp} is right-sided and $x^*$ is an optimal threshold. Furthermore, the value function is $\Va\colon \I \to \R$ defined by
\begin{equation*}
 \Va(x):=\int_{\Ig{x^*}} \Ga(x,y) \al g(y) m(dy).
\end{equation*}
\end{teo}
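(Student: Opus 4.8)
The plan is to verify that the candidate function
\[
\Va(x):=\int_{\Ig{x^*}} \Ga(x,y) \al g(y) m(dy)
\]
is the minimal $\desc$-excessive majorant of $g$, and then invoke Dynkin's characterization (\autoref{sec:DynkinChar}) to conclude that it is the value function and that $\tau^*=\hit{\SR}$ is optimal with $\SR=\Ige{x^*}$. I would organize the argument in four steps: (i) $\Va$ is $\desc$-excessive; (ii) $\Va\geq g$ on all of $\I$; (iii) $\Va=g$ on $\Ige{x^*}$; and (iv) minimality, i.e. $\Va\leq W$ for any $\desc$-excessive $W$ dominating $g$, which simultaneously identifies the optimal stopping time.

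For step (i), since $\al g(y)\geq 0$ on $\Ig{x^*}$ by \autoref{hyp:algpositive}, the measure $\mu(dy):=\al g(y)\ind{\Ig{x^*}}(y)\,m(dy)$ is non-negative, so $\Va$ has exactly the form $\int_{(\ell,r)}\Ga(x,y)\mu(dy)$ appearing in the Riesz decomposition \eqref{eq:alphaexcessive} (with zero $\desc$-harmonic part), hence is $\desc$-excessive; one should note $\Va$ is finite, which follows from the RRC applied to $\gi$ together with \autoref{lem:Vequalsg1}. For step (iii), \autoref{lem:Vequalsg1} gives directly that $g(x)=\int_{\Ig{x^*}}\Ga(x,y)\al g(y)m(dy)=\Va(x)$ for every $x\in\Ige{x^*}$. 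For step (ii) on $\Il{x^*}$, I would use the explicit Green-function representation \eqref{eq:Garepr}: for $x<x^*\leq y$ one has $\Ga(x,y)=\wa^{-1}\psia(x)\phia(y)$, so
\[
\Va(x)=\wa^{-1}\psia(x)\int_{\Ig{x^*}}\phia(y)\al g(y)m(dy)=\frac{\psia(x)}{\psia(x^*)}\,g(x^*),
\]
where the last equality uses that $x^*$ solves \eqref{eq:defx*teo}. Then \autoref{hyp:mayorizq} says precisely that this equals or exceeds $g(x)$ for $x<x^*$.

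The main obstacle is step (iv), minimality. Let $W$ be $\desc$-excessive with $W\geq g$. Because $W=g=\Va$ on $\Ige{x^*}$ is not automatic (we only know $W\geq g$ there), I would instead argue via \autoref{lem:generalMin} applied to $S=\Ige{x^*}$ and $f=\al g\cdot\ind{\Ig{x^*}}$: this gives $\Va(x)=\Ex{x}{\ea{\hit{S}}\Va(X_{\hit{S}})}$. Since $\Va=g$ on $S$ (step (iii)) and $X_{\hit{S}}\in S$ on $\{\hit{S}<\infty\}$ by continuity of paths and $\Va\ge 0$, we get $\Va(x)=\Ex{x}{\ea{\hit{S}}g(X_{\hit{S}})}\leq\sup_\tau\Ex{x}{\ea{\tau}g(X_\tau)}=V(x)$. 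Conversely, $\Va$ is $\desc$-excessive and dominates $g$ (steps (i), (ii), (iii)), so by Dynkin's characterization $V\leq\Va$. Hence $V=\Va$, and the stopping time $\hit{S}=\inf\{t\ge 0: X_t\geq x^*\}$ attains the supremum, so the problem is right-sided with optimal threshold $x^*$. The delicate point to handle carefully is the behaviour of the path near the boundary $r$ and the possibility $\hit{S}=\infty$: one must check $\ea{\hit{S}}\Va(X_{\hit{S}})\to 0$ appropriately on $\{\hit{S}=\infty\}$, which is where the growth restriction built into the RRC (that $\gi$ does not grow faster than $\psia$ near $r$) is used, via the representation $\Va=\Ra(\al\gi)$ on $\Ige{x^*}$ and the standard fact that resolvents of non-negative functions are $\desc$-excessive and vanish in the limit along the killed process.
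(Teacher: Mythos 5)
Your proof is correct and follows essentially the same route as the paper: establish that $\Va$ is an $\desc$-excessive majorant of $g$ via the Riesz representation \eqref{eq:alphaexcessive}, \autoref{lem:Vequalsg1} and the two hypotheses, and then show that the bound is attained by the hitting time of $\Ige{x^*}$, so Dynkin's characterization closes both inequalities. The only cosmetic difference is in the attainment step: the paper reads off $\Va(x)=\Ex{x}{\ea{\hit{x^*}}g(x^*)}$ directly from \eqref{eq:VaCall} together with the hitting-time formula \eqref{eq:hitting}, whereas you invoke \autoref{lem:generalMin}; both yield the same identity.
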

\begin{proof}
Riesz decomposition, given in \eqref{eq:alphaexcessive}, and the hypothesis \autoref{hyp:algpositive} allow us to conclude that $\Va$ is $\desc$-excessive. Denoting by $\nu(dy)=\al g(y) m(dy)$ for $y\geq x^*$, by \eqref{eq:Garepr}, in the first equality and \eqref{eq:defx*teo} in the second, we have for $x\leq x^*$:
\begin{equation*}
 \Va(x)= \wa^{-1} \psia(x) \int_{\Ig{x^*}}  \phia(y)\nu(dy) = \frac{\psia(x)}{\psia(x^*)}g(x^*).
\end{equation*}
%From \autoref{hyp:mayorizq} we conclude $\Va(x) \geq g(x)$ for $x \leq x^*$. 
Hypothesis \autoref{hyp:mayorizq} renders $\Va(x) \geq g(x)$ for $x \leq x^*$.
Besides, from \autoref{lem:Vequalsg1}, it follows that $\Va(x)=g(x)$ for $x\in \Ige{x^*}$; thereby, $\Va$ is a majorant of $g$, and by Dynkin's characterization of the value function as the minimal $\desc$-excessive majorant of $g$, it follows that
\begin{equation*}
\Va(x) \geq \sup_{\tau} \Ex{x}{ e^{-\desc \tau}\g(X_\tau)}.
\end{equation*}
Observing that $\Va$ satisfies \eqref{eq:VaCall}, we conclude that $\Va$ is the expected discounted reward associated with $\tau^*=\inf\{t:X_t\geq x^*\}$ and 
\begin{equation*}
\Va(x) \leq \sup_{\tau} \Ex{x}{e^{-\desc \tau}\g(x_\tau)}.
\end{equation*}
From both previous inequalities we conclude that the equality holds and the optimal stopping problem is right-sided with threshold $x^*$. Thus completing the proof.

\end{proof}

\begin{remark}
Under the conditions of \autoref{teo:verif1}, if $x^{**}$ is another solution of \eqref{eq:defx*teo} greater that $x^*$, then
\begin{equation*}
 g(x^{**})=\int_{\Ig{x^{**}}}\Ga(x^{**},y) \al g(y)m(dy) = \int_{\Ig{x^*}}\Ga(x^{**},y) \al g(y)m(dy).
\end{equation*}
This means that 
\begin{equation*}
 \int_{(x^*,x^{**}]}\Ga(x,y)\al g(y)m(dy)=0,
\end{equation*}
and the measure $\al g(y)m(dy)$ does not charge $(x^*,x^{**}]$. Actually, every $x\in(x^*,x^{**}]$ is a root of \eqref{eq:defx*teo} and also an optimal threshold. This shows that the ``best candidate'' to be $x^*$ is the largest solution of \eqref{eq:defx*teo}.
\end{remark}

The existence of a solution of equation \eqref{eq:defx*teo} not only provides a solution to the problem, but also implies a certain type of \emph{smooth fit}, discussed in \autoref{section:sf}. 

Even if the optimal stopping problem is right-sided, it could be the case that \eqref{eq:defx*teo} has no solutions. The following result could be useful in these situations. 

\begin{teo}
\label{teo:verif2}
 Consider a diffusion $X$ and a reward function $g$. Consider $x^*$ such that: the RRC (\autoref{cond:RRC}) holds;  the inequality
\begin{equation}
\label{eq:defx*mayor}
 g(x^*) > \wa^{-1} \psia(x^*)\int_{\Ig{x^*}}\phia(y)\al g(y) m(dy),
\end{equation}
is fulfilled; and both hypothesis \autoref{hyp:algpositive} and \autoref{hyp:mayorizq} are valid. Then the optimal stopping problem is right-sided with optimal threshold $x^*$.
Furthermore, $x^*$ is the smallest value satisfying equation \eqref{eq:defx*mayor} and hypothesis \autoref{hyp:algpositive}.
\end{teo}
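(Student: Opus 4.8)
The plan is to mirror the three-step argument of \autoref{teo:verif1}: produce an explicit candidate $\Va$, check that it is an $\desc$-excessive majorant of $g$ (so that $\Va(x)\geq\sup_\tau\Ex{x}{\ea{\tau}\g(X_\tau)}$ by Dynkin's characterization), and check that it equals the expected discounted payoff of $\tau^*=\inf\{t\geq 0\colon X_t\geq x^*\}$ (so that the reverse inequality holds). The one genuinely new ingredient compared with \autoref{teo:verif1} is that, since \eqref{eq:defx*mayor} is a \emph{strict} inequality, the candidate $\int_{\Ig{x^*}}\Ga(\cdot,y)\al g(y)m(dy)$ used there would lie strictly below $g$ at $x^*$; the remedy is to add to the representing measure a point mass at $x^*$ whose weight is exactly the ``gap'' in \eqref{eq:defx*mayor}.

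Concretely, I would set
\[
\beta:=\frac{g(x^*)-\int_{\Ig{x^*}}\Ga(x^*,y)\al g(y)\,m(dy)}{\Ga(x^*,x^*)}
=\frac{\wa\Big(g(x^*)-\wa^{-1}\psia(x^*)\int_{\Ig{x^*}}\phia(y)\al g(y)\,m(dy)\Big)}{\psia(x^*)\phia(x^*)},
\]
which is \emph{strictly positive} precisely by \eqref{eq:defx*mayor}, and define
\[
\Va(x):=\int_{\Ig{x^*}}\Ga(x,y)\al g(y)\,m(dy)+\beta\,\Ga(x,x^*)\qquad(x\in\I).
\]
This has the form $\int_{(\ell,r)}\Ga(x,y)\mu(dy)$ with $\mu(dy):=\ind{\Ig{x^*}}(y)\,\al g(y)\,m(dy)+\beta\,\delta_{x^*}(dy)$; by \autoref{hyp:algpositive} the density part is non-negative, and $\beta>0$, so $\mu$ is a non-negative measure and the Riesz representation \eqref{eq:alphaexcessive} shows $\Va$ is $\desc$-excessive.

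The next step is to evaluate $\Va$ explicitly, which is where the RRC enters and which is essentially the computation behind \autoref{lem:Vequalsg1}. Writing $\gi$ for the function furnished by \autoref{cond:RRC} (so $\gi=g$ on $\Ige{x^*}$ and $\gi(x)=\int_\I\Ga(x,y)\al\gi(y)m(dy)$), I would split the integral $\int_\I\Ga(x,y)\al\gi(y)m(dy)$ at $x^*$ and use \eqref{eq:Garepr}. For $x\geq x^*$ this gives $g(x)=\gi(x)=D\phia(x)+\int_{\Ig{x^*}}\Ga(x,y)\al g(y)m(dy)$, where $D:=\wa^{-1}\int_{\Ile{x^*}}\psia(y)\al\gi(y)m(dy)$; and the choice of $\beta$ is exactly the one for which $\beta\Ga(x,x^*)=D\phia(x)$ on $\Ige{x^*}$, whence $\Va=g$ on $\Ige{x^*}$ and, at $x=x^*$, $D\phia(x^*)=g(x^*)-\int_{\Ig{x^*}}\Ga(x^*,y)\al g(y)m(dy)>0$. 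For $x\leq x^*$, \eqref{eq:Garepr} makes every term a multiple of $\psia(x)$, and evaluating the multiple at $x^*$ yields $\Va(x)=\frac{g(x^*)}{\psia(x^*)}\psia(x)$, which is $\geq g(x)$ by \autoref{hyp:mayorizq}. Thus $\Va$ is an $\desc$-excessive majorant of $g$, and since $\Va$ is of the form \eqref{eq:VaCall}, \eqref{eq:hitting} together with the continuity of the paths (so that $X_{\tau^*}=x^*$ on $\{\tau^*<\infty\}$ when $x<x^*$) identifies it with $\Ex{x}{\ea{\tau^*}\g(X_{\tau^*})}$; combined with Dynkin's characterization this gives that $\Va$ is the value function and the problem is right-sided with threshold $x^*$.

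For the last assertion — that $x^*$ is the smallest value for which both \eqref{eq:defx*mayor} (with $a$ in place of $x^*$) and \autoref{hyp:algpositive} (with $a$ in place of $x^*$) hold — I would argue by contradiction: if some $a<x^*$ satisfied both, then $\al g\geq 0$ on $(a,x^*]$, and I would compare the ``gap'' at $a$, i.e. $g(a)-\int_{\Ig{a}}\Ga(a,y)\al g(y)m(dy)$, with the already-established value function on $[a,x^*]$ (where $\Va$ is $\desc$-harmonic and equal to $\frac{g(x^*)}{\psia(x^*)}\psia$), exploiting the monotonicity of $b\mapsto\int_{\Ig{b}}\phia(y)\al g(y)m(dy)$ on the region where $\al g\geq 0$ together with \autoref{hyp:mayorizq}. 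I expect this minimality clause to be the most delicate point: the rest of the proof is a close variant of \autoref{teo:verif1}, whereas here one must keep careful track of how the extra atom $\beta\,\delta_{x^*}$ interacts with the comparison between $g$ and $\psia$ to the left of $x^*$.
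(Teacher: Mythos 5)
Your proof of the main assertion (right-sidedness with threshold $x^*$) is correct and is essentially the paper's own argument: you take the candidate of \autoref{lem:Vequalsg}, namely $\Va(x)=\int_{\Ig{x^*}}\Ga(x,y)\al g(y)m(dy)+\beta\,\Ga(x,x^*)$ with the atom's weight $\beta>0$ chosen to close the strict gap in \eqref{eq:defx*mayor}, verify excessivity through the Riesz representation, get $\Va=g$ on $\Ige{x^*}$ from the RRC, get $\Va=\frac{g(x^*)}{\psia(x^*)}\psia\geq g$ on $\Ile{x^*}$ from \autoref{hyp:mayorizq}, and identify $\Va$ with the payoff of the hitting time of $\Ige{x^*}$. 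No issues there.

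The minimality clause, however, is left as a sketch, and the route you sketch does not close. Suppose $a<x^*$ satisfies \eqref{eq:defx*mayor} and \autoref{hyp:algpositive}. The monotonicity of $b\mapsto\int_{\Ig{b}}\phia(y)\al g(y)m(dy)$ on $\{\al g\geq 0\}$ only gives you the two lower bounds
\begin{equation*}
\frac{g(a)}{\psia(a)}>\wa^{-1}\int_{\Ig{a}}\phia(y)\al g(y)m(dy)\geq \wa^{-1}\int_{\Ig{x^*}}\phia(y)\al g(y)m(dy)
\quad\text{and}\quad
\frac{g(x^*)}{\psia(x^*)}>\wa^{-1}\int_{\Ig{x^*}}\phia(y)\al g(y)m(dy),
\end{equation*}
which bound $\frac{g(a)}{\psia(a)}$ and $\frac{g(x^*)}{\psia(x^*)}$ from below by the \emph{same} quantity; this cannot produce the strict comparison $\frac{g(a)}{\psia(a)}>\frac{g(x^*)}{\psia(x^*)}$ needed to contradict \autoref{hyp:mayorizq}. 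The paper instead computes $\Va(a)-g(a)$ exactly: writing $\sigma(dy)=\al\gi(y)m(dy)$ and using the inversion formula for $\gi$, one has $\Va(a)-g(a)=-\int_{\Ile{x^*}}\Ga(a,y)\sigma(dy)+\beta\,\Ga(a,x^*)$, which is split into four pieces and paired. The pairing over $\Ile{a}$ gives $\bigl(\tfrac{\psia(a)\phia(x^*)}{\psia(x^*)\phia(a)}-1\bigr)\int_{\Ile{a}}\Ga(a,y)\sigma(dy)<0$, using that the bracket is negative while $\int_{\Ile{a}}\Ga(a,y)\sigma(dy)>0$ is exactly \eqref{eq:defx*mayor} at $a$; the pairing over $(a,x^*]$ gives $\wa^{-1}\tfrac{\psia(a)}{\psia(x^*)}\int_{(a,x^*]}\bigl(\phia(x^*)\psia(y)-\psia(x^*)\phia(y)\bigr)\sigma(dy)\leq 0$, using $\al g\geq 0$ there. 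Hence $\Va(a)<g(a)$, contradicting the majorant property already established. The essential ingredient your sketch is missing is this exact cancellation between the left tail of $\sigma$ and the corresponding part of the atom's contribution, whose ratio is the constant $\tfrac{\psia(a)\phia(x^*)}{\psia(x^*)\phia(a)}<1$; you should carry out this computation (or an equivalent one) to complete the proof.
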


Before giving the proof we state the following useful lemma.

\begin{lem} \label{lem:Vequalsg}
Consider a diffusion $X$ and reward function $g$. Assume the RRC holds for a given $x^*$ and define $\Va\colon\I\to \R$ by
\begin{equation*}
 \Va(x)=\int_{\Ig{x^*}} \Ga(x,y) \al g(y) m(dy)+ k \Ga(x,x^*),
\end{equation*}
where $k$ is such that $\Va(x^*)=g(x^*)$, i.e.
\begin{equation*}
 k=\frac{g(x^*)-\int_{\Ig{x^*}} \Ga(x^*,y) \al g(y) m(dy)}{\Ga(x^*,x^*)}.
\end{equation*}
Then, for all $x \in \Ige{x^*}$ 
\begin{equation*}
g(x)=\Va(x).
\end{equation*}
\end{lem}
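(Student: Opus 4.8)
The plan is to exploit the defining property of the RRC (\autoref{cond:RRC}): by hypothesis there is a function $\gi$ with $\gi(x)=g(x)$ for $x\geq x^*$ and $\gi(x)=\int_{\I}\Ga(x,y)\al\gi(y)m(dy)$ for all $x\in\I$. I would split the integral in this representation of $\gi$ at the point $x^*$, writing, for $x\geq x^*$,
\begin{equation*}
g(x)=\gi(x)=\int_{\Ig{x^*}}\Ga(x,y)\al\gi(y)m(dy)+\int_{\Ile{x^*}}\Ga(x,y)\al\gi(y)m(dy).
\end{equation*}
Since $\gi$ agrees with $g$ on $\Ig{x^*}$, the differential operator is local, and $m$-a.e.\ behaviour of $\al\gi$ on $\Ig{x^*}$ coincides with $\al g$ there, so the first term equals $\int_{\Ig{x^*}}\Ga(x,y)\al g(y)m(dy)$, which is exactly the first term of $\Va(x)$.

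The remaining task is to identify the ``left'' contribution $\int_{\Ile{x^*}}\Ga(x,y)\al\gi(y)m(dy)$ with the correction term $k\,\Ga(x,x^*)$ when $x\geq x^*$. The key observation is the shape of the Green function \eqref{eq:Garepr}: for $x\geq x^*\geq y$ one has $\Ga(x,y)=\wa^{-1}\psia(y)\phia(x)$, so the $x$-dependence factors out as $\phia(x)$. Hence for $x\geq x^*$,
\begin{equation*}
\int_{\Ile{x^*}}\Ga(x,y)\al\gi(y)m(dy)=\phia(x)\cdot\wa^{-1}\int_{\Ile{x^*}}\psia(y)\al\gi(y)m(dy)=c\,\phia(x)
\end{equation*}
for a constant $c$ not depending on $x$. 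Likewise $\Ga(x,x^*)=\wa^{-1}\psia(x^*)\phia(x)$ for $x\geq x^*$, so $\Ga(x,x^*)$ is also a constant multiple of $\phia(x)$ on $\Ige{x^*}$. Therefore $\int_{\Ile{x^*}}\Ga(x,y)\al\gi(y)m(dy)$ and $k\,\Ga(x,x^*)$ are both multiples of $\phia(x)$ on $\Ige{x^*}$, hence proportional; to see that the proportionality constant is the right one it suffices to evaluate at the single point $x=x^*$. There, by the very definition of $k$ in the statement,
\begin{equation*}
k\,\Ga(x^*,x^*)=g(x^*)-\int_{\Ig{x^*}}\Ga(x^*,y)\al g(y)m(dy)=\int_{\Ile{x^*}}\Ga(x^*,y)\al\gi(y)m(dy),
\end{equation*}
the last equality being the splitting above at $x=x^*$. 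Since the two functions of $x$ are proportional on $\Ige{x^*}$ and agree at $x^*$ (where $\phia(x^*)\neq 0$), they agree on all of $\Ige{x^*}$, giving
\begin{equation*}
\Va(x)=\int_{\Ig{x^*}}\Ga(x,y)\al g(y)m(dy)+k\,\Ga(x,x^*)=g(x)\qquad(x\in\Ige{x^*}),
\end{equation*}
as claimed.

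I expect the main obstacle to be the measure-theoretic bookkeeping needed to justify the first step rigorously: namely that, because the differential operator $L$ (hence $\al$) is a local operator and $\gi=g$ on an interval $\Ig{x^*}$ that is open from the right, one may replace $\al\gi$ by $\al g$ inside $\int_{\Ig{x^*}}$, and that the integrals split cleanly at $x^*$ (the behaviour at the single point $x^*$, including any atom $m(\{x^*\})$, must be accounted for and is absorbed into the $k\,\Ga(x,x^*)$ term). One must also check integrability, i.e.\ that $\int_{\Ile{x^*}}\psia(y)|\al\gi(y)|m(dy)<\infty$, which follows from the RRC since the whole representing integral for $\gi$ is assumed finite. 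Once these routine points are in place, the argument reduces to the elementary ``two proportional functions agreeing at a point'' observation above.
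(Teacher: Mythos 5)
Your proof is correct and follows essentially the same route as the paper: both substitute the RRC extension $\gi$, apply its inversion formula, use the factorization $\Ga(x,y)=\wa^{-1}\psia(y)\phia(x)$ for $y\leq x^*\leq x$ to see that the ``left'' integral and $k\,\Ga(x,x^*)$ are each constant multiples of $\phia(x)$ on $\Ige{x^*}$, and conclude from the normalization $\Va(x^*)=g(x^*)$. The paper phrases this as computing $\Va(x)-\gi(x)=\frac{\phia(x)}{\phia(x^*)}\left(\Va(x^*)-g(x^*)\right)$, which is your proportionality-plus-agreement-at-$x^*$ argument in one line.
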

\begin{proof}
Let us compute $\Va(x)-g(x)$ for $x \geq x^*$. We can substitute $g$ by $\gi$, the extension given by the RRC, and use the inversion formula \eqref{eq:ginversion} for $\gi$; denoting by $\sigma(dy)=\al \gi(y) m(dy)$ we get
\begin{equation*}
 \Va(x)=\int_{\Ig{x^*}} \Ga(x,y) \sigma(dy)+ k \Ga(x,x^*),
\end{equation*}
and
\begin{equation*}
 \gi(x)=\int_{\I} \Ga(x,y) \sigma(dy).
\end{equation*}
Computing the difference for $x\in \Ige{x^*}$ we obtain that
\begin{align*}
\Va(x)-\g(x) &= \Va(x)-\gi(x)\\
&= \int_{\Ile{x^*}} \Ga(x,y) \sigma(dy) + k\Ga(x,x^*)\\
&= \wa^{-1} \phia(x) \left(\int_{\Ile{x^*}} \psia(y) \sigma(dy)+k\psia(x^*)\right) \\
&= \frac{\phia(x)}{\phia(x^*)}\left(\Va(x^*)-g(x^*)\right),
\end{align*}
that vanishes due to $\Va(x^*)=g(x^*)$.

\end{proof}

\begin{proof}[Proof of \autoref{teo:verif2}]
The idea is the same as in the proof of \autoref{teo:verif1} but considering $\Va$ as defined in \autoref{lem:Vequalsg}.
It is easy to reproduce that proof to observe that $\Va$ is $\desc$-excessive, is majorant of $g$, and  satisfies \eqref{eq:VaCall}. 

We move on to prove that $x^*$ is the smallest value satisfying equation \eqref{eq:defx*mayor} and hypothesis \autoref{hyp:algpositive}: Suppose that there exists $x^{**}$ such that $x_1<x^{**}<x^*$, satisfying \eqref{eq:defx*mayor} and \autoref{hyp:algpositive}. Let us compute $\Va(x^{**})-\g(x^{**})$ to see it is negative, in contradiction with the fact that $\Va$ is a majorant of $g$. 
Considering the extension $\gi$ of the RRC and calling $\sigma(dy)=\al \gi(y) m(dy)$ we find
\begin{equation*}
\Va(x^{**})-g(x^{**}) = -\int_{\Ile{x^*}}\Ga(x^{**},y)\sigma(dy)+k \Ga(x^{**},x^*).
\end{equation*}
Splitting the integral in $x^{**}$, the first term on the right-hand side of the previous equation is $s_1+s_2$
with
\begin{equation*}
s_1= -\int_{\Ile{x^{**}}}\Ga(x^{**},y)\sigma(dy),\quad \text{and}\quad s_2=-\wa^{-1} \psia(x^{**}) \int_{(x^{**},x^*]} \phia(y) \sigma(dy).
\end{equation*}
To compute the second term observe that
\begin{equation*}
 k=\frac{1}{\Ga(x^*,x^*)}\int_{\Ile{x^*}}\Ga(x^*,y)\sigma(dy)
\end{equation*}
and by \eqref{eq:Garepr} we get $\frac{\Ga(x^{**},x^*)}{\Ga(x^*,x^*)}=\frac{\psia(x^{**})}{\psia(x^*)}$; we obtain that $k \Ga(x^{**},x^*)=s_3+s_4$ with
\bd
s_3=\frac{\psia(x^{**})}{\psia(x^*)}\frac{\phia(x^*)}{\phia(x^{**})} \int_{\Ile{x^{**}}}\Ga(x^{**},y)\sigma(dy)
\ed
and 
\bd
s_4=\frac{\psia(x^{**})}{\psia(x^*)} \int_{(x^{**},x^*]}\Ga(x^*,y)\sigma(dy).
\end{equation*}
Finally observe that
\begin{equation*}
s_1+s_3= \left(\frac{\psia(x^{**})}{\psia(x^*)}\frac{\phia(x^*)}{\phia(x^{**})}-1\right) \int_{\Ile{x^{**}}}\Ga(x^{**},y)\sigma(dy)< 0 
\end{equation*}
because the first factor negative, while the second is positive by the assumption about $x^{**}$, and on the other hand
\begin{equation*} 
s_4+s_2=\frac{\wa^{-1} \psia(x^{**})}{\psia(x^*)} \int_{(x^{**},x^*]}\left(\psi(y)\phia(x^*)-\psia(x^*)\phia(y)\right)\sigma(dy)<0,
\end{equation*}
because the measure is positive by our assumption, and the integrand is non-positive (it is increasing and null in $y=x^*$). We have proved $\Va(x^{**})-g(x^{**})=s_1+s_2+s_3+s_4 <0$, thus completing the proof.

\end{proof}

Although the following result does not give a practical method to solve the optimal stopping problem, 
we find it relevant because it gives conditions under which the previous theorems can be used successfully to find the solution of the problem. Observe that, under the RRC, the following result is a converse result of \autoref{teo:verif1} and \autoref{teo:verif2} together.

\begin{teo} 
\label{teo:recip}
Consider a diffusion $X$ and a reward function $g$ such that the optimal stopping problem is right-sided and $x^*$ is an optimal threshold. Assume $X$ and $g$ satisfy the RRC for $x^*$. Then $x^*$ satisfies 
\be \label{eq:reciproco}
 g(x^*) \geq \wa^{-1} \psia(x^*)\int_{\Ig{x^*}}\phia(y)\al g(y) m(dy),
\ee
(i.e. $x^*$ satisfies either \eqref{eq:defx*teo} or \eqref{eq:defx*mayor}) and both \autoref{hyp:algpositive} and \autoref{hyp:mayorizq} are valid as well.
\end{teo}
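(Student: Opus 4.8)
The plan is to read the value function off from the hypothesis, to identify it with the explicit candidate built in \autoref{lem:Vequalsg}, and then to extract the three conclusions from the non-negativity of the representing measure in the Riesz decomposition of that value function.

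First, since the OSP is right-sided with threshold $x^*$, formula \eqref{eq:VaCall} tells us that $\Va(x)=\tfrac{\psia(x)}{\psia(x^*)}g(x^*)$ for $x\le x^*$, that $\Va(x)=g(x)$ for $x\ge x^*$, and that $\Va$ is the minimal $\desc$-excessive majorant of $g$. Hypothesis \autoref{hyp:mayorizq} is then immediate, being exactly the inequality $\Va(x)\ge g(x)$ for $x<x^*$. Next, I would use the RRC, through \autoref{lem:Vequalsg}, to obtain the function $\tilde V(x)=\int_{\Ig{x^*}}\Ga(x,y)\al g(y)\,m(dy)+k\,\Ga(x,x^*)$, with $k$ fixed by $\tilde V(x^*)=g(x^*)$ and with $\tilde V=g$ on $\Ige{x^*}$. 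By \eqref{eq:Garepr}, and because $x\le x^*\le y$ for every $y$ in the range of integration, $\tilde V$ is a constant multiple of $\psia$ on $\Ile{x^*}$; continuity of $\tilde V$ (a sum of Green potentials) together with $\tilde V(x^*)=g(x^*)$ forces that constant to equal $g(x^*)/\psia(x^*)$, so that $\tilde V\equiv\Va$ on $\I$. In particular, reading off $k$ and using $\Ga(x^*,y)=\wa^{-1}\psia(x^*)\phia(y)$ for $y\ge x^*$, the target inequality \eqref{eq:reciproco} is precisely the statement $k\ge0$.

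The core step is to compute the Riesz representing measure of $\Va$ in two ways. Since $\Va$ is $\desc$-excessive, \eqref{eq:alphaexcessive} gives $\Va=\int_{(\ell,r)}\Ga(\cdot,y)\mu(dy)+h$ with $\mu\ge0$ a measure and $h$ a $\desc$-harmonic function, and applying the differential operator $\al\cdot$, which inverts the Green potential ($\al\Ga(\cdot,y)=\delta_y$) and annihilates $\desc$-harmonic functions, recovers $\mu=\al\Va$ as a measure. Applying the same operator to the identity $\Va=\tilde V$ gives $\al\Va=\al g(y)\,m(dy)$ on $(x^*,r)$ together with the single atom $k\,\delta_{x^*}$, while on $(\ell,x^*)$ one has $\al\Va=0$ because there $\Va$ is a multiple of $\psia$. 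Comparing the two computations, $\mu=\al g(y)\,m(dy)$ on $(x^*,r)$ plus $k\,\delta_{x^*}$, and $\mu\ge0$ then yields at once that $\al g(x)\ge0$ for $x>x^*$ — this is \autoref{hyp:algpositive}, the RRC making $\al g$ a genuine (continuous) function there so that the $m$-a.e. inequality is an everywhere inequality — and that $k=\mu(\{x^*\})\ge0$, which is \eqref{eq:reciproco}. This completes the proof.

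The main obstacle I expect is precisely that last identification of $\mu$ with $\al g\,m$ on $(x^*,r)$ plus $k\,\delta_{x^*}$: it rests on the representing measure of Riesz's theorem being recoverable by applying the measure-valued differential operator, and some care is needed at the corner $x^*$, where an atom of the speed measure would demand extra bookkeeping (which is why the cleanest statements assume $m$ atomless). One can sidestep the measure-valued operator for \autoref{hyp:algpositive} by noting that a $\desc$-excessive function is $\desc$-superharmonic, so that $\al\Va\ge0$ on the open stopping region $(x^*,r)$, where $\Va=g$; but pinning down the sign of the extra mass $k$ at $x^*$, hence \eqref{eq:reciproco}, still seems to require the representing-measure computation combined with $\Va=\tilde V$.
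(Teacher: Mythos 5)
Your proposal is correct and follows essentially the same route as the paper: \autoref{hyp:mayorizq} from $\Va\ge g$ together with \eqref{eq:VaCall}, \autoref{hyp:algpositive} from the general fact that $\al g\ge 0$ on the stopping region, and \eqref{eq:reciproco} by identifying $\Va$ with the candidate of \autoref{lem:Vequalsg} and showing that the coefficient $k$ of $\Ga(\cdot,x^*)$ is non-negative via the Riesz representation. The one delicate point you flag --- that the signed measure $\al g(y)\,m(dy)\,\ind{\{y>x^*\}}+k\,\delta_{x^*}$ really is the representing measure --- is handled in the paper without inverting the potential operator: setting $\Wa(x)=\int_{\Ig{x^*}}\Ga(x,y)\al g(y)\,m(dy)$ one checks directly that $\Va-\Wa=\frac{g(x^*)-\Wa(x^*)}{\Ga(x^*,x^*)}\,\Ga(\cdot,x^*)$ on all of $\I$, so that if $g(x^*)<\Wa(x^*)$ then $\Wa=\Va+c\,\Ga(\cdot,x^*)$ with $c>0$ exhibits $\Wa$ as a potential of a non-negative measure charging $\{x^*\}$, contradicting the uniqueness of the Riesz decomposition of $\Wa$, whose representing measure does not charge $\Ile{x^*}$.
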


\begin{proof}
Since the optimal stopping problem is right-sided we obtain that the value function $\Va$ satisfies \eqref{eq:VaCall}.  We also know that $\Va$ is a majorant of $g$, then \autoref{hyp:mayorizq} clearly holds. To conclude the validity of \autoref{hyp:algpositive}, one needs to observe that $\Ig{x^*}$ is included in the stopping region, and in general $\al g(y)\geq 0$ in that region. 
It only remains to verify \eqref{eq:reciproco}. 
Consider the excessive function $\Wa$ defined by
\begin{equation*}
\Wa(x):=\int_{\Ig{x^*}}\Ga(x,y)\al \g(y) m(dy). 
\end{equation*}
It is easy to see, by using \eqref{eq:Garepr}, that for $x\leq x^*$
\begin{equation*}
\Wa(x)=\psia(x)\frac{\Wa(x^*)}{\psia(x^*)},
\end{equation*}
then, by using \eqref{eq:VaCall}, we get
\be
\label{eq:dif1}
\Va(x)-\Wa(x)=\psia(x)\frac{g(x^*)-\Wa(x^*)}{\psia(x^*)},\quad \mbox{if }x\leq x^*.
\ee
Now consider the region $x\geq x^*$: the equalities $\Va=\g=\gi$ (where $\gi$ is the extension in the RRC) hold in that region, so by using the inversion formula for $\gi$, and also using the explicit formula for $\Ga$ we obtain that
\bd
\label{eq:dif2}
\Va(x)-\Wa(x)= \frac{g(x^*)-\Wa(x^*)}{\phia(x^*)} \phia(x),\quad \mbox{if }x\geq x^*;
\ed
multiplying and dividing by $\wa^{-1} \phia(x^*)$ in the previous equation, and by $\wa^{-1} \psia(x^*)$ in \eqref{eq:dif1}, we find, bearing \eqref{eq:Garepr} in mind,
\begin{equation*}
\Va(x)-\Wa(x)=\frac{\g(x^*)-\Wa(x^*)}{\Ga(x^*,x^*)} \Ga(x,x^*). 
\end{equation*}
Since $\Va$ is an excessive function, Riesz representation \eqref{eq:excessiverepr}  ensures the existence of a representing measure $\mu$ and a harmonic function $h$ such that
\begin{equation*}
 \Va(x)=\int_{(\ell,r)}\Ga(x,y) \mu(dy) + h(x).
\end{equation*}
Suppose that $g(x^*)<\Wa(x^*)$. We obtain that
\begin{align*}
\Wa(x)&=\Va(x)+\frac{\Wa(x^*)-\g(x^*)}{\Ga(x^*,x^*)} \Ga(x,x^*)\\
&=\int_{I}\Ga(x,y)\mu^*(dy) + h(x),
\end{align*}
where the measure $\mu^*=\mu+\frac{\Wa(x^*)-\g(x^*)}{\Ga(x^*,x^*)} \delta_{x^*}(dy)$, and this is absurd because the representing measure of $\Wa$ does not charge $\Ile{x^*}$ and $\mu^*(\{x^*\})>0$. We conclude that $g(x^*)\geq \Wa(x^*)$, that is \eqref{eq:reciproco}.

\end{proof}

The proof of the previous theorem gives a precise description of the Riesz representation of the value function $\Va$ as an $\desc$-excessive function given in equation \eqref{eq:alphaexcessive}.
\begin{prop}
\label{prop:repmeasure}
Assume the conditions of \autoref{teo:recip}.
Then, in the representation of $\Va$ as an $\desc$-excessive function \eqref{eq:alphaexcessive}, the harmonic part vanishes and the representing measure $\mu$ is given by
\bd
\mu(dx)= k \delta_{\{x^*\}}(dx)+\ind{\{x>x^*\}} \al g(x)m(dx),
\ed
with 
%$\nu(dy)$ such that $\nu(dy)=\al \g (y) m(dy),\ y>x^*,$
%$\nu$ does not charge $\Il{x^*}$, i.e. $\nu(\Il{x^*})=0,$ and eventually $\nu$ has an atom in the stopping threshold $x^*$, given by 
\begin{equation*}
\mu(\{x^*\})=k=\frac{\g(x^*)-\int_{\Ig{x^*}}\Ga(x^*,y)\al \g(y) m(dy)}{\Ga(x^*,x^*)}. 
\end{equation*}
Furthermore, if the RRC holds for some $x_1<x^*$ we have that 
\be \label{eq:nuleq}
\mu(\{x^*\})\leq \al g(x^*)m(\{x^*\}).
\ee

\end{prop}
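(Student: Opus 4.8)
The plan is to read off the representation \eqref{eq:alphaexcessive} of $\Va$ almost for free from the identity proved inside \autoref{teo:recip}, and then to extract the atom $\mu(\{x^*\})$ by comparing the lateral scale-derivatives of $\Va$ at $x^*$ with those of the RRC-extension of $g$. For the first two assertions I would recall that in the proof of \autoref{teo:recip} we obtained $\Va(x)=\Wa(x)+k\,\Ga(x,x^*)$ for every $x\in\I$, with $\Wa(x)=\int_{\Ig{x^*}}\Ga(x,y)\,\al g(y)\,m(dy)$ and $k:=(g(x^*)-\Wa(x^*))/\Ga(x^*,x^*)\ge0$. Rewriting this as
\[
\Va(x)=\int_{\I}\Ga(x,y)\,\mu(dy),\qquad \mu(dy):=\ind{\{y>x^*\}}\,\al g(y)\,m(dy)+k\,\delta_{\{x^*\}}(dy),
\]
I note that $\mu\ge0$, since $k\ge0$ and, by \autoref{hyp:algpositive} (which holds under the hypotheses of \autoref{teo:recip}), $\al g\ge0$ on $\Ig{x^*}$; thus $\Va$ is displayed as a pure Green potential, i.e.\ with vanishing $\desc$-harmonic part. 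By the uniqueness of the decomposition \eqref{eq:alphaexcessive}, $\mu$ is then the representing measure and $h\equiv0$; and since $\ind{\{y>x^*\}}$ puts no mass at $x^*$, $\mu(\{x^*\})=k$, which is the stated value.

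For the bound \eqref{eq:nuleq}, assume the RRC holds for some $x_1<x^*$ with extension $\gi$, so that $\gi=g$ on $\Ige{x_1}$, in particular on a two-sided neighbourhood of $x^*$. The technical ingredient I would establish first is an atom formula for Green potentials: if $u(x)=\int_{\I}\Ga(x,y)\,\rho(dy)$ for a (signed) measure $\rho$, then
\[
\rho(\{x^*\})=\desc\,u(x^*)\,m(\{x^*\})-\Bigl(\tfrac{\partial^+ u}{\partial s}(x^*)-\tfrac{\partial^- u}{\partial s}(x^*)\Bigr),
\]
obtained by differentiating the explicit kernel \eqref{eq:Garepr} and using the Wronskian together with \eqref{eq:atom} applied to $\psia$ and $\phia$ (for $\rho=\delta_{x^*}$, i.e.\ $u=\Ga(\cdot,x^*)$, it reduces to exactly that). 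I would apply this identity twice: to $u=\Va$, with $\rho=\mu$ and $\mu(\{x^*\})=k$; and to $u=\gi$, which satisfies $\gi=\int_{\I}\Ga(\cdot,y)\al \gi(y)m(dy)$ by the RRC, with $\rho=\al\gi\,m$, whose atom at $x^*$ equals $\al g(x^*)m(\{x^*\})$. Using $\Va(x^*)=\gi(x^*)=g(x^*)$ and $\Va=\gi$ on $\Ige{x^*}$ (so that $\tfrac{\partial^+\Va}{\partial s}(x^*)=\tfrac{\partial^+\gi}{\partial s}(x^*)$) and subtracting the two resulting identities gives
\[
\mu(\{x^*\})-\al g(x^*)\,m(\{x^*\})=\tfrac{\partial^-\Va}{\partial s}(x^*)-\tfrac{\partial^-\gi}{\partial s}(x^*).
\]

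Finally, it suffices to check that the right-hand side is $\le0$. Since $\Va$ is a majorant of the reward and $\gi=g$ on $(x_1,x^*]$, the difference $\Va-\gi$ is non-negative on $(x_1,x^*]$ and vanishes at $x^*$ (and it has a left scale-derivative there, $\Va$ being a constant multiple of $\psia$ near $x^*$ by \eqref{eq:VaCall}); hence $\tfrac{\partial^-(\Va-\gi)}{\partial s}(x^*)\le0$, that is $\tfrac{\partial^-\Va}{\partial s}(x^*)\le\tfrac{\partial^-\gi}{\partial s}(x^*)$, and \eqref{eq:nuleq} follows. I expect the main obstacle to be the Green-potential atom formula above: it is morally the statement ``$\al u=\rho$ as a measure'', but since $u\notin\D_L$ in general it must be justified directly from \eqref{eq:Garepr}, with some care about which side of $x^*$ carries the atom when $\Ga$ is split into a one-sided integral. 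The remaining steps are routine bookkeeping once the proof of \autoref{teo:recip} is in hand.
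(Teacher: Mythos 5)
Your proof is correct, but for the key inequality \eqref{eq:nuleq} it follows a genuinely different route from the paper. For the first two claims you do essentially what the paper does: read $\Va=\Wa+k\,\Ga(\cdot,x^*)$ off the proof of \autoref{teo:recip}, note that the resulting measure is non-negative, and invoke uniqueness of the Riesz decomposition \eqref{eq:alphaexcessive}; the paper simply says this is ``already proved in the previous results''. For \eqref{eq:nuleq}, however, the paper argues by contradiction: if $\mu(\{x^*\})>\al g(x^*)m(\{x^*\})$ then $g(x^*)>\int_{\Ige{x^*}}\Ga(x^*,y)\al g(y)m(dy)$, and since the RRC on $\Ige{x_1}$ makes $\al g$ continuous and positive near $x^*$, one finds $x_2<x^*$ satisfying \eqref{eq:defx*mayor} and \autoref{hyp:algpositive}, contradicting the minimality assertion in \autoref{teo:verif2}. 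You instead prove the bound directly, via an atom formula $\rho(\{x^*\})=\desc u(x^*)m(\{x^*\})-\bigl(\tfrac{\partial^+u}{\partial s}(x^*)-\tfrac{\partial^-u}{\partial s}(x^*)\bigr)$ for Green potentials, applied to $\Va$ and to the RRC extension $\gi$, reducing \eqref{eq:nuleq} to $\tfrac{\partial^-(\Va-\gi)}{\partial s}(x^*)\le 0$, which follows from $\Va-\gi\ge 0$ on $(x_1,x^*]$ with equality at $x^*$. Both arguments are sound. What you lose is brevity: your atom formula requires justifying an interchange of differentiation and integration (the analogue of \eqref{eq:integral0}, already proved in \autoref{lem:equiveq}, so the machinery is available) and the existence of the lateral scale-derivative of $\gi$ at $x^*$. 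What you gain is a self-contained, quantitative identity — your computation actually shows that $\mu(\{x^*\})-\al g(x^*)m(\{x^*\})$ \emph{equals} the left scale-derivative of $\Va-\gi$ at $x^*$, which is more informative than the paper's pure contradiction, and it avoids relying on continuity of $\al g$ near $x^*$, which the paper's proof uses without stating it as a hypothesis.
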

\begin{proof}
 The first claim is already proved in the previous results. We only need to prove the validity of \eqref{eq:nuleq}. Assume it does not hold. Then 
\begin{equation*}
\al g(x^*)m(\{x^*\})<\mu(\{x^*\}) 
\end{equation*}
and 
\begin{equation*}
 \int_{\Ige{x^*}}\Ga(x^*,y) \al g(y) m(dy)<\int_{\Ige{x^*}}\Ga(x^*,y) \mu(dy)=g(x^*).
\end{equation*}
Since $\al g(x^*)>0$ and $\al g$ is a continuous function in $\Ige{x_1}$ we can find $x_2<x^*$ such that \eqref{eq:defx*mayor} and \autoref{hyp:algpositive} are fulfilled, this being in contradiction with \autoref{teo:verif2}.

\end{proof}

\autoref{teo:verif1} and \autoref{teo:verif2} give sufficient conditions for the problem \eqref{eq:osp} to be right-sided. \autoref{teo:recip} shows that these conditions are actually necessary. 
In order to know in a quick view if the problem is right-sided, the following result gives a simpler sufficient condition.

\begin{teo}
Consider a diffusion $X$ and a reward function $g$ such that the inversion formula, given in \eqref{eq:ginversion}, is fulfilled.
Suppose that there exists a root $c\in \I$ of the equation $\al g(x)=0$,
such that $\al g(x)<0$ if $x<c$ and $\al g(x)>0$ if $x>c$. Assume also that $g(x_1)>0$ for some $x_1\in \I$.
Then the optimal stopping problem \eqref{eq:osp} is right-sided and the optimal threshold is 
\begin{equation}
\label{eq:xstarsuficiente}
x^*=\inf\{x\colon b(x)\geq 0\} 
\end{equation}
where $b\colon \I \to \R$ is defined by
\begin{equation*}
b(x)=\int_{\Ile{x}} \psia(y) \al g(y) m(dy).
\end{equation*}
In fact, $b(x)< 0$, if $x<x^*$, and $b(x^*)> 0$ if $x>x^*$.
\end{teo}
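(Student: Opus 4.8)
The plan is to obtain the statement as a consequence of \autoref{teo:verif1} (or of \autoref{teo:verif2} in the exceptional case where the speed measure charges the threshold), applied to the point $x^*$ defined by \eqref{eq:xstarsuficiente}. Since $g$ itself satisfies the inversion formula \eqref{eq:ginversion}, the RRC of \autoref{cond:RRC} holds for every threshold with $\gi=g$, so that hypothesis needs no attention. The computational core is an identity for the derivative of $\gpsi$ with respect to the scale function $s$. Writing $\tilde b(x):=\int_{\Ig{x}}\phia(y)\al g(y)\,m(dy)$, and noting that \eqref{eq:ginversion} entails the absolute convergence of the integrals defining both $b$ and $\tilde b$, the explicit form \eqref{eq:Garepr} of $\Ga$ gives
\[
g(x)=\wa^{-1}\bigl(\phia(x)\,b(x)+\psia(x)\,\tilde b(x)\bigr),\qquad \gpsi(x)=\wa^{-1}\Bigl(\tfrac{\phia(x)}{\psia(x)}\,b(x)+\tilde b(x)\Bigr).
\]
Differentiating the second expression with respect to $s$, using $\frac{d}{ds}(\phia/\psia)=-\wa/\psia^2$ and $db=\psia\,\al g\,dm$, $d\tilde b=-\phia\,\al g\,dm$, the two contributions carrying $\al g\,dm$ cancel and one is left with
\[
\frac{d}{ds}\Bigl(\gpsi\Bigr)(x)=-\frac{b(x)}{\psia(x)^{2}}.
\]
Thus $\gpsi$ is non-decreasing exactly where $b\le 0$. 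I would also record here that $b$ is strictly decreasing on $\Il{c}$ and strictly increasing on $\Ig{c}$, since $\al g$ keeps a constant sign on each side of $c$ while $\psia>0$ and $m$ is positive on open intervals; in particular $b(c)<0$.

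Next I would check that the infimum \eqref{eq:xstarsuficiente} is finite and that $c<x^*<r$, and at the same time pin down the sign of $b$. From $b(c)<0$ and the monotonicity just noted, $\Ile{c}$ is disjoint from $\{x\colon b(x)\ge 0\}$, so $x^*\ge c$. If $b$ were negative on all of $\I$, the identity $\frac{d}{ds}(\gpsi)=-b/\psia^{2}$ would make $\gpsi$ strictly increasing on the whole interval, while $\limsup_{x\to r}\gpsi(x)\le 0$: indeed $\tilde b(x)\to 0$ as $x\to r$ (tail of a convergent integral), and $\phia(x)b(x)/\psia(x)$ tends to a non-positive limit because $\psia$ is increasing (it tends to $0$ if $\psia(r^-)=\infty$, and otherwise to $\phia(r^-)b(r^-)/\psia(r^-)<0$). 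This would force $g<0$ on $\I$, contradicting $g(x_1)>0$. Hence $\{x\colon b(x)\ge 0\}\ne\emptyset$, so $x^*<r$; by the right-continuity of $x\mapsto b(x)=\int_{\Ile{x}}\psia\,\al g\,m$ we get $b(x^*)\ge 0$, whereas $b(x)<0$ for $x<x^*$ by definition of the infimum, and $b(x)>0$ for $x>x^*$ since $b$ is strictly increasing on $\Ig{c}\supseteq(x^*,r)$. In particular $x^*>c$, and the ``in fact'' part of the statement is proved.

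It remains to verify the hypotheses of the verification theorems at $x^*$. Evaluating the representation of $g$ at $x^*$ gives $g(x^*)=\wa^{-1}\phia(x^*)b(x^*)+\wa^{-1}\psia(x^*)\tilde b(x^*)$, so $x^*$ solves \eqref{eq:defx*teo} when $b(x^*)=0$, and satisfies the strict inequality \eqref{eq:defx*mayor} when $b(x^*)>0$ (which can occur only if $m(\{x^*\})>0$). Hypothesis \autoref{hyp:algpositive} holds because $(x^*,r)\subseteq\Ig{c}$, where $\al g>0$. Hypothesis \autoref{hyp:mayorizq} says $\gpsi(x)\le\gpsi(x^*)$ for $x<x^*$, which follows from $\frac{d}{ds}(\gpsi)=-b/\psia^{2}$ together with $b<0$ on $\Il{x^*}$. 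Then \autoref{teo:verif1} (if $b(x^*)=0$) or \autoref{teo:verif2} (if $b(x^*)>0$) gives that the problem is right-sided with optimal threshold $x^*$ and supplies the stated form of $\Va$.

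The main obstacle is the existence step of the second paragraph: the only hypothesis that prevents the degenerate conclusion $\Va\equiv 0$ is $g(x_1)>0$ at a single point, so one is forced to transport that information to the right endpoint via the monotonicity of $\gpsi$, and this requires being slightly careful about the boundary behaviour of $\psia$ (whether $\psia(r^-)$ is finite). The cancellation yielding $\frac{d}{ds}(\gpsi)=-b/\psia^{2}$ is routine once one differentiates comfortably with respect to the (possibly non-smooth) scale and speed, but it is precisely what makes both \autoref{hyp:mayorizq} and the unimodality of $b$ transparent.
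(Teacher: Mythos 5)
Your proof is correct, and while it keeps the same overall skeleton as the paper (define $x^*$ through $b$, show $\{b\ge 0\}\neq\emptyset$, locate $x^*>c$, check that $x^*$ satisfies \eqref{eq:defx*teo} or \eqref{eq:defx*mayor} together with \autoref{hyp:algpositive} and \autoref{hyp:mayorizq}, and invoke \autoref{teo:verif1} or \autoref{teo:verif2}), the two nontrivial steps are handled by a genuinely different device. The paper proves nonemptiness of $\{b\ge0\}$ directly, by comparing $k\psia$ with $\Ga(x_1,\cdot)$ for $k=\Ga(x_1,c)/\psia(c)$ so that the comparison has the right sign on each side of $c$, which yields $\int_\I\psia(y)\al g(y)m(dy)\ge g(x_1)/k>0$; and it proves \autoref{hyp:mayorizq} by constructing a signed measure $\sigma_\ell$ supported in $\Ile{x^*}$ with $\int\psia\,\sigma_\ell=0$ and again comparing $\Ga(x,\cdot)$ with a multiple of $\psia$. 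You replace both by the single identity $\gpsi(z)-\gpsi(y)=-\int_{(y,z]}b(t)\psia(t)^{-2}\,ds(t)$, which I checked is exact (the Stieltjes integration by parts has no jump correction because $\phia/\psia$ is continuous with non-atomic $s$-derivative $-\wa/\psia^2$, and the atomic jumps of $b$ and $\tilde b$ cancel). Your route buys a transparent proof of \autoref{hyp:mayorizq} (monotonicity of $\gpsi$ where $b<0$) and makes visible the link with the alternative threshold equation \eqref{eq:alternativex*}, at the cost of an existence argument by contradiction that needs the boundary analysis of $\gpsi$ at $r$ (boundedness of $b$, which follows from its unimodality, and the two cases $\psia(r^-)<\infty$ or $=\infty$); the paper's direct integral inequality avoids any boundary discussion and gives the slightly stronger conclusion $b(r^-)>0$ at once. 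Two cosmetic remarks: in the case $\psia(r^-)<\infty$ the limit of $\phia b/\psia$ need only be $\le 0$ (it is $0$ if $b(r^-)=0$), which is all your argument uses; and your observation that $b(x^*)>0$ forces $m(\{x^*\})>0$ is correct and consistent with \autoref{prop:repmeasure}.
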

\begin{proof}
The idea of the proof is to apply either \autoref{teo:verif1} or \autoref{teo:verif2}, to $x^*$ defined in \eqref{eq:xstarsuficiente}. We first must prove that $x^*$ is well defined.
We start by observing that the set $\{x\colon b(x)\geq 0\}$ is not empty, or what is the same 
\bd \int_{\I}\psia(y)\al g(y)m(dy)>0. \ed
To verify this, one can observe that, taking $k=\frac{\Ga(x_1,c)}{\psia(c)}>0$, we get $k\psia(y)\leq \Ga(x_1,y)$ for $y\leq c$, where $\al g(y)\leq 0$, and $k\psia(y)\geq \Ga(x_1,y)$ for $y\geq x_1$, where $\al g(y)\geq 0$. Then
\bd
 \int_{\I}k\psia(y)\al g(y)m(dy) \geq \int_{\I} \Ga(x_1,y) \al(y) m(dy) = g(x_1) >0.
\ed
By the assumptions on $\al g$ and the fact that $m$ is strictly positive in any open set, we conclude that $b$ is decreasing in $\Il{c}$ and increasing in $\Ig{c}$. Moreover, $b(x)<0$ if $x \leq c$. Since $b$ is right continuous and increasing in $\Ig{c}$ we have that as $\{x\colon b(x)\geq 0\}$ is not empty, it is $\Ig{x^*}$ with $x^*> c$. Observe that, by the inversion formula \eqref{eq:ginversion} and the explicit representation \eqref{eq:Garepr} of $\Ga$, we obtain that
\begin{equation*}
 g(x)=\wa^{-1}\phia(x)b(x)+\int_{\Ig{x}}\Ga(x,y)\al g(y)m(dy).
\end{equation*}
Since $b(x^*)\geq 0$ we get
\begin{equation*}
 g(x^*)\leq \int_{\Ig{x^*}}\Ga(x^*,y)\al g(y)m(dy),
\end{equation*}
and either equation \eqref{eq:defx*teo} or equation \eqref{eq:defx*mayor} is fulfilled. Since $x^*\geq c$, it follows that  $\al g(y)>0$ for $x>x^*$. 
It just remains to be proven that \autoref{hyp:mayorizq} holds. By the definition of $x^*$, there exists a signed measure $\sigma_{\ell}(dy)$ such that: 
\begin{itemize}
\item its support is contained in $\Ile{x^*}$; 
\item $\sigma_{\ell}(dy)=\al g(y) m(dy)$ for $y<x^*$; and 
\item $\int_{\Ile{x^*}}\psia(y)\sigma_{\ell}(dy)=0.$
\end{itemize}
Furthermore, $\sigma_r(dy)=\al g(y)m(dy)-\sigma_{\ell}(dy)$ is a positive measure supported in $\Ige{x^*}$. Using the inversion formula for $g$ and the explicit representation of $\Ga$, given in \eqref{eq:Garepr}, we get for $x<x^*$, that
\begin{equation*}
 g(x)-\frac{\psia(x)}{\psia(x^*)}g(x^*)=\int_{\Ile{x^*}} \Ga(x,y) \sigma_{\ell}(dy)\leq \frac{\Ga(x,c)}{\psia(c)} \int_{\Ile{x^*}}\psia(y)  \sigma_{\ell}(dy) =0,
\end{equation*}
where the inequality follows from the following facts:
if $y<c$ then $\sigma_{\ell}(dy)\leq 0$ and
\begin{equation*}
\psia(y) \frac{\Ga(x,c)}{\psia(c)} \leq \Ga(x,y),
\end{equation*}
while if $y>c$ then $\sigma_{\ell}(dy)\geq 0$
\begin{equation*}
\psia(y) \frac{\Ga(x,c)}{\psia(c)} \geq \Ga(x,y).
\end{equation*}
We can now apply either \autoref{teo:verif1} or \autoref{teo:verif2} to complete the proof.

\end{proof}

\subsection{An alternative equation to find the optimal threshold}
\label{sec:easyequation}
Equation \eqref{eq:defx*teo}, which is key in our approach to solve 
the optimal stopping problem, can be difficult to solve, as it involves the computation of a usually difficult integral. 
In this section we prove that, in order to find the optimal threshold $x^*$, one can solve the 
alternative, and typically much easier, equation
\be
\label{eq:alternativex*}
 \frac{\partial^+{g}}{\partial{\psia}}(x)=\frac{g(x)}{\psia(x)}.
\ee
which in the smooth case, if $g$ and $\psia$ have derivatives, becomes
\be
\label{eq:easyeq}
 \frac{g'(x)}{\psia'(x)}=\frac{g(x)}{\psia(x)}.
\ee

\begin{lem} \label{lem:equiveq}
Assume $X$ and $g$ satisfy the RRC (\autoref{cond:RRC}) for $x^*$. Then, $x^*$ is a root of \eqref{eq:defx*teo} if and only if it is a root of 
\eqref{eq:alternativex*}.
In other words, in the region where $g$ is regular the solutions of \eqref{eq:defx*teo}, coincide with the solutions of \eqref{eq:alternativex*}.
\end{lem}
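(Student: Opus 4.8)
The plan is to reduce each of \eqref{eq:defx*teo} and \eqref{eq:alternativex*}, read at $x^*$, to one and the same scalar identity. Write $\gi$ for the extension of $\g$ furnished by \autoref{cond:RRC} and set
\[
B(x):=\int_{\Ile{x}}\psia(y)\,\al\gi(y)\,m(dy),\qquad D(x):=\int_{\Ig{x}}\phia(y)\,\al\gi(y)\,m(dy).
\]
I claim both equations at $x^*$ are equivalent to $B(x^*)=0$. The passage from $\g$ to $\gi$ is harmless, since $\g(x^*)=\gi(x^*)$, the one-sided quantities $\tfrac{\partial^+\g}{\partial s}(x^*)$ and $\tfrac{\partial^+\g}{\partial\psia}(x^*)$ only involve $\g$ on $\Ige{x^*}$ where $\g=\gi$, and $\al\g=\al\gi$ on $\Ig{x^*}$ because the differential operator is local.

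\textbf{Step 1 (the integral equation).} First I would apply the inversion formula \eqref{eq:ginversion} to $\gi$ and split the Green kernel at the running argument via \eqref{eq:Garepr}, obtaining $\gi(x)=\wa^{-1}\phia(x)B(x)+\wa^{-1}\psia(x)D(x)$. By the remarks above the right-hand side of \eqref{eq:defx*teo} at $x^*$ equals $\wa^{-1}\psia(x^*)D(x^*)$, while its left-hand side is $\gi(x^*)$; hence \eqref{eq:defx*teo} at $x^*$ is equivalent to $\wa^{-1}\phia(x^*)B(x^*)=0$, that is, to $B(x^*)=0$ (recall $\phia>0$ and $\wa>0$).

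\textbf{Step 2 (the differential equation).} Using $0<\tfrac{\partial^+\psia}{\partial s}<\infty$ and the chain rule $\tfrac{\partial^+\gi}{\partial\psia}=\tfrac{\partial^+\gi/\partial s}{\partial^+\psia/\partial s}$, equation \eqref{eq:alternativex*} at $x^*$ is equivalent to the vanishing at $x^*$ of
\[
W(x):=\frac{\partial^+\gi}{\partial s}(x)\,\psia(x)-\gi(x)\,\frac{\partial^+\psia}{\partial s}(x).
\]
Next I would differentiate the representation of $\gi$ with respect to $s$. The one genuinely analytic point is that the variation of $B$ and $D$ contributes nothing, because
\[
\int_{(x,x+h]}\bigl(\phia(x)\psia(y)-\psia(x)\phia(y)\bigr)\,\al\gi(y)\,m(dy)=o\bigl(s(x+h)-s(x)\bigr)\qquad(h\to0^+).
\]
Indeed the bracket vanishes at $y=x$ and has scale derivative $\wa$ there, hence is $O\bigl(s(y)-s(x)\bigr)$ uniformly on $(x,x+h]$, while $\al\gi$ is $m$-integrable near $x$ by the integrability built into the RRC, so $\int_{(x,x+h]}|\al\gi|\,dm\to0$. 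It follows that $\tfrac{\partial^+\gi}{\partial s}$ exists and equals $\wa^{-1}\bigl(\tfrac{\partial^+\phia}{\partial s}B+\tfrac{\partial^+\psia}{\partial s}D\bigr)$; substituting this and $\gi=\wa^{-1}(\phia B+\psia D)$ into $W$ and invoking the Wronskian identity $\tfrac{\partial^+\psia}{\partial s}\phia-\psia\tfrac{\partial^+\phia}{\partial s}=\wa$ (the definition of $\wa$), the $D$-terms cancel and $W(x)=-B(x)$. Hence \eqref{eq:alternativex*} at $x^*$ holds iff $B(x^*)=0$, which together with Step 1 proves the lemma.

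\textbf{Where the difficulty lies.} Essentially all of the work sits in the displayed estimate of Step 2; the rest is bookkeeping with \eqref{eq:Garepr} and the constancy of the Wronskian. I expect the fussiest point to be the case in which the speed measure $m$ has an atom at $x^*$: then $B$ and $D$ jump there, and one must observe that moving that atom from $B$ to $D$ leaves $\gi=\wa^{-1}(\phia B+\psia D)$ unchanged --- because $\Ga(x,x)=\wa^{-1}\psia(x)\phia(x)$ --- and does not affect the above estimate; this is precisely why \eqref{eq:defx*teo} (with the half-line $\Ig{x}$) and \eqref{eq:alternativex*} end up singling out the same threshold.
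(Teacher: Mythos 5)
Your proof is correct, and while the single genuinely analytic ingredient coincides with the paper's, your algebraic organization is genuinely different and arguably cleaner. The paper introduces the auxiliary function $\Va(x)=\int_{\Ig{x^*}}\Ga(x,y)\al g(y)m(dy)+k\,\Ga(x,x^*)$, identifies it with $g$ on $\Ige{x^*}$ via \autoref{lem:Vequalsg}, computes the right $\psia$-derivative of $\Va$ at $x^*$ by splitting the increment into three pieces, and reduces \eqref{eq:alternativex*} to $k\wa^{-1}\psia(x^*)\tfrac{\partial^+\phia}{\partial\psia}(x^*)=k\wa^{-1}\phia(x^*)$, i.e.\ to $k=0$; since $k=B(x^*)/\psia(x^*)$, your reduction to $B(x^*)=0$ is the same condition. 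What you do differently is to bypass $\Va$ and \autoref{lem:Vequalsg} entirely: the decomposition $\gi=\wa^{-1}(\phia B+\psia D)$ obtained by splitting \eqref{eq:Garepr} at the running argument, combined with the Wronskian identity, makes the cancellation of the $D$-terms transparent and exhibits the smooth-fit defect $W$ as exactly $-B$; this buys a symmetric treatment of the two equations and makes Step 1 a one-line consequence of the same decomposition. The step carrying all the weight --- your $o\bigl(s(x+h)-s(x)\bigr)$ estimate for $\int_{(x,x+h]}\bigl(\phia(x)\psia(y)-\psia(x)\phia(y)\bigr)\al\gi(y)\,m(dy)$ --- is the same estimate as the paper's \eqref{eq:integral0}, where the increment is measured in $\psia$ rather than in $s$ (equivalent, since $0<\partial^+\psia/\partial s<\infty$), and your justification parallels the paper's squeeze of the integrand between $b(x)$ and $\phia(x)$. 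One small point of looseness: the bracket having scale derivative $\wa$ at the single point $y=x$ does not by itself give the uniform $O\bigl(s(y)-s(x)\bigr)$ bound on $(x,x+h]$; you should invoke that $\phia$ and $\psia$ are locally Lipschitz with respect to $s$, which follows from \eqref{eq:diferentialop} applied to these functions. Your closing remark on atoms of $m$ at $x^*$ correctly identifies why the half-open conventions in $\Ig{x^*}$ and $\Ga(x^*,x^*)=\wa^{-1}\psia(x^*)\phia(x^*)$ make the bookkeeping consistent.
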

\begin{proof}% ($\Rightarrow$)
Consider $\Va$ such that
\begin{equation*}
 \Va(x)=\int_{\Ig{x^*}} \Ga(x,y) \al g(y) m(dy)+ k \Ga(x,x^*),
\end{equation*}
where $k$ is such that $\Va(x^*)=g(x^*)$. From \autoref{lem:Vequalsg} it follows that $\Va(x)$ coincides with $g(x)$ for all $x\in \Ige{x^*}$, so we can compute the right derivative of $g$ --with respect to $\psia$-- at $x^*$ by
\begin{equation*}
\frac{\partial^+{g}}{\partial{\psia}}(x) = \lim_{x \to {x^*}^+} \frac{\Va(x)-\Va(x^*)}{\psia(x)-\psia(x^*)}.
\end{equation*}
For $x>x^*$, using $\nu(dy)$ as an abbreviation for $\al g(y) m(dy)$, we obtain that:
\begin{equation*}
\Va(x)=\phia(x) \wa^{-1}\int_{(x^*,x)}\psia(y)\nu(dy)+\psia(x) \wa^{-1}\int_{\Ige{x}}\phia(y)\nu(dy)+k\psia(x^*)\phia(x).
\end{equation*}
Computing the difference between $\Va(x)$ and $\Va(x^*)$ we obtain that
\begin{align*} \Va(x)-\Va(x^*)& =\wa^{-1} \left(\psia(x)-\psia(x^*)\right) \int_{\Ige{x}}\phia(y)\nu(dy)\\
&\qquad +\wa^{-1}\int_{(x^*,x)} 
\left(\phia(x) \psia(y)-\psia(x^*)\phia(y)\right) \nu(dy)\\
&\qquad + k \wa^{-1} \psia(x^*)\left(\phia(x)-\phia(x^*)\right).
\end{align*}
%Let us compute the right derivative of $\Va$ respect to $\psia$:
Dividing by the increment of $\psia$ and taking the limit we get
\begin{align*}
 \lim_{x \to {x^*}^+} \frac{\Va(x)-\Va(x^*)}{\psia(x)-\psia(x^*)} &= \wa^{-1} \int_{\Ig{x^*}}\phia(y)\nu(dy) \\
& \qquad + \wa^{-1} \lim_{x \to {x^*}^+} \frac{\int_{(x^*,x)}\phia(x)\psia(y)-\psia(x^*)\phia(y)\nu(dy)}{\psia(x)-\psia(x^*)}\\
& \qquad + k \wa^{-1} \psia(x^*)\frac{\partial^+\phia}{\partial\psia}(x^*).
\end{align*}
Assume for a moment that the middle term on the right-hand side vanishes; hence the limit 
exists, and
\begin{equation}
\label{eq:dgoverdpsi}
\frac{\partial g^+}{\partial \psia}(x^*)=\wa^{-1} \int_{\Ig{x^*}}\phia(y)\nu(dy)+k \wa^{-1} \psia(x^*)\frac{\partial^+\phia}{\partial\psia}(x^*).
\end{equation}
On the other hand, concerning $\frac{g(x^*)}{\psia(x^*)}$, we have:
\begin{equation}
\label{eq:goverpsi}
\frac{g(x^*)}{\psia(x^*)}=\frac{\Va(x^*)}{\psia(x^*)}=\wa^{-1} \int_{\Ig{x^*}}\phia(y)\nu(dy)+k \wa^{-1} \phia(x^*).
\end{equation}
Comparing the two previous equations, we obtain that $x^*$ satisfy \eqref{eq:alternativex*} if and only if 
\begin{equation*}
 k \wa^{-1} \psia(x^*)\frac{\partial^+\phia}{\partial\psia}(x^*)= k \wa^{-1} \phia(x^*).
\end{equation*}
Since $\phia(x^*)>0$ and $\frac{\partial^+\phia}{\partial\psia}(x^*)<0$ the previous equality holds if and only if $k=0$, and this is equivalent to \eqref{eq:defx*teo}.
This means that we only need to verify
\begin{equation}\label{eq:integral0}
\lim_{x \to {x^*}^+} \int_{(x^*,x)}\frac{\phia(x)\psia(y)-\psia(x^*)\phia(y)}{\psia(x)-\psia(x^*)}\nu(dy)=0,
\end{equation}
to finalize the proof.
Denoting by $f(y)$ the numerator of the integrand in \eqref{eq:integral0}, observe that
\begin{equation*}
 f(y)=\phia(x)(\psia(y)-\psia(x^*))+\psia(x^*)(\phia(x)-\phia(y)).
\end{equation*}
Concerning the first term, we have
\begin{equation*}
0\leq \phia(x)(\psia(y)-\psia(x^*))\leq \phia(x)(\psia(x)-\psia(x^*)),
\end{equation*}
while for the second we have
\begin{equation*}
0\geq \psia(x^*)(\phia(x)-\phia(y)) \geq \psia(x^*)(\phia(x)-\phia(x^*)).
\end{equation*}
These two previous inequalities render
\begin{equation*}
\psia(x^*)(\phia(x)-\phia(x^*))\leq f(y) \leq \phia(x)(\psia(x)-\psia(x^*)).
\end{equation*}
Dividing by $\psia(x)-\psia(x^*)$ in the previous formula, we conclude that the integrand in \eqref{eq:integral0} is bounded from below by
\begin{equation*}
b(x)=\psia(x^*)\frac{\phia(x)-\phia(x^*)}{\psia(x)-\psia(x^*)} = \psia(x^*)\frac{\phia(x)-\phia(x^*)}{s(x)-s(x^*)} \frac{s(x)-s(x^*)}{\psia(x)-\psia(x^*)},
\end{equation*}
and bounded from above by $\phia(x).$ We obtain that the integral in \eqref{eq:integral0} satisfies
\begin{equation*}
 b(x)\nu(x^*,x)\leq  \int_{(x^*,x)}\frac{\phia(x)\psia(y)-\psia(x^*)\phia(y)}{\psia(x)-\psia(x^*)}\nu(dy) \leq \phia(x)\nu(x^*,x).
\end{equation*}
Taking the limits as $x \to {x^*}^+$ we obtain that $\phia(x)\to \phia(x^*)$, $\nu(x,x^*)\to 0$, and
\begin{equation*}
 \lim_{x\to {x^*}^+}b(x)=\psia(x^*)\frac{\partial \phia^+}{\partial s}(x^*)\left/ \frac{\partial \psia^+}{\partial s}(x^*)\right.;
\end{equation*}
hence
\begin{equation*}
 0 \leq  \lim_{x\to {x^*}^+}\int_{(x^*,x)}\frac{\phia(x)\psia(y)-\psia(x^*)\phia(y)}{\psia(x)-\psia(x^*)}\nu(dy) \leq 0,
\end{equation*}
and \eqref{eq:integral0} holds.

\end{proof}

\begin{remark}
 A useful consequence of the previous proof is that, under the RRC for $x^*$, the right derivative of $g$ with respect to $\psia$ exists.
\end{remark}

\begin{lem} \label{lem:equiveq2}
Assume that $X$ and $g$ satisfy the RRC for $x^*$. Then, equation \eqref{eq:defx*mayor} is fulfilled if and only if
\begin{equation}
\label{eq:alternativex*mayor}
 \frac{\partial^+{g}}{\partial{\psia}}(x^*)<\frac{g(x^*)}{\psia(x^*)}.
\end{equation}
\end{lem}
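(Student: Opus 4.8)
The plan is to piggy-back entirely on the algebra already assembled in the proof of \autoref{lem:equiveq}. Recall that there, for an arbitrary $x^*$ at which $X$ and $g$ satisfy the RRC (\autoref{cond:RRC}), one introduces the function $\Va$ of \autoref{lem:Vequalsg} with the constant $k$ chosen so that $\Va(x^*)=g(x^*)$, uses \autoref{lem:Vequalsg} to know that $\Va\equiv g$ on $\Ige{x^*}$, and then, after the limit computation that also establishes the existence of the right derivative $\partial^+ g/\partial\psia(x^*)$, arrives at the two identities \eqref{eq:dgoverdpsi} and \eqref{eq:goverpsi}:
\begin{align*}
\frac{\partial^+ g}{\partial \psia}(x^*) &= \wa^{-1}\int_{\Ig{x^*}}\phia(y)\,\al g(y)\,m(dy) + k\,\wa^{-1}\,\psia(x^*)\,\frac{\partial^+\phia}{\partial\psia}(x^*),\\
\frac{g(x^*)}{\psia(x^*)} &= \wa^{-1}\int_{\Ig{x^*}}\phia(y)\,\al g(y)\,m(dy) + k\,\wa^{-1}\,\phia(x^*).
\end{align*}
Nothing in this derivation uses that $x^*$ is a root of \eqref{eq:defx*teo}, so both identities, and the existence of $\partial^+ g/\partial\psia(x^*)$, are available here verbatim for any RRC-threshold.

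Subtracting the second identity from the first cancels the common integral and gives
\begin{equation*}
\frac{\partial^+ g}{\partial \psia}(x^*) - \frac{g(x^*)}{\psia(x^*)} = k\,\wa^{-1}\Bigl(\psia(x^*)\,\frac{\partial^+\phia}{\partial\psia}(x^*) - \phia(x^*)\Bigr).
\end{equation*}
Now I track signs: $\wa^{-1}>0$, $\phia(x^*)>0$, and $\frac{\partial^+\phia}{\partial\psia}(x^*)<0$, so the parenthesis is strictly negative; hence $\partial^+ g/\partial\psia(x^*)-g(x^*)/\psia(x^*)$ has the sign opposite to that of $k$. In particular \eqref{eq:alternativex*mayor} holds if and only if $k>0$.

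It remains to observe that $k>0$ is precisely \eqref{eq:defx*mayor}. Using $\Ga(x^*,x^*)=\wa^{-1}\psia(x^*)\phia(x^*)>0$ in the definition
\begin{equation*}
k=\frac{g(x^*)-\int_{\Ig{x^*}}\Ga(x^*,y)\,\al g(y)\,m(dy)}{\Ga(x^*,x^*)},
\end{equation*}
and rewriting the integral in the numerator via \eqref{eq:Garepr} as $\wa^{-1}\psia(x^*)\int_{\Ig{x^*}}\phia(y)\,\al g(y)\,m(dy)$, one sees that $k>0$ is equivalent to \eqref{eq:defx*mayor}. Chaining the equivalences, \eqref{eq:defx*mayor} $\Leftrightarrow k>0 \Leftrightarrow$ \eqref{eq:alternativex*mayor}, which is the claim.

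I do not expect any real obstacle: the whole argument is a sign computation on top of the identities from the proof of \autoref{lem:equiveq}. The only thing to be careful about is to invoke those identities in the form valid for a general RRC-threshold rather than only for a root of \eqref{eq:defx*teo} — but the proof of \autoref{lem:equiveq} (and the remark following it) already supplies exactly that.
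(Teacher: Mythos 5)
Your proof is correct and follows essentially the same route as the paper's: define $\Va$ as in \autoref{lem:Vequalsg}, invoke the identities \eqref{eq:dgoverdpsi} and \eqref{eq:goverpsi} (which, as you rightly note, require only the RRC and not that $x^*$ solve \eqref{eq:defx*teo}), and reduce everything to the sign of $k$. The extra care you take in spelling out the sign computation and the equivalence $k>0 \Leftrightarrow$ \eqref{eq:defx*mayor} is exactly what the paper's terser proof leaves implicit.
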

\begin{proof}
Define $\Va$ as in the proof of the previous lemma. By equations \eqref{eq:dgoverdpsi} and \eqref{eq:goverpsi} we conclude that \eqref{eq:alternativex*mayor} holds if and only if
\begin{equation*}
 k \wa^{-1} \psia(x^*)\frac{\partial^+\phia}{\partial\psia}(x^*)< k \wa^{-1} \phia(x^*),
\end{equation*}
if and only if $k>0$, if and only if \eqref{eq:defx*mayor} holds.

\end{proof}

\begin{remark}
Applying \autoref{lem:equiveq} and \autoref{lem:equiveq2}, we obtain results analogous to \autoref{teo:verif1}, \autoref{teo:verif2} and \autoref{teo:recip}, by substituting equations \eqref{eq:defx*teo} and \eqref{eq:defx*mayor} by \eqref{eq:alternativex*} and \eqref{eq:alternativex*mayor} respectively.
\end{remark}

%We use the notation $\ga$ to refer to $\frac{g}{\psia}$

\begin{teo}
 Let $x^*$ be the argument of the maximum of $\gpsi$. Assume that the RRC is fulfilled for $x^*$, and $\al g(x)\geq 0$ for all $x>x^*$. Then, the optimal stopping problem \eqref{eq:osp} is right-sided and $x^*$ is an optimal threshold.
\end{teo}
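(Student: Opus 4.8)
The plan is to reduce the statement to the verification theorems already proved, used in their ``alternative equation'' form. By the remark following \autoref{lem:equiveq2}, it suffices to show, for the given $x^*$, that hypotheses \autoref{hyp:algpositive} and \autoref{hyp:mayorizq} hold and that $x^*$ satisfies either \eqref{eq:alternativex*} or \eqref{eq:alternativex*mayor}; the conclusion that \eqref{eq:osp} is right-sided with optimal threshold $x^*$ then follows from the corresponding analogue of \autoref{teo:verif1} or \autoref{teo:verif2}. Hypothesis \autoref{hyp:algpositive} is part of the assumptions, so only the other two points need work, and both will come essentially for free from the maximality of $x^*$.

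First I would dispose of \autoref{hyp:mayorizq}. Since $x^*$ maximizes $\gpsi$ over $\I$, we have $g(x)/\psia(x)\le g(x^*)/\psia(x^*)$ for every $x\in\I$; multiplying by $\psia(x)>0$ gives $g(x)\le\frac{g(x^*)}{\psia(x^*)}\psia(x)$, which for $x<x^*$ is exactly \autoref{hyp:mayorizq}. Next, the threshold equation. By the remark following \autoref{lem:equiveq}, the RRC for $x^*$ guarantees that the right derivative $\frac{\partial^+ g}{\partial\psia}(x^*)$ exists. For $x>x^*$, the maximality of $x^*$ together with $\psia(x)>\psia(x^*)$ gives
\begin{equation*}
\frac{g(x)-g(x^*)}{\psia(x)-\psia(x^*)}\le\frac{g(x^*)}{\psia(x^*)},
\end{equation*}
and letting $x\downarrow x^*$ yields $\frac{\partial^+ g}{\partial\psia}(x^*)\le\frac{g(x^*)}{\psia(x^*)}$. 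Hence either equality holds, i.e. $x^*$ is a root of \eqref{eq:alternativex*}, or the strict inequality \eqref{eq:alternativex*mayor} holds. In the first case I invoke the \autoref{teo:verif1} analogue, in the second the \autoref{teo:verif2} analogue (with \autoref{hyp:algpositive} and \autoref{hyp:mayorizq} now verified), and in either case the optimal stopping problem \eqref{eq:osp} is right-sided with optimal threshold $x^*$.

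I do not expect a serious obstacle here: the only substantive inputs are the existence of the right derivative with respect to $\psia$ at $x^*$ (the content of the remark after \autoref{lem:equiveq}, which rests on the RRC) and the reduction machinery of \autoref{lem:equiveq} and \autoref{lem:equiveq2}; the rest is the elementary observation that a maximum of $\gpsi$ forces the derivative inequality displayed above, split into the equality and strict-inequality cases. One minor caveat to state explicitly is that $x^*$ should not be the right endpoint of $\I$, so that points $x>x^*$ exist; if $\gpsi$ attains its maximum at $\sup\I$ the claim is either vacuous or degenerate and can be excluded at the outset.
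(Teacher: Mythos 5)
Your proposal is correct and follows essentially the same route as the paper: verify \autoref{hyp:mayorizq} directly from the maximality of $\gpsi$ at $x^*$, and establish $\frac{\partial^+ g}{\partial\psia}(x^*)\leq \gpsi(x^*)$ so that \autoref{lem:equiveq} and \autoref{lem:equiveq2} let you invoke the analogue of \autoref{teo:verif1} or \autoref{teo:verif2}. The only cosmetic difference is that you bound the difference quotient directly and pass to the limit, whereas the paper argues the same inequality by contradiction; your explicit caveat about $x^*$ not being the right endpoint of $\I$ is a reasonable addition.
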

\begin{proof}
 The idea is to prove that either \autoref{teo:verif1} or \autoref{teo:verif2} are applicable. 
 The validity of \autoref{hyp:mayorizq} is a direct consequence of the definition of $x^*$. 
 We need to verify that $x^*$ satisfies \eqref{eq:defx*teo} or \eqref{eq:defx*mayor}, or, what is equivalent, that
\begin{equation*}
 g(x^*)\geq \wa^{-1}\psia(x^*)\int_{\Ig{x^*}}\phia(y)\al g(y)m(dy).
\end{equation*}
By \autoref{lem:equiveq} and \autoref{lem:equiveq2} we get that this is equivalent to the condition
\begin{equation*}
 \frac{\partial^+g}{\partial \psia}(x^*)\leq \gpsi(x^*).
\end{equation*}
Suppose that the previous condition does not hold, i.e.
\begin{equation*}
 \lim_{x\to x^{*+}}\frac{g(x)-g(x^*)}{\psia(x)-\psia(x^*)}>\frac{g(x^*)}{\psia(x^*)},
\end{equation*}
and consider $x_2>x^*$ such that 
\begin{equation*}
 \frac{g(x_2)-g(x^*)}{\psia(x_2)-\psia(x^*)}>\frac{g(x^*)}{\psia(x^*)};
\end{equation*}
doing computations we conclude that $\gpsi(x_2)>\gpsi(x^*)$, which contradicts the hypothesis on $x^*$. We conclude, by the application of \autoref{teo:verif1} if \eqref{eq:alternativex*}, and by the application of \autoref{teo:verif2} if \eqref{eq:alternativex*mayor}, that the problem is right-sided with threshold $x^*$.

\end{proof}

%In the following theorem we give a converse result of the previous one.
The following is a converse of the previous result:
\begin{teo}
Consider a diffusion $X$ and a reward function $g$, such that the optimal stopping problem \eqref{eq:osp} is right-sided with optimal threshold $x^*$. Then, $\gpsi$ takes its maximum value at $x^*$. Furthermore, if the RRC is fulfilled, then $\al g(x) \geq 0$ for all $x > x^*$.
\end{teo}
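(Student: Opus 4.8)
The plan is to obtain both assertions from facts already available in this section, so no new machinery is needed. Since the problem is right-sided with threshold $x^*$, the value function is the one displayed in \eqref{eq:VaCall},
\begin{equation*}
\Va(x)=
\begin{cases}
\frac{\psia(x)}{\psia(x^*)}g(x^*), & x<x^*,\\
g(x), & x\geq x^*,
\end{cases}
\end{equation*}
and, by Dynkin's characterization, $\Va$ dominates $g$ on all of $\I$. From these two facts alone I would deduce that $\gpsi$ attains its maximum at $x^*$; the statement about $\al g$ will then be read off from \autoref{teo:recip}.

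First I would treat the points $x<x^*$: combining $\Va(x)\geq g(x)$ with the explicit value $\Va(x)=\frac{\psia(x)}{\psia(x^*)}g(x^*)$ and dividing by $\psia(x)>0$ gives $\gpsi(x)\leq \gpsi(x^*)$. Next, for $x>x^*$, I would use the hitting time $\hit{x}$ of level $x$ as a (generally sub-optimal) competitor in the supremum defining $\Va(x^*)$. Because $X$ has continuous paths and $x^*<x$, one has $X_{\hit{x}}=x$ on $\{\hit{x}<\infty\}$, while \eqref{eq:hitting} yields $\Ex{x^*}{\ea{\hit{x}}}=\psia(x^*)/\psia(x)$; hence
\begin{equation*}
g(x^*)=\Va(x^*)\;\geq\;\Ex{x^*}{\ea{\hit{x}}g(X_{\hit{x}})}=\frac{\psia(x^*)}{\psia(x)}\,g(x),
\end{equation*}
so that again $\gpsi(x)\leq \gpsi(x^*)$. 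Since equality holds at $x^*$ itself, this shows that $x^*$ is a maximum point of $\gpsi$, which is the first claim.

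For the second claim I would simply invoke \autoref{teo:recip}: when the RRC holds for $x^*$, the hypotheses of that theorem are exactly met, and among its conclusions is the validity of \autoref{hyp:algpositive}, i.e.\ $\al g(x)\geq 0$ for every $x>x^*$. Thus the proof reduces to a citation. The one genuinely non-trivial ingredient is therefore hidden inside \autoref{teo:recip} and not in this statement: it is the corollary of Dynkin's formula relating the continuation region to the sign of the charge $\al g$, which forces $\al g\geq 0$ throughout the interior $\Ig{x^*}$ of the stopping region. Everything visible at this level --- the first claim and the reduction of the second --- is a direct computation, the only points requiring a word of care being the path-continuity used to locate $X_{\hit{x}}$ and the positivity of $\psia$ that legitimises the divisions.
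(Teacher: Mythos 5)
Your proposal is correct and follows essentially the same route as the paper: the case $x<x^*$ via \eqref{eq:VaCall} and $\Va\geq g$, the case $x>x^*$ via the hitting-time competitor and \eqref{eq:hitting} (the paper phrases this as a contradiction, you as a direct inequality — the same computation), and the sign of $\al g$ on $\Ig{x^*}$ from the fact that it lies in the stopping region. The appeal to \autoref{teo:recip} for the last point is legitimate and reduces to the same Dynkin-formula corollary the paper invokes directly.
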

\begin{proof}
We have already seen that $\al g\geq 0$ in the stopping region. Let us see that $\gpsi(x^*)$ is maximum. For $x<x^*$ we have, by equation \eqref{eq:VaCall}, that
\begin{equation*}
 \Va(x)=\psia(x)\gpsi(x^*)\geq g(x),
\end{equation*}
therefore, $\gpsi(x)\leq \gpsi(x^*)$. Suppose there exists $x_2>x^*$ such that $\gpsi(x_2)>\gpsi(x^*)$, we would conclude, by equation \eqref{eq:hitting}, that
\begin{equation*}
 \E_{x^*}\left(e^{-\desc \hit{x_2}}g(X_{\hit{x_2}})\right)=\frac{\psia(x^*)}{\psia(x_2)}g(x_2)>g(x^*),
\end{equation*}
what would be in contradiction with the fact of $x^*$ being the optimal threshold.

\end{proof}

\begin{remark}
 Consider the optimal stopping problem \eqref{eq:osp} but taking the supremum over the more restrictive class of hitting times of sets of the form $\Ige{z}$. Departing from $\ell$, for example, we have to solve
\begin{equation*}
 \sup_{z\in \I}\Ex{\ell}{ \ea{\hit{z}}g(X_{\hit{z}})}=\sup_{z\in \I} \frac{\psia(\ell)}{\psia(z)}g(z).
\end{equation*}
In conclusion we need to find the supremum of $\gpsi(z)$. Previous results give conditions under which the solution of the problem \eqref{eq:osp} is the solution of the problem in the restricted family of stopping times.
\end{remark}

\subsection{About the inversion formula}
\label{sec:aboutInv}
In order to apply the previous results we need to know whether a function $g$ satisfies equation \eqref{eq:ginversion}. We remember the equation
\begin{equation} \tag{\ref{eq:ginversion}} \label{eq:ginversionReciente}
g(x)=\int_{\I} \Ga(x,y) \al g(y) m(dy).
\end{equation}

As we have seen in the preliminaries, if $g\in \D_L$ we have that $\Ra \al g = g$ and, if equation \eqref{eq:Ra=Ga} is valid  for $\al g$, we then get \eqref{eq:ginversionReciente}. 
This is the content of the following result:
\begin{lem}
 If $g\in \D_L$ and for all $x\in \I$ we have that 
\begin{equation*}
 \int_{\I} \Ga(x,y) |\al g(y)| m(dy) < \infty;
\end{equation*}
then, for all $x \in \I$, equation \eqref{eq:ginversionReciente} holds.
%\begin{equation*}
%g(x)=\int_{\I} \Ga(x,y) \al g(y) m(dy);
%\end{equation*}
\end{lem}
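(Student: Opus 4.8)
The plan is to derive the identity as an immediate consequence of two facts already recorded in the preliminaries: that $\Ra$ and $\al$ are inverse operators on $\D_L$, and that $\Ra f$ admits the Green-function representation \eqref{eq:Ra=Ga} whenever $\int_\I \Ga(x,y)|f(y)|\,m(dy)<\infty$.

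First I would note that, since $g\in\D_L$, the function $\al g=(\desc-L)g$ belongs to $\CC_b(\I)$: indeed $Lg\in\CC_b(\I)$ by the very definition of $\D_L$ and $g\in\CC_b(\I)$, so $\al g=\desc g-Lg\in\CC_b(\I)$. Hence $\Ra(\al g)$ is well defined through \eqref{eq:resolvent}, and the inverse-operator relation gives $\Ra(\al g)=g$. Next, the hypothesis
\begin{equation*}
\int_\I\Ga(x,y)\,|\al g(y)|\,m(dy)<\infty\qquad(x\in\I)
\end{equation*}
is precisely the integrability condition under which \eqref{eq:RafGa}--\eqref{eq:Ra=Ga} were established, so applying \eqref{eq:Ra=Ga} with $f=\al g$ yields
\begin{equation*}
\Ra(\al g)(x)=\int_\I\Ga(x,y)\,\al g(y)\,m(dy)\qquad(x\in\I).
\end{equation*}
Combining the two displays gives $g(x)=\int_\I\Ga(x,y)\,\al g(y)\,m(dy)$ for every $x\in\I$, which is \eqref{eq:ginversionReciente}. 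If one prefers to argue from scratch, the content of \eqref{eq:Ra=Ga} here is just the interchange, justified by Tonelli for the absolute values and then by Fubini, of the $dt$-integration in $\Ra(\al g)(x)=\int_0^\infty\ea{t}\Ex{x}{\al g(X_t)}\,dt$ with the $m(dy)$-integration coming from the transition density, $\Ex{x}{\al g(X_t)}=\int_\I\al g(y)p(t;x,y)\,m(dy)$, the finiteness needed being exactly the stated hypothesis.

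There is essentially no obstacle beyond bookkeeping. The only points deserving a word of care are that $\al g$ computed via the differential operator coincides with $(\desc-L)g$ for $g\in\D_L$ --- part of the construction recalled in \autoref{sec:one-dim-dif} --- and the routine measurability required for Fubini. One may also remark that the hypothesis is automatically satisfied whenever $\al g$ is bounded, since $\int_\I\Ga(x,y)\,m(dy)=\Ra\mathbf 1(x)\le 1/\desc$; the integrability condition bites only when $\al g$ grows near the boundary of $\I$.
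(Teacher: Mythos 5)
Your argument is correct and is essentially the paper's own: the lemma is presented there as an immediate consequence of the identity $\Ra\al g=g$ for $g\in\D_L$ together with the Green-function representation \eqref{eq:Ra=Ga} applied to $f=\al g$, which is exactly the two-step combination you carry out. Your closing remark that the integrability hypothesis is automatic when $\al g$ is bounded (hence always for $g\in\D_L$ as defined) is a correct and worthwhile observation not made in the paper.
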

The condition of the previous lemma is very restrictive in solving concrete problems, 
because typically, we have that $\lim_{x\to r^-}g(x)=\infty$ (in this case $r\notin\I$).
The following result extends the previous one to unbounded reward functions. 
It states, essentially, that if a function $g$ satisfies all the local conditions to belong to $\D_L$, and its only ``problem'' is its behaviour 
%that it is not well-behaved in 
at $r^-$ (typically $r^-=+\infty$), then it is sufficient to verify the condition $\gpsi(x)\to 0$  ($x\to r$), to ensure the validity of \eqref{eq:ginversionReciente}.

\begin{prop} \label{propInversion2}
 Suppose that $r\notin\I$ and $g\colon\I\to \R$ is such that: the differential operator is defined for all $x\in\I$;
 %($g$ not necessarily in $\D_L$) 
 and
\be \label{eq:alIntegrable}
 \int_{\I}\Ga(x,y) |\al g(y)| m(dy)<\infty.
\ee
Assume that, for each natural number $n$ that satisfies $r-\frac1n\in \I$, there exists a function $g_n \in \D_L$ such that $g_n(x)=g(x)$ for all $x\leq r-\frac1n$. If 
\be \label{eq:gOverPsi}
 \lim_{z\to r^-} \frac{g(z)}{\psia(z)}=0,
\ee
then \eqref{eq:ginversionReciente} holds.
\end{prop}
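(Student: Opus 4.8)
## Proof Proposal

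The plan is to approximate $g$ by the functions $g_n\in\D_L$ on the truncated intervals $\Ile{r-\frac1n}$, apply the already-established inversion formula \eqref{eq:ginversionReciente} to each $g_n$ (via the preceding lemma, once we check the integrability hypothesis for $g_n$), and then pass to the limit $n\to\infty$, using \eqref{eq:gOverPsi} to kill the boundary contribution near $r$. Fix $x\in\I$ and pick $N$ large enough that $x< r-\frac1N$ and $r-\frac1N\in\I$; all $n\geq N$ will be admissible. Since each $g_n\in\D_L$, we would like to write $g_n(x)=\int_\I \Ga(x,y)\,\al g_n(y)\,m(dy)$; but $g_n$ need not satisfy the global integrability condition \eqref{eq:alIntegrable}, so instead of applying the lemma directly I would use Dynkin's formula \eqref{eq:dynkinFormula} with the stopping time $\hit{z_n}$, $z_n:=r-\frac1n$, namely
\begin{equation*}
g_n(x)=\Ex{x}{\int_0^{\hit{z_n}}\ea{t}\al g_n(X_t)\,dt}+\Ex{x}{\ea{\hit{z_n}}g_n(X_{\hit{z_n}})}.
\end{equation*}
On the event $\{\hit{z_n}<\infty\}$ before time $\hit{z_n}$ the process stays in $\Il{z_n}$, where $g_n$ and $g$ agree and $\al g_n=\al g$ (the differential operator being local); and $g_n(X_{\hit{z_n}})=g(z_n)$ on $\{\hit{z_n}<\infty\}$. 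Using \eqref{eq:hitting}, $\Ex{x}{\ea{\hit{z_n}}}=\psia(x)/\psia(z_n)$, so
\begin{equation*}
g(x)=\Ex{x}{\int_0^{\hit{z_n}}\ea{t}\al g(X_t)\,dt}+\frac{\psia(x)}{\psia(z_n)}\,g(z_n).
\end{equation*}

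The second term tends to $0$ as $n\to\infty$ by hypothesis \eqref{eq:gOverPsi}, since $\psia(x)$ is fixed. For the first term I would rewrite the expectation of the integral over $[0,\hit{z_n}]$ as an integral against the Green kernel of the process killed on exiting $\Il{z_n}$; by \autoref{harmonic} (applied with $S=\Ige{z_n}$) this killed Green kernel is $\Ga(x,y)-\Ex{x}{\ea{\hit{z_n}}\Ga(X_{\hit{z_n}},y)}=\Ga(x,y)\ind{\{y<z_n\}}$ up to the harmonic correction on $\Ige{z_n}$, so that
\begin{equation*}
\Ex{x}{\int_0^{\hit{z_n}}\ea{t}\al g(X_t)\,dt}=\int_{\Il{z_n}}\Ga(x,y)\,\al g(y)\,m(dy)-\text{(a term supported away from }x\text{)},
\end{equation*}
and more precisely one gets $g(x)=\int_{\Il{z_n}}\Ga(x,y)\,\al g(y)\,m(dy)+\wa^{-1}\phia(x)\,b_n$ where $b_n=\int_{\Il{z_n}}\psia(y)\,\al g(y)\,m(dy)$ — the same bookkeeping as in the proof of the simpler sufficiency theorem earlier in the section. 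By \eqref{eq:alIntegrable} and dominated convergence, $\int_{\Il{z_n}}\Ga(x,y)\,\al g(y)\,m(dy)\to\int_{\I}\Ga(x,y)\,\al g(y)\,m(dy)$, and hence $\wa^{-1}\phia(x)\,b_n$ converges to a constant multiple of $\phia(x)$; combined with the vanishing of $\psia(x)g(z_n)/\psia(z_n)$, taking $n\to\infty$ yields $g(x)=\int_\I\Ga(x,y)\,\al g(y)\,m(dy)+c\,\phia(x)$ for some constant $c$ independent of $x$.

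It then remains to show $c=0$. Here I would again invoke \eqref{eq:gOverPsi}: dividing the identity $g(x)=\int_\I\Ga(x,y)\,\al g(y)\,m(dy)+c\,\phia(x)$ by $\psia(x)$ and letting $x\to r^-$, the left side tends to $0$, the term $c\,\phia(x)/\psia(x)\to 0$ since $\phia$ is bounded near $r$ (it is decreasing and positive) while $\psia(x)\to\infty$ (as $r\notin\I$), and the Green-kernel term, using \eqref{eq:Garepr}, is $\wa^{-1}\int_{\Ile{x}}\psia(y)\al g(y)\,m(dy)+\wa^{-1}\frac{\phia(x)}{\psia(x)}\int_{\Ig{x}}\psia(x)\cdots$ — one checks via \eqref{eq:alIntegrable} that $\frac{1}{\psia(x)}\int_\I \Ga(x,y)\,\al g(y)\,m(dy)$ has a finite limit $\wa^{-1}\int_\I\psia(y)\,\al g(y)\,m(dy)$. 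So the limit relation forces that limit to be $0$; but that is exactly the statement that plugging $x\to r$ into the identity is consistent only when we re-expand, and a cleaner route is: subtract the two representations $g(x)=\int\Ga(x,y)\al g\,m + c\phia(x)$ obtained from two different starting truncations — they must agree, pinning down $c$ — or simply note $b_n\to \wa c$ and independently $b_n = \int_{\Il{z_n}}\psia\,\al g\,m \to \int_\I \psia\,\al g\,m$, a finite number $B$ by \eqref{eq:alIntegrable}; then one shows $B=0$ using $g(z_n)/\psia(z_n)\to 0$ together with the explicit form $g(z_n)=\int_{\Il{z_n}}\Ga(z_n,y)\al g\,m + \wa^{-1}\phia(z_n)b_n$ evaluated at $x=z_n$.

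The main obstacle is precisely this last step — showing the harmonic coefficient vanishes — because it is the only place where the global growth hypothesis \eqref{eq:gOverPsi} does real work rather than the purely local regularity; the algebra of splitting the Green kernel at the moving point $z_n$ and tracking which pieces are harmonic must be done carefully, but it is exactly parallel to the Wronskian bookkeeping already carried out in the proof of the earlier sufficiency theorem, so I expect no conceptual surprises beyond keeping the limits straight.
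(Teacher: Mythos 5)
Your first step coincides with the paper's: apply Dynkin's formula \eqref{eq:dynkinFormula} to $g_n$ with the hitting time of $z_n=r-\tfrac1n$, use locality to replace $g_n$ by $g$, and kill the boundary term $\frac{\psia(x)}{\psia(z_n)}g(z_n)$ via \eqref{eq:gOverPsi}. After that, however, your argument has a genuine gap. You convert $\Ex{x}{\int_0^{\hit{z_n}}\ea{t}\al g(X_t)\,dt}$ into a truncated Green integral plus a ``harmonic correction'' and then must prove that the resulting coefficient $c$ vanishes --- a step you yourself flag as the main obstacle and never complete (the routes you sketch for it are either circular, since the identity $g(z_n)=\int_{\Il{z_n}}\Ga(z_n,y)\al g\,m+\wa^{-1}\phia(z_n)b_n$ is an instance of the very representation you are trying to establish, or left unexecuted). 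Moreover the correction term is misidentified: by \autoref{harmonic}, the Green function of the process killed at $z_n$ is $\Ga(x,y)-\frac{\psia(x)}{\psia(z_n)}\Ga(z_n,y)$ for $y<z_n$, so the correction equals $\wa^{-1}\frac{\psia(x)\phia(z_n)}{\psia(z_n)}\int_{\Il{z_n}}\psia(y)\al g(y)m(dy)$ --- proportional to $\psia(x)$, not to $\phia(x)$. This breaks your closing argument, which disposes of the constant by dividing by $\psia(x)$ and using $\phia(x)/\psia(x)\to 0$; a remainder proportional to $\psia(x)$ would survive that limit. (One can in fact show this correction tends to $0$, by dominating $\frac{\phia(z_n)}{\psia(z_n)}\psia(y)\ind{\{y<z_n\}}$ by $\phia(y)$ and invoking \eqref{eq:alIntegrable}, but you neither state nor prove this.)

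The paper avoids the entire decomposition. Since $r\notin\I$ and the paths are continuous, $\hit{z_n}\to\infty$ $\P_x$-a.s., so $\int_0^{\hit{z_n}}\ea{t}\al g(X_t)\,dt\to\int_0^\infty\ea{t}\al g(X_t)\,dt$ pathwise; this is dominated by $\int_0^\infty\ea{t}|\al g(X_t)|\,dt$, whose expectation equals $\int_{\I}\Ga(x,y)|\al g(y)|m(dy)<\infty$ by Fubini and \eqref{eq:alIntegrable}. Dominated convergence then sends the first term of Dynkin's formula directly to $\Ra\al g(x)=\int_{\I}\Ga(x,y)\al g(y)m(dy)$, and no harmonic remainder ever appears. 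If you replace your splitting of the Green kernel at the moving point $z_n$ by this one-line dominated-convergence argument, the proof closes.
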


\begin{proof}
By \eqref{eq:Ra=Ga} we get that $\int_{\I} \Ga(x,y) \al g(y) m(dy)=\Ra \al g(x)$. 
Consider the strictly increasing sequence $r_n:=r-\frac1{n-1}$.
%$(r_n)\subset \I$ such that $r_n \to r$ when $n \to \infty$; 
By the continuity of the paths we conclude that $\hit{r_n}\to \infty,\ (n\to\infty)$. 
Applying formula \eqref{eq:dynkinFormula} to $g_n$ and $\hit{r_n}$ we obtain, for $x<r_n$, that
\bd
g_n(x)=\Ex{x}{\int_0^{\hit{r_n}} \ea{t} \al g_n(X_t) dt}+ \Ex{x}{\ea{\hit{r_n}}g_n(r_n)}.
\ed
By using $g_n(x)=g(x)$ and $\al g(x)=\al g_n(x)$ for $x<r_{n+1}$ we conclude that
\be \label{eq:paraconvdominada}
g(x)=\Ex{x}{\int_0^{\hit{r_n}} \ea{t} \al g(X_t) dt}+ \Ex{x}{\ea{\hit{r_n}}g(r_n)}.
\ee
Taking the limit as $n\to \infty$, by  \eqref{eq:hitting} and \eqref{eq:gOverPsi} we obtain that:
\bd
 \Ex{x}{\ea{\hit{r_n}}}g(r_n) = \frac{\psia(x)}{\psia(r_n)}g(r_n)\to 0.
\ed
Let us verify we are can apply the Lebesgue dominated convergence theorem to compute the limit of the first term on the right-hand side of \eqref{eq:paraconvdominada}:
\begin{align*}
 \left|\int_0^{\hit{r_n}} \ea{t} \al g(X_t) dt\right|& \leq\int_0^{\hit{r_n}} \ea{t} |\al g(X_t)| dt \\
& \leq \int_0^\infty \ea{t} |\al g(X_t)| dt
\end{align*}
and by Fubini's Theorem, and hypothesis \eqref{eq:alIntegrable},
\begin{align*}
\Ex{x}{\int_0^\infty \ea{t} |\al g(X_t)|} &= \int_0^\infty \ea{t} \Ex{x}{|\al g(X_t)|} dt \\
&= \int_{\I} \Ga(x,y) |\al g(y)| m(dy)<\infty.
\end{align*}
Taking the limit into the expected it follows that
\bd
\Ex{x}{\int_0^{\hit{r_n}} \ea{t} \al g(X_t) dt} \to \int_0^\infty \Ex{x}{ \ea{t} \al g(X_t)} dt\quad (n\to \infty).
 % \int_0^{\hit{r_n}} \ea{t} |\al g(X_t)| dt \to \int_0^\infty \Ex{x}{ \ea{t} \al g(X_t)} dt\quad (n\to \infty). esto decía pero debe estar mal
\ed
We have obtained:
\begin{align*}
 g(x)&=\int_0^\infty \Ex{x}{ \ea{t} \al g(X_t)} dt
\\&= \int_{\I} \Ga(x,y) \al g(y) m(dy),
\end{align*}
thus completing the proof.

\end{proof}

The following result is analogous to \autoref{propInversion2}, considering the case in which the function $g$ does not belong to $\D_L$ due to its behaviour on the left endpoint of $\I$.

\begin{prop} \label{propInversion3}
 Suppose that $\ell \notin\I$ and $g\colon\I\to \R$ is such that: the differential operator is defined for all $x\in\I$;
 %($g$ not necessarily in $\D_L$) 
 and
\bd
 \int_{\I}\Ga(x,y) |\al g(y)| m(dy)<\infty.
\ed
Assume that, for each natural number $n$ that satisfies $\ell+\frac1n\in \I$, there exists a function $g_n \in \D_L$ such that $g_n(x)=g(x)$ for all $x\geq \ell+\frac1n$. If 
\bd
 \lim_{z\to \ell^+} \frac{g(z)}{\phia(z)}=0,
\ed
then \eqref{eq:ginversionReciente} holds.
\end{prop}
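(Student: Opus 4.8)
The plan is to follow the proof of \autoref{propInversion2} essentially verbatim, interchanging the roles of the two endpoints of $\I$ and of the fundamental solutions $\psia$ and $\phia$. By \eqref{eq:Ra=Ga} the right-hand side of \eqref{eq:ginversionReciente} equals $\Ra\al g(x)$, so it suffices to prove $g(x)=\Ra\al g(x)$ for every $x\in\I$. Introduce the strictly decreasing sequence $\ell_n:=\ell+\frac{1}{n-1}$, which converges to $\ell$ from above; since $\ell\notin\I$, continuity of the paths forces $\hit{\ell_n}\to\infty$ as $n\to\infty$ (the sequence $\hit{\ell_n}$ is non-decreasing in $n$, and if its limit were finite it would, by path-continuity, produce a path value equal to $\ell$). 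Note also that $\ell_n>\ell+\frac1n$, so as soon as $n$ is large enough that $\ell_n\in\I$ the approximating function $g_n$ coincides with $g$ on $\Ige{\ell_n}$, and, $\al$ being a local (differential) operator, $\al g_n=\al g$ there as well.

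Next, fix $x\in\I$ and take $n$ large enough that $\ell_n\in\I$ and $x>\ell_n$. Apply Dynkin's formula \eqref{eq:dynkinFormula} to $g_n\in\D_L$ and the stopping time $\hit{\ell_n}$; since $X_t>\ell_n$ for $t<\hit{\ell_n}$ by path-continuity and $X_{\hit{\ell_n}}=\ell_n$, we may replace $g_n$ by $g$ and $\al g_n$ by $\al g$ throughout, obtaining
\begin{equation*}
g(x)=\Ex{x}{\int_0^{\hit{\ell_n}}\ea{t}\al g(X_t)\,dt}+\Ex{x}{\ea{\hit{\ell_n}}g(\ell_n)}.
\end{equation*}
Now let $n\to\infty$. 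For the boundary term, \eqref{eq:hitting} gives $\Ex{x}{\ea{\hit{\ell_n}}}=\phia(x)/\phia(\ell_n)$ because $x\geq\ell_n$, whence
\begin{equation*}
\Ex{x}{\ea{\hit{\ell_n}}g(\ell_n)}=\phia(x)\,\frac{g(\ell_n)}{\phia(\ell_n)}\longrightarrow 0
\end{equation*}
by the hypothesis $\lim_{z\to\ell^+}g(z)/\phia(z)=0$.

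For the integral term I would invoke dominated convergence exactly as in \autoref{propInversion2}: we have the bound $\big|\int_0^{\hit{\ell_n}}\ea{t}\al g(X_t)\,dt\big|\leq\int_0^\infty\ea{t}|\al g(X_t)|\,dt$, and by Fubini's theorem together with the integrability assumption,
\begin{equation*}
\Ex{x}{\int_0^\infty\ea{t}|\al g(X_t)|\,dt}=\int_\I\Ga(x,y)|\al g(y)|\,m(dy)<\infty,
\end{equation*}
so the dominating variable is integrable; combining this with $\hit{\ell_n}\to\infty$ yields
\begin{equation*}
\Ex{x}{\int_0^{\hit{\ell_n}}\ea{t}\al g(X_t)\,dt}\longrightarrow\Ex{x}{\int_0^\infty\ea{t}\al g(X_t)\,dt}=\int_\I\Ga(x,y)\al g(y)\,m(dy).
\end{equation*}
Passing to the limit in the displayed identity for $g(x)$ and combining the two limits gives $g(x)=\int_\I\Ga(x,y)\al g(y)\,m(dy)$, which is \eqref{eq:ginversionReciente}.

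Because the argument is structurally identical to that of \autoref{propInversion2}, I do not expect a genuine obstacle. The only points needing a little care are the bookkeeping of $\ell_n$ against $\ell+\frac1n$ (so that $g_n\equiv g$ on the portion of $\I$ actually visited before time $\hit{\ell_n}$, and $g_n(\ell_n)=g(\ell_n)$), and the justification that $\hit{\ell_n}\to\infty$; both are settled by continuity of the paths and the inaccessibility of $\ell$.
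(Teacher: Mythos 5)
Your proof is correct and is exactly the argument the paper intends: \autoref{propInversion3} is stated without proof as the mirror image of \autoref{propInversion2}, and your adaptation (the decreasing sequence $\ell_n=\ell+\frac{1}{n-1}$, Dynkin's formula at $\hit{\ell_n}$, the ratio $\phia(x)/\phia(\ell_n)$ for the boundary term via \eqref{eq:hitting}, and dominated convergence with the dominating variable $\int_0^\infty \ea{t}|\al g(X_t)|\,dt$) reproduces that proof with the roles of the endpoints and of $\psia$, $\phia$ interchanged. The two bookkeeping points you single out --- that $\ell_n>\ell+\frac1n$ guarantees $g_n\equiv g$ and $\al g_n=\al g$ on the portion of $\I$ visited before $\hit{\ell_n}$, and that $\hit{\ell_n}\to\infty$ by path continuity together with $\ell\notin\I$ --- are handled correctly.
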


\section{On the Smooth fit principle}\label{section:sf}

The well-known \emph{smooth fit principle} \sfp \ states that under some conditions at the critical threshold $x^*$  the condition $V'(x^*)=g'(x^*)$ is satisfied. 
This principle was used for the first time by \cite{mikhalevish1958Ukranian} and constitutes a widely used method to find the limit between the stopping and the continuation region.
Many works studying the validity of this principle in different situation were developed. Some of them are: \cite{grigelionis1966stefan,brekkehigh,chernoff1968optimal,alili-kyp,dayanik2002thesis,dayanik,villeneuve,christensen2009note}.  
In \cite{salminen85}, the author proposes an alternative version of this principle, considering derivatives with respect to the scale function. 
We say that there is \emph{scale smooth fit} \ssfp \ when the value function has derivative at $x^*$ with respect to the scale function; note that if $g$ also has derivative with respect to the scale function, they coincide, as $g=\Va$ in $\Ige{x^*}$. 
%$$\frac{\partial V}{\partial s}(x^*)=\frac{\partial g}{\partial s}(x^*),$$
%where $\partial s$ stands for the derivative with respect to the scale function $s$.
This alternative smooth fit principle happens to be more general than the \sfp\ in the sense that it needs less regularity conditions in order to hold.

In this work we consider another smooth fit principle considering derivatives with respect to $\psia$. 
We say that there is \emph{smooth fit with respect to $\psia$}, and we denote \psfp, if the value function has derivative with respect to $\psia$, i.e. if the following limit exists,
\begin{equation}
\frac{\partial \Va}{\partial \psia}(x^*)=\lim_{h \to 0}\frac{\Va(x^*+h)-\Va(x^*)}{\psia(x^*+h)-\psia(x^*)}.
\end{equation}

As the scale function is an increasing solution to the equation $\al f=0$ in case $\desc=0$, 
the \ssfp\ can be considered as an $0$-SF, obtaining then a generalization of Salminen's proposal
(although in \cite{salminen85} the \ssfp\ is considered also in case $\desc>0$). 
In what follows we discuss in detail conditions in order to the different smooth fit principles hold.

In the previous section we give a solution to the optimal stopping problem \eqref{eq:osp} when it is right-sided and the RRC is satisfied. We have seen that the Riesz representation of the value function is
\begin{equation*}
\Va(x)=\int_{\Ige{x^*}}\Ga(x,y) \nu(y)
\end{equation*}
where $\nu(dy)=\al g(y) m(dy)$ in $\Ig{x^*}$. In the particular case in which $\nu(\{x^*\})=0$ 
have the following result:
\begin{teo} \label{teosf}
Given a diffusion $X$, and a reward function $g$, if the optimal stopping problem is right-sided and the value function satisfies 
\begin{equation*}
 \Va(x)=\int_{\Ig{x^*}}\Ga(x,y) \al g(y) m(dy)
\end{equation*}
then $\Va$ is differentiable in $x^*$ with respect to $\psia$, i.e.
\begin{equation*}
 \nu(\{x^*\})=0\quad\Rightarrow\quad \text{\rm \psfp}.
\end{equation*}
%\marginpar{capaz que alcanza con $\nu(x^*)=0$, de paso, habría que ver que $\al g m(dy)$ es de radon}
\end{teo}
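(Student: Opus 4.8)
The plan is to show that both one-sided derivatives of $\Va$ with respect to $\psia$ exist at $x^*$ and coincide, which is the meaning of \psfp. Since the OSP is right-sided with threshold $x^*$, the value function is given by \eqref{eq:VaCall}: $\Va(x)=\frac{g(x^*)}{\psia(x^*)}\psia(x)$ for $x\le x^*$ and $\Va(x)=g(x)$ for $x\ge x^*$, and in particular $\Va(x^*)=g(x^*)$. The left $\psia$-derivative is then immediate, because for every $x<x^*$ one has $\frac{\Va(x)-\Va(x^*)}{\psia(x)-\psia(x^*)}=\frac{g(x^*)}{\psia(x^*)}$, so $\frac{\partial^-\Va}{\partial\psia}(x^*)=\frac{g(x^*)}{\psia(x^*)}$. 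The hypothesis $\nu(\{x^*\})=0$ is needed only for the right derivative: it is precisely what makes $\Va(x)=\int_{\Ig{x^*}}\Ga(x,y)\al g(y)\,m(dy)$, i.e.\ what removes the atomic term $k\,\Ga(x,x^*)$ appearing in \autoref{prop:repmeasure}. (This is sharp: if $k\ne 0$, the left and right $\psia$-derivatives of $k\,\Ga(\cdot,x^*)$ at $x^*$ are $k\wa^{-1}\phia(x^*)$ and $k\wa^{-1}\psia(x^*)\frac{\partial^+\phia}{\partial\psia}(x^*)$, which differ since $\psia(x^*)\frac{\partial^+\phia}{\partial\psia}(x^*)<0<\phia(x^*)$, so $\Va$ would fail to be $\psia$-differentiable at $x^*$.)

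For the right derivative I would reuse, with $k=0$, the computation in the proof of \autoref{lem:equiveq}. Writing $\nu(dy)=\al g(y)\,m(dy)$ on $\Ig{x^*}$ (a non-negative measure, as the restriction of the Riesz representing measure of the $\desc$-excessive function $\Va$), splitting $\Ig{x^*}=(x^*,x)\cup\Ige{x}$ for $x>x^*$ and applying \eqref{eq:Garepr} on each piece gives, exactly as there,
\[
\frac{\Va(x)-\Va(x^*)}{\psia(x)-\psia(x^*)}=\wa^{-1}\int_{\Ige{x}}\phia(y)\,\nu(dy)+\wa^{-1}\,\frac{\int_{(x^*,x)}\bigl(\phia(x)\psia(y)-\psia(x^*)\phia(y)\bigr)\,\nu(dy)}{\psia(x)-\psia(x^*)}.
\]
As $x\downarrow x^*$, the first term tends to $\wa^{-1}\int_{\Ig{x^*}}\phia(y)\,\nu(dy)$ by monotone convergence, the value being finite since $\Va(x^*)=\wa^{-1}\psia(x^*)\int_{\Ig{x^*}}\phia(y)\,\nu(dy)$ (apply \eqref{eq:Garepr} at $x=x^*$, where every $y>x^*$); and the second term tends to $0$ by the estimate \eqref{eq:integral0} established in that same proof — there the integrand for $y\in(x^*,x)$ is sandwiched between $\phia(x)$ and a quantity with finite limit, while $\nu\bigl((x^*,x)\bigr)\to 0$. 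Hence the limit exists and
\[
\frac{\partial^+\Va}{\partial\psia}(x^*)=\wa^{-1}\int_{\Ig{x^*}}\phia(y)\,\nu(dy)=\frac{\Va(x^*)}{\psia(x^*)}=\frac{g(x^*)}{\psia(x^*)},
\]
using $\Va(x^*)=g(x^*)$ from right-sidedness.

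Since the left and right $\psia$-derivatives of $\Va$ at $x^*$ both equal $\frac{g(x^*)}{\psia(x^*)}$, the limit defining $\frac{\partial\Va}{\partial\psia}(x^*)$ exists, which is the claimed \psfp. The only genuinely non-routine step is the vanishing of the cross term as $x\downarrow x^*$, but this is literally \eqref{eq:integral0} from the proof of \autoref{lem:equiveq}; the one point to verify is that its derivation used no property of the RRC — indeed, in \autoref{lem:equiveq} the RRC is invoked only through \autoref{lem:Vequalsg} to obtain $\Va=g$ on $\Ige{x^*}$, and here that identity is supplied directly by right-sidedness, so the computation carries over verbatim.
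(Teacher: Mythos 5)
Your proof is correct and follows essentially the same route as the paper's: compute the left $\psia$-derivative from the explicit form of $\Va$ on $\Il{x^*}$ (the paper reads it off the integral representation, you read it off \eqref{eq:VaCall} — the same number), and obtain the right derivative by rerunning the computation of \autoref{lem:equiveq} with $k=0$, the key step being the vanishing of the cross term \eqref{eq:integral0}. Your added check that this computation does not secretly rely on the RRC (only on $\Va=g$ on $\Ige{x^*}$, here supplied by right-sidedness) is a worthwhile clarification, but it does not change the argument.
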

 \begin{proof}
 For $x\leq x^*$
 \begin{equation*}
 \Va(x)=\wa^{-1} \psia(x) \int_{\Ig{x^*}}\phia(y) \nu(dy), 
 \end{equation*}
 and the left derivative of $\Va$ with respect to $\psia$ in $x^*$ is
 \begin{equation*}
 \frac{\partial \Va^-}{\partial \psia}(x^*)=\wa^{-1} \int_{\Ig{x^*}}\phia(y) \nu(dy). 
 \end{equation*}

 It still remains to verify that
\begin{equation*}
 \lim_{x\to {x^*}^+}\frac{V(x)-V(x^*)}{\psia(x)-\psia(x^*)}=\wa^{-1} \int_{\Ig{x^*}}\phia(y) \nu(dy);
\end{equation*}
what can be done by the same computation as in the proof of \autoref{lem:equiveq}, for $k=0$.

\end{proof}

As we have seen in the previous section, if the speed measure does not charge $x^*$ neither does the representing measure, i.e.
\begin{equation*}
 m(\{x^*\})=0 \quad \Rightarrow \quad \nu(\{x^*\})=0;
\end{equation*}
then the previous theorem is applicable to this case to conclude that there is \psfp. Next theorem states that in this case there is \ssfp.
\begin{cor}
\label{scalesf}
Under the same conditions as in the previous theorem, if the speed measure does not charge $x^*$, then there is scale smooth fit, i.e.
\begin{equation*}
 m(\{x^*\})=0\quad\Rightarrow\quad \text{\rm \ssfp}.
\end{equation*} 
\begin{proof}
From the previous theorem  it follows that there is \psfp. Hypothesis $m(\{x^*\})=0$ implies $\psia$ has derivative with respect to the scale function. We have obtained that
\begin{equation*}
 \frac{\partial \Va}{\partial s}(x^*)=\frac{\partial \Va}{\partial \psia}(x^*) \left/\frac{\partial \psia}{\partial s}(x^*)\right.;
\end{equation*}
thus competing the proof.

\end{proof}
\end{cor}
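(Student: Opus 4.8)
The plan is to deduce the scale smooth fit at $x^*$ from the $\psia$-smooth fit already supplied by \autoref{teosf}, combined with the differentiability of $\psia$ itself at $x^*$ with respect to the scale. Since we are under the hypotheses of \autoref{teosf} (and note that $m(\{x^*\})=0$ is consistent with them, indeed it forces $\nu(\{x^*\})=0$ via \eqref{eq:nuleq}), the value function $\Va$ possesses a two-sided derivative at $x^*$ with respect to $\psia$; denote it by $D$. It then suffices to transfer this differentiability from $\psia$ to the scale function $s$.

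First I would invoke the extra hypothesis $m(\{x^*\})=0$. Although $\psia$ need not lie in $\D_L$, it still satisfies identity \eqref{eq:atom}, so that
\begin{equation*}
\frac{\partial^+\psia}{\partial s}(x^*)-\frac{\partial^-\psia}{\partial s}(x^*)=m(\{x^*\})\,L\psia(x^*)=0 .
\end{equation*}
Hence $\psia$ has a genuine two-sided derivative with respect to $s$ at $x^*$, which is moreover strictly positive because $0<\frac{\partial^-\psia}{\partial s}(x^*)$; write $c:=\frac{\partial\psia}{\partial s}(x^*)>0$.

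For the last step I would use the elementary chain rule for derivatives taken with respect to the continuous, strictly increasing functions $\psia$ and $s$. For every small $h\neq 0$ with $x^*+h\in\I$ one has the exact identity
\begin{equation*}
\frac{\Va(x^*+h)-\Va(x^*)}{s(x^*+h)-s(x^*)}=\frac{\Va(x^*+h)-\Va(x^*)}{\psia(x^*+h)-\psia(x^*)}\cdot\frac{\psia(x^*+h)-\psia(x^*)}{s(x^*+h)-s(x^*)},
\end{equation*}
which makes sense because strict monotonicity keeps both denominators away from $0$. Letting $h\to 0$, the first factor tends to $D$ by \autoref{teosf} and the second to $c$ by the previous paragraph, so the left-hand side converges to $Dc$. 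That is precisely the statement that $\Va$ admits a derivative with respect to $s$ at $x^*$, i.e.\ that \ssfp\ holds, with
\begin{equation*}
\frac{\partial\Va}{\partial s}(x^*)=\frac{\partial\Va}{\partial\psia}(x^*)\cdot\frac{\partial\psia}{\partial s}(x^*).
\end{equation*}

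I do not expect any serious obstacle here: the corollary is in essence this chain rule. The single place where a hypothesis beyond \autoref{teosf} is genuinely used is in upgrading $\psia$ from a merely one-sided to a two-sided $s$-derivative at $x^*$, which is exactly what $m(\{x^*\})=0$ buys through \eqref{eq:atom}; and the positivity $\frac{\partial\psia}{\partial s}(x^*)>0$, likewise recorded among the preliminaries, is what keeps the factorization from degenerating.
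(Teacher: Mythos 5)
Your proof is correct and follows essentially the same route as the paper: $\psia$-smooth fit from the previous theorem, two-sided $s$-differentiability of $\psia$ at $x^*$ obtained from $m(\{x^*\})=0$ via \eqref{eq:atom}, and then the chain rule for derivatives with respect to the strictly monotone functions $\psia$ and $s$. Your multiplicative form $\frac{\partial \Va}{\partial s}(x^*)=\frac{\partial \Va}{\partial \psia}(x^*)\cdot\frac{\partial \psia}{\partial s}(x^*)$ is the correct one; the quotient displayed in the paper's proof appears to be a typographical slip.
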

Next corollary, which is a direct consequence from the previous results, gives sufficient conditions in order to the \sfp\ hold.
\begin{cor}
 \label{clasicsf}
Suppose either, \begin{itemize}
\item the hypotheses of \autoref{teosf} are fulfilled and $\psia$ is differentiable at $x^*$, or 
\item the hypotheses of \autoref{scalesf} are fulfilled and $s$ is differentiable at $x^*$.
\end{itemize}
Then, if the reward function $g$ is differentiable at $x^*$ the classic smooth fit principle holds.
\end{cor}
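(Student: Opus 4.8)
The plan is to prove the first alternative in detail; the second follows verbatim after replacing $\psia$ throughout by the scale function $s$ and invoking \autoref{scalesf} in place of \autoref{teosf}. So assume the hypotheses of \autoref{teosf} hold, that $\psia$ is differentiable at $x^*$, and that $g$ is differentiable at $x^*$. Under the hypotheses of \autoref{teosf} the problem is right-sided with threshold $x^*$, so \eqref{eq:VaCall} holds; moreover \autoref{teosf} guarantees \psfp, i.e. the two-sided limit
\[
\frac{\partial\Va}{\partial\psia}(x^*)=\lim_{h\to 0}\frac{\Va(x^*+h)-\Va(x^*)}{\psia(x^*+h)-\psia(x^*)}
\]
exists and is finite (the quotients make sense since $\psia$ is strictly increasing).

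The first step is to upgrade \psfp\ to ordinary differentiability of $\Va$ at $x^*$. For $h\neq 0$ small one writes
\[
\frac{\Va(x^*+h)-\Va(x^*)}{h}=\frac{\Va(x^*+h)-\Va(x^*)}{\psia(x^*+h)-\psia(x^*)}\cdot\frac{\psia(x^*+h)-\psia(x^*)}{h};
\]
as $h\to 0$ the first factor tends to $\frac{\partial\Va}{\partial\psia}(x^*)$ by \psfp\ and the second to $\psia'(x^*)$ by hypothesis, and since both limits are finite, $\Va$ is differentiable at $x^*$ with $\Va'(x^*)=\frac{\partial\Va}{\partial\psia}(x^*)\,\psia'(x^*)$. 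The second step uses right-sidedness: by the $x\ge x^*$ branch of \eqref{eq:VaCall} we have $\Va=g$ on $\Ige{x^*}$, so for every $h>0$ the increment $\Va(x^*+h)-\Va(x^*)$ equals $g(x^*+h)-g(x^*)$. Dividing by $h$ and letting $h\to 0^+$, and using that both $\Va'(x^*)$ (just shown) and $g'(x^*)$ (by hypothesis) exist, we obtain $\Va'(x^*)=g'(x^*)$, which is exactly the smooth fit principle \sfp.

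The second alternative is identical: \autoref{scalesf} provides \ssfp\ in place of \psfp, the factoring argument applies to $s$ because $s$ too is strictly increasing, giving $\Va'(x^*)=\frac{\partial\Va}{\partial s}(x^*)\,s'(x^*)$, and then $\Va=g$ on $\Ige{x^*}$ yields $\Va'(x^*)=g'(x^*)$. There is no deep difficulty, since all the analytic substance is already contained in \autoref{teosf} and \autoref{scalesf}; the point that needs care — and the reason the extra hypothesis appears — is that the smooth fit statements only control the derivative of $\Va$ taken with respect to $\psia$ (resp. $s$), and it is precisely differentiability of $\psia$ (resp. $s$) at $x^*$ that converts this into an honest $x$-derivative; without it $\Va$ could have a corner at $x^*$ in the ordinary sense while still being $\psia$-smooth. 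One must also keep track of one-sided versus two-sided limits in the last step, as $\Va=g$ only to the right of $x^*$.
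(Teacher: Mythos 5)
Your proof is correct and is exactly the argument the paper leaves implicit: the corollary is stated there as a ``direct consequence'' with no written proof, and your chain-rule factorization of the difference quotient through $\psia$ (resp.\ $s$), followed by the identification $\Va=g$ on $\Ige{x^*}$ to pin down $g'(x^*)$, is the intended route. The care you take with the two-sided existence of $\Va'(x^*)$ versus the one-sided agreement with $g$ is the right place to be careful, and nothing is missing.
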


\section{Examples}\label{section:examples}
We follow by showing how to solve some optimal stopping problems using the previous results. We start with some well-known problems, whose solutions are also known, and we include as well some new examples in which the smooth fit principle is not useful to find the solution.

\subsection{Standard Brownian motion}
\label{ex:Brownian}
Through this subsection we consider $X$ such that $X_t := W_t$, the standard Wiener process \citep[see][p.119]{borodin}. The state space is $\I=\R$, the scale function is $s(x)=x$ and the speed measure is $m(dx)=2dx$. The differential operator $L$ is given $Lf=f''/2$, being its domain  
\begin{equation*}
\D_L=\{f: f,\ Lf \in \CC_b(\R)\}. 
\end{equation*} 
The fundamental solutions of $\al u=0$ are $\phia(x)=e^{-\sqrt{2 \desc}x}$ and $\psia(x)=e^{\sqrt{2 \desc}x}.$ The Wronskian is $\wa=2\sqrt{2\desc}$ and, according to \eqref{eq:Garepr},
the Green function is given by
\begin{equation*}
\Ga(x,y)=
\begin{cases}
\frac{1}{2\sqrt{2 \desc}}e^{-\sqrt{2\desc}\,x}e^{\sqrt{2\desc}\,y}, & x\geq y; \\[.5em]
\frac{1}{2\sqrt{2 \desc}}e^{-\sqrt{2\desc}\,y}e^{\sqrt{2\desc}\,x}, & x< y.
\end{cases} 
\end{equation*}

\begin{example}
Consider the reward function $\g(x):=x^+$, where $x^+$ states for the maximum between $x$ and $0$. 
The OSP for $X$ and $g$ was solved for the first time by \cite{taylor}. 
It is easy to verify that the RRC is fulfilled for $x_1=0$: it is enough to consider $\gi \in \CC^2(\R)$ such that $\gi(x)=0$ for $x<-1$ and $\gi(x)=x$ for $x>0$ and, by the application of \autoref{propInversion2}, one can prove that $\gi$ satisfies the inversion formula \eqref{eq:ginversion}.
Now we check that we can apply \autoref{teo:verif1} to find the solution of the optimal stopping problem: 
Observe that $\al g$ is given by
\begin{equation*}
\al g(x)=\desc x, \quad \mbox{for $x>0$.}
\end{equation*}
To find $x^*$ we solve equation \eqref{eq:defx*teo}, which is
\begin{align*}
x^* &=\frac{1}{2\sqrt{2 \desc}} e^{\sqrt{2\desc}\,x^*} \int_{x^*}^\infty e^{-\sqrt{2\desc}\,y}\desc y\,2dy\\ 
&=\frac{1}{2 \sqrt{2\desc}}(x^* \sqrt{2 \desc} +1 ),
\end{align*}
obtaining that $x^*=\frac{1}{\sqrt{2\desc}}$. The conditions \autoref{hyp:algpositive} and \autoref{hyp:mayorizq} are easy to verify. We conclude that the problem is right-sided and $x^*$ is the optimal threshold. Observe that the hypotheses of  \autoref{teosf},  \autoref{scalesf} and \autoref{clasicsf} are fulfilled; then, all variants of smooth fit principle hold in this example.
 
Considering the results in \autoref{sec:easyequation}, we can see that the simpler equation \eqref{eq:easyeq} could be used to find $x^*$. For $x>0$ we have $g'(x)=1$ and $\psia'(x)=\sqrt{2\desc}\, e^{\sqrt{2\desc}}$, so in this particular case, equation \eqref{eq:easyeq} is
\[
\frac{1}{\sqrt{2\desc}\, e^{\sqrt{2\desc}}}=\frac{x}{e^{\sqrt{2\desc}}}
\]
obtaining the same solution.

According to equation \eqref{eq:VaCall} the value function is 
\bd
\Va(x)=\begin{cases}
\frac{e^{\sqrt{2\desc}x}}{e\sqrt{2\desc}},  & x < \frac{1}{\sqrt{2\desc}},\\
x, & x\geq \frac{1}{\sqrt{2\desc}}.
\end{cases}
\ed
\begin{figure}
\centering
\includegraphics[scale=.8]{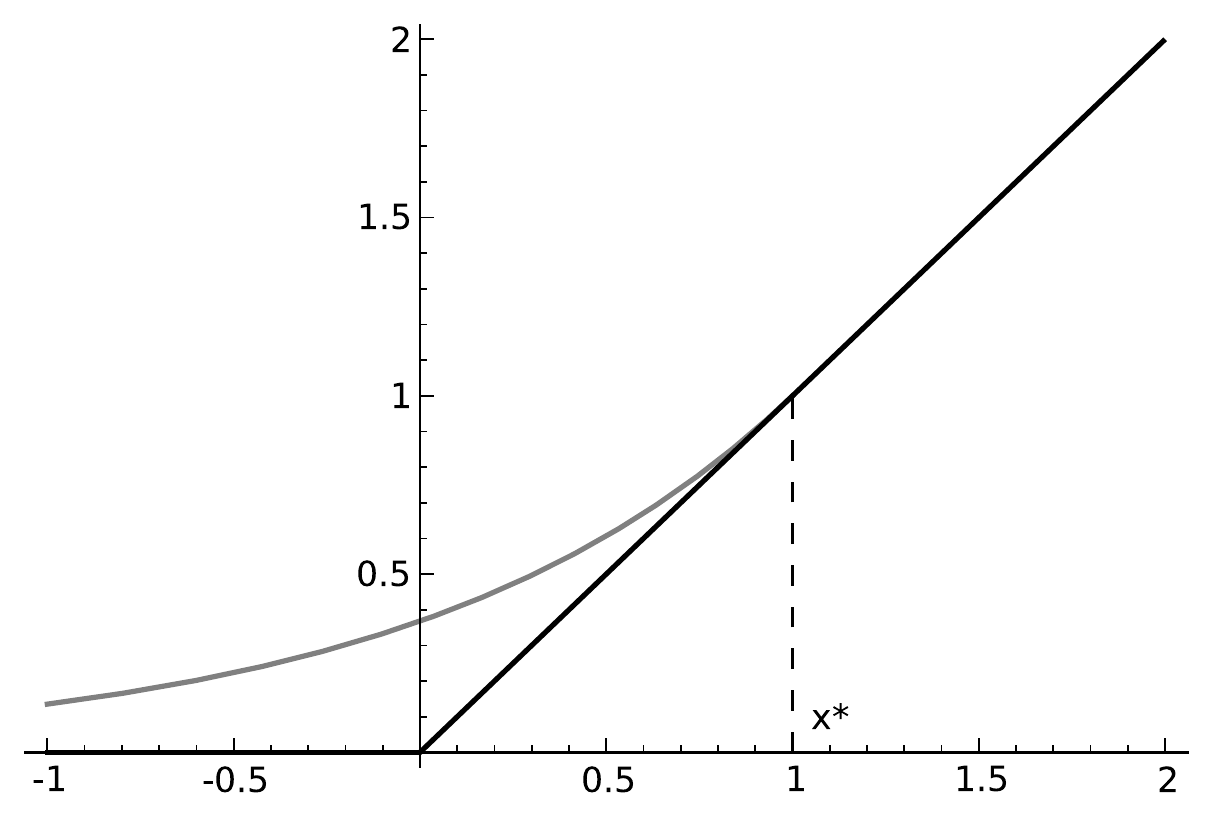}
\caption{\label{fig:taylor} OSP for the standard BM: $g$ (black), $\Va$ (gray, when different from $g$). Parameter $\desc=\frac12$.}
\end{figure}
\autoref{fig:taylor} shows the value function $\Va$ for $\desc=\frac12$.
\end{example}

\begin{example}[Discontinuous reward]
Now consider the reward function 
\be \label{eq:gnonsmooth}
g(x)=\begin{cases}
0,& x<0, \\
1, & x\geq 0.
\end{cases}
\ee
Obviously, the optimal stopping problem \eqref{eq:osp} with positive discount has optimal threshold $x^*=0$; we show this fact as a consequence of \autoref{teo:verif2}. 
First we observe that the RRC is fulfilled for $x^*=0$, for example $\gi(x)=1$ satisfy the inversion formula and is equal to $g(x)$ for $x\geq 0$. 
The inequality \eqref{eq:defx*mayor} is, in this particular case and considering $x^*=0$,
\bd
1 \geq \frac1{2\sqrt{2\desc}}\int_0^\infty e^{-\sqrt{2\desc}y}\desc 2 dy,
\ed
which clearly holds, since the right-hand side is $\frac12$. 
For $x\geq 0$ we have that $\al g=\desc>0$, therefore \autoref{hyp:algpositive} is valid. Finally we observe that 
$\psia(x)>0$ for $x<0$ and then \autoref{hyp:mayorizq} is also valid. We conclude, by the application of \autoref{teo:verif2}, that the problem is right-sided with $0$ as optimal threshold. The value function is, by \eqref{eq:VaCall}
\bd
\Va(x)=\begin{cases}
e^{\sqrt{2\desc}x},& x < 0,\\
1, & x\geq 0.
\end{cases}
\ed
Clearly the smooth fit principle does not hold in this example; from \autoref{teosf} we get that the representing measure $\nu$ of $\Va$ has to have a mass at $0$, in fact, from \autoref{prop:repmeasure}, we obtain that $\nu(\{0\})=\frac12$. We remark that the hypotheses that ensures \eqref{eq:nuleq} in \autoref{prop:repmeasure} are not fulfilled in this case; in fact, the absence of smooth fitting, in this particular example, comes from the reward function. Further on we present examples in which, although the reward function is differentiable, the smooth fitting fail due to the lack of regularity of the scale function or due to atoms in the speed measure.
\end{example}
\begin{example}[Non-smooth reward]
Following with the same Wiener process, consider the continuous reward function $g$, depending on a positive parameter $a$, defined by
\bd
g(x)=\begin{cases}
0, & x\leq -\frac1a,\\
ax+1, & -\frac1a<x<0,\\
1, & x\geq 0
\end{cases}
\ed
The question we try to answer is: for which values of the discount parameter $\desc$, the optimal threshold associated to the optimal stopping problem \eqref{eq:osp} is $x^*=0$. As in the previous example, and with the same arguments, we observe that the RRC (\autoref{cond:RRC}) is fulfilled for $x^*=0$ and also the inequality \eqref{eq:defx*mayor} and the condition \autoref{hyp:algpositive} in \autoref{teo:verif2} hold. 
We only need to verify that \autoref{hyp:mayorizq} is valid, i.e. for all $x<0$,
\bd
\psia(x)\frac{g(0)}{\psia(0)}\geq g(x),
\ed
or what is the same $e^{\sqrt{2\desc}x}\geq ax+1$ for $-\frac1a<x<0$. It is easy to see that previous equation holds if $a\geq \sqrt{2\desc}$. \autoref{fig:nonsmoothreward} shows the value function for $\desc=\frac12$ and $\desc=\frac18$ when $a=1$

\begin{figure}
\centering
\includegraphics[scale=.8]{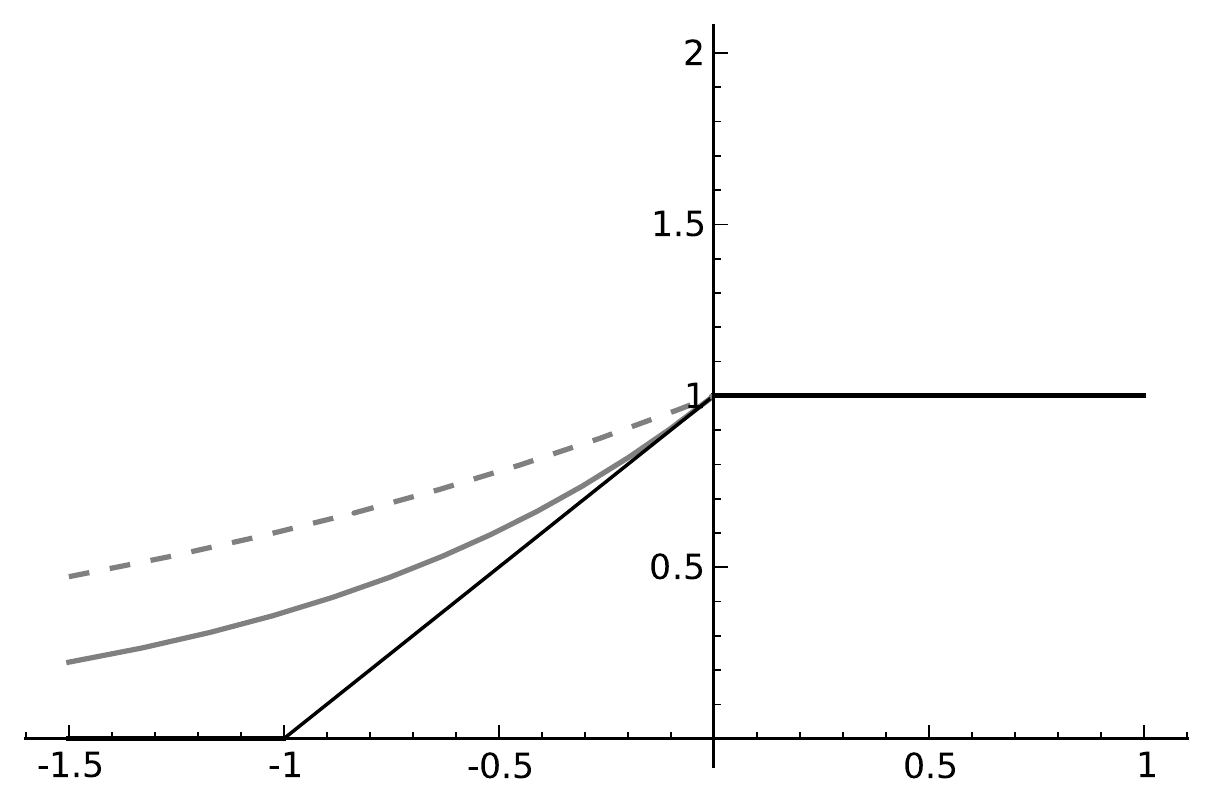}
\caption{\label{fig:nonsmoothreward} OSP for the standard BM, non-smooth reward: $g$ (black), $V_{0.5}$ (gray, when is different from $g$), $V_{0.125}$ (gray dashed, when is different from $g$). Parameter $a=1$}
\end{figure}
\end{example}

\subsection{American call options}

We consider the classical optimal stopping problem associated with the pricing of an American call option with dividends, solved by \cite{merton}.
\begin{example}
Consider  a geometric Brownian motion $\{X_t\}$ \citep[see][p.132]{borodin}, which is a solution of the stochastic differential equation
\begin{equation*}
dX_t=\sigma X_t dW_t + \mu X_t dt;
\end{equation*}
where $\mu\in \R$ and $\sigma^2>0$. The state space is $\I=(0,\infty)$. Set $\nu=\frac{\mu}{\sigma^2}-\frac{1}{2}$, the scale function is
\begin{equation*}
 s(x)=
\begin{cases}
-\frac{x^{-2\nu}}{2\nu},& \nu\neq 0,\\
\ln(x),&\quad \nu=0;\\
\end{cases}
\end{equation*}
and the speed measure $m(dx)=\frac{2}{\sigma^2}x^{\frac{2\mu}{\sigma^2}-2}dx.$
 The differential operator is  $Lf=\frac{1}{2}\sigma^2x^2 f''+ \mu x f',$ and the domain of the infinitesimal generator is
\begin{equation*}
\D_L=\left\{f: f \in \CC_b(\I),\ Lf \in \CC_b(\I)\right\}. 
\end{equation*} 
Functions $\psia$ and $\phia$ are given by 
\begin{equation*}
\psia(x)=x^{\gamma_1}, \text{ with }\gamma_1=\frac{1}{2}-\frac{\mu}{\sigma^2}{+\sqrt{\left(\frac{1}{2}-\frac{\mu}{\sigma^2}\right)^2+ \frac{2\desc}{\sigma^2}}}
\end{equation*}
and
\begin{equation*}
\phia(x)=x^{\gamma_2}, \text{ with $\gamma_2=\frac{1}{2}-\frac{\mu}{\sigma^2}{-\sqrt{\left(\frac{1}{2}-\frac{\mu}{\sigma^2}\right)^2+ \frac{2\desc}{\sigma^2}}}$},
\end{equation*}
the Wronskian is $\wa=2\sqrt{\nu^2+2\desc/\sigma^2}$.
The Green function, by \eqref{eq:Garepr},  is given by
\begin{equation*}
\Ga(x,y)=
\begin{cases}
\wa^{-1}x^{\gamma_2}y^{\gamma_1}, & x\geq y>0, \\[.5em]
\wa^{-1}y^{\gamma_2}x^{\gamma_1}, & y\geq x>0.
\end{cases}
\end{equation*}
Consider the reward function $g\colon \R^+ \to \R,\quad g(x):=(x-K)^+$
where $K$ is a positive constant, and a positive discount factor $\desc$ satisfying $\desc>\mu$. 
The reward function $g$ satisfies the RRC for $x_1=K$, it is enough to consider $\gi$ as a $\CC^2$ function, bounded in $(0,k)$ and such that $\gi(x)=x-k$ for $x\geq k$. 
Function $\gi$ defined satisfies the following inversion formula:
\begin{equation*}
 \gi(x)=\int_{\I} \Ga(x,y)  \al \gi(y) m(dy),
\end{equation*}
as a consequence of \autoref{propInversion2}. Observe that equation \eqref{eq:alIntegrable} holds. Equation \eqref{eq:gOverPsi} is in this case
\begin{equation*}
 \lim_{z\to \infty} \frac{\gi(z)}{\psia(z)} = \lim_{z\to \infty} z^{1-\gamma_1};
\end{equation*}
the last limit vanishes if $1-\gamma_1<0$ which is equivalent to $\mu < \desc$.
To find $x^*$ we solve equation \eqref{eq:defx*teo}. We will need a primitive of $\phia(x) \al \gi(x) m(x)$, where $m(x)$ is the density of the speed measure with respect to Lebesgue measure,
\begin{align*}
F(x)&=\int \phia(x) \al \gi(x) m(x) dx\\ 
&= \frac{2(\desc-\mu)}{\sigma^2}\int x^{\gamma_2+\frac{2 \mu}{\sigma^2}-1}-\frac{2 \desc K}{\sigma^2}\int x^{\gamma_2+\frac{2 \mu}{\sigma^2}-2} \\
&= \frac{2(\desc-\mu)}{\sigma^2(\gamma_2+\frac{2 \mu}{\sigma^2})} x^{\gamma_2+\frac{2 \mu}{ \sigma^2}}-\frac{2 \desc K}{\sigma^2(\gamma_2+\frac{2 \mu}{\sigma^2}-1)} x^{\gamma_2+\frac{2 \mu}{\sigma^2}-1};
\end{align*}
observe that 
\begin{equation*}
 \gamma_2+\frac{2 \mu}{\sigma^2}-1 < \gamma_2+\frac{2 \mu}{\sigma^2} = 1-\gamma_1 <0 
\end{equation*}
and $ \lim_{x\to \infty} F(x)=0.$
We solve equation \eqref{eq:defx*teo},
\begin{align*}
x^*-K &= \wa^{-1} \psia(x^*) \int_{x^*}^\infty \phia(y)\al g(y)m(y) dy\\
&= \wa^{-1} \psia(x^*) (-F(x^*))\\
&= \wa^{-1} \left(\frac{2 \desc K}{\sigma^2(\gamma_2+\frac{2 \mu}{\sigma^2}-1)}{x^*}^{\gamma_1+\gamma_2+\frac{2 \mu}{\sigma^2}-1}- \frac{2(\desc-\mu)}{\sigma^2(\gamma_2+\frac{2 \mu}{\sigma^2})}{x^*}^{\gamma_1+\gamma_2+\frac{2 \mu}{\sigma^2}}\right) \\ 
&= \wa^{-1} \left(\frac{2 \desc K}{\sigma^2(\gamma_2+\frac{2 \mu}{\sigma^2}-1)}- \frac{2(\desc-\mu)}{\sigma^2(\gamma_2+\frac{2 \mu}{\sigma^2})}{x^*}\right)
\end{align*}
then, using $\gamma_2=1-\frac{2 \mu}{\sigma^2}-\gamma_1$ we obtain that
\begin{equation*}
x^*\left(1+ \wa^{-1} \frac{2(\desc-\mu)}{\sigma^2(1-\gamma_1)}\right) = K \left(1+\wa^{-1} \frac{2 \desc }{\sigma^2(-\gamma_1)}\right),
\end{equation*}
concluding that
\begin{equation*}
 x^*=K \left(\frac{\gamma_1-1}{\gamma_1}\right) \left( \frac{\wa \gamma_1 - 2\desc/{\sigma^2}}{\wa(\gamma_1-1)-2\desc/{\sigma^2}+2\mu/{\sigma^2}}\right).
\end{equation*}
Calling $s=\sqrt{\frac{1}{4}-\frac{\mu}{\sigma^2}+\frac{\mu^2}{\sigma^4}+ \frac{2\desc}{\sigma^2}}$, we have $\wa=2s$, $\gamma_1 = \frac{1}{2}-\frac{\mu}{\sigma^2}+s$, and
\begin{align*}
\wa \gamma_1 - \frac{2\desc}{\sigma^2}&=s-\frac{2\mu s}{\sigma^2}+2s^2-\frac{2\desc}{\sigma^2}\\
&=s-\frac{2\mu s}{\sigma^2}+s^2+\underbrace{\frac{1}{4}-\frac{\mu}{\sigma^2}+\frac{\mu^2}{\sigma^4}+ \frac{2\desc}{\sigma^2}}_{s^2}-\frac{2\desc}{\sigma^2}=\gamma_1^2.
\end{align*}
In the same way,
\begin{equation*}
\wa(\gamma_1-1)-\frac{2\desc}{\sigma^2}+\frac{2\mu}{\sigma^2} 
= \gamma_1^2-\wa+\frac{2\mu}{\sigma^2}
= \gamma_1^2-2\gamma_1+1
=(\gamma_1-1)^2.
\end{equation*}
Finally, we arrive to the conclusion: 
\begin{equation*}
x^*=K \left(\frac{\gamma_1}{\gamma_1-1}\right).
\end{equation*}
Observing that $x^*>x_1=K$, we only need to verify \autoref{hyp:algpositive} and \autoref{hyp:mayorizq} is fulfilled in order to apply  \autoref{teo:verif1}. The condition \autoref{hyp:algpositive} is, in this example,
\begin{equation*}
(\desc-\mu) x-\desc K\geq 0,\quad \mbox{ if }\quad x>K \left(\frac{\gamma_1}{\gamma_1-1}\right);
\end{equation*}
it is enough to prove $(\desc-\mu) x^*-\desc K\geq 0$, which is equivalent to $\mu \gamma_1\leq \desc.$ To justify the validity of the previous equation remember that $\psia$ satisfies $\al \psia \equiv 0$ and in this particular cases
\begin{equation*}
\al \psia(x) = \left(\desc - \frac{1}{2}\sigma^2 \gamma_1 (\gamma_1 -1)-\mu \gamma_1\right)x^{\gamma1},
\end{equation*}
so we have $\desc -\mu \gamma_1 = \frac{1}{2}\sigma^2 \gamma_1 (\gamma_1 -1) \geq 0,$ concluding what we need. In order to verify \autoref{hyp:mayorizq} we need to observe $\left(\frac{1}{\gamma_1-1}\right)\left(\frac{\gamma_1 -1}{K \gamma_1} x\right)^{\gamma_1}<(x-K)^+$ for all $x<x^*.$

We conclude, by the application of \autoref{teo:verif1}, that the problem is right-sided with optimal threshold $x^*$. Observe that, as in the previous example, hypothesis of \autoref{teosf}, \autoref{scalesf} and \autoref{clasicsf} are fulfilled and all variants of smooth fit principle hold.

In virtue of the results in \autoref{sec:easyequation}, the threshold $x^*$ can be obtained also by solving \eqref{eq:easyeq}, which in this case is
\[
\frac{1}{\gamma_1\, x^{\gamma_1-1}}=\frac{x-K}{x^{\gamma_1}},
\]
and we obtain very easily the already given threshold $x^*$.
\end{example}

\subsection{Russian Options}

Optimal stopping problems regarding the maximum of a process constitutes other topic of high fertility in optimal stopping. In particular, it has applications in the search of maximal inequalities. Some optimal stopping problems for the maximum of a process can be reduced to a regular optimal stopping problem --as is the case in this example--. Some works regarding the maximum process are \cite{dubinsSheppSh,peskir1998optimal, pedersen2000discounted,skewOS}. For further reference see \cite{ps}. In the article by \cite{kramkov1994integral}, the authors use the same kind of reduction used in this example, but in this case to reduce an optimal stopping problem for the integral of the process.

The Russian Option was introduced by \cite{shepp93}. If $X_t$ is a standard geometric Brownian motion and $S_t=\max\{X_s\colon 0\leq s \leq t\}$, the Russian option gives the holder the right --but not the obligation-- to receive an amount $S_\tau$ at a moment he can choose. 
First the authors found the value of the option, reducing the problem to an optimal stopping problem of a two-dimensional Markov process.
 Later they found the way to solve the same problem based on the solution of a one-dimensional optimal stopping problem \citep{shepp94}. More recently \cite{salminen00RussianOptions}, making use of a generalization of Lévy's theorem for a Brownian motion with drift shortened the derivation of the valuation formula in \cite{shepp94} and solved the related optimal stopping problem.  Now we show how to use our results to solve this one-dimensional optimal stopping problem.
 \begin{example}
Consider $\desc>0$, $r>0$ and $\sigma>0$. Let $X$ be a Brownian motion on $\I=[0,\infty)$,  with drift $-\delta < 0$, where $\delta=\frac{r+\sigma^2/2}{\sigma} $ and reflected at $0$ (see \cite{borodin}, p. 129).
The scale function is
\begin{equation*}
 s(x)=\frac{1}{-2\delta}(1-e^{2 \delta x});
\end{equation*}
the speed measure is $m(dx)=2 e^{-2 \delta\,x}.$ 
The differential operator is given by $Lf(x)=f''(x)/2-\delta f'(x)$ for $x>0$, and $Lf(0)=\lim_{x\to 0^+} Lf(x)$, being its domain 
\begin{equation*}
\D_L=\left\{f: f\in \CC_b(\I),\ Lf \in \CC_b(\I), \lim_{x\to 0^+}f'(x)=0\right\}. 
\end{equation*} 
Functions $\phia$ and $\psia$ are given by 
\begin{equation*}
\phia(x)=e^{-(\gamma - \delta) \,x}
\end{equation*}
and 
\begin{equation*}
\psia(x)=\frac{\gamma - \delta}{2 \gamma} e^{(\gamma + \delta) x} +\frac{\gamma + \delta}{2 \gamma} e^{-(\gamma - \delta)x},
\end{equation*}
where $\gamma=\sqrt{2 \desc + \delta^2}$, the Wronskian is given by $\wa=\gamma - \delta$. Consider the reward function $g(x):=e^{\sigma x}$, which does satisfy the RRC for every $x_1>0$. We have,
\begin{equation*}
\al g(x)=(\desc - \sigma^ 2/2 + \delta \sigma)e^{\sigma x} = (\desc + r)e^{\sigma x}>0. 
\end{equation*}
In order to apply \autoref{teo:verif1} we solve equation \eqref{eq:defx*teo}, which in this case is
\[
e^{\sigma x}=\frac{1}{\gamma-\delta}\left(\frac{\gamma - \delta}{2 \gamma} e^{(\gamma+\delta) x}+\frac{\gamma + \delta}{2 \gamma} e^{-(\gamma-\delta) x} \right)\int_{x}^\infty 2 (\desc+r) e^{(-\gamma-\delta+\sigma) y}dy,
\]
or we can solve the easier equivalent equation \eqref{eq:easyeq}, which in this case is
\[
\frac{\sigma e^{\sigma x}}{\frac{\gamma^2 - \delta^2}{2 \gamma} \left(e^{(\gamma + \delta) \, x} - e^{-(\gamma - \delta) \, x}\right)}=\frac{e^{\sigma x}}{\frac{\gamma - \delta}{2 \gamma} e^{(\gamma + \delta) \, x} +\frac{\gamma + \delta}{2 \gamma} e^{-(\gamma - \delta) \, x}}
\]
obtaining that% (observe that  $-\gamma-\delta+\sigma<0$)
\bd
x^*=\frac{1}{2\gamma}\ln{\left(\left(\frac{\gamma+\delta}{\gamma-\delta}\right)\left(\frac{\gamma-\delta+\sigma}{\gamma+\delta-\sigma}\right)\right)}. 
\ed
Assertions \autoref{hyp:algpositive} and \autoref{hyp:mayorizq} remains to be verified to obtain that the optimal stopping rule is to stop when $X_t\geq x^*$. This result agree with the ones obtained in the previous works.

We solve a particular case with $\desc=0.7$, $r=0.5$ and $\sigma=1$. \autoref{fig:russian} shows the value function $\Va$ and the reward $g$ for this example. The threshold is $x^*\simeq 0.495$.
 \end{example}

\begin{figure}[htb]
\begin{center}
\includegraphics[scale=.8]{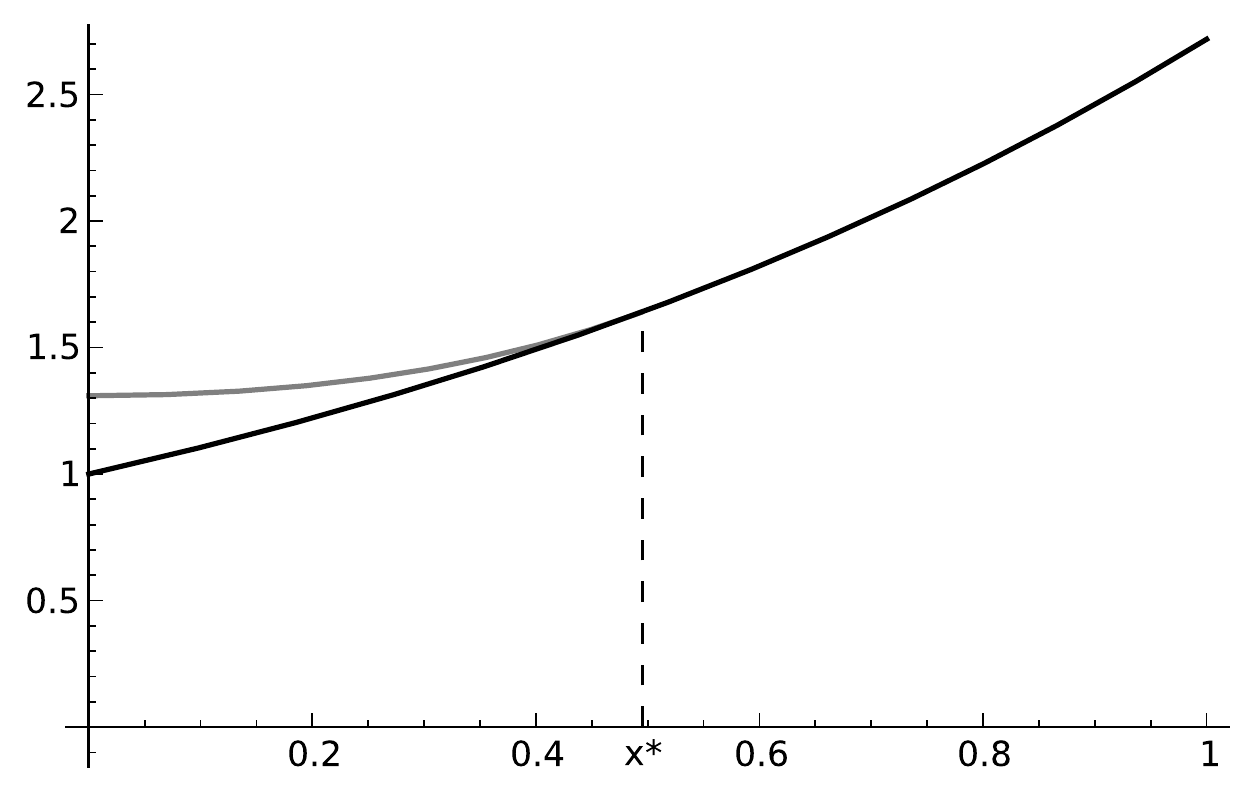}
\end{center}
\caption{\label{fig:russian} Russian options: $g$ (black), $\Va$ (gray, when different from $g$). Parameters: $\desc=0.7$, $r=0.5$ and $\sigma=1$.}
\end{figure}

\subsection{Skew Brownian motion}
We consider a Brownian motion skew at zero \citep[see][p.126]{borodin}. For further results about Skew Brownian we refer to \cite{lejay} and the references therein. This process is a standard Brownian motion when off the origin, but it has an asymmetric behaviour from the origin. It has the following property
\begin{equation*}
\P_0(X_t\geq 0)= 1-\P_0(X_t\leq 0)=\beta.
\end{equation*}
The parameter $\beta\in (0,1)$ is known as the ``skewness parameter of the process''.
The space state of this process is $\I=\R$; the scale function is
\begin{equation*}
 s(x)=\begin{cases}
       \frac{x}{\beta},&x\geq 0,\\
       \frac{x}{1-\beta},&x<0;
      \end{cases}
\end{equation*}
and the speed measure is
\begin{equation*}
m(dx)= \begin{cases}
	  2(1-\beta)\ dx, &x<0;\\
	  2 \beta\ dx, & x>0.
       \end{cases}
\end{equation*}
The differential operator is $Lf(x)=f''(x)/2$ if $x\neq 0$ and $Lf(0)=\lim_{x\to 0}Lf(x)$. The domain of the infinitesimal generator is
\begin{equation*} 
\D_L=\{f: f,\ Lf \in \CC_b(\I),\ \beta f'(0^+)=(1-\beta)f'(0^-)\}. 
\end{equation*}
Functions $\phia$ and $\psia$ are given by 
\begin{equation*}
\phia(x)=
\begin{cases}
\frac{1-2\beta}{1-\beta}\sinh(x \sqrt{2\desc})+e^{-\sqrt{2\desc}\,x},& x\leq 0, \\
e^{-\sqrt{2\desc}\,x}, & x\geq 0,
\end{cases}
\end{equation*}
and 
\begin{equation*}
\psia(x)=
\begin{cases}
e^{\sqrt{2\desc}\,x}, & x\leq 0, \\
\frac{1-2\beta}{\beta}\sinh(x \sqrt{2\desc})+e^{\sqrt{2\desc}\,x}, & x\geq 0,
\end{cases}
\end{equation*}
and the Wronskian is $\wa=\sqrt{2\desc}$.
 \begin{example}[Skew BM, $g(x)=x^+$]
Consider the reward function $g(x)=x^+$. It satisfies the RRC for $x_1=0$. We have $\al g(x)= \desc x, x\geq0.$
Equation \eqref{eq:defx*teo} is in this case
\begin{equation*}
x^*=\frac{1}{\sqrt{2\desc}}\left(\frac{1-2\beta}{\beta}\sinh(\sqrt{2\desc}\,x^*)+e^{\sqrt{2\desc}\,x^*}\right)\int_{x^*}^\infty e^{-\sqrt{2\desc}\,t}\desc t\,2\beta dt 
\end{equation*}
or equivalently
\begin{equation} \label{eq:xxskew} x^*=\frac{1}{2\sqrt{2\desc}}\left((2\beta-1)e^{-\sqrt{2\desc}\,x^*}(\sqrt{2\desc}\,x^*+1)+\sqrt{2\desc}\,x^*+1\right).
\end{equation}
In general it is not possible to solve analytically equation \eqref{eq:xxskew}. If we consider the particular case $\beta=\frac{1}{2}$, in which the process is the ordinary Brownian motion, we obtain that 
$x^*=\frac{1}{\sqrt{2\desc}};$
according with results obtained in \autoref{ex:Brownian}. Consider a particular case, in which $\desc=1$ and $\beta=0.9$. Solving numerically equation \eqref{eq:xxskew} we obtain that
\begin{equation*}
x^*\simeq  0.82575. 
\end{equation*}
 \autoref{fig:skew} shows the optimal expected reward function $\Va$. Observe that if $x>x^*$, $\Va(x)=x$, and $\Va$ has derivative in $x^*$.
\begin{figure}[htb]
\begin{center}
\includegraphics[scale=.8]{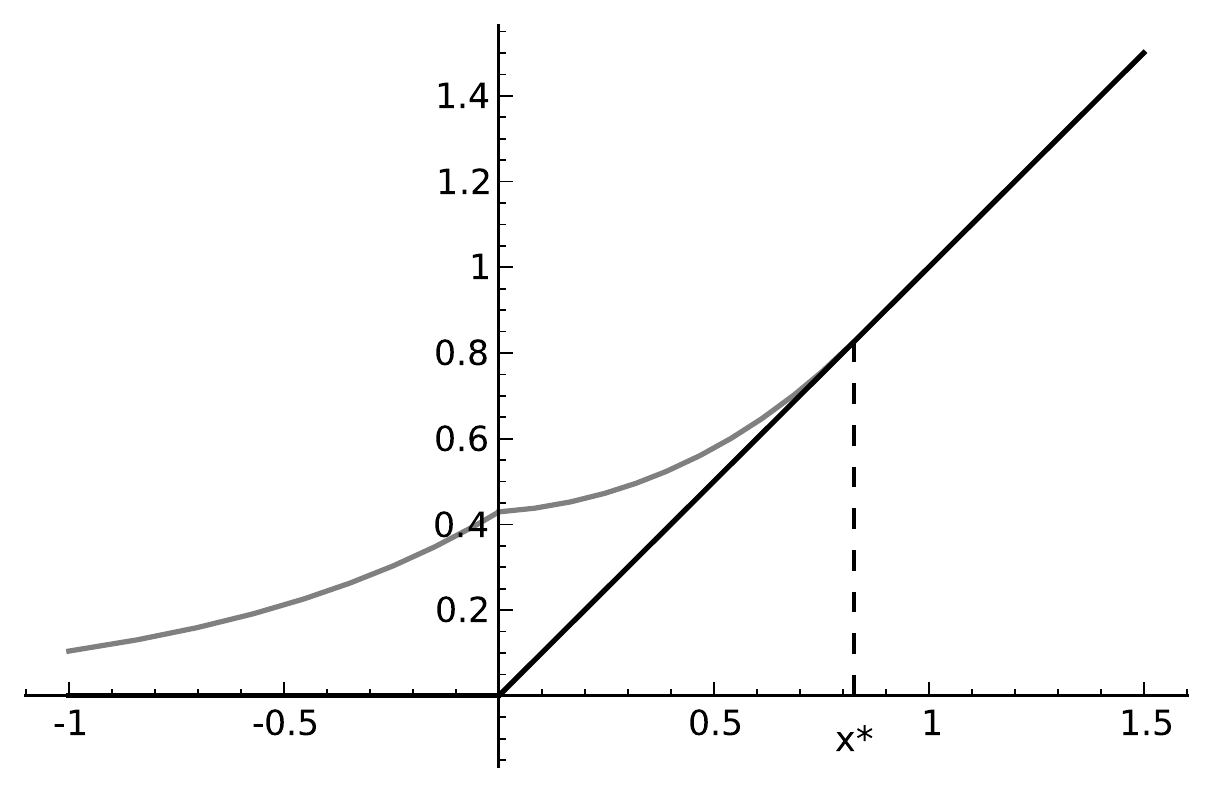}
\end{center}
\caption{\label{fig:skew} OSP for the Skew BM: $g$ (black), $\Va$ (gray, when different from $g$). Parameters: $\desc=1$ and $\beta=0.9$.}
\end{figure}
The article by \cite{skewOS} considers the optimal stopping problem for the maximum of this process.
\end{example}

\begin{example}[Skew BM: an example without smooth fitting]
Consider the Skew Brownian motion, process presented in the previous example, with parameters value $\beta=1/3$ and $\desc=1/8$. Let $g(x)=(x+1)^+$ be the reward function. Functions $\phia$, $\psia$ were already presented.
We have $\al g(x)=\desc (x+1),\ x\geq0.$
Observe that $x^*=0$ is solution of \eqref{eq:defx*teo}. It is easy to see that the hypotheses of \autoref{teo:verif1} are fulfilled. 
We conclude that the problem is right-sided with threshold $x^*=0$. 
Moreover, the value function satisfies:
\begin{equation*}
\Va(x)=
\begin{cases}
x+1, & x\geq 0, \\
\psi_\desc(x), & x\leq 0. 
\end{cases}
\end{equation*}
Unlike the previous examples $\Va$ is not derivable at $x^*$. As can be seen in  \autoref{fig:skewNoFit} the graphic of $\Va$ shows an angle in $x=0$.

\begin{figure}[htb]
\begin{center}
\includegraphics[scale=.8]{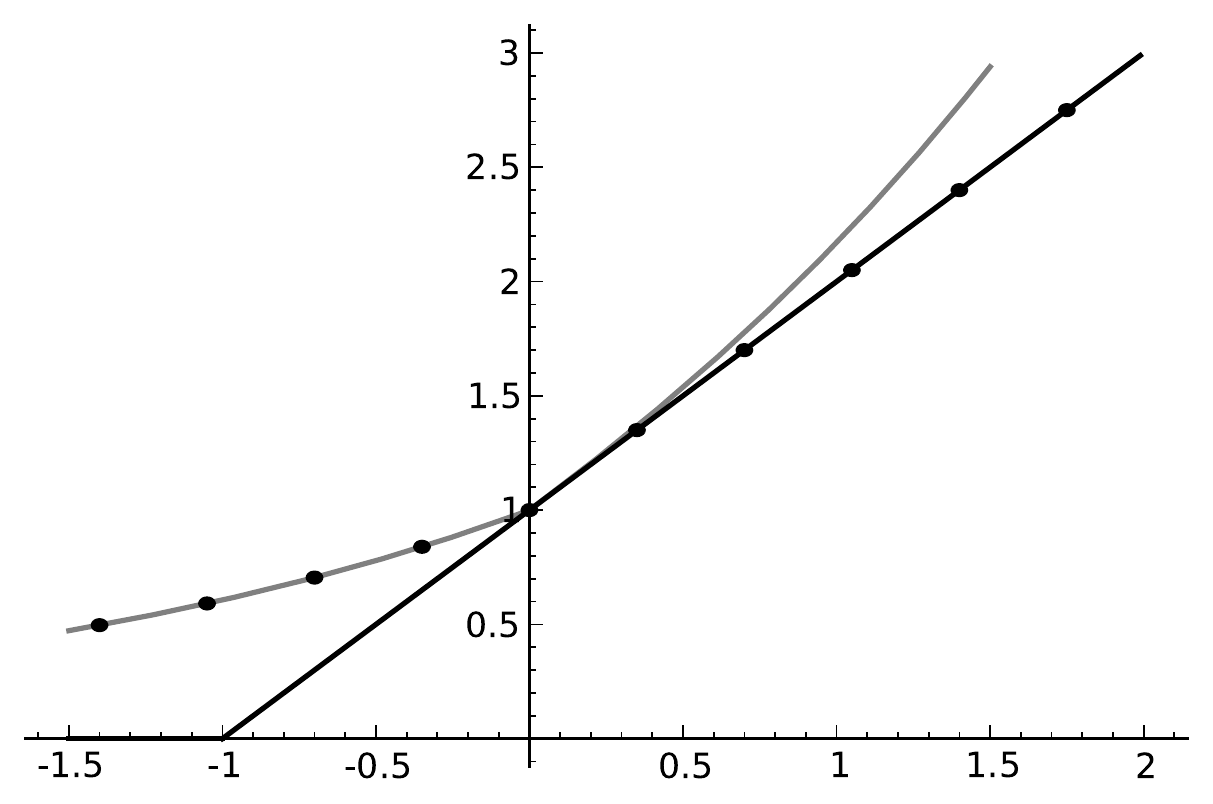}
\end{center}
\caption{\label{fig:skewNoFit} OSP for the Skew BM, example without smooth fitting: $g$ (black), $\psia$ (gray), $\Va$ (highlighted with dots). Parameters: $\desc=1/8$ and $\beta=1/3$.}
\end{figure}

As we have mentioned, the smooth fit principle states that $x^*$, the critical value between the continuation and the stopping region, satisfies the equation $\Va'(x^*)=g'(x^*)$. 
This principle is valid for a wide class of optimal stopping problems, 
and it is commonly used to find $x^*$. 
In the article \citep{peskir07},
the author gives an example of an optimal stopping problem of a regular diffusion with a differentiable reward function in which the smooth fit principle does not hold. 
Despite the fact that in the previous example the reward function is not differentiable (at $0$), it is very easy to see that a differentiable reward function $\tilde{g}$ for which the solution of the OSP is exactly the same can be considered;  \autoref{fig:skewNoFitDif} shows a possible differentiable $\tilde{g}$.

\begin{figure}[hbt]
\begin{center}
\includegraphics[scale=.8]{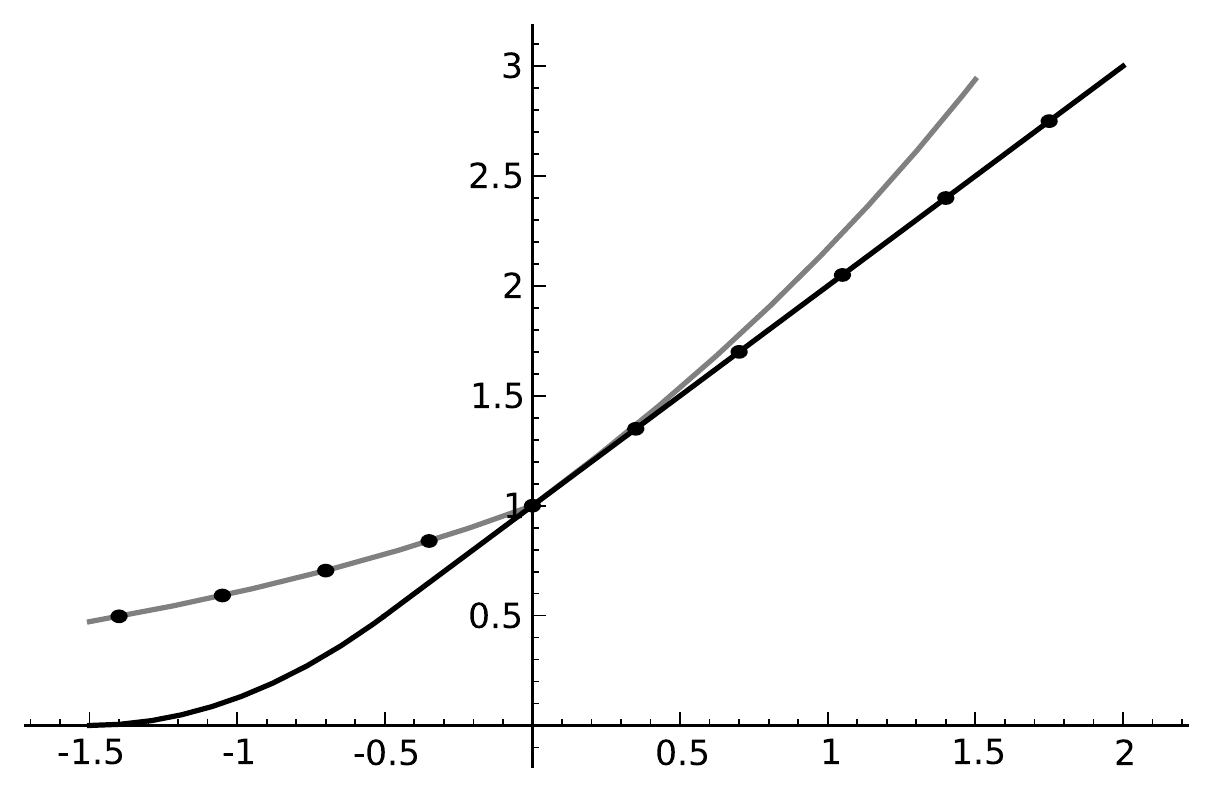}
\end{center}
\caption{\label{fig:skewNoFitDif} OSP for the Skew BM, example without smooth fitting with differentiable reward: $\tilde{g}(x)$ (black), $\psia$ (gray), $\Va$ (highlighted with dots). Parameters: $\desc=1/8$ and $\beta=1/3$.}
\end{figure}

\end{example}

\subsection{Sticky Brownian Motion} 

Consider a Brownian motion, sticky in 0  \citep[see][p. 123]{borodin}. It is a Brownian motion out of 0, but in 0 expends a positive time, which depends on a positive parameter that we assume to be 1.

The space state of this process is $\I=\R$. The scale function is $s(x)=x$ and the speed measure is $m(dx)=2dx+2\delta_{\{0\}}(dx)$. The differential operator is $Lf(x)=\frac{f''(x)}{2}$ when $x\neq 0$, and $Lf(0)=\lim_{x\to 0}Lf(x)$; being its domain
\begin{equation*} 
\D_L=\left\{f: f,\ Lf \in \CC_b(\I),\ f''(0^+)=f'(0^+)-f'(0^-)\right\}. 
\end{equation*}
Functions $\phia$ and $\psia$ are given by 
\begin{equation*}
\phia(x)=
\begin{cases}
e^{-\sqrt{2\desc}\,x} - \sqrt{2\desc}\,\sinh(x \sqrt{2\desc}),& x\leq 0, \\
e^{-\sqrt{2\desc}\,x}, & x\geq 0;
\end{cases}
\end{equation*}
and 
\begin{equation*}
\psia(x)=
\begin{cases}
e^{\sqrt{2\desc}\,x}, & x\leq 0, \\
e^{\sqrt{2\desc}\,x} +\sqrt{2\desc}\,\sinh(x \sqrt{2\desc}), & x\geq 0;
\end{cases} 
\end{equation*}
the Wronskian is $\wa=2\sqrt{2\desc}+2\desc.$

We remark that in the literature there are different definitions of sticky Brownian motion, \cite[see][note on page 223]{amir1991sticky}. 
According to the definition we follow the process is defined with $\R$ as space state. Another used definition has space state $[0, \infty)$ and it is given by the stochastic differential equation
\bd
dX_t=\theta \ind{\{X_t=0\}} dt + \ind{\{X_t>0\}}dB_t.
\ed

\begin{example} \label{ex:sticky}
Consider the reward function $g(x)=(x+1)^+$. It is easy to see that the RRC is fulfilled for $x_1=-1$. 
We discuss the solution of the optimal stopping problem depending on the discount factor; particularly, we are interested in finding the values of $\desc$ for which the optimal threshold is the sticky point. We are going to use \eqref{eq:defx*teo} in a different way: we fix $x=0$ and solve the equation with $\desc$. We obtain that
\begin{equation*}
 1=\wa^{-1} \int_{(0,\infty)} e^{-\sqrt{2\desc}\,y}\desc(y+1) 2dy
\end{equation*}
and we find $\desc_1=\frac{(-1+\sqrt{5})^2}{8}\simeq 0.19$ is the solution. It can be seen, by the application of  \autoref{teo:verif1}, that with $\desc=\desc_1$ the problem is right-sided with threshold 0. Another option to have threshold 0 could be, by the application of  \autoref{teo:verif2}, that $x^*=0$ is the minimum satisfying \eqref{eq:defx*mayor}. In order to find these values of $\desc$ it is useful to solve equation
\begin{equation}
\label{eq:alpha2}
g(x)=\wa^{-1}\psia(x)\int_{[x,\infty)}\phia(y)\al g(y) m(dy)
\end{equation}
with $x=0$. Since the measure m(dx) has an atom at $x=0$, the solution of the previous equation is different from $\desc_1$. 
Solving this equation we find the root $\desc_2=1/2$. It is easy to see that, for $\desc \in (\desc_1,\desc_2]$, the minimal $x$ satisfying \eqref{eq:defx*mayor} is 0. Then \autoref{teo:verif2} can be applied to conclude that 0 is the optimal threshold.
In this case we cannot apply the theorems of \autoref{section:sf} to conclude that any of the smooth fit principles hold. In fact for $\desc\in (\desc_1,\desc_2)$ any of the principles is fulfilled. 
With $\desc=\desc_2$ there is \sfp\ and \ssfp. This is not a consequence of \eqref{eq:alpha2}, but it follows from the particular choose of the reward function. 
This example shows that theorems on smooth fit only give sufficient conditions.
\autoref{table} summarizes the information about the solution of the OSP in this example.
\begin{table}
\caption{\label{table} Sticky BM: solution of the OSP depending on $\desc$}
\centering
{\begin{tabular}{@{}|lcccccc|}
\hline
$\desc$ & $x^*$ & Theo. & \sfp & \ssfp & \psfp & Fig.\\
\hline
$\desc\in(0,\desc_1)$ & $x^*>0$ & \ref{teo:verif1} & yes & yes & yes &\ref{fig:menor}\\
$\desc=\desc_1$ & $x^*=0$ & \ref{teo:verif1} & no & no & yes&\ref{fig:a1}\\
$\desc\in(\desc_1,\desc_2)$& $x^*=0$ &\ref{teo:verif2} & no & no & no&\ref{fig:medio}\\
$\desc=\desc_2$ & $x^*=0$ &\ref{teo:verif2} & yes & yes & no&\ref{fig:a2}\\
$\desc\in (\desc_2,+\infty)$ & $x^*<0$ & \ref{teo:verif1} & yes & yes & yes&\ref{fig:mayor}\\
\hline
\end{tabular}}
\end{table}

\begin{figure}
\begin{center}
\includegraphics[scale=.8]{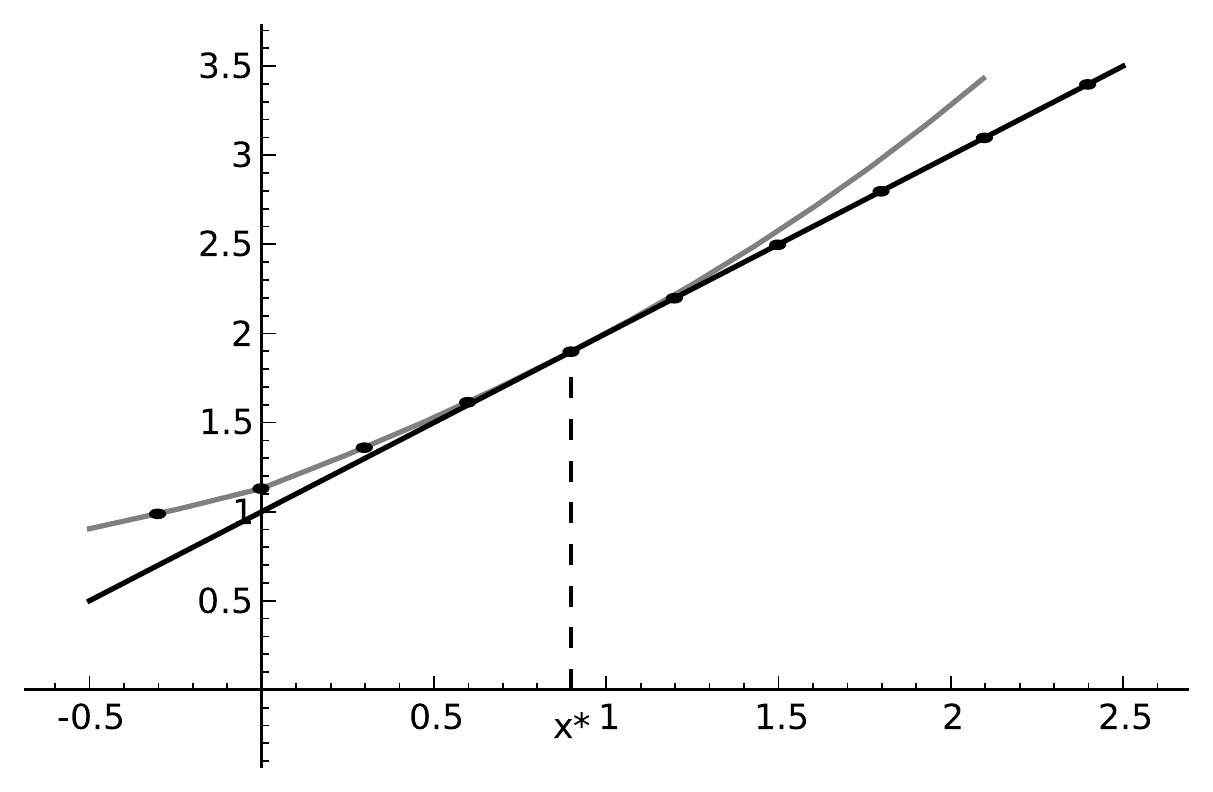}
\caption{\label{fig:menor}OSP for the Sticky BM: $g$ (black), $k \psia$ (gray), $\Va$ (highlighted with dots). Parameter $\desc=0.1$.}
\end{center}
\end{figure}

\begin{figure}
\begin{center} 
\includegraphics[scale=.8]{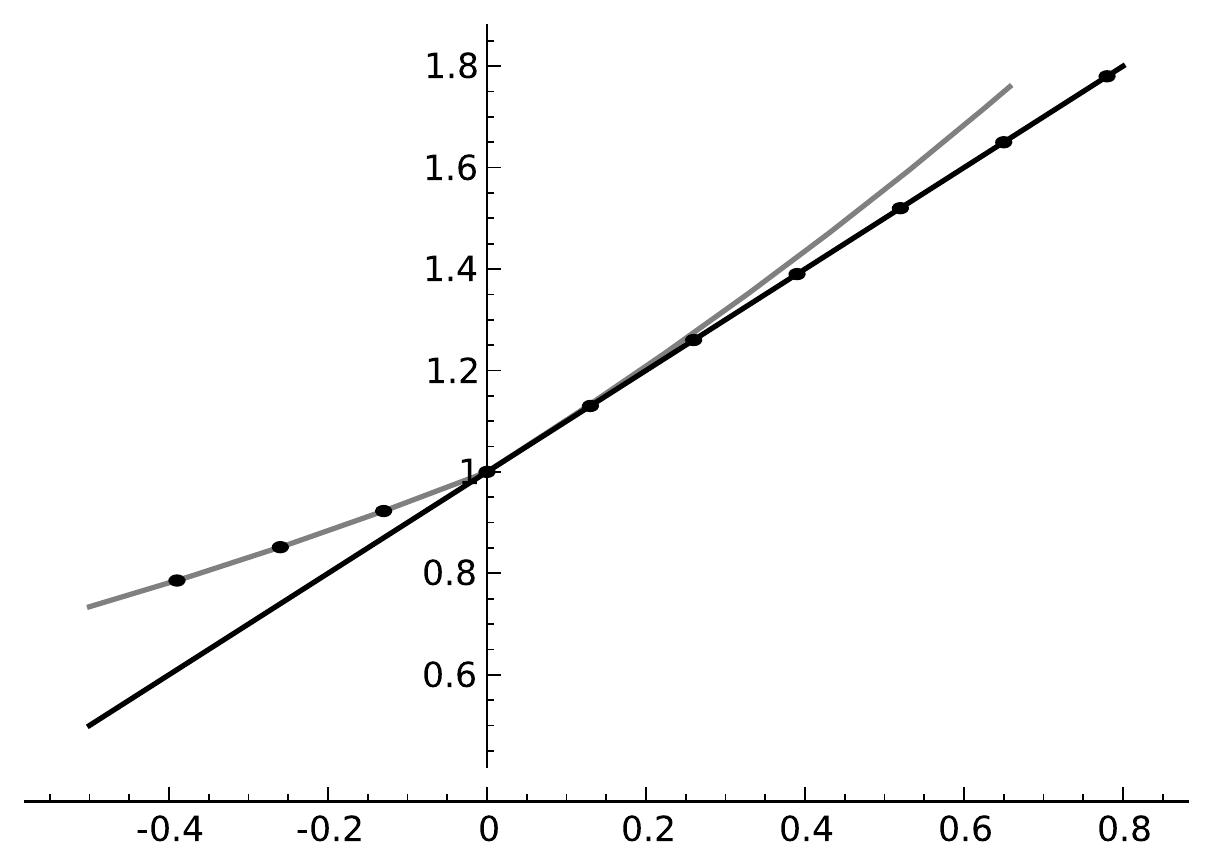}
\caption{\label{fig:a1} OSP for the Sticky BM: $g$ (black), $k \psia$ (gray), $\Va$ (highlighted with dots). Parameter $\desc=\desc_1\simeq 0.19$.}
\end{center}
\end{figure}

\begin{figure}
\begin{center}
\includegraphics[scale=.8]{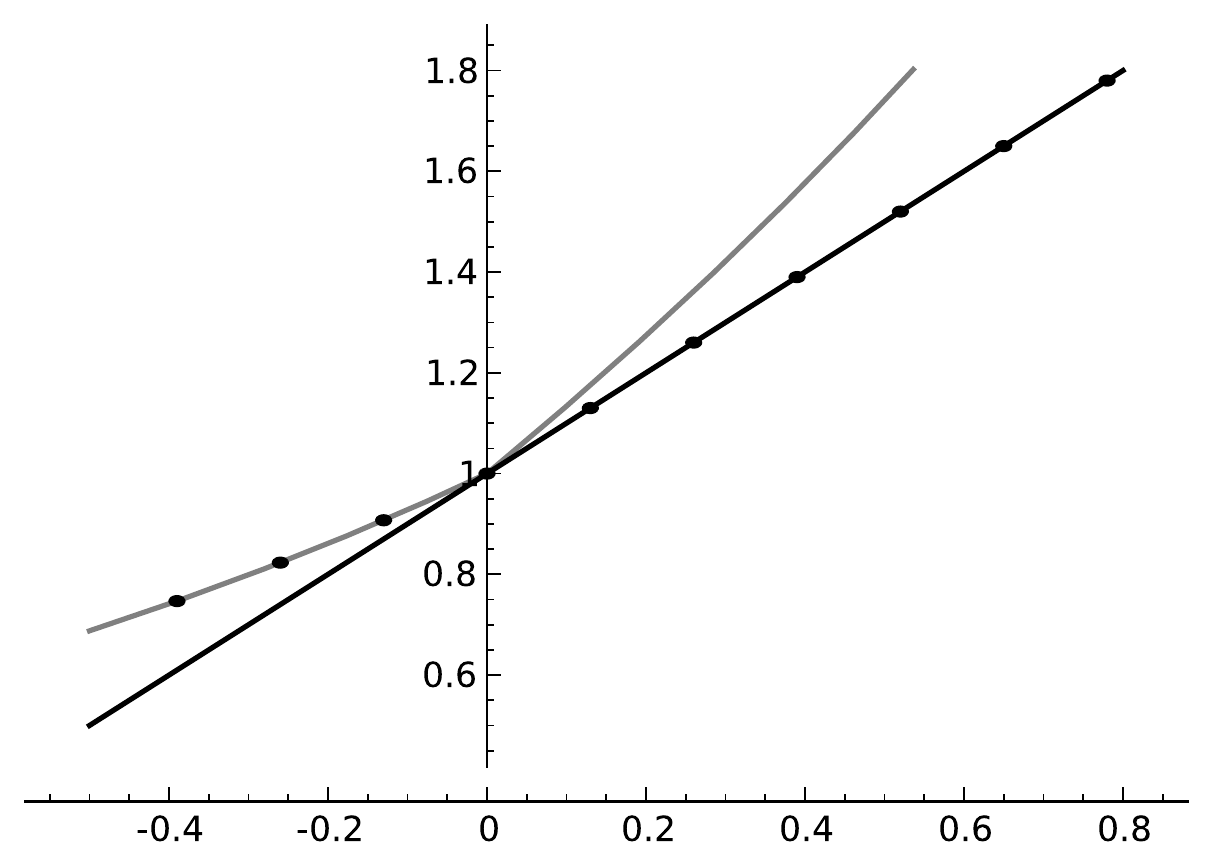}
\caption{\label{fig:medio}OSP for the Sticky BM: $g$ (black), $k \psia$ (gray), $\Va$ (highlighted with dots). Parameter $\desc=0.28$.}
\end{center}
\end{figure}

\begin{figure}
\begin{center}
\includegraphics[scale=.8]{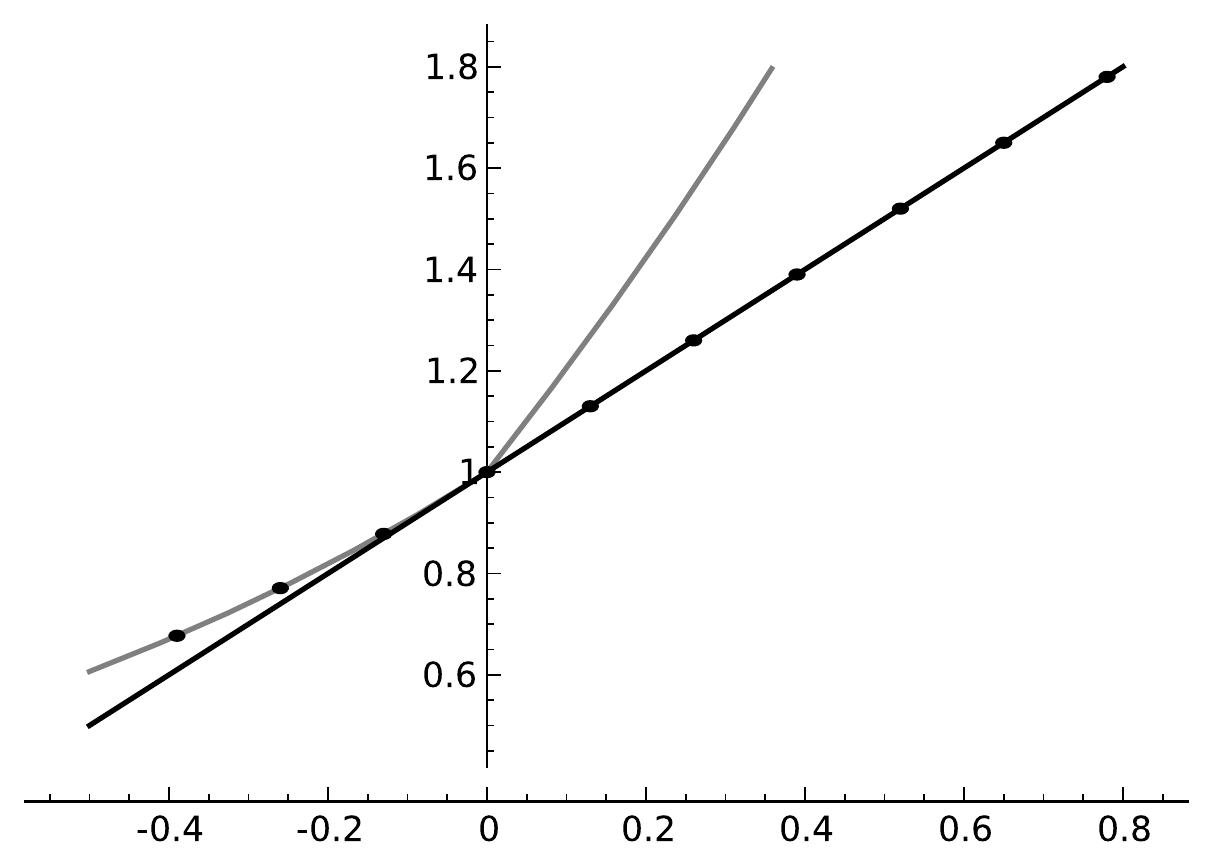}
\caption{\label{fig:a2}OSP for the Sticky BM: $g$ (black), $k \psia$ (gray), $\Va$ (highlighted with dots). Parameter $\desc=\desc_2=0.5$.}
\end{center}
\end{figure}

\begin{figure}
\begin{center}
\includegraphics[scale=.8]{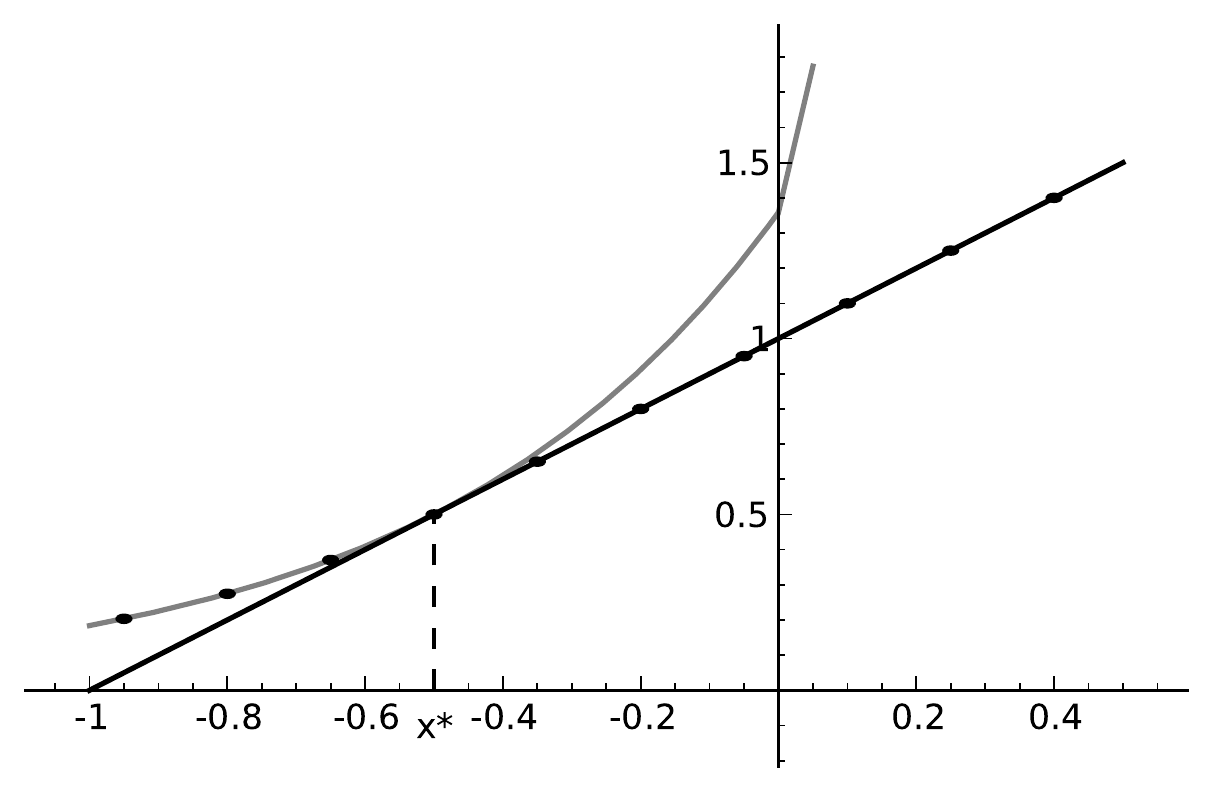}
\caption{\label{fig:mayor}OSP for the Sticky BM: $g$ (black), $k \psia$ (gray), $\Va$ (highlighted with dots). Parameter $\desc=2$.}
\end{center}
\end{figure}

\end{example}

%\newpage

\subsection{Bessel process} \label{ex:bessel}
Consider a 3-dimensional Bessel process $X$. It is a process with state space $\I=[0,\infty)$, which is given by $X_t=\|B_t\|$ with $B$ a 3-dimensional Brownian motion. \cite[see][p. 134 for details]{borodin}.

We have for $x\neq 0$:
\begin{equation*}
\psia(x)=\frac{2 \sinh(\sqrt{2\desc}\, x)}{x}
\end{equation*}
and 
\begin{equation*}
\psia'(x)=\frac{ 2 x \sqrt{2\desc} \cosh(\sqrt{2\desc}\, x)-2 \sinh(\sqrt{2\desc}\, x)}{x^2}
\end{equation*}

%The scale function is $s(x)=1/x$ and the speed measure is $m(dx)=2x^2dx$. The differential operator is $Lf(x)=\frac{f''(x)}{2}+\frac{1}{x}f'(x)$ when $x > 0$.
%The domain of the infinitesimal generator is
%\begin{equation*} 
%\D_L=\{f: f,\ Lf \in \CC_b(\I),\ \frac{df^+}{ds}(0^+)=0\}. 
%\end{equation*}
%Functions $\phia$ and $\psia$ are given by 
%\begin{equation*}
%\phia(x)=\frac{1}{x}e^{-\sqrt{2\desc}\, x}\qquad (x\neq 0)
%\end{equation*}
%and 
%\begin{equation*}
%\psia(x)=\frac{2}{x}\sinh(\sqrt{2\desc}\, x)\qquad (x\neq 0)
%\end{equation*}
%the Wronskian is $\wa=2\sqrt{2\desc}.$

\begin{example}
Consider the reward function $g(x)=x^2$. Assuming that the OSP \eqref{eq:osp} is right-sided we find the threshold by solving equation \eqref{eq:easyeq}, which in this case is
\[
\frac{2x^3}{ 2 x \sqrt{2\desc} \cosh(\sqrt{2\desc}\, x)-2 \sinh(\sqrt{2\desc}\,x)}=\frac{x^3}{2 \sinh(\sqrt{2\desc\,x})}.
\]
After computations, we conclude that $x^*=z/\sqrt{2\desc}$, with $z$ the positive solution of
\[
\arctan(z)=\frac{z}{3}.
\]
To conclude that the problem is indeed right-sided it remains to observe that $\al g(x)>0$ for $x>x^*$ and $g(x)<\psia(x)\frac{g(x^*)}{\psia(x^*)}$ for $x<x^*$; both conditions are easy to prove.
\end{example}
%
%The scale function is $s(x)=1/x$ and the speed measure is $m(dx)=2x^2dx$. The differential operator is $Lf(x)=\frac{f''(x)}{2}+\frac{1}{x}f'(x)$ when $x > 0$.
%The domain of the infinitesimal generator is
%\begin{equation*} 
%\D_L=\{f: f,\ Lf \in \CC_b(\I),\ \frac{df^+}{ds}(0^+)=0\}. 
%\end{equation*}
%Functions $\phia$ and $\psia$ are given by 
%\begin{equation*}
%\phia(x)=\frac{1}{x}e^{-\sqrt{2\desc}\, x}\qquad (x\neq 0)
%\end{equation*}
%and 
%\begin{equation*}
%\psia(x)=\frac{2}{x}\sinh(\sqrt{2\desc}\, x)\qquad (x\neq 0)
%\end{equation*}
%the Wronskian is $\wa=2\sqrt{2\desc}.$

 %esto era el artículo
\chapter{Optimal stopping for one-dimensional diffusions:\\ \textit{the general case}}
\label{chap:diffGeneral}
\section{Introduction}
The optimal stopping problem which we deal with in this chapter, as in  \autoref{chap:diffonesided}, consist on finding a stopping time $\tau^*$ and a value function $\Va$ such that
\begin{equation*}\label{eq:ospChap3}
\Va(x)=\Ex{x}{\ea{\tau^*} \g(X_{\tau^*})} = \sup_{\tau}\Ex{x}{\ea{\tau} \g(X_{\tau})},
\end{equation*}
where $X=\{X_t\}$ is a regular one-dimensional diffusion and the supremum is taken over all stopping times. In \autoref{chap:diffonesided} we gave several results concerning one-sided problems. In the present chapter we consider results for ``two-sided''  problems, and also more general situations.

The most important result of this chapter considers reward functions $g$ that satisfy the (already stated) inversion formula
\begin{equation} \tag{\ref{eq:ginversion}} \label{eq:ginversionCap3}
g(x)=\int_{\I} \Ga(x,y) \al g(y) m(dy).
\end{equation}
It states that the optimal continuation region $\CR$, is a disjoint union of intervals $J_i=(a_i,b_i)$ that satisfy
\bd 
\int_{J_i} \psia(y)\al \g(y) m(dy)=0
\ed
if $b_i\neq r$, and also
\bd
\int_{J_i} \phia(y)\al \g(y) m(dy)=0.
\ed
if $a_i\neq \ell$. Its proof consists on  an algorithm that determines this continuation region. A remarkable characteristic of the method is that it always arrives to the solution, thus no verification is needed. Furthermore, we find the following simple expression to the value function:
\bd
\Va(x)=
\begin{cases}
g(x),&\text{if $x \notin \CR$},\\
k_1^i\phia(x)+k_2^i\psia(x) &\text{if $x \in J_i\colon i=1\ldots n$}
\end{cases}
\ed
with
 \bd k_1^i=\begin{cases}
 0,& a_i=\ell , \\
 \frac{g(a_i)}{\phia(a_i)},& b_i=r, \\
 \frac{g(b_i)\psia(a_i)-g(a_i)\psia(b_i)}{\psia(a_i)\phia(b_i)-\psia(b_i)\phia(a_i)}&\text{else};
 \end{cases}
 \ed
 and
 \bd k_2^i=\begin{cases}
 \frac{g(b_i)}{\psia(b_i)},& a_i=\ell ,\\
 0,& b_i=r \\
 \frac{g(a_i)\phia(b_i)-g(b_i)\phia(a_i)}{\psia(a_i)\phia(b_i)-\psia(b_i)\phia(a_i)},& \text{else}.
 \end{cases} 
 \ed
The previous expression for the value function is alternative to the Riesz representation
\bd
\Va = \int_{\SR}\Ga(x,y)\al g(y) m(dy),
\ed
which also holds in this case, with $\SR=\I\setminus \CR$.

The described result also holds for one-sided problems. In fact, some results of the previous chapter follow from this general results in case the general assumptions are fulfilled.

\section{Preliminary results}
We start by presenting a few preliminary results about one-dimensional diffusions.

\begin{lem} \label{psiharmonic} \label{phiharmonic}
Consider a one-dimensional diffusion $X$. Consider $a,x,b \in \I$ such that $a\leq x \leq b$. Denote by $\hit{ab}$ the hitting time of the set $\{a,b\}$, i.e.
\bd
\hit{ab}:=\inf\{t\colon X_t\in \{a,b\}\}.
\ed
 Then,
if $a>\ell$
\bd
\phia(x)=\Ex{x}{\ea{\hit{ab}}\phia(X_{\hit{ab}})},
\ed
and if $b<r$ 
\bd
\psia(x)=\Ex{x}{\ea{\hit{ab}}\psia(X_{\hit{ab}})}.
\ed
\end{lem}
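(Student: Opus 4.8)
The plan is to deduce both identities from the strong Markov property combined with the one‑point hitting formula \eqref{eq:hitting}. I treat the $\psia$‑identity (under the hypothesis $b<r$); the $\phia$‑identity follows by the symmetric argument with the two endpoints interchanged (under $a>\ell$). If $x\in\{a,b\}$ the claim is trivial, since then $\hit{ab}\equiv 0$; so assume $a<x<b$. The role of the hypothesis $b<r$ is that $b$ is then an interior point, so $\psia(b)<\infty$ and \eqref{eq:hitting} is applicable at level $b$; together with regularity and path continuity it also gives $\hit{ab}<\infty$ $\P_x$‑a.s. (otherwise one simply uses the convention $\ea{\hit{ab}}:=0$ on $\{\hit{ab}=\infty\}$, under which that event contributes nothing anywhere below).

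Since $X_{\hit{ab}}\in\{a,b\}$, set
\[
A:=\Ex{x}{\ea{\hit{ab}}\ind{X_{\hit{ab}}=a}},\qquad B:=\Ex{x}{\ea{\hit{ab}}\ind{X_{\hit{ab}}=b}},
\]
so that $\Ex{x}{\ea{\hit{ab}}\psia(X_{\hit{ab}})}=\psia(a)\,A+\psia(b)\,B$, and it remains to identify this with $\psia(x)$. On $\{X_{\hit{ab}}=b\}$ one has $\hit{b}=\hit{ab}$, whereas on $\{X_{\hit{ab}}=a\}$ one has $\hit{b}=\hit{ab}+\hit{b}\circ\theta_{\hit{ab}}$ (the path must first reach $a$ and then travel from $a$ to $b$). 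Decomposing $\Ex{x}{\ea{\hit{b}}}$ over these two events and using the strong Markov property at $\hit{ab}$ to restart the post‑$\hit{ab}$ process from $a$,
\[
\Ex{x}{\ea{\hit{b}}}=\Ex{x}{\ea{\hit{b}}\ind{X_{\hit{ab}}=b}}+\Ex{x}{\ea{\hit{b}}\ind{X_{\hit{ab}}=a}}=B+A\,\Ex{a}{\ea{\hit{b}}}.
\]
Now \eqref{eq:hitting} applies: since $x\le b$ and $a\le b$, $\Ex{x}{\ea{\hit{b}}}=\psia(x)/\psia(b)$ and $\Ex{a}{\ea{\hit{b}}}=\psia(a)/\psia(b)$. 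Multiplying the displayed identity through by $\psia(b)$ yields $\psia(x)=\psia(b)\,B+\psia(a)\,A=\Ex{x}{\ea{\hit{ab}}\psia(X_{\hit{ab}})}$, as wanted.

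For the $\phia$‑identity the argument is the mirror image: on $\{X_{\hit{ab}}=b\}$ one has $\hit{a}=\hit{ab}+\hit{a}\circ\theta_{\hit{ab}}$, hence $\Ex{x}{\ea{\hit{a}}}=A+B\,\Ex{b}{\ea{\hit{a}}}$, and \eqref{eq:hitting} (applicable since $x\ge a$, $b\ge a$, and $a>\ell$ makes $\phia(a)<\infty$) gives $\phia(x)/\phia(a)=A+B\,\phia(b)/\phia(a)$, i.e. $\phia(x)=\phia(a)\,A+\phia(b)\,B=\Ex{x}{\ea{\hit{ab}}\phia(X_{\hit{ab}})}$. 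I do not expect a genuine obstacle here; the only points needing a word of care are the a.s.\ finiteness of $\hit{ab}$ for starting points in the compact subinterval $[a,b]\subset\I$ (regularity plus path continuity, or the convention above) and the verification that \eqref{eq:hitting} is legitimately invoked at the endpoints $a$ and $b$ — which is exactly what $a>\ell$ and $b<r$ guarantee. An alternative, equally short, derivation would extend $\psia|_{[a,b]}$ to some $\tilde\psi\in\D_L$ with $\al\tilde\psi=0$ near $[a,b]$ and apply Dynkin's formula \eqref{eq:dynkinFormula} with $\tau=\hit{ab}$; I prefer the route above as it avoids constructing such an extension.
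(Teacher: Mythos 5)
Your proof is correct, but it takes a genuinely different route from the paper's. You derive the two-point identity from the one-point Laplace transform formula \eqref{eq:hitting}: writing $A=\Ex{x}{\ea{\hit{ab}}\ind{\{X_{\hit{ab}}=a\}}}$ and $B=\Ex{x}{\ea{\hit{ab}}\ind{\{X_{\hit{ab}}=b\}}}$, you decompose $\hit{b}$ according to whether the path exits $[a,b]$ at $a$ or at $b$, apply the strong Markov property at $\hit{ab}$ to get $\Ex{x}{\ea{\hit{b}}}=B+A\,\Ex{a}{\ea{\hit{b}}}$, and then substitute $\psia(\cdot)/\psia(b)$ from \eqref{eq:hitting}; the $\phia$ case is the mirror image. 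The paper instead picks $h\in\CC_b(\I)$ supported on $\Il{a}$, uses that $f=\Ra h\in\D_L$ coincides with $k\,\phia$ on $\Ige{a}$ for some $k>0$, and applies the discounted Dynkin's formula \eqref{eq:dynkinFormula} with $\tau=\hit{ab}$, noting that $\al f(X_t)=h(X_t)=0$ up to $\hit{ab}$ by path continuity — essentially the "extension plus Dynkin" alternative you mention and decline at the end. Your argument is the more elementary of the two (it needs only \eqref{eq:hitting}, which the paper takes as known, and the strong Markov property, and it sidesteps the construction of the auxiliary resolvent function), at the cost of leaning on \eqref{eq:hitting} as an external input; the paper's argument is self-contained relative to the resolvent machinery it has already set up and reuses a representation of $\phia$ that appears elsewhere in the text. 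One cosmetic quibble: your gloss that $a>\ell$ is needed "so that $\phia(a)<\infty$" is not quite the right reason ($\phia$ is finite at every point of $\I$ by definition); the hypothesis is really about boundary behaviour at an endpoint belonging to $\I$, but since you only work under the stated hypotheses this does not affect the validity of the proof.
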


\begin{proof}
Let us prove the first statement, which is a direct consequence of the discounted Dynkin's formula \eqref{eq:dynkinFormula} for functions that belong to $\D_L$. As $\phia\notin \D_L$, we consider a function $h\in \CC_b(\I)$ such that $h(x)=0$ for $x\geq a$ and $h(x)>0$ for $x<a$.
Then $f$ defined by $f(x):=\left(\Ra h \right)(x)$ belongs to $\D_L$ and there exist a constant $k>0$ such that for  $x\geq a$, $f(x)=k \phia(x)$ \citep[see][section 4.6]{itoMcKean}. 
The discounted Dynkin's formula holds for $f$, so, for $x\geq a$,
\bd
f(x)-\Ex{x}{\ea{\hit{ab}}f(X_{\hit{ab}})}=\Ex{x}{\int_0^{\hit{ab}}\al f(X_t) dt}.
\ed
From the continuity of the paths, for $t\in[0,\hit{ab}]$, $X_t\geq a$ and $\al f(X_t)=h(X_t)=0$, so the right-hand side of the previous equation vanishes. Finally taking into account the relation between $f$ and $\phia$ the conclusion follows.

The second statement is proved in an analogous way.

\end{proof}

\begin{lem}
\label{lem:Wa}
Let $X$ be a one-dimensional diffusion. Consider the function $\Wa\colon\I \to \R$ such that
\bd
\Wa(x)=\int_S \Ga(x,y)\sigma(dy),
\ed
where $\sigma$ is a positive measure and the set $S$ is
\bd
S=\I \setminus \cup_{i=1}^N J_i,
\ed 
where $N$ could be infinite, and $J_i$ are disjoint intervals included in $\I$.

Then $\Wa$ satisfies
\bd
\Wa(x)=\E_x \left(\ea{\hit{S}} \Wa(X_{\hit{S}})\right).
\ed
\end{lem}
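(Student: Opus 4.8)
The plan is to establish first the pointwise identity
\begin{equation*}
\Ga(x,y)=\Ex{x}{\ea{\hit{S}}\Ga(X_{\hit{S}},y)}\qquad\text{for every }x\in\I\text{ and every }y\in S,
\end{equation*}
and then to deduce the statement of the lemma by integrating this identity against $\sigma(dy)$ over $S$ and interchanging the expectation with the integral. The interchange is legitimate by Tonelli's theorem, since $\Ga\geq 0$, $\sigma\geq 0$ and $\ea{\hit{S}}\geq 0$; the finiteness of $\Wa$, implicit in its definition, then guarantees that the resulting quantities are finite.

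For $x\in S$ the pointwise identity is trivial, because $\hit{S}\equiv 0$. So fix $i$ and $x\in J_i=(a_i,b_i)$ (there is no loss in taking the $J_i$ to be open, say the connected components of $\I\setminus S$, since merging abutting intervals does not change $S$). Because $S$ separates $J_i$ from the remaining $J_k$, a path started inside $J_i$ cannot reach $S$ without first hitting a finite endpoint of $J_i$; hence, on $\{\hit{S}<\infty\}$ one has $X_{\hit{S}}\in\{a_i,b_i\}\cap\I$, and $\hit{S}$ coincides with the exit time of $J_i$.

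The crucial observation is that, for each fixed $y\in S$, the function $z\mapsto\Ga(z,y)$ is a linear combination of $\phia$ and $\psia$ on $\overline{J_i}$: since $y\notin J_i$, either $y\geq b_i$, and then \eqref{eq:Garepr} gives $\Ga(z,y)=\wa^{-1}\phia(y)\,\psia(z)$ for all $z\in\overline{J_i}$, or $y\leq a_i$, and then $\Ga(z,y)=\wa^{-1}\psia(y)\,\phia(z)$ for all $z\in\overline{J_i}$. Suppose first $a_i>\ell$ and $b_i<r$; then $a_i,b_i\in\I$, $\hit{S}=\hit{a_ib_i}$, and \autoref{psiharmonic} yields both $\phia(x)=\Ex{x}{\ea{\hit{a_ib_i}}\phia(X_{\hit{a_ib_i}})}$ and $\psia(x)=\Ex{x}{\ea{\hit{a_ib_i}}\psia(X_{\hit{a_ib_i}})}$. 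Applying whichever of these two identities matches the form of $\Ga(\cdot,y)$, and using that $X_{\hit{a_ib_i}}\in\{a_i,b_i\}$, where $\Ga(\cdot,y)$ coincides with that same combination, one gets $\Ga(x,y)=\Ex{x}{\ea{\hit{a_ib_i}}\Ga(X_{\hit{a_ib_i}},y)}=\Ex{x}{\ea{\hit{S}}\Ga(X_{\hit{S}},y)}$, as desired.

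It remains to treat the boundary intervals, and here the main obstacle is purely bookkeeping. If $b_i=r$ with $a_i>\ell$, then every $y\in S$ lies below $a_i$, so $\Ga(\cdot,y)$ is a multiple of $\phia$ on $\overline{J_i}$, and since $X$ reaches $S$ from $J_i$ only through $a_i$ one concludes as before using $\Ex{x}{\ea{\hit{a_i}}}=\phia(x)/\phia(a_i)$ from \eqref{eq:hitting}, together with $\ea{\hit{S}}=0$ on $\{\hit{S}=\infty\}$; the case $a_i=\ell$ with $\ell\notin\I$ is symmetric, via $\psia$ and \eqref{eq:hitting}; when $a_i=\ell$ or $b_i=r$ is an accessible point of $\I$, one extends \autoref{psiharmonic} to that endpoint by the very argument of its proof (or by optional stopping for the bounded local martingale $\ea{t}\phia(X_t)$ up to the exit time of $J_i$); and the degenerate case $J_i=\I$, $S=\emptyset$ is vacuous. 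With the pointwise identity in hand for all $y\in S$, integrating against $\sigma$ and using Tonelli completes the proof. The heart of the argument — that $\Ga(\cdot,y)$ is $\desc$-harmonic throughout $J_i$ precisely because $y$ sits in the complementary set $S$, which is exactly what lets \autoref{psiharmonic} do the work — is short; the endpoint case distinctions are the only place that demands care.
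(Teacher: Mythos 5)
Your argument is correct and follows essentially the same route as the paper's own proof: reduce to the pointwise identity $\Ga(x,y)=\Ex{x}{\ea{\hit{S}}\Ga(X_{\hit{S}},y)}$ for $y\in S$ by noting that $\Ga(\cdot,y)$ restricted to $\overline{J_i}$ is a multiple of $\psia$ or $\phia$ according to the position of $y$, invoke \autoref{psiharmonic} at the two-sided exit time of $J_i$, and then integrate against $\sigma$ with Tonelli. Your explicit treatment of the boundary intervals ($a_i=\ell$ or $b_i=r$) via \eqref{eq:hitting} is more careful than the paper, which leaves those cases implicit, but it is the same proof.
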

\begin{proof}
If $x\in S$ the result is trivial, because $\hit{S}\equiv 0$. Let us consider the case $x\notin S$. In this case $x \in J_i$ for some $i$; we move on to prove that
\bd
\Ga(x,y) = \Ex{x}{\ea{\hit{S}}\Ga(X_{\hit{S}},y)}
\ed 
for all $y$ in $S$. To see this, let us denote by $a=\inf J_i$ and $b=\sup J_i$, and observe that $\hit{S}=\hit{ab}$. If $b < r$ and $y \geq b$ we have
$\Ga(x,y)=\wa^{-1}\psia(x)\phia(y)$ and by \autoref{psiharmonic} we get
\begin{align*}
\Ga(x,y)&= \wa^{-1} \Ex{x}{\ea{\hit{ab}}\psia(X_{\hit{ab}})}\phia(y)\\
&= \Ex{x}{\ea{\hit{ab}}\Ga(X_{\hit{ab}},y)},
\end{align*}
where in the second equality we have used again \eqref{eq:Garepr} and the fact that $\hit{ab}\leq y$. In the case $y\leq a$ we have to do the analogous computation.

Now we can write
\begin{align*}
\Wa(x)&=\int_S \Ga(x,y)\sigma(dy)\\
&=\int_S \Ex{x}{\ea{\hit{S}}\Ga(X_{\hit{S}},y)} \sigma(dy)\\
&=\Ex{x}{\ea{\hit{S}}\int_S \Ga(X_{\hit{S}},y) \sigma(dy)}\\
&=\Ex{x}{\ea{\hit{S}}\Wa(X_{\hit{S}},y)}
\end{align*}
and the result follows.

\end{proof}

\begin{lem}
\label{lem:waeqg}
Let $X$ be a one-dimensional diffusion and consider the function $g\colon \I \to \R$ defined by
\bd
g(x):=\int_{\I} \Ga(x,y) \sigma(dy),
\ed
where $\sigma$ is a signed measure in $\sigalg$. Consider the
 function $\Wa\colon \estados \to \R$ defined by 
\be
\label{eq:defWa2}
\Wa(x):=\int_{S} \Ga(x,y) \sigma(dy),
\ee
where the set $S$ is
\bd
S:=\I \setminus \cup_{i=1}^N J_i,
\ed 
where $N$ could be infinite, and $J_i\subset \I$ are intervals such that $J_i\cap J_j=\emptyset$ if $j\neq i$ and
\begin{itemize}
  \item $\int_{J_i}\phia(y)\sigma(dy)=0$ if there is some $x\in \I$ such that $x<y$ for all $y\in J_i$,
  \item $\int_{J_i}\psia(y)\sigma(dy)=0$ if there is some $x\in \I$ such that $x>y$ for all $y\in J_i$,
\end{itemize}

Then $g(x)=\Wa(x)$ for all $x\in S$. 
\end{lem}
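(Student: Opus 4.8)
The plan is to work directly with the difference $g-\Wa$ on $S$ and to show that each piece of it vanishes. First I would write, for $x\in\I$,
\[
g(x)-\Wa(x)=\int_{\I\setminus S}\Ga(x,y)\,\sigma(dy)=\sum_{i=1}^N\int_{J_i}\Ga(x,y)\,\sigma(dy),
\]
the interchange of the (at most countable) sum and the integral being legitimate because $\int_{\I}|\Ga(x,y)|\,|\sigma|(dy)<\infty$, which is implicit in $g$ being well defined. Hence it suffices to fix $x\in S$ and prove that $\int_{J_i}\Ga(x,y)\,\sigma(dy)=0$ for every $i$.

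Fix such an $x$ and an interval $J_i$, and set $a_i=\inf J_i$, $b_i=\sup J_i$. Since $x\notin J_i$ and $J_i$ is an interval, either $x\le a_i$ or $x\ge b_i$; I would treat the first case, the second being entirely symmetric (under the left--right reflection that swaps $\psia$ with $\phia$ and $\ell$ with $r$). If $x\le a_i$, then in fact $x<y$ for every $y\in J_i$: if $x<a_i$ this is clear, and if $x=a_i$ then $a_i\notin J_i$ because $x\in S$, so every $y\in J_i$ satisfies $y>a_i=x$. Thus $x$ itself is a point of $\I$ lying strictly to the left of all of $J_i$, so the first standing hypothesis on $J_i$ applies and gives $\int_{J_i}\phia(y)\,\sigma(dy)=0$. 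On the other hand $x\le a_i\le y$ for all $y\in J_i$, so the representation \eqref{eq:Garepr} yields $\Ga(x,y)=\wa^{-1}\psia(x)\phia(y)$ throughout $J_i$, and therefore
\[
\int_{J_i}\Ga(x,y)\,\sigma(dy)=\wa^{-1}\psia(x)\int_{J_i}\phia(y)\,\sigma(dy)=0.
\]

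In the symmetric case $x\ge b_i$ the same reasoning with $\psia$ in place of $\phia$ applies: $x>y$ for all $y\in J_i$, the second hypothesis on $J_i$ gives $\int_{J_i}\psia(y)\,\sigma(dy)=0$, and \eqref{eq:Garepr} gives $\Ga(x,y)=\wa^{-1}\psia(y)\phia(x)$ on $J_i$, so again the integral over $J_i$ vanishes. Summing over $i$ then gives $g(x)=\Wa(x)$ for every $x\in S$, which is the assertion. I do not anticipate a genuine obstacle here; the only points that need care are the bookkeeping around strict versus non-strict inequalities at the endpoints of the $J_i$ (handled above by using $x\in S$, i.e. $x\notin J_i$), and, when $N=\infty$, the justification of the termwise integration, for which the crude bound $\sum_i\int_{J_i}|\Ga(x,y)|\,|\sigma|(dy)\le\int_{\I}|\Ga(x,y)|\,|\sigma|(dy)<\infty$ together with dominated convergence suffices.
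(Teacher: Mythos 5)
Your proof is correct and follows essentially the same route as the paper: decompose $g-\Wa$ as the sum of the integrals over the $J_i$, note that $x\in S$ lies entirely to one side of each $J_i$, and use the product form of $\Ga$ together with the vanishing-integral hypotheses. The extra care you take with the endpoint cases and with the termwise integration when $N=\infty$ is welcome but does not change the argument.
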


\begin{proof}
From the definitions of $g$ and $\Wa$ we get
\begin{align*}
%\label{eq:gVa}
g(x)&=\int_{\I}\Ga(x,y)\sigma(dy)\\
&=\Wa(x) + \sum_{i=1}^N \int_{J_i}\Ga(x,y)\sigma(dy). \notag
\end{align*}
To prove the result it is enough to verify that if $x\in S$, then 
\bd
\int_{J_i}\Ga(x,y)\sigma(dy)=0,\quad \mbox{for all $i$.}
\ed
Consider $x\in S$, then for any $i=1\ldots N$, we have that $x\notin J_i$. Since $J_i$ is an interval either $x<y$ for all $y$ in $J_i$ or $x>y$ for all $y$ in $J_i$. Suppose the first case, from \eqref{eq:Garepr} we obtain
\begin{align*}
\int_{J_i}\Ga(x,y)\sigma(dy)=\wa^{-1}\psia(x)\int_{J_i}\phia(y)\sigma(dy)=0,
\end{align*}
where the second equality follows from hypothesis. The other case is analogous.

\end{proof}
\subsection{More about the inversion formula}
In \autoref{sec:aboutInv} we already considered the problem of determining when the inversion formula \eqref{eq:ginversionCap3} is valid. In this chapter, we need to have an easy rule to decide whether the inversion formula holds, for instance, for functions $g$ such that both $\lim_{x\to r}g(x)$ and $\lim_{x\to \ell}g(x)$ are infinite. These cases are not considered in \autoref{sec:aboutInv}. Nevertheless, an analogous result of \autoref{propInversion2} and \autoref{propInversion3} can be stated as well in this case.

\begin{prop} \label{propInversion4}
Suppose that $\I=(\ell,r)$ and that $g\colon\I\to \R$ is such that the differential operator is defined for all $x\in\I$
 %($g$ not necessarily in $\D_L$) 
 and
\be \label{eq:alIntegrable4}
 \int_{\I}\Ga(x,y) |\al g(y)| m(dy)<\infty.
\ee
Assume that for each natural number $n$, satisfying $\ell+\frac1n<r-\frac1n\in \I$ there exists a function $g_n \in \D_L$ such that $g_n(x)=g(x)$ for all $x\colon \ell+\frac1n \leq x \leq r-\frac1n$. If 
\be \label{eq:gOverPsi4}
 \lim_{z\to r^-} \frac{g(z)}{\psia(z)}=\lim_{z\to \ell^+} \frac{g(z)}{\phia(z)}=0,
\ee
then \eqref{eq:ginversionCap3} holds.
\end{prop}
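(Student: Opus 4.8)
The plan is to mimic the proofs of \autoref{propInversion2} and \autoref{propInversion3}, now controlling the behaviour of $g$ at \emph{both} endpoints of $\I=(\ell,r)$ simultaneously. Fix $x\in\I$. Choose strictly monotone sequences $c_n\downarrow\ell$ and $d_n\uparrow r$ with $[c_n,d_n]\subset\I$, and, using the hypothesis, for each $n$ pick $g_n\in\D_L$ with $g_n=g$ on $[c_n,d_n]$. Since $[c_n,d_n]\subset(c_{n+1},d_{n+1})$ and the differential operator is local and defined on all of $\I$, we have $g_{n+1}=g$ and $\al g_{n+1}=\al g$ on $[c_n,d_n]$. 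Let $\tau_n:=\hit{\{c_n,d_n\}}$ be the exit time of $(c_n,d_n)$; since $X$ has continuous paths and stays in $\I=(\ell,r)$ for all times, a path on $[0,T]$ has $\min_{[0,T]}X>\ell$ and $\max_{[0,T]}X<r$, so $\tau_n>T$ for $n$ large, i.e. $\tau_n\uparrow\infty$ a.s.

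First I would apply the discounted Dynkin formula \eqref{eq:dynkinFormula} to $g_{n+1}\in\D_L$ and the stopping time $\tau_n$. For $t\le\tau_n$ one has $X_t\in[c_n,d_n]$, whence $\al g_{n+1}(X_t)=\al g(X_t)$ and $g_{n+1}(X_{\tau_n})=g(X_{\tau_n})$, while $g_{n+1}(x)=g(x)$; this gives
\[
g(x)=\Ex{x}{\int_0^{\tau_n}\ea{t}\al g(X_t)\,dt}+\Ex{x}{\ea{\tau_n}g(X_{\tau_n})}.\qquad(\star)
\]
The claim will follow by letting $n\to\infty$ in $(\star)$, once the two terms on the right are controlled.

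For the boundary term, set $\varepsilon_n:=\max\bigl\{|g(c_n)|/\phia(c_n),\ |g(d_n)|/\psia(d_n)\bigr\}$, which tends to $0$ by \eqref{eq:gOverPsi4}. On $\{X_{\tau_n}=c_n\}$ one has $|g(X_{\tau_n})|\le\varepsilon_n\phia(X_{\tau_n})$ and on $\{X_{\tau_n}=d_n\}$ one has $|g(X_{\tau_n})|\le\varepsilon_n\psia(X_{\tau_n})$, so $|g(X_{\tau_n})|\le\varepsilon_n\bigl(\phia(X_{\tau_n})+\psia(X_{\tau_n})\bigr)$ pointwise. Applying \autoref{psiharmonic} with $a=c_n>\ell$ and $b=d_n<r$ gives $\Ex{x}{\ea{\tau_n}\phia(X_{\tau_n})}=\phia(x)$ and $\Ex{x}{\ea{\tau_n}\psia(X_{\tau_n})}=\psia(x)$, hence
\[
\left|\Ex{x}{\ea{\tau_n}g(X_{\tau_n})}\right|\le\varepsilon_n\bigl(\phia(x)+\psia(x)\bigr)\xrightarrow[n\to\infty]{}0
\]
(alternatively, split according to which endpoint is hit and bound each piece by $\psia(x)/\psia(d_n)$ resp.\ $\phia(x)/\phia(c_n)$ using \eqref{eq:hitting}). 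For the integral term, Fubini's theorem together with \eqref{eq:alIntegrable4} gives $\Ex{x}{\int_0^{\infty}\ea{t}|\al g(X_t)|\,dt}=\int_{\I}\Ga(x,y)|\al g(y)|\,m(dy)<\infty$, so $\int_0^{\infty}\ea{t}|\al g(X_t)|\,dt$ is an integrable dominating function; since $\tau_n\uparrow\infty$, dominated convergence yields $\Ex{x}{\int_0^{\tau_n}\ea{t}\al g(X_t)\,dt}\to\int_0^{\infty}\ea{t}\Ex{x}{\al g(X_t)}\,dt=\int_{\I}\Ga(x,y)\al g(y)\,m(dy)$, the last equality being \eqref{eq:Ra=Ga}. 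Passing to the limit in $(\star)$ then gives exactly \eqref{eq:ginversionCap3}.

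The argument is routine once the one-sided versions are available; the only genuinely new ingredient, compared with \autoref{propInversion2} and \autoref{propInversion3}, is the simultaneous control of the boundary term at both endpoints, so I expect that step — in particular the pointwise estimate $|g(X_{\tau_n})|\le\varepsilon_n(\phia+\psia)(X_{\tau_n})$ followed by the use of \autoref{psiharmonic} in both directions — to require the most care. A minor technical point is the index shift (applying Dynkin's formula to $g_{n+1}$ at the exit time of $(c_n,d_n)$) so that $\al g_{n+1}$ coincides with $\al g$ on the \emph{closed} path range $[c_n,d_n]$, together with the standard path-continuity argument for $\tau_n\uparrow\infty$.
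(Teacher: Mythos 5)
Your proof is correct and follows essentially the same route as the paper's: exit times of the shrinking intervals, Dynkin's formula applied to the extensions $g_n$, the boundary term killed by the limits \eqref{eq:gOverPsi4} via \eqref{eq:hitting} (or equivalently \autoref{psiharmonic}), and Fubini plus dominated convergence for the integral term. Your handling of the boundary term with $\varepsilon_n$ and absolute values is in fact slightly more careful than the paper's, which drops the indicators and writes a one-sided inequality.
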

\begin{proof}
The outline of the proof is the same as in \autoref{propInversion2} with minor differences:
By \eqref{eq:Ra=Ga} we get that 
\bd \int_{\I} \Ga(x,y) \al g(y) m(dy)=\Ra \al g(x). \ed
Consider the strictly increasing sequence $r_n:=r-\frac1{n-1}$ and the strictly decreasing sequence $\ell_n:=\ell+\frac1{n-1}$.
%$(r_n)\subset \I$ such that $r_n \to r$ when $n \to \infty$; 
Let $\tau_n$ be the hitting time of the set $\I\setminus (\ell_n,r_n)$, defined by
\bd
\tau_n:=\inf\{t\geq 0\colon X_t \notin (\ell_n,r_n)\}.
\ed
Observe that $\tau_n=\inf\{\hit{r_n},\hit{\ell_n}\}$.
By the continuity of the paths it can be concluded that $\tau_n \to \infty,\ (n\to\infty)$. 
Applying formula \eqref{eq:dynkinFormula} to $g_n$ and $\tau_n$ we obtain, for $x\in(\ell_n,r_n)$,
\begin{equation*}
g_n(x)=\Ex{x}{\int_0^{\tau_n} \ea{t} \al g_n(X_t) dt}+ \Ex{x}{\ea{\tau_n}g_n(X_{\tau_n})},
\end{equation*}
taking into account that $g_n(x)=g(x)$ and $\al g(x)=\al g_n(x)$ for $\ell_{n+1} <x <r_{n+1}$, from the previous equality follows that
\be \label{eq:paraconvdominada4}
g(x)=\Ex{x}{\int_0^{\tau_n} \ea{t} \al g(X_t) dt}+ \Ex{x}{\ea{\tau_n}g(X_{\tau_n})}.
\ee
About the second term on the right-hand side of the previous equation we have
\begin{align*}
\Ex{x}{\ea{\tau_n}g(X_{\tau_n})}&=\Ex{x}{\ea{\hit{r_n}} g(X_{\hit{r_n}})\ind{\{\hit{r_n}<\hit{\ell_n}\}}}+\Ex{x}{\ea{\hit{\ell_n}} g(X_{\hit{\ell_n}})\ind{\{\hit{\ell_n}<\hit{r_n}\}}}\\
&\leq \Ex{x}{\ea{\hit{r_n}} g(X_{\hit{r_n}})}+\Ex{x}{\ea{\hit{\ell_n}} g(X_{\hit{\ell_n}})}\\
&=\psia(x) \frac{g(r_n)}{\psia(r_n)}+\phia(x)\frac{g(\ell_n)}{\phia(r_n)},
\end{align*}
which taking the limit as $n\to \infty$ vanishes, by the hypotheses.
Finally, in the same way we did in \autoref{propInversion2}, we can apply Fubini's theorem, and dominated convergence theorem to conclude that the limit as $n\to \infty$ of the first term on the right-hand side of \eqref{eq:paraconvdominada4} is
\bd
\int_{\I} \Ga(x,y) \al g(y) m(dy),
\ed
thus completing the proof.

\end{proof}

\section{The two-sided case}

The following theorem is a two-sided version of \autoref{teo:verif1}.
\begin{teo} \label{teo:twosided}
Consider a one-dimensional diffusion $X$  and a reward function $g\colon \I \to \R$. Suppose $\al g(x)$ is defined and non-negative for  $x<x_{\ell}$ and $x>x_r$, where $x_\ell,x_r\colon  \ell<x_{\ell}<x_r<r$ are solution of the system of equations
 \begin{equation}
 \label{eq:deftresholds}
 \begin{cases}
  \phia(x_\ell) k_\ell(x_\ell) +\psia(x_\ell)k_r(x_r) = g(x_\ell),\\
  \phia(x_r) k_\ell(x_\ell) +\psia(x_r)k_r(x_r) = g(x_r),
 \end{cases}
\end{equation}
with
\bd
k_\ell(x_\ell)=\wa^{-1}\int_{(\ell,x_\ell)} \psia(y) \sigma(dy)
\ed
and
\bd
k_r(x_r)=\wa^{-1}\int_{(x_r,r)} \phia(y) \sigma(dy),
\ed
where $\sigma(dy)$ states for $\al g(y) m(dy)$.
Assume either that $\g$ satisfy the inversion formula \eqref{eq:ginversionCap3} or there exists $\gi$ such that $\gi(x)=g(x)$ for $x\leq x_\ell$ and for $x\geq x_r$ and $\gi$ satisfies the inversion formula \eqref{eq:ginversionCap3}. Define $\Va$ by
 \begin{equation*}
   \Va(x):=\int_{\I\setminus [x_\ell,x_r]}\Ga(x,y)\sigma(dy).
 \end{equation*}
 If $\Va(x)\geq g(x)$ for $x\in (x_\ell,x_r)$, then $\SR=\I\setminus [x_\ell,x_r]$ is the stopping region and $\Va$ is the value function.
\end{teo}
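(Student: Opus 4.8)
The plan is to follow the proof of \autoref{teo:verif1} almost verbatim, with the two equations in \eqref{eq:deftresholds} playing the role of the continuous‑fit equation \eqref{eq:defx*teo} and the standing assumption $\Va(x)\geq g(x)$ on $(x_\ell,x_r)$ playing the role of \autoref{hyp:mayorizq}. Throughout write $\sigma(dy)=\al g(y)\,m(dy)$; when the inversion formula \eqref{eq:ginversionCap3} is invoked one replaces $g$ by the extension $\gi$ and works with $\al\gi(y)\,m(dy)$, which agrees with $\sigma$ on $\I\setminus[x_\ell,x_r]$ because the differential operator is local, so that $\Va$ and $k_\ell,k_r$ are unchanged, and since $\gi=g$ on $\SR$ every conclusion drawn for $\gi$ transfers to $g$ there.

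First I would use the explicit form \eqref{eq:Garepr} of $\Ga$ to compute, for $x\in(x_\ell,x_r)$,
\[
\Va(x)=\phia(x)\,k_\ell(x_\ell)+\psia(x)\,k_r(x_r),
\]
using that $\Ga(x,y)=\wa^{-1}\psia(y)\phia(x)$ for $y\le x_\ell$ and $\Ga(x,y)=\wa^{-1}\psia(x)\phia(y)$ for $y\ge x_r$. Thus $\Va$ is $\desc$‑harmonic on $(x_\ell,x_r)$, extends continuously to its closure, and the same computation at $x_\ell$ and $x_r$ shows that the system \eqref{eq:deftresholds} says exactly $\Va(x_\ell)=g(x_\ell)$ and $\Va(x_r)=g(x_r)$. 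Moreover, since $\al g\geq 0$ on $\I\setminus[x_\ell,x_r]$, the measure $\mu(dy):=\ind{\{y\notin[x_\ell,x_r]\}}\,\sigma(dy)$ is non‑negative, so $\Va(x)=\int_{\I}\Ga(x,y)\,\mu(dy)$ is $\desc$‑excessive by the Riesz decomposition \eqref{eq:alphaexcessive} with vanishing harmonic part.

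Next, subtracting the (extended) inversion formula from the definition of $\Va$ gives $g-\Va=\int_{[x_\ell,x_r]}\Ga(\cdot,y)\,\sigma(dy)$; evaluating this at $x_\ell$ and at $x_r$ and inserting \eqref{eq:deftresholds} yields $\int_{[x_\ell,x_r]}\phia(y)\sigma(dy)=\int_{[x_\ell,x_r]}\psia(y)\sigma(dy)=0$, which are precisely the hypotheses of \autoref{lem:waeqg} for the single interval $J_1=[x_\ell,x_r]$; that lemma gives $\Va=g$ on $\SR$. Combined with the assumption $\Va\geq g$ on $(x_\ell,x_r)$, the function $\Va$ is a $\desc$‑excessive majorant of $g$, so Dynkin's characterization yields $\Va(x)\geq\sup_\tau\Ex{x}{\ea{\tau}g(X_\tau)}$. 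For the converse, \autoref{lem:Wa} applied to $\mu\geq0$ and $S=\SR$ gives $\Va(x)=\Ex{x}{\ea{\hit{\SR}}\Va(X_{\hit{\SR}})}$; by continuity of the paths $X_{\hit{\SR}}\in\{x_\ell,x_r\}$, where $\Va=g$, so $\Va(x)=\Ex{x}{\ea{\tau^*}g(X_{\tau^*})}$ with $\tau^*=\hit{\SR}$ (the case $x\in\SR$ being trivial, $\tau^*=0$), whence $\Va(x)\leq\sup_\tau\Ex{x}{\ea{\tau}g(X_\tau)}$. The two inequalities give that $\Va$ is the value function and $\tau^*$ is optimal, i.e. $\SR=\I\setminus[x_\ell,x_r]$ is a stopping region.

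The step I expect to require the most care is the bookkeeping at the two thresholds: checking that whether $x_\ell$ and $x_r$ are counted in $\SR$ or not is immaterial --- immediate when $m$ has no atoms there, and otherwise needing the boundary mass to be absorbed consistently into the representing measure --- and that $\hit{\SR}$ really coincides with the hitting time of $\{x_\ell,x_r\}$, which relies on the regularity of $X$ so that the process leaves $[x_\ell,x_r]$ immediately after reaching an endpoint. The remaining arguments are a routine two‑sided transcription of the one‑sided proof.
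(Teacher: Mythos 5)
Your proposal is correct and follows essentially the same route as the paper's proof: show via \eqref{eq:deftresholds} and the inversion formula for $\gi$ that $\int_{[x_\ell,x_r]}\phia\,\sigma(dy)=\int_{[x_\ell,x_r]}\psia\,\sigma(dy)=0$, invoke \autoref{lem:waeqg} to get $\Va=g$ on $\SR$, then combine $\desc$-excessivity, the majorant hypothesis, Dynkin's characterization, and \autoref{lem:Wa} for the two inequalities. The explicit computation of $\Va$ as $k_\ell\phia+k_r\psia$ on $(x_\ell,x_r)$ and the remarks on endpoint bookkeeping are sensible additions but not departures from the paper's argument.
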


\begin{remark}
By \eqref{eq:Garepr} the system of equation \eqref{eq:deftresholds} is 
\bd
\begin{cases}
 \int_{\I\setminus [x_\ell,x_r]}\Ga(x_\ell,y) \sigma(dy) = g(x_\ell)\\
 \int_{\I\setminus [x_\ell,x_r]}\Ga(x_r,y) \sigma(dy) = g(x_\ell)
\end{cases}
\ed
and the value function $\Va$ also can be represented by
\bd
\Va(x)=\begin{cases}
g(x), \qquad & x\in \SR,\\
k_\ell \phia(x)+k_r \psia(x), &x\in [x_\ell,x_r].
\end{cases}
\ed
\end{remark}

\begin{proof}
Clearly the defined $\Va$ is an $\desc$-excessive function. Consider $x\in \SR$; by hypothesis, there exists $\gi(x)$ that satisfies \eqref{eq:ginversionCap3} and it is equal to $g$ for $x\leq x_\ell$ and for $x\geq x_r$ (in some cases $\gi$ could be $\g$ itself). Therefore
\begin{align}\label{eq:hinv}
\gi(x)&=\int_{\I}\Ga(x,y)\al \gi(y)m(dy) \notag \\
&=\int_{\I\setminus [x_\ell,x_r]}\Ga(x,y)\sigma(dy)+\int_{[x_\ell,x_r]}\Ga(x,y)\al \gi(y)m(dy) \notag \\
&=\Va(x)+\int_{[x_\ell,x_r]}\Ga(x,y)\al \gi(y)m(dy).
\end{align}
Observe that \eqref{eq:deftresholds} is $g(x_\ell)=\Va(x_\ell)$ and $g(x_r)=\Va(x_r)$. Considering $g(x_\ell)=\gi(x_\ell)$ and $g(x_r)=\gi(x_r)$, we obtain, by \eqref{eq:hinv}, that 
\bd
\int_{[x_\ell,x_r]}\Ga(x_\ell,y)\al \gi(y)m(dy)=\int_{[x_\ell,x_r]}\Ga(x_r,y)\al \gi(y)m(dy)=0
\ed
and this implies, using the explicit formula for $\Ga$, that
\bd
\int_{[x_\ell,x_r]}\phia(y)\al \gi(y)m(dy)=\int_{[x_\ell,x_r]}\psia(y)\al \gi(y)m(dy)=0.
\ed
Considering the previous equation we are in conditions to apply \autoref{lem:waeqg} to $\gi$, with 
$\sigma(dy)=\al \gi(y)m(dy)$ and $\Wa=\Va$; we conclude that $\Va(x)=\gi(x)$ for $x\in \SR$, region in which  $\gi(x)=\g(x)$. We assumed as hypothesis $\Va(x)\geq g(x)$ for $x\in (x_\ell,x_r)$, therefore we have proved that $\Va$ is a majorant of $g$.
Since, by Dynkin's characterization, the value function is the minimal $\desc$-excessive majorant of $g$, and $\Va$ is $\desc$-excessive and majorant we conclude that 
$$\Va(x) \geq \sup_{\tau}\E_x \left(\ea{\tau}g(X_\tau)\right).$$
By \autoref{lem:Wa} we get that $\Va(x)=\E_x\left(\ea{\hit{\SR}}\Va(X_{\hit{\SR}})\right)$ and considering that $\Va(x)=g(x)$ for $x\in \SR$ we conclude that
$$\Va(x)=\E_x\left(\ea{\hit{\SR}}g(X_{\hit{\SR}})\right),$$
which proves the other inequality.
\end{proof}

\subsection{Brownian motion with drift and $g(x)=|x|$}
\label{ex:absx1}
Consider $X$ a Brownian motion with drift $\mu$ \cite[see][p. 127]{borodin}. The scale function is $s(x)=(1-e^{2\mu x})/2\mu$, the speed measure is $m(dx)=2e^{2\mu x}dx$. The differential operator is $Lf(x)=f''(x)/2+\mu f'(x)$. Denoting by $\gamma=\sqrt{2\desc+\mu^2}$ we have 
\bd
\phia(x)=e^{-(\gamma+\mu)x} \quad \mbox{and}\quad \psia(x)=e^{(\gamma-\mu)x}.
\ed
The Wronskian is $\wa=2\gamma$.

\begin{example}
We consider the OSP with reward function $g(x)=|x|$. Since it is not differentiable at $0$, we can not apply \autoref{teo:diffusionGeneral} as in the previous examples; nevertheless we do can apply \autoref{teo:twosided} (observe that the reward function suggests that the problem is two-sided). 
We have to solve \eqref{eq:deftresholds}. By the nature of the problem we assume that $x_\ell < 0 < x_r$. We have 
\bd
\sigma(dx)= \begin{cases} 
(\desc x-\mu) 2e^{2\mu x}dx\qquad x>0,\\
(-\desc x+\mu) 2e^{2\mu x}dx\qquad x<0;
\end{cases}
\ed
then, if $|\mu|< \gamma$
\bd 
\int_{x_r}^{\infty} \phia(y) \sigma(dy) = e^{-(\gamma-\mu)x_r}(x_r(\gamma+\mu)+1)
\ed
and 
\bd 
\int^{x_\ell}_{-\infty} \psia(y) \sigma(dy) = e^{(\gamma+\mu)x_\ell}(-x_\ell(\gamma-\mu)+1).
\ed
The system of equations \eqref{eq:deftresholds} becomes 
\bd
\begin{cases}
e^{(\gamma-\mu)(x_{\ell}-x_r)}(x_r(\gamma+\mu)+1)=(-\gamma-\mu)x_{\ell} -1\\
e^{(\gamma+\mu)(x_{\ell}-x_r)}(-x_\ell(\gamma-\mu)+1)=(\gamma-\mu)x_r -1,
\end{cases}
\ed
which has a unique solution $(x_{\ell},x_r)$; by the application of \autoref{teo:twosided} 
we can conclude that the interval $(x_\ell,x_r)$ is the continuation region associated to the optimal stopping problem. 
We remark that the system of equations
that defines $x_\ell$ and $x_r$ is equivalent to the system obtained in \citep{salminen85}. In \autoref{ex:absx2} we consider this problem again and include some graphics with the solution.
\end{example}

\section{General case with regular reward}

During this section we consider a one-dimensional diffusion  $X$ whose speed measure has no atoms, and a reward function $g$ such that the inversion formula \eqref{eq:ginversionCap3} is fulfilled (the differential operator $L$ must be defined for all $x\in \I$). We also assume that the set 
\bd \left\{x\colon \al g(x)<0 \right\} \ed
has a finite number of connected components. We denote by $\sigma(dx)$ the measure $\al g(x)m(dx)$. 

We characterize the solution to the OSP for this kind of reward functions, providing an algorithm to find it.

\begin{teo} \label{teo:diffusionGeneral}
Under the assumptions of this section, the value function associated with the OSP is
% Consider a diffusion $X$ whose speed measure has no atoms. Assume the reward function $g$ satisfies the inversion formula \eqref{eq:ginversionCap3} and that the set $\al g(y)<0$ has a finite number of connected components. Using $\sigma(dy)$ to denote the measure $\al g(y)m(dy)$

 \begin{equation}
 \label{eq:VaDiffGeneral}
  \Va(x)=\int_{\SR} \Ga(x,y) \sigma(dy),
 \end{equation}
 where $\SR$, the stopping region, is given by $\SR=\I\setminus \CR$ and $\CR$, the continuation region, can be represented as a disjoint union of intervals $(\ell_i,r_i)$, satisfying
 \begin{itemize}
  \item if $\ell_i\neq \ell$, then $\int_{(\ell_i,r_i)}\phia(y)\sigma(dy)=0$,
  \item if $r_i\neq r$, then $\int_{(\ell_i,r_i)}\psia(y)\sigma(dy)=0$, and
  \item the set $\{x\colon \al g(x)<0\}$ is included in $\CR$
 \end{itemize}
 Furthermore, $\CR$ can be found by the algorithm \ref{algor}, to be presented further on.
\end{teo}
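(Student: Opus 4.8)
The plan is to prove the theorem by construction, so that no a posteriori verification is needed: I will describe the algorithm that outputs $\CR$, show it terminates, and then check that the function $\Va$ it produces via \eqref{eq:VaDiffGeneral} is automatically an $\alpha$-excessive majorant of $g$ which coincides with the discounted reward of the hitting time of $\SR=\I\setminus\CR$. The ingredients are the Riesz representation \eqref{eq:alphaexcessive}, Dynkin's characterization of $\Va$ as the least $\alpha$-excessive majorant of $g$, \autoref{lem:waeqg} (which will give $\Va=g$ on $\SR$), \autoref{lem:Wa} (which will give the lower bound), and \autoref{teo:verif1} together with its left-sided analogue for the pieces of $\CR$ that reach an endpoint.

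\textbf{The algorithm and its termination.} By the corollary of Dynkin's formula (if $g=\Ra h$ and $h<0$ on a neighbourhood of $x$ then $x\in\CR$), the finitely many open connected components of $\{x\colon\al g(x)<0\}$ must lie in the continuation region, and on each of them $\sigma(dy)=\al g(y)m(dy)$ is a negative measure, so $\int\psia(y)\sigma(dy)<0$ and $\int\phia(y)\sigma(dy)<0$ there. The algorithm (\autoref{algor}) enlarges these intervals: it slides the right endpoint to the right — where $\sigma\geq0$, so $\int\psia(y)\sigma(dy)$ increases — until that integral vanishes (or, if it never does, all the way to $r$, the one-sided situation of \autoref{teo:verif1}), and symmetrically slides the left endpoint to the left until $\int\phia(y)\sigma(dy)$ vanishes (or reaches $\ell$); whenever two intervals collide, or an enlarged interval absorbs a not-yet-processed component of $\{\al g<0\}$, the pieces are merged and the two integral conditions are re-solved on the union. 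Since there are finitely many initial components and every merge strictly reduces their number, while between merges each endpoint moves monotonically, the procedure halts after finitely many steps and returns a finite disjoint union $\CR=\bigcup_i(\ell_i,r_i)$ with $\{\al g<0\}\subseteq\CR$, with $\int_{(\ell_i,r_i)}\phia(y)\sigma(dy)=0$ whenever $\ell_i\neq\ell$, and with $\int_{(\ell_i,r_i)}\psia(y)\sigma(dy)=0$ whenever $r_i\neq r$. Set $\SR=\I\setminus\CR$ and define $\Va$ by \eqref{eq:VaDiffGeneral}.

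\textbf{$\Va$ is an $\alpha$-excessive majorant.} Because $\{\al g<0\}\subseteq\CR$, the restriction of $\sigma$ to $\SR$ is non-negative, and its $\Ga$-potential is finite since $\int_\I\Ga(x,y)|\al g(y)|m(dy)<\infty$ is part of the standing hypothesis (the inversion formula \eqref{eq:ginversion}); hence $\Va$, being a pure $\Ga$-potential of a non-negative measure, is $\alpha$-excessive by \eqref{eq:alphaexcessive}. For $x\in\SR$, the integral conditions on the $(\ell_i,r_i)$ are exactly the hypotheses of \autoref{lem:waeqg} with $S=\SR$, so $\Va(x)=g(x)$ on $\SR$. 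It remains to check $\Va\geq g$ on $\CR$. Fix $x\in(\ell_i,r_i)$; any other interval $(\ell_j,r_j)$ lies entirely on one side of $x$, and if it lies to the right then $\ell_j>\ell$, whence $\int_{(\ell_j,r_j)}\Ga(x,y)\sigma(dy)=\wa^{-1}\psia(x)\int_{(\ell_j,r_j)}\phia(y)\sigma(dy)=0$, and symmetrically on the left. Subtracting these null contributions from the inversion formula,
\begin{align*}
\Va(x)&=g(x)-\int_{(\ell_i,r_i)}\Ga(x,y)\sigma(dy)\\
&=g(x)-\wa^{-1}\phia(x)\int_{\ell_i}^{x}\psia(y)\sigma(dy)-\wa^{-1}\psia(x)\int_{x}^{r_i}\phia(y)\sigma(dy),
\end{align*}
so $\Va(x)\geq g(x)$ amounts to the non-positivity of the last two terms. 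This is where the fine structure built by the algorithm enters: on $(\ell_i,r_i)$ the region $\{\al g<0\}$ sits between regions where $\sigma\geq0$, and the full integrals $\int_{(\ell_i,r_i)}\psia(y)\sigma(dy)$ and $\int_{(\ell_i,r_i)}\phia(y)\sigma(dy)$ vanish (or the interval abuts an endpoint, handled by \autoref{teo:verif1}), from which a sign analysis of the partial integrals $\int_{\ell_i}^x\psia(y)\sigma(dy)$ and $\int_x^{r_i}\phia(y)\sigma(dy)$ yields the desired inequality. I expect this sign bookkeeping — carried out uniformly in $x$ and over all the interval shapes that merges can produce — to be the main obstacle.

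\textbf{Conclusion.} By the above, $\Va$ is an $\alpha$-excessive majorant of $g$, so Dynkin's characterization gives $\Va(x)\geq\sup_\tau\Ex{x}{\ea{\tau}g(X_\tau)}$. Conversely, $\CR$ is a disjoint union of intervals, so \autoref{lem:Wa} applies with $S=\SR$ and yields $\Va(x)=\Ex{x}{\ea{\hit{\SR}}\Va(X_{\hit{\SR}})}$; since $\Va=g$ on $\SR$, the right-hand side equals $\Ex{x}{\ea{\hit{\SR}}g(X_{\hit{\SR}})}\leq\sup_\tau\Ex{x}{\ea{\tau}g(X_\tau)}$. Therefore equality holds throughout, $\Va$ is the value function, $\SR$ is a stopping region, and the representation \eqref{eq:VaDiffGeneral} holds; the piecewise formula $\Va=k_1^i\phia+k_2^i\psia$ on each $(\ell_i,r_i)$ with the stated constants follows by writing out \eqref{eq:Garepr} and imposing $\Va=g$ at the (finite) endpoints of $(\ell_i,r_i)$.
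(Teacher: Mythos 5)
Your architecture matches the paper's: grow intervals around the components of $\{\al g<0\}$ until the $\phia$- and $\psia$-integrals vanish (or an endpoint is reached), merge on collision, then show the resulting $\Va$ is an excessive majorant equal to $g$ on $\SR$ and conclude with Dynkin's characterization plus \autoref{lem:Wa}. The upper and lower bounds at the end, the use of \autoref{lem:waeqg} for $\Va=g$ on $\SR$, and the identity $\Va(x)=g(x)-\int_{(\ell_i,r_i)}\Ga(x,y)\sigma(dy)$ for $x\in(\ell_i,r_i)$ are all correct and are exactly how the paper closes the argument.

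The genuine gap is the step you yourself flag as "the main obstacle": proving $\int_{(\ell_i,r_i)}\Ga(x,y)\sigma(dy)\leq 0$ for every $x$ in the interval. This does not follow from the two vanishing integrals $\int_{(\ell_i,r_i)}\phia\,\sigma=\int_{(\ell_i,r_i)}\psia\,\sigma=0$ by a generic "sign analysis of partial integrals"; it is an invariant (assertion (iv) of \autoref{cond:continuation}) that must be established in the base case and then re-proved after every enlargement and every merge, and this occupies most of \autoref{lem:pasobase} and all of Lemmas \ref{lem:casoIntervalosPropios}, \ref{lem:caso1}--\ref{lem:caso2lados}. The key device you are missing is the comparison function $h(x,y)=k_1(x)\psia(y)+k_2(x)\phia(y)$ with $k_1(x),k_2(x)\geq0$ chosen so that $h(x,\cdot)$ agrees with $\Ga(x,\cdot)$ at the endpoints $a,b$ of the negativity component $J$; then $h(x,y)\leq\Ga(x,y)$ for $y\in J$ (where $\sigma\leq0$) and $h(x,y)\geq\Ga(x,y)$ for $y\notin J$ (where $\sigma_J\geq0$), giving $\int_{\bar J}\Ga(x,y)\sigma_J(dy)\leq\int_{\bar J}h(x,y)\sigma_J(dy)\leq0$, the last inequality because $h(x,\cdot)$ is a non-negative combination of $\phia$ and $\psia$ whose integrals against $\sigma_J$ over $\bar J$ are $\leq0$. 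The merge lemmas then propagate this by splitting $\int_{\bar J}\Ga(x,y)\sigma_J(dy)$ into the contributions of the previously validated pairs plus correction terms controlled by the same one-sided comparison $k(x)\psia(y)\lessgtr\Ga(x,y)$. A second, smaller problem: you propose to handle components of $\CR$ that reach $\ell$ or $r$ by invoking \autoref{teo:verif1} and its left-sided analogue, but those theorems assert that the \emph{whole} problem is one-sided, which is false when $\CR$ has several components; the paper instead treats these cases inside the algorithm (Lemmas \ref{lem:caso1}, \ref{lem:caso2}, \ref{lem:casoizq}, \ref{lem:casoder}, \ref{lem:caso2lados}), again by verifying \autoref{cond:continuation} directly.
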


\begin{remark}
The condition $m(\{x\})=0$ (absence of atoms of the speed measure) is required only for simplicity of exposition. A corresponding result for the general case can also be obtained. Observe that in the one-sided case we have not considered this restriction (see, for instance, \autoref{ex:sticky} of the sticky BM).
\end{remark}

\begin{remark} \label{remark:formaVgeneral}
Once the continuation region is found, we have the integral formula for $\Va$, given in \eqref{eq:VaDiffGeneral}. Consider one of the connected components of the continuation region $J_i=(a_i,b_i)$; for $x\in J_i$ we have
\begin{align*}
\Va(x)&=\int_{\I\setminus \CR}\Ga(x,y)\sigma(dy)\\
&=\int_{(\I\setminus \CR)\cap \{x<a_i\}}\wa^{-1}\psia(y)\phia(x)\sigma(dy)+\int_{(\I\setminus \CR)\cap \{x>b_i\}}\wa^{-1}\psia(x)\phia(y)\sigma(dy)\\
&=k_1^i \phia(x)+k_2^i \psia(x).
\end{align*}
Other alternative way to find $k_1^i$ and $k_2^i$ is to take into account the fact that $\Va(a_i)=g(a_i)$ and $\Va(b_i)=g(b_i)$ and solve the system of equations
\begin{equation*}
\begin{cases}
k_1^i \phia(a_i) + k_2^i\psia(a_i)=g(a_i)\\
k_1^i \phia(b_i) + k_2^i\psia(b_i)=g(b_i)
\end{cases}
\end{equation*}
obtaining
 $$k_1^i=\frac{g(b_i)\psia(a_i)-g(a_i)\psia(b_i)}{\psia(a_i)\phia(b_i)-\psia(b_i)\phia(a_i)}$$
 and
 $$k_2^i=\frac{g(a_i)\phia(b_i)-g(b_i)\phia(a_i)}{\psia(a_i)\phia(b_i)-\psia(b_i)\phia(a_i)}.$$
In the particular case in which $a_i=\ell$ we have $k_1^i=0$ and $k_2^i=g(b_i)/\psia(b_i)$, and if $b_i=r$ then $k_1=g(a_i)/\phia(a_i)$ and $k_2=0$. We have the following alternative formula for $\Va$:
\begin{equation}
\label{eq:VaMasHumana}
\Va(x)=
\begin{cases}
g(x)&\text{for $x \notin \CR$}\\
k_1^i\phia(x)+k_2^i\psia(x) &\text{for $x \in J_i\colon i=1\ldots n$}
\end{cases}
\end{equation}
\end{remark}

The proof of this theorem is essentially the algorithm to find the intervals that constitute the continuation region (\autoref{algor}) and it requires some previous results. Anyway we start by giving a brief idea of the algorithm as a motivation:
\begin{enumerate}
\item split the set $\al g(x)<0$ in $J_1=(a_1,b_1), \ldots, J_n=(a_n,b_n)$ disjoint intervals with $a_1<b_1<a_2<b_2<\ldots <a_n<b_n$;
\item for each $J_i$ consider a bigger interval ${\bar{J}_i}$ contained in the continuation region (see \autoref{cond:continuation});
\item if ${\bar{J}_i}$ are disjoint intervals then $\CR=\bigcup_i {\bar{J}_i}$;
\item else, consider, for each connected component $A$ of $\bigcup_i {\bar{J}_i}$, a unique interval $(a',b')$, where $a'=\inf\{a_i: a_i\in A\}$ and $b'=\sup\{b_i: b_i\in A\}$ and return to step 2.
\end{enumerate}
%Clearly we have to give a precise description of the algorithm and to prove that it indeed gives the solution.

Before giving the algorithm and the proof of \autoref{teo:diffusionGeneral} we need some preliminary results.

Given an interval $J\subseteq \I$ we define the signed measure $\sigma_J$ by
 \begin{equation*}
 \sigma_J(dx)=
 \begin{cases}
   \sigma(dx) &\mbox{if $x\in J$ or $\al g(x)>0$,}\\
   0 &\mbox{else}.
 \end{cases}
 \end{equation*}
Observe that $\sigma_J$ is a positive measure out of $J$, and it is equal to $\sigma$ into $J$.

\begin{cond} \label{cond:continuation}
We say that the pair of intervals $(J,\bar{J}) \colon J\subseteq\bar{J}\subseteq \I$ satisfies the \autoref{cond:continuation} if the following assertions hold:
\begin{enumerate}[(i)]
\item \label{i} both, $\int_J \phia(x) \sigma(dx)\leq 0$ and $\int_J \psia(x) \sigma(dx)\leq 0$;
\item \label{ii} if $\inf\{\bar{J}\}\neq \ell$ then $\int_{\bar{J}}\phia(x)\sigma_J(dx)=0$;
\item \label{iii}if $\sup\{\bar{J}\}\neq r$ then $\int_{\bar{J}}\psia(x)\sigma_J(dx)=0$; and
\item \label{iv}for every $x\in \bar{J}$, $\int_{\bar{J}}\Ga(x,y)\sigma_J(dy)\leq 0.$
\end{enumerate}
\end{cond}
We denote by $\sigma^+(dx)$ the measure 
\bd \sigma^+(dx):= \sigma(dx)\ind{\{\al g(x)>0\}}.\ed

\begin{lem}
\label{lem:pasobase}
 Under the assumptions of this section, consider an open interval $J\subseteq \I$, such that $\sigma(dx)<0$ for $x\in J$.
 Then, there exists an interval $\bar{J}$ such that $(J,\bar{J})$ satisfies \autoref{cond:continuation}
\end{lem}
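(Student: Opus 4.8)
The first assertion of Condition~\ref{cond:continuation} is immediate: $\phia$ and $\psia$ are strictly positive on $\I$, while $\sigma(dx)=\al g(x)m(dx)$ is negative on $J$ and $m$ charges every non-empty open set, so $\int_J\phia\,d\sigma<0$ and $\int_J\psia\,d\sigma<0$. Writing $J=(a,b)$, it remains to produce $\bar J=(\bar a,\bar b)$ with $\ell\le\bar a\le a<b\le\bar b\le r$ satisfying (ii)--(iv).

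I would construct $(\bar a,\bar b)$ by a coupled enlargement of $J$. The structural facts to exploit are: $\sigma_J$ coincides with the negative measure $\sigma$ on $J$ and with the non-negative measure $\sigma^+$ off $J$; the standing hypothesis that $g$ satisfies \eqref{eq:ginversion} yields $\int_\I\Ga(x,y)|\al g(y)|m(dy)<\infty$; and, $m$ having no atoms, the functionals $(c,d)\mapsto\int_c^d\phia\,d\sigma_J$ and $(c,d)\mapsto\int_c^d\psia\,d\sigma_J$ are continuous, non-increasing in $c$, non-decreasing in $d$, and equal to the negative numbers $\int_J\phia\,d\sigma$, $\int_J\psia\,d\sigma$ at $(a,b)$. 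The plan is to first increase $\bar b$ past $b$ until $\int_a^{\bar b}\psia\,d\sigma_J$ first returns to $0$ (taking $\bar b=r$ if it never does), then decrease $\bar a$ below $a$ until $\int_{\bar a}^{\bar b}\phia\,d\sigma_J$ returns to $0$ (taking $\bar a=\ell$ if it does not), then re-correct $\bar b$, and iterate; the monotonicities and continuity make each step well defined, and the finiteness of the number of connected components of $\{\al g<0\}$ keeps the iteration from running forever, producing $(\bar a,\bar b)$ with $\bar a=\ell$ or $\int_{\bar a}^{\bar b}\phia\,d\sigma_J=0$, and with $\bar b=r$ or $\int_{\bar a}^{\bar b}\psia\,d\sigma_J=0$. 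By \eqref{eq:Garepr} these are exactly (ii) and (iii), the relevant integral condition being void at an endpoint of $\I$. I expect the correct coordination of the two endpoints — in particular, arranging that the left extension does not push $\int_{\bar a}^{\bar b}\psia\,d\sigma_J$ above $0$, together with the separate treatment of the cases $\bar a=\ell$ and $\bar b=r$ — to be the main obstacle, which is why the construction has to be iterative rather than done in one step.

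Finally I would check (iv). Fix $x\in\bar J$. Using \eqref{eq:Garepr} and, where available, $\int_{\bar J}\phia\,d\sigma_J=0$ or $\int_{\bar J}\psia\,d\sigma_J=0$, one rewrites
\[
\int_{\bar J}\Ga(x,y)\sigma_J(dy)=\wa^{-1}\int_{(\bar a,x)}\bigl(\phia(x)\psia(y)-\psia(x)\phia(y)\bigr)\sigma_J(dy)=\wa^{-1}\int_{(x,\bar b)}\bigl(\psia(x)\phia(y)-\phia(x)\psia(y)\bigr)\sigma_J(dy).
\]
Since $\phia/\psia$ is strictly decreasing (a consequence of the sign of the Wronskian), the kernel $\phia(x)\psia(y)-\psia(x)\phia(y)=\psia(x)\psia(y)\bigl(\phia(x)/\psia(x)-\phia(y)/\psia(y)\bigr)$ is $\le0$ for $y\le x$ and $\ge0$ for $y\ge x$. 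Hence if $x\le a$ the first expression is $\le0$, because $\sigma_J\ge0$ on $(\bar a,x)\subseteq\I\setminus J$; symmetrically, if $x\ge b$ the second is $\le0$. For $x\in J$, let $c_\psi,c_\phi\in[a,b]$ be the points where $y\mapsto\int_{\bar a}^y\psia\,d\sigma_J$, resp. $y\mapsto\int_{\bar a}^y\phia\,d\sigma_J$, change sign; they exist by the sign structure of $\sigma_J$, and $c_\psi\le c_\phi$ by a short computation using the monotonicity of $\phia/\psia$. When $c_\psi<x<c_\phi$, both $\int_{\bar a}^x\psia\,d\sigma_J$ and $\int_x^{\bar b}\phia\,d\sigma_J$ are negative (the latter by (ii)), so $\wa^{-1}\bigl(\phia(x)\int_{\bar a}^x\psia\,d\sigma_J+\psia(x)\int_x^{\bar b}\phia\,d\sigma_J\bigr)<0$. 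When $x\le c_\psi$, splitting $(\bar a,x)$ at $a$ and using $\int_a^x\psia\,d|\sigma|\le\int_{\bar a}^a\psia\,d\sigma^+$ and $\int_a^x\phia\,d|\sigma|\le\int_{\bar a}^a\phia\,d\sigma^+$ (valid precisely because $x\le c_\psi\le c_\phi$), together with $\psia$ increasing and $\phia/\psia$ decreasing, yields that the first expression is $\le0$; the range $x\ge c_\phi$ is symmetric, via the second representation. This exhausts $\bar J$, so (iv) holds and $(J,\bar J)$ satisfies Condition~\ref{cond:continuation}.
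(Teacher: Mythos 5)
Your treatment of (i) and your construction of $\bar J$ for (ii)--(iii) follow the paper's proof: alternately push an endpoint outward until the integral of $\phia$ (resp.\ $\psia$) against $\sigma_J$ returns to zero, the monotonicity of $\phia/\psia$ guaranteeing that each correction drives the other integral back to $\leq 0$. The one concrete error is your termination claim: the finiteness of the number of components of $\{x\colon\al g(x)<0\}$ is irrelevant here, and the alternating scheme need not stop after finitely many corrections. What actually closes the construction --- and what the paper does --- is that the two endpoint sequences are monotone and bounded, hence convergent; since at each stage the integral just corrected vanishes over the current interval, and the intervals increase to $\bar J$, dominated convergence (using $\int_{\I}\Ga(x,y)|\al g(y)|m(dy)<\infty$) gives (ii) and (iii) in the limit. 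Replace the appeal to finitely many components by this monotone-limit argument and the step is sound.

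For (iv) you take a genuinely different route. The paper introduces $h(x,y)=k_1(x)\psia(y)+k_2(x)\phia(y)$ with $k_1(x),k_2(x)\geq 0$ chosen so that $h(x,\cdot)$ agrees with $\Ga(x,\cdot)=\wa^{-1}\min\{\phia(x)\psia(\cdot),\psia(x)\phia(\cdot)\}$ at $y=a$ and $y=b$; such a chord satisfies $h\leq\Ga$ on $(a,b)$ and $h\geq\Ga$ outside, whence
\[
\int_{\bar J}\Ga(x,y)\sigma_J(dy)\leq\int_{\bar J}h(x,y)\sigma_J(dy)\leq 0
\]
uniformly in $x\in\bar J$, the last inequality from (ii)--(iii) and $k_1,k_2\geq0$. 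Your case analysis on the position of $x$ relative to $c_\psi\leq c_\phi$ also works, and I could verify the delicate outer cases: for $a<x\leq c_\psi$, writing $\phia(x)\psia(y)-\psia(x)\phia(y)=-\psia(x)\psia(y)\bigl(w(y)-w(x)\bigr)$ with $w=\phia/\psia$ and bounding $w(y)-w(x)$ by $w(a)-w(x)$ from above on $(a,x)$ and from below on $(\bar a,a]$, the claim reduces to $\int_{(\bar a,x)}\psia(y)\sigma_J(dy)\geq0$ alone, i.e.\ to $x\leq c_\psi$. But this pivot of the ratio at $w(a)$ is the entire content of the step, and your write-up only gestures at it; note that a naive linear combination of your two displayed inequalities with the kernel's coefficients $\psia(x)$ and $-\phia(x)$ is circular, since one coefficient is negative. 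The paper's chord argument buys a single uniform estimate with no case analysis; your route, once the pivot is spelled out, buys an explicit picture of which boundary identity controls which part of $\bar J$.
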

\begin{proof}
 Consider $J$ to be $(a,b)$. Assertion (\ref{i}) in \autoref{cond:continuation} is clearly fulfilled. Without loss of generality (denoting by $\phia$ the result of multiplying $\phia$ by the necessary positive constant) we may assume 
 \bd \int_J \psia(x)\sigma(dx)=\int_J \phia(x)\sigma(dx)<0. \ed
 Under this assumption, $\phia(a)<\psia(a)$ and $\phia(b)>\psia(b)$. Consider 
 \begin{equation*}
 x_1:=\inf\left\{x\in [\ell,a]\colon \int_{(x_1,b)}\phia(x)\sigma_J(dx)<0\right\}.
 \end{equation*}
 Since $\phia(x)>\psia(x)$ for $x\leq a$ and $\sigma_J(dx)$ is non-negative in the same region we conclude that $\int_{(x_1,b)}\psia(x)\sigma_J(dx)\leq 0$. Consider $y_1>b$ defined by
 \begin{equation*}
 y_1:=\sup\left\{x\in [b,r]\colon \int_{(x_1,y_1)}\psia(x)\sigma_J(dx)<0\right\}. 
 \end{equation*}
 Now we consider $x_2\geq x_1$ as 
  \begin{equation*}
 x_2:=\inf\left\{x\in [\ell,a]\colon \int_{(x_2,y_1)}\phia(x)\sigma_J(dx)<0\right\}
  \end{equation*}
 and $y_2\geq y_1$ as
  \begin{equation*}
 y_2:=\sup\left\{x\in [b,r]\colon \int_{(x_2,y_2)}\psia(x)\sigma_J(dx)<0\right\}. 
  \end{equation*}
 Following in the same way we obtain two non-decreasing sequences $\ell\leq \{x_n\}\leq a$ and $b\leq \{y_n\}\leq r$. By construction, the interval $\bar{J}=(\lim x_n, \lim y_n)$ satisfies (\ref{ii}) and (\ref{iii}) in \autoref{cond:continuation}. To prove (\ref{iv}), first we find $k_1(x)$ and $k_2(x)$ such that
 $$
 \begin{cases}
 k_1(x) \psia(a)+k_2(x) \phia(a)=\Ga(x,a) \\
  k_1(x) \psia(b)+k_2(x) \phia(b)=\Ga(x,b). 
 \end{cases}
 $$
 Solving the system we obtain 
 $$k_1(x)=\frac{\Ga(x,b)\phia(a)-\Ga(x,a)\phia(b)}{\psia(b)\phia(a)-\psia(a)\phia(b)}$$
 and
 $$k_2(x)=\frac{\Ga(x,a)\psia(b)-\Ga(x,b)\psia(a)}{\psia(b)\phia(a)-\psia(a)\phia(b)}.$$
Let us see that $k_1(x),k_2(x)\geq 0$ for any $x\in \bar{J}$: using the explicit formula for $\Ga$ it follows that
\begin{equation*}
k_1(x) =
\begin{cases}
 0 &\mbox{for $x\leq a$,} \\
 \wa^{-1}\phia(b)\frac{\psia(x)\phia(a)-\psia(a)\phia(x)}{\psia(b)\phia(a)-\psia(a)\phia(b)} &\mbox{for $x\in (a,b)$,}\\
 \wa^{-1}\psia(x) &\mbox{for $x\geq b$}.
\end{cases}
\end{equation*}
 The only non-trivial case is when $x\in (a,b)$. The numerator and denominator are non-negative because $\phia$ is decreasing and $\psia$ increasing. The case of $k_2$ is completely analogous.
 
 Considering $h(x,y)=k_1(x)\psia(y)+k_2(x)\phia(y)$, it can be seen (discussing for the different positions of $x$ and $y$ with respect to $a$ and $b$) that for all $x\in \bar{J}$, $h(x,y)\leq \Ga(x,y)$ for $y \in (a,b)$ and $h(x,y)\geq \Ga(x,y)$ for $y\notin (a,b)$. From these inequalities we conclude that
 \begin{equation*}
 \int_{\bar{J}}\Ga(x,y)\sigma_J(dy) \leq \int_{\bar{J}}h(x,y)\sigma_J(dy) \leq 0;
 \end{equation*}
 where the first inequality is consequence of $\sigma_J(dy)\geq 0$ in $\I\setminus J$ and $\sigma_J(dy)\leq 0$ in $J$; and the second one is obtained fixing $x$ and observing that $h(x,y)$ is a linear combination of $\psia$ and $\phia$ with non-negative coefficients.
 
\end{proof}

\begin{lem} \label{lem:casoIntervalosPropios}
Under the assumptions of this section, consider $J_1=(a_1,b_1)$, $J_2=(a_2,b_2)$ such that $b_1<a_2$ and $\al g(x)\geq 0$ for $x$ in $(b_1,a_2)$. Let ${\bar{J}_1}=(\bar{a}_1,{\bar{b}_1})$ and ${\bar{J}_2}=({\bar{a}_2},{\bar{b}_2})$ be intervals such that ${\bar{a}_1}>\ell$, ${\bar{b}_1}<r$, ${\bar{a}_2}>\ell$, ${\bar{b}_2}<r$. Suppose that the two pairs of intervals $(J_1,{\bar{J}_1})$, $(J_2,{\bar{J}_2})$ satisfy \autoref{cond:continuation}. 

If ${\bar{J}_1}\cap {\bar{J}_2}\neq \emptyset$ then, considering $J=(a_1,b_2)$, there exists an interval $\bar{J}$ such that $(J,\bar{J})$ satisfies \autoref{cond:continuation}.
\end{lem}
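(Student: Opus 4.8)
The plan is to obtain the required $\bar J$ by re-running the inflation construction of \autoref{lem:pasobase}, now applied to the enlarged interval $J=(a_1,b_2)$ and the measure $\sigma_J$, but starting the recursion from $\bar J_1\cup\bar J_2$ rather than from $J$ itself. The obstruction to quoting \autoref{lem:pasobase} verbatim is that $\sigma(dx)=\al g(x)\,m(dx)$ is non-negative on the ``gap'' $(b_1,a_2)$, so $\sigma$ is not one-signed on $J$; in particular assertion (\ref{i}) of \autoref{cond:continuation} for $J$ is not automatic and must be extracted from the hypothesis $\bar J_1\cap\bar J_2\neq\emptyset$.

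First I would fix the geometry. From $J_i\subseteq\bar J_i$ we get $\bar a_1\leq a_1<b_1\leq\bar b_1$ and $\bar a_2\leq a_2<b_2\leq\bar b_2$; since $b_1<a_2$ and $\bar a_1<\bar b_2$ automatically, the condition $\bar J_1\cap\bar J_2\neq\emptyset$ reduces to $\bar a_2<\bar b_1$, so $\bar J_1\cup\bar J_2=(\bar a_1\wedge\bar a_2,\bar b_1\vee\bar b_2)$ is a genuine interval containing $J$. The same inequality gives the key covering $(b_1,a_2)\subseteq(b_1,\bar b_1)\cup(\bar a_2,a_2)$. Now, since $\bar a_1>\ell$ and $\bar b_1<r$, assertions (\ref{ii}) and (\ref{iii}) for $(J_1,\bar J_1)$ state $\int_{\bar J_1}\phia\,\sigma_{J_1}=\int_{\bar J_1}\psia\,\sigma_{J_1}=0$; splitting $\bar J_1$ into $(\bar a_1,a_1]$, $J_1$ and $[b_1,\bar b_1)$ (the speed measure has no atoms in this section), using $\sigma_{J_1}=\sigma$ on $J_1$ and $\sigma_{J_1}=\sigma^+\geq0$ elsewhere on $\bar J_1$, and dropping the non-negative left tail, I obtain $\int_{(b_1,\bar b_1)}\phia\,\sigma^+\leq-\int_{J_1}\phia\,\sigma$ and $\int_{(b_1,\bar b_1)}\psia\,\sigma^+\leq-\int_{J_1}\psia\,\sigma$; symmetrically, from $(J_2,\bar J_2)$, $\int_{(\bar a_2,a_2)}\phia\,\sigma^+\leq-\int_{J_2}\phia\,\sigma$ and $\int_{(\bar a_2,a_2)}\psia\,\sigma^+\leq-\int_{J_2}\psia\,\sigma$. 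Since $\sigma=\sigma^+$ on $(b_1,a_2)$, the covering turns these bounds into $\int_{(b_1,a_2)}\phia\,\sigma\leq-\int_{J_1}\phia\,\sigma-\int_{J_2}\phia\,\sigma$, whence $\int_J\phia\,\sigma=\int_{J_1}\phia\,\sigma+\int_{(b_1,a_2)}\phia\,\sigma+\int_{J_2}\phia\,\sigma\leq0$, and likewise $\int_J\psia\,\sigma\leq0$. This is assertion (\ref{i}).

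With (\ref{i}) at hand, I would construct $\bar J$ by the alternating recursion of \autoref{lem:pasobase} adapted to the fact that $\sigma$ may change sign inside $J$: after rescaling $\phia$ so that $\int_J\phia\,\sigma=\int_J\psia\,\sigma$ (the degenerate case where these vanish being treated directly), start from $\bar J_1\cup\bar J_2$ and alternately push the left endpoint leftward until $\int\phia\,\sigma_J$ over the current interval vanishes, then the right endpoint rightward until $\int\psia\,\sigma_J$ over it vanishes, and so on; this produces a non-increasing sequence of left endpoints and a non-decreasing sequence of right endpoints whose limits $x_\infty,y_\infty$ define $\bar J=(x_\infty,y_\infty)$. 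Monotonicity yields convergence, $\bar J\supseteq\bar J_1\cup\bar J_2\supseteq J$, and assertions (\ref{ii}) and (\ref{iii}) hold by construction, with the proviso that the $\phia$-integral of $\sigma_J$ over $\bar J$ need only vanish when $x_\infty\neq\ell$ and the $\psia$-integral only when $y_\infty\neq r$.

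The remaining assertion (\ref{iv}), that $\int_{\bar J}\Ga(x,y)\sigma_J(dy)\leq0$ for every $x\in\bar J$, is where the real work lies, since the single comparison estimate of \autoref{lem:pasobase} relied on $\sigma_J$ being one-signed on the bad interval, which now fails. I would combine two ingredients. First, conditions (\ref{ii})--(\ref{iv}) for each $(J_i,\bar J_i)$ actually give $\int_{\bar J_i}\Ga(x,y)\sigma_{J_i}(dy)\leq0$ for \emph{every} $x\in\I$: it is $\leq0$ on $\bar J_i$ by (\ref{iv}), and off $\bar J_i$ it equals $\wa^{-1}\psia(x)\int_{\bar J_i}\phia\,\sigma_{J_i}$ or $\wa^{-1}\phia(x)\int_{\bar J_i}\psia\,\sigma_{J_i}$, both of which vanish by (\ref{ii}),(\ref{iii}). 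Via the pointwise identity $\sigma_J=\sigma_{J_1}+\sigma_{J_2}-\sigma^+$ this reduces $\int_{\bar J}\Ga(x,y)\sigma_J(dy)$ to $-\int_{\bar J_1\cap\bar J_2}\Ga(x,y)\sigma^+(dy)$, which is $\leq0$, plus the contribution of the re-inflated tails $\bar J\setminus(\bar J_1\cup\bar J_2)$. Second, that tail contribution is absorbed exactly as in \autoref{lem:pasobase}, by comparing $\Ga(x,\cdot)$ on $\bar J$ with the harmonic interpolant $h(x,y)=k_1(x)\psia(y)+k_2(x)\phia(y)$ agreeing with $\Ga(x,\cdot)$ at the endpoints of $J$ (with $k_1(x),k_2(x)\geq0$) and using $\int_{\bar J}\psia\,\sigma_J=\int_{\bar J}\phia\,\sigma_J=0$. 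I expect the genuinely delicate point to be merging these two ingredients uniformly over the location of $x$ (in $J_1\cup J_2$, in the gap, or in a re-inflated tail) and over the possible configurations of $\bar J_1,\bar J_2$ relative to $J_1,J_2$; that case bookkeeping, not any single inequality, is the main obstacle.
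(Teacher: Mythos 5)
Your handling of assertions (i)--(iii) of \autoref{cond:continuation} is sound, and your derivation of (i) for $J=(a_1,b_2)$ --- via the covering $(b_1,a_2)\subseteq(b_1,\bar{b}_1)\cup(\bar{a}_2,a_2)$ and the budget inequalities $\int_{(b_1,\bar{b}_1)}\phia\,\sigma^+\leq-\int_{J_1}\phia\,\sigma$, etc. --- is actually more explicit than the paper's, which only establishes the analogous sign condition over $\bar{J}_1\cup\bar{J}_2$ before rerunning the expansion of \autoref{lem:pasobase}. Your two key observations for (iv), namely that $\int_{\bar{J}_i}\Ga(x,y)\sigma_{J_i}(dy)\leq 0$ for \emph{every} $x\in\I$ and the pointwise identity $\sigma_J=\sigma_{J_1}+\sigma_{J_2}-\sigma^+$, are also correct and are indeed the right ingredients.

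The gap is that your proof of (iv) stops exactly where you admit the real work lies: after reducing $\int_{\bar{J}}\Ga(x,y)\sigma_J(dy)$ to nonpositive terms plus the tail contribution $\int_{T}\Ga(x,y)\sigma^+(dy)\geq 0$, with $T=\bar{J}\setminus(\bar{J}_1\cup\bar{J}_2)$, you defer its absorption to unspecified ``case bookkeeping'' and never carry it out. It can be closed, and with no case analysis on $x$ at all. Writing $O=\bar{J}_1\cap\bar{J}_2$, your identity together with $\int_{\bar{J}_i}\phia\,\sigma_{J_i}=\int_{\bar{J}_i}\psia\,\sigma_{J_i}=0$ and the inequalities $\int_{\bar{J}}\phia\,\sigma_J\leq0$, $\int_{\bar{J}}\psia\,\sigma_J\leq0$ (which the construction of $\bar{J}$ guarantees) yields the budgets $\int_T\phia\,\sigma^+\leq\int_O\phia\,\sigma^+$ and $\int_T\psia\,\sigma^+\leq\int_O\psia\,\sigma^+$. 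Now take the interpolant $h(x,y)=k_1(x)\psia(y)+k_2(x)\phia(y)$ agreeing with $\Ga(x,\cdot)$ at the endpoints of $\bar{J}_1\cup\bar{J}_2$ --- not of $J$, as you propose: the overlap $O$ need not be contained in $J$ (e.g.\ $\bar{b}_1$ may exceed $b_2$), so with your nodes the required inequality $h\leq\Ga$ on $O$ can fail. With the correct nodes one has $k_1(x),k_2(x)\geq0$, $h\geq\Ga$ on $T$ and $h\leq\Ga$ on $O$, hence $\int_T\Ga\,\sigma^+\leq\int_T h\,\sigma^+\leq\int_O h\,\sigma^+\leq\int_O\Ga\,\sigma^+$, which is precisely the bound needed to make your decomposition nonpositive. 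For comparison, the paper proves (iv) by a direct case analysis on the position of $x$ (below $a_1$, above $b_2$, or in $J$ split at $\min\{a_2,\bar{b}_1\}$), each time dominating $\Ga(x,\cdot)$ by $\wa^{-1}\psia(x)\phia(\cdot)$ where the measure is positive; your route, once completed as above, is arguably cleaner and configuration-independent, but as submitted it is not a proof.
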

\begin{proof}
By hypothesis
\bd
\int_{{\bar{J}_i}}\phia(x)\sigma_{J_i}(dx)=\int_{{\bar{J}_i}}\psia(x)\sigma_{J_i}(dx)=0.
\ed
Then
\bd
\int_{{\bar{J}_1}\cup {\bar{J}_2}}\phia(x)\sigma(dx)=-\int_{{\bar{J}_1}\cap {\bar{J}_2}}\phia(x)\sigma^+(dx)
\ed
and
$$\int_{{\bar{J}_1}\cup {\bar{J}_2}}\psia(x)\sigma(dx)=-\int_{{\bar{J}_1}\cap {\bar{J}_2}}\psia(x)\sigma^+(dx).$$
We assume, without loss of generality, that 
$$\int_{{\bar{J}_1}\cap {\bar{J}_2}}\phia(x)\sigma^+(dx)=\int_{{\bar{J}_1}\cap {\bar{J}_2}}\psia(x)\sigma^+(dx)>0$$
and therefore, denoting by $(a',b')$ the interval ${\bar{J}_1}\cup{\bar{J}_2}$, we get:
\bd
\int_{(a',b')}\phia(x)\sigma(dx)=\int_{(a',b')}\psia(x)\sigma(dx)<0;
\ed
$\psia(a')\leq\phia(a')$; and $\psia(b')\geq \phia(b')$. The same procedure in the proof of \autoref{lem:pasobase}, allow us to construct an interval $\bar{J}$ such that $(J,\bar{J})$ satisfy (\ref{i}), (\ref{ii}) and (\ref{iii}) in \autoref{cond:continuation}. Let us prove (\ref{iv}):
If $x<a_1$ we have $\Ga(x,y)=\wa^{-1}\psia(x)\phia(y)$ for $y\geq a_1$ and $\Ga(x,y)\leq \wa^{-1}\psia(x)\phia(y)$ for $y\leq a_1$; since $\sigma_{J}(dy)$ is non-negative in $y\leq a_1$ we find
\bd
\int_{\bar{J}} \Ga(x,y)\sigma_J(dy)\leq \wa^{-1}\psia(x) \int_{\bar{J}} \phia(y)\sigma_J(dy)\leq 0.
\ed
An analogous argument prove the assertion in the case $x>b_2$. Now consider $x\in J$, suppose $x<\min\{a_2,{\bar{b}_1}\}$ (in case $x>\max\{b_1,{\bar{a}_2}\}$ an analogous argument is valid), we get 
\begin{align*}
\int_{\bar{J}}\Ga(x,y)\sigma_J(dy)&=\int_{{\bar{J}_1}}\Ga(x,y)\sigma_{J_1}(dy)+\int_{{\bar{J}_1}}\Ga(x,y)(\sigma_{J}-\sigma_{J_1})(dy)\\
&\quad+\int_{\bar{J}\setminus {\bar{J}_1}}\Ga(x,y)\sigma_{J}(dy),
\end{align*}
where $\int_{{\bar{J}_1}}\Ga(x,y)\sigma_{J_1}(dy)\leq 0$ by hypothesis. 
We move on to prove that the sum of the second and the third terms on the right-hand side of the previous equation are non-positive, thus completing the proof: Observe that 
$$\Ga(x,y)\leq \wa^{-1}\psia(x)\phia(y)$$
and 
$$\Ga(x,y)=\wa^{-1}\psia(x)\phia(y)\quad  (y\geq \min\{a_2,{\bar{b}_1}\})$$ 
The measure $(\sigma_J-\sigma_{J_1})$ has support in $J_2$, where the previous equality holds. The measure $\sigma_J(dy)$ is positive for $y<a_1$ where we do not have the equality, then
\begin{align*}
&\int_{{\bar{J}_1}}\Ga(x,y)(\sigma_{J}-\sigma_{J_1})(dy) +\int_{\bar{J}\setminus {\bar{J}_1}}\Ga(x,y)\sigma_{J}(dy)\\
&\leq \wa^{-1}\psia(x)\left(\int_{{\bar{J}_1}}\phia(y)(\sigma_{J}-\sigma_{J_1})(dy) +\int_{\bar{J}\setminus {\bar{J}_1}}\phia(y)\sigma_{J}(dy)\right)\leq 0,
\end{align*}
where the last inequality is a consequence of
\begin{align*}
\int_{\bar{J}}\phia(y)\sigma_J(dy)&=\int_{{\bar{J}_1}}\phia(y)\sigma_{J_1}(dy)+\int_{{\bar{J}_1}}\phia(y)(\sigma_{J}-\sigma_{J_1})(dy)\\
&\quad+\int_{\bar{J}\setminus {\bar{J}_1}}\phia(y)\sigma_{J}(dy)\leq 0,
\end{align*}
and 
$$\int_{{\bar{J}_1}}\phia(y)\sigma_{J_1}(dy)=0.$$
This completes the proof.

\end{proof}

\begin{lem} \label{lem:caso1}
Under the assumptions of this section, consider the interval $J=(a,b)$ and $\bar{J}=(\ell,\bar{b})$ (with $\bar{b}<r$) such that $(J,\bar{J})$ satisfies \autoref{cond:continuation}. Then, there exists $b'\geq \bar{b}$ such that $(J'=(\ell,b),\bar{J'}=(\ell,b'))$ satisfies \autoref{cond:continuation}.
\end{lem}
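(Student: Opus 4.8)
The plan is to enlarge $J$ leftward all the way to the boundary, absorbing $(\ell,a]$ into the inside interval, and to compensate by pushing the right endpoint of $\bar J$ outward. Since $\inf\bar J=\ell$, item (ii) of \autoref{cond:continuation} is vacuous for $(J,\bar J)$, and it will stay vacuous for the new pair, because both $J'=(\ell,b)$ and the $\bar{J'}$ we build will have left endpoint $\ell$. First I would record the effect of replacing $J$ by $J'$ on the auxiliary measures: $\sigma_{J'}$ and $\sigma_J$ agree outside $(\ell,a]$, while on $(\ell,a]$ one has $\sigma_{J'}-\sigma_J=\sigma\ind{\{\al g<0\}}=-\sigma^-\le 0$ (writing $\sigma=\sigma^+-\sigma^-$ with $\sigma^-\ge 0$); in particular $\sigma_{J'}\le\sigma_J$. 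Note also $\bar b\ge b$ because $J\subseteq\bar J$, so $[\bar b,r)$ is disjoint from $J'$ and there $\sigma_{J'}=\sigma^+\ge 0$.

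Next I would choose $b'$. Put $C:=\int_{(\ell,a]}\psia\,\sigma^-\ge 0$ and $\Psi(x):=\int_{(\ell,x)}\psia(y)\,\sigma_{J'}(dy)$. Using item (iii) for $(J,\bar J)$ (available since $\bar b<r$) and $\sigma_{J'}\le\sigma_J$, we get $\Psi(\bar b)=\int_{\bar J}\psia\,\sigma_J-C=-C\le 0$; and on $[\bar b,r)$ the map $\Psi$ is continuous (the speed measure, hence $\sigma$, is atomless) and nondecreasing. So I would let $b'$ be a zero of $\Psi$ in $(\bar b,r)$ if one exists — then $\int_{\bar{J'}}\psia\,\sigma_{J'}=0$, which is item (iii) — and $b'=r$ otherwise — then $\sup\bar{J'}=r$ and item (iii) is void; in either case $\int_{[\bar b,b')}\psia\,\sigma^+\le C$, with equality when $b'<r$. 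For item (i) one must check $\int_{J'}\phia\,\sigma\le 0$ and $\int_{J'}\psia\,\sigma\le 0$. The $\psia$ inequality follows from $\int_{\bar J}\psia\,\sigma_J=0$ by discarding the non-negative piece over $[b,\bar b)$ and using $\sigma\le\sigma^+$ on $(\ell,a]$. For the $\phia$ inequality I would first extract from item (iv) for $(J,\bar J)$ the bound $\int_{\bar J}\phia\,\sigma_J\le 0$: for $x\in(\ell,a]$, splitting $\int_{\bar J}\Ga(x,y)\,\sigma_J(dy)\le 0$ at $x$, the piece over $(\ell,x]$ is $\ge 0$ (there $\sigma_J=\sigma^+\ge 0$), hence $\int_{(x,\bar b)}\phia\,\sigma_J\le 0$, and letting $x\downarrow\ell$ gives $\int_{\bar J}\phia\,\sigma_J\le 0$; then the same discard-and-compare argument yields $\int_{J'}\phia\,\sigma\le 0$.

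The crux, and the step I expect to be the main obstacle, is item (iv): $\int_{\bar{J'}}\Ga(x,y)\,\sigma_{J'}(dy)\le 0$ for all $x\in\bar{J'}$. For $x\in\bar J$ I would write this as $\int_{\bar J}\Ga(x,y)\,\sigma_J(dy)+\int_{(\ell,a]}\Ga(x,y)(-\sigma^-)(dy)+\int_{[\bar b,b')}\Ga(x,y)\,\sigma^+(dy)$: the first term is $\le 0$ by item (iv) for $(J,\bar J)$, the second is $\le 0$, and the only offender is the third, which is $\ge 0$. I would control the second and third together using the pointwise estimates $\Ga(x,y)\ge\tfrac{\Ga(x,a)}{\psia(a)}\psia(y)$ for $y\le a$ and $\Ga(x,y)=\wa^{-1}\psia(x)\phia(y)$ for $y\ge\bar b\ge x$, together with $\int_{[\bar b,b')}\phia\,\sigma^+\le\tfrac{\phia(\bar b)}{\psia(\bar b)}\,C$ and the fact that $\phia/\psia$ is decreasing; for each range of $x$ in $\bar J$ this leads to a bound of the shape $\wa^{-1}(\text{positive})\,\big(\tfrac{\phia(\bar b)}{\psia(\bar b)}-\tfrac{\phia(a)}{\psia(a)}\big)C\le 0$. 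For $x\in[\bar b,b')$, since $y<\bar b\le x$ for $y\in\bar J$ one has $\Ga(x,y)=\wa^{-1}\psia(y)\phia(x)$, so $\int_{\bar J}\Ga(x,y)\,\sigma_{J'}(dy)=\wa^{-1}\phia(x)\int_{\bar J}\psia\,\sigma_{J'}=-\wa^{-1}\phia(x)C$, while the general bound $\Ga(x,y)\le\wa^{-1}\phia(x)\psia(y)$ gives $\int_{[\bar b,b')}\Ga(x,y)\,\sigma^+(dy)\le\wa^{-1}\phia(x)\int_{[\bar b,b')}\psia\,\sigma^+\le\wa^{-1}\phia(x)C$, and the two cancel; the case $b'=r$ is handled identically, using $\int_{[\bar b,r)}\psia\,\sigma^+\le C$ in place of the equality. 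These comparisons between $\Ga$ and non-negative linear combinations of $\psia$ and $\phia$, exactly in the spirit of step (iv) in the proof of \autoref{lem:pasobase}, are the technical heart of the argument; once they are in place all four items of \autoref{cond:continuation} hold for $(J',\bar{J'})$, which proves the lemma.
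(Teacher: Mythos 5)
Your proof is correct and follows essentially the same route as the paper's: the same choice of $b'$ (the point where the cumulative integral $\int_{(\ell,\cdot)}\psia\,\sigma_{J'}$ returns to zero), the same three-way decomposition of $\int_{\bar{J'}}\Ga(x,y)\,\sigma_{J'}(dy)$ into the old pair's contribution, the negative correction on $(\ell,a]$, and the new mass on $[\bar{b},b')$, and the same comparison-function technique for item (iv). The only differences are that the paper controls the last two pieces with a single comparison $k(x)\psia(y)$ pinned at $y=\bar{b}$ (which avoids your case analysis on the position of $x$ and your separate pinning at $y=a$), and that you additionally verify item (i) and the auxiliary bound $\int_{\bar{J}}\phia\,\sigma_{J}(dy)\le 0$, which the paper's proof leaves implicit.
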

\begin{proof}
By hypothesis we know
\bd
\int_{\bar{J}}\psia(y)\sigma_{J}(dy)=0.
\ed
It follow that
\bd
\int_{\bar{J}}\psia(y)\sigma_{J'}(dy)\leq 0.
\ed
Consider $b'=\sup\{x \in [\bar{b},r) \colon \int_{(\ell,x)}\psia(y)\sigma_{J'}(dy)\leq 0\}$. It is clear that
$$\int_{\bar{J'}}\psia(y)\sigma_{J'}(dy)\leq 0,$$
with equality if $b'=r$. This proves (\ref{iii}) in \autoref{cond:continuation}. Now we prove (\ref{iv}). Consider
\begin{align} \label{eq:splitIntegral1}
\int_{\bar{J'}}\Ga(x,y)\sigma_{J'}(dy)&=\int_{\bar{J}}\Ga(x,y)\sigma_{J}(dy)+\int_{\bar{J}}\Ga(x,y)(\sigma_{J'}-\sigma_{J})(dy) \notag \\
&\qquad +\int_{\bar{J'}\setminus \bar{J}}\Ga(x,y)\sigma_{J'}(dy).
\end{align}
The first term on the right-hand side is non-positive by hypothesis. 
Let us analyse the sum of the remainder terms. 
Considering the previous decomposition with $\psia(y)$ instead of $\Ga(x,y)$, and taking $\int_{\bar{J}}\psia(y)\sigma_{J}(dy)= 0$ into account, we obtain
\begin{equation}
\label{eq:intdiferpsi}
\int_{\bar{J}}\psia(y)(\sigma_{J'}-\sigma_{J})(dy)+\int_{\bar{J'}\setminus \bar{J}}\psia(y)\sigma_{J'}(dy)\leq 0.
\end{equation}
Consider $k(x)$ such that $k(x)\psia(\bar{b})=\Ga(x,\bar{b})$; we have $k(x)\psia(y)\leq\Ga(x,y)$ if $y\leq \bar{b}$ and $k(x) \psia(y)\geq\Ga(x,y)$ if $y\geq \bar{b}$. 
Also note that $(\sigma_{J'}-\sigma_J)(dy)$ is non-positive in $\bar{J}$ and $\sigma_{J'}$ is non-negative in $\bar{J'}\setminus \bar{J}$. We get
\begin{align*}
& \int_{\bar{J}}\Ga(x,y)(\sigma_{J'}-\sigma_{J})(dy) +\int_{\bar{J'}\setminus \bar{J}}\Ga(x,y)\sigma_{J'}(dy) \\ 
&\qquad \leq \int_{\bar{J}}\psia(y)(\sigma_{J'}-\sigma_{J})(dy)+\int_{\bar{J'}\setminus \bar{J}}\psia(y)\sigma_{J'}(dy)\leq 0.
\end{align*}
This completes the proof of (\ref{iv}). To prove (\ref{iii}), i.e.
$$\int_{\bar{J'}}\phia(y)\sigma_{J'}(dy)\leq 0,$$
consider $k>0$ such that  $\phia(\bar{b})=k \psia(\bar{b})$. It is easy to see that all the steps considered in proving (\ref{iv}) also work in this case. 

\end{proof}

\begin{lem}\label{lem:caso2}
Under the assumptions of this section, consider the interval $J=(a,b)$ and $\bar{J}=(\bar{a},r)$ (with $\bar{a}>\ell$), such that $(J,\bar{J})$ satisfies \autoref{cond:continuation}. Then, there exists $a'\leq \bar{a}$ such that $(J'=(a,r),\bar{J'}=(a',r))$ satisfies \autoref{cond:continuation}.
\end{lem}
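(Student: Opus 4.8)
The statement is the left--right mirror image of \autoref{lem:caso1}. Under the change of orientation $x\mapsto -x$ of the interval $\I$ (which produces another regular one-dimensional diffusion with atomless speed measure and with $\{\al g<0\}$ still having finitely many components), the increasing solution $\psia$ and the decreasing solution $\phia$ are interchanged, the endpoints $\ell$ and $r$ are interchanged, the conditions (\ref{ii}) and (\ref{iii}) of \autoref{cond:continuation} are interchanged, while (\ref{i}), (\ref{iv}), the definition of $\sigma_J$, and the representation \eqref{eq:Garepr} of $\Ga$ are all preserved. So the plan is simply to run the proof of \autoref{lem:caso1} in reversed orientation; I indicate below the transported construction and flag the one step that is not a purely formal transcription.

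Write $J=(a,b)$ and $\bar{J}=(\bar{a},r)$, so that $\ell<\bar{a}\le a<b\le r$. I would set $J'=(a,r)$ and define
\[
a':=\inf\bigl\{x\in(\ell,\bar{a}]\colon \textstyle\int_{(x,r)}\phia(y)\,\sigma_{J'}(dy)\le 0\bigr\}.
\]
Since $\sigma_{J'}$ and $\sigma_{J}$ coincide on $\bar{J}$ except on $[b,r)$, where $\sigma_{J'}=\sigma\le\sigma^+=\sigma_{J}$, and since condition (\ref{ii}) of \autoref{cond:continuation} applied to $(J,\bar{J})$ gives $\int_{\bar{J}}\phia\,\sigma_{J}=0$ (here $\inf\bar{J}=\bar{a}\neq\ell$), we get $\int_{\bar{J}}\phia\,\sigma_{J'}\le 0$, hence $a'\le\bar{a}$. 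Put $\bar{J'}=(a',r)$. Then $\int_{\bar{J'}}\phia\,\sigma_{J'}\le 0$, and because $m$ has no atoms this integral equals $0$ whenever $a'>\ell$; this is condition (\ref{ii}) for $(J',\bar{J'})$, while (\ref{ii}) is vacuous if $a'=\ell$ and (\ref{iii}) is vacuous since $\sup\bar{J'}=r$. For (\ref{iv}), given $x\in\bar{J'}$ I would split
\[
\int_{\bar{J'}}\Ga(x,y)\,\sigma_{J'}(dy)=\int_{\bar{J}}\Ga(x,y)\,\sigma_{J}(dy)+\int_{\bar{J}}\Ga(x,y)\,(\sigma_{J'}-\sigma_{J})(dy)+\int_{\bar{J'}\setminus\bar{J}}\Ga(x,y)\,\sigma_{J'}(dy),
\]
bound the first summand by $0$ via (\ref{iv}) for $(J,\bar{J})$, and for the other two introduce $k(x)\ge 0$ with $k(x)\phia(\bar{a})=\Ga(x,\bar{a})$; from \eqref{eq:Garepr} one checks $\Ga(x,y)\le k(x)\phia(y)$ for $y\le\bar{a}$ and $\Ga(x,y)\ge k(x)\phia(y)$ for $y\ge\bar{a}$. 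Since $\sigma_{J'}-\sigma_{J}\le 0$ on $\bar{J}$ (supported in $[b,r)\subseteq(\bar{a},r)$) and $\sigma_{J'}\ge 0$ on $\bar{J'}\setminus\bar{J}=(a',\bar{a}]$, the last two summands are bounded above by $k(x)\bigl(\int_{\bar{J'}}\phia\,\sigma_{J'}-\int_{\bar{J}}\phia\,\sigma_{J}\bigr)=k(x)\int_{\bar{J'}}\phia\,\sigma_{J'}\le 0$.

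It remains to check condition (\ref{i}) for $J'=(a,r)$. Since $\sigma_{J'}=\sigma$ on $J'\subseteq\bar{J'}$ and $\sigma_{J'}\ge 0$ on $\bar{J'}\setminus J'=(a',a]$, we have $\int_{J'}\phia\,\sigma\le\int_{\bar{J'}}\phia\,\sigma_{J'}\le 0$ and likewise $\int_{J'}\psia\,\sigma\le\int_{\bar{J'}}\psia\,\sigma_{J'}$, so it suffices to show $\int_{\bar{J'}}\psia\,\sigma_{J'}\le 0$; this I would obtain by repeating the argument of the previous paragraph with $\psia$ in place of $\Ga(x,\cdot)$ and with $k>0$ chosen so that $\psia(\bar{a})=k\phia(\bar{a})$. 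The one point that is not a mechanical mirror of \autoref{lem:caso1} is precisely the input $\int_{\bar{J}}\psia\,\sigma_{J}\le 0$ needed here: in \autoref{lem:caso1} the corresponding fact $\int_{\bar{J}}\psia\,\sigma_{J}=0$ is free, being condition (\ref{iii}) for $(J,\bar{J})$ (there $\sup\bar{J}\neq r$), whereas here (\ref{iii}) for $(J,\bar{J})$ is vacuous and one must instead extract $\int_{\bar{J}}\psia\,\sigma_{J}\le 0$ from (\ref{iv}), by letting $x\to r$ in $\int_{\bar{J}}\Ga(x,y)\,\sigma_{J}(dy)\le 0$ and using that $\Ga(x,y)/\phia(x)\to\wa^{-1}\psia(y)$ while the portion of the integral over $(x,r)$ stays nonnegative (there $\sigma_{J}=\sigma^+\ge 0$). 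I expect this limiting argument, together with the sign bookkeeping for $\sigma_{J'}-\sigma_{J}$ across the subintervals, to be the only genuine obstacle; the rest transports verbatim from \autoref{lem:caso1}.
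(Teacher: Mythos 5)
Your proof is correct and is exactly the paper's intended argument: the paper's own proof of this lemma is the single line ``Analogous to the proof of the previous lemma,'' and your mirror-image transcription of \autoref{lem:caso1} (with $\phia$ and $\psia$, $\ell$ and $r$, and conditions (\ref{ii}) and (\ref{iii}) of \autoref{cond:continuation} interchanged) is that analogy carried out. You have moreover correctly isolated the one step that does not transport mechanically --- the input $\int_{\bar{J}}\psia(y)\sigma_J(dy)\le 0$ needed for condition (\ref{i}) of the new pair, which here cannot come from the vacuous condition (\ref{iii}) and must instead be extracted from condition (\ref{iv}) by letting $x\to r$ --- and your limiting argument for it is sound.
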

\begin{proof}
Analogous to the proof of the previous lemma.

\end{proof}

\begin{lem}\label{lem:casoizq}
Under the assumptions of this section, consider $J_1=(\ell,b_1)$, $J_2=(a_2,b_2)$ such that: $b_1<a_2$; and $\al g(x)\geq 0$ for $x$ in $(b_1,a_2)$. Let ${\bar{J}_1}=(\ell,{\bar{b}_1})$ and ${\bar{J}_2}=({\bar{a}_2},{\bar{b}_2})$ be intervals such that: ${\bar{b}_1}<r$; ${\bar{a}_2}>\ell$; and ${\bar{b}_2}<r$. Suppose that the two pairs of intervals $(J_1,{\bar{J}_1})$, $(J_2,{\bar{J}_2})$ satisfy \autoref{cond:continuation}. 
If ${\bar{J}_1}\cap {\bar{J}_2}\neq \emptyset$ then, considering $J=(\ell,b_2)$, there exists $\bar{b}$ such that $(J,\bar{J}=(\ell,\bar{b}))$ satisfies \autoref{cond:continuation}.
\end{lem}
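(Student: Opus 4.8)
This is the left-boundary counterpart of \autoref{lem:casoIntervalosPropios}: like \autoref{lem:caso1} and \autoref{lem:caso2}, it is a merging step for the algorithm behind \autoref{teo:diffusionGeneral}, and the plan is to run the proof of \autoref{lem:casoIntervalosPropios} while importing from \autoref{lem:caso1} the technique for an interval whose left endpoint is $\ell$. The decisive simplification is that $\inf J_1=\inf\bar J_1=\ell$ and that the pair to be produced has the form $(J=(\ell,b_2),\,\bar J=(\ell,\bar b))$, so assertion (\ref{ii}) of \autoref{cond:continuation} is vacuous for $(J_1,\bar J_1)$ and for $(J,\bar J)$; of the hypotheses there then remain, on the left, $\int_{\bar J_1}\psia(y)\,\sigma_{J_1}(dy)=0$ (assertion (\ref{iii}), valid because $\bar b_1<r$), assertion (\ref{i}), and assertion (\ref{iv}) for $(J_1,\bar J_1)$, while $(J_2,\bar J_2)$ still supplies $\int_{\bar J_2}\phia(y)\,\sigma_{J_2}(dy)=\int_{\bar J_2}\psia(y)\,\sigma_{J_2}(dy)=0$ and assertion (\ref{iv}). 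A first, purely combinatorial observation is that $\bar J_1\cap\bar J_2\neq\emptyset$ together with $\bar a_2\le a_2$ forces the gap $(b_1,a_2)$ to lie in $\bar J_1\cup\bar J_2$, so $J=(\ell,b_2)\subseteq\bar J_1\cup\bar J_2$.

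I would then assemble assertion (\ref{i}) for $J$. Decomposing $\int_J\psia(y)\,\sigma(dy)$ over $J_1$, the gap $(b_1,a_2)$ and $J_2$, rewriting $\int_{J_1}\psia\,\sigma$ and $\int_{J_2}\psia\,\sigma$ through the identities $\int_{\bar J_1}\psia\,\sigma_{J_1}=0$ and $\int_{\bar J_2}\psia\,\sigma_{J_2}=0$ (which turn them into $-\int_{\bar J_1\setminus J_1}\psia(y)\,\sigma^+(dy)$ and $-\int_{\bar J_2\setminus J_2}\psia(y)\,\sigma^+(dy)$), and using the inclusion $(b_1,a_2)\subseteq\bar J_1\cup\bar J_2$, one sees that the non-negative contribution of the gap is dominated by those two non-positive terms, giving $\int_J\psia\,\sigma\le 0$. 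For the companion inequality $\int_J\phia\,\sigma\le 0$ the $\bar J_2$-identity $\int_{\bar J_2}\phia\,\sigma_{J_2}=0$ plays the same role on the right, and on the left one uses $\int_{\bar J_1}\phia(y)\,\sigma_{J_1}(dy)\le 0$; the latter is a consequence of assertion (\ref{iv}) for $(J_1,\bar J_1)$ (let $z\to\ell^+$ in $\int_{\bar J_1}\Ga(z,y)\,\sigma_{J_1}(dy)\le 0$, or, equivalently, it is built into the construction of $\bar J_1$).

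Next I would build the extension. Because $\sigma_J\ge 0$ on $(b_2,r)$, the function $x\mapsto\int_{(\ell,x)}\psia(y)\,\sigma_J(dy)$ is non-decreasing on $[b_2,r)$ and equals $\int_J\psia\,\sigma\le 0$ at $x=b_2$; so
\[
\bar b:=\sup\left\{x\in[b_2,r)\colon \int_{(\ell,x)}\psia(y)\,\sigma_J(dy)\le 0\right\}
\]
is well defined, $\bar J:=(\ell,\bar b)\supseteq J$, and $\int_{\bar J}\psia(y)\,\sigma_J(dy)\le 0$, with equality if $\bar b<r$. A short computation of the same kind shows $\bar b\ge\max\{\bar b_1,\bar b_2\}$, hence $\bar J\supseteq\bar J_1\cup\bar J_2$; this gives assertion (\ref{iii}), while (\ref{ii}) is vacuous and (\ref{i}) is already done. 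Only assertion (\ref{iv}) remains.

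For assertion (\ref{iv}), $\int_{\bar J}\Ga(x,y)\,\sigma_J(dy)\le 0$ for all $x\in\bar J$, I would proceed exactly as in the proofs of \autoref{lem:casoIntervalosPropios} and \autoref{lem:caso1}: split $\bar J$ into the block $\bar J_1$, the part of $\bar J_2$ outside $\bar J_1$, and the part of $\bar J$ to the right of both $\bar b_1$ and $\bar b_2$; treat the block $\bar J_1$ with assertion (\ref{iv}) for $(J_1,\bar J_1)$ when $x\in\bar J_1$, and symmetrically use $(J_2,\bar J_2)$ when $x$ lies in the right part; on the remaining blocks bound $\Ga(x,y)$ above by $\wa^{-1}\psia(x)\phia(y)$ wherever $\sigma_J\ge 0$, keep the equality $\Ga(x,y)=\wa^{-1}\psia(x)\phia(y)$ for $y$ beyond $x$, and, as in \autoref{lem:caso1}, dominate the far-right extension by a multiple of $\psia(y)$ anchored at the point where $\phia$ and $\psia$ cross; the pieces then recombine with the right signs. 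Once this is done $(J,\bar J)$ satisfies \autoref{cond:continuation}, which is the claim. This last step — the careful tracking of the sign of the sign-changing measure $\sigma_J$ against the position of $x$ and of the subintervals $J_1$, $(b_1,a_2)$, $J_2$ and $\bar J\setminus(\bar J_1\cup\bar J_2)$ — is where I expect the real work to lie, just as in the two-sided case of \autoref{lem:casoIntervalosPropios}, the only genuinely new point being the bookkeeping at the left endpoint $\ell$, which is handled along the lines of \autoref{lem:caso1}.
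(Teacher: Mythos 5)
Your proposal follows essentially the same route as the paper: you define $\bar b$ as the supremum of the points where $\int_{(\ell,\cdot)}\psia\,\sigma_J\leq 0$ (the paper takes the supremum starting from $\bar b_2$, noting $\bar b_2$ itself qualifies, which is equivalent to your construction plus your observation that $\bar b\geq\max\{\bar b_1,\bar b_2\}$), obtain assertion (\ref{iii}) from monotonicity, and prove assertion (\ref{iv}) by decomposing $\int_{\bar J}\Ga(x,y)\sigma_J(dy)$ into the contributions of $(J_1,\bar J_1)$, $(J_2,\bar J_2)$ and the leftover pieces, bounding the latter by a multiple $k(x)\psia(y)$ of $\psia$ — the paper anchors this comparison at $\bar b_2$ via $k(x)\psia(\bar b_2)=\Ga(x,\bar b_2)$ rather than at a crossing point of $\phia$ and $\psia$, but the mechanism is identical. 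Your explicit verification of assertion (\ref{i}) for $J$ is a detail the paper leaves implicit, and is correct.
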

\begin{proof}
Define $\bar{b}=\sup\{x \in [{\bar{b}_2},r) \colon \int_{(\ell,x)}\psia(y)\sigma_{J}(dy)\leq 0\}$ (note that $\bar{b}_2$ belongs to the set). We have
\begin{equation}
\label{eq:intPsiaMenor0}
\int_{\bar{J}}\psia(y)\sigma_J(dy)\leq 0,
\end{equation}
with equality if $\bar{b}<r$, proving (\ref{ii}) in \autoref{cond:continuation}. To prove (\ref{iv}) we split the integral as follows:
\begin{align}
\label{eq:splitintegralGa}
\int_{\bar{J}}\Ga(x,y)\sigma_J(dy)&=\int_{{\bar{J}_1}}\Ga(x,y)\sigma_{J_1}(dy)+\int_{{\bar{J}_2}}\Ga(x,y)\sigma_{J_2}(dy)\\
&\quad -\int_{{\bar{J}_1}\cap {\bar{J}_2}}\Ga(x,y)\sigma_J^+(dy)+\int_{\bar{J}\setminus({\bar{J}_1}\cup {\bar{J}_2})}\Ga(x,y)\sigma_{J}(dy) \notag 
\end{align}
where $\sigma_J^+$ is the positive part of $\sigma_J$. Considering the same decomposition as in \eqref{eq:splitintegralGa} with $\psia(y)$, instead of $\Ga(x,y)$, and also considering: equation \eqref{eq:intPsiaMenor0}; $\int_{{\bar{J}_1}}\psia(y)\sigma_{J_1}(dy)=0$; and  $\int_{{\bar{J}_2}}\psia(y)\sigma_{J_2}(dy)=0$, we obtain
\begin{equation}
-\int_{{\bar{J}_1}\cap {\bar{J}_2}}\psia(y)\sigma_J^+(dy)+\int_{\bar{J}\setminus({\bar{J}_1}\cup {\bar{J}_2})}\psia(y)\sigma_{J}(dy)\leq 0.
\end{equation}
For every $x$ consider $k(x)\geq 0$ such that $k(x)\psia({\bar{b}_2})=\Ga(x,{\bar{b}_2})$.
We have $k(x)\psia({\bar{b}_2})\leq \Ga(x,{\bar{b}_2})$ for $y\leq {\bar{b}_2}$ and $k(x)\psia({\bar{b}_2})\geq \Ga(x,{\bar{b}_2})$ for $y\geq {\bar{b}_2}$ and therefore
\begin{align*}
&-\int_{{\bar{J}_1}\cap {\bar{J}_2}}\Ga(x,y)\sigma_J^+(dy)
+\int_{\bar{J}\setminus({\bar{J}_1}\cup {\bar{J}_2})}\Ga(x,y)\sigma_{J}(dy) \\ 
&\quad = k(x) \left(-\int_{{\bar{J}_1}\cap {\bar{J}_2}}\psia(y)\sigma_J^+(dy)
+\int_{\bar{J}\setminus({\bar{J}_1}\cup {\bar{J}_2})}\psia(y)\sigma_{J}(dy)\right) \leq 0.
\end{align*}
The first two terms on the right-hand side of equation \eqref{eq:splitintegralGa} are also non-positive, and we conclude that (\ref{iv}) in \autoref{cond:continuation} holds. To prove (\ref{ii}) we consider the decomposition in \eqref{eq:splitintegralGa} with $\phia(y)$ instead of $\Ga(x,y)$ and $k\geq 0$ such that $k\psia({\bar{b}_2})=\phia({\bar{b}_2})$; the same considerations done to prove (\ref{iv}) conclude the result in this case.

\end{proof}

\begin{lem}\label{lem:casoder}
Under the assumptions of this section, consider $J_1=(a_1,b_1)$, $J_2=(a_2,r)$ such that: $b_1<a_2$; and $\al g(x)\geq 0$ for $x$ in $(b_1,a_2)$. Let ${\bar{J}_1}=({\bar{a}_1},{\bar{b}_1})$ and ${\bar{J}_2}=({\bar{a}_2},r)$ intervals such that: ${\bar{a}_1}>\ell$; ${\bar{b}_1}<r$; and ${\bar{a}_2}>\ell$. Suppose that the two pairs of intervals $(J_1,{\bar{J}_1})$, $(J_2,{\bar{J}_2})$ satisfy \autoref{cond:continuation}. If ${\bar{J}_1}\cap {\bar{J}_2}\neq \emptyset$ then, considering $J=(a_1,r)$, there exists $\bar{a}$ such that $(J,\bar{J}=(\bar{a},r))$ satisfies \autoref{cond:continuation}.
\end{lem}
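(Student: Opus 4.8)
This is the left--right mirror of \autoref{lem:casoizq}: there $J_1$ reached the left endpoint $\ell$ and $J_2$ was interior, whereas here $J_2=(a_2,r)$ reaches the right endpoint and $J_1$ is interior, so the interval $J=(a_1,r)$ to be built is anchored at $r$. The plan is therefore to transcribe the whole argument of \autoref{lem:casoizq} with $\ell$ and $r$ interchanged and with $\psia$ and $\phia$ interchanged; since now $\sup\bar J=r$, it is part~(iii) of \autoref{cond:continuation} that becomes vacuous (instead of part~(ii)), so only (i), (ii) and (iv) have to be checked. The two facts that make the transcription work are: the explicit form \eqref{eq:Garepr} of $\Ga$, from which, for each fixed $x$, the map $y\mapsto\Ga(x,y)/\phia(y)$ is non-decreasing (it equals $\wa^{-1}\phia(x)\psia(y)/\phia(y)$ for $y\le x$ and the constant $\wa^{-1}\psia(x)$ for $y\ge x$), the reflection of the monotonicity of $y\mapsto\Ga(x,y)/\psia(y)$ used before; and the pointwise inequalities $\sigma_J\le\sigma_{J_1}$, $\sigma_J\le\sigma_{J_2}$, which hold because $J_1,J_2\subseteq J$.

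Concretely: first, after the same normalization as in \autoref{lem:pasobase} and \autoref{lem:casoIntervalosPropios} (rescale $\phia$ so that $\int_{\bar J_1\cap\bar J_2}\phia\,\sigma^+=\int_{\bar J_1\cap\bar J_2}\psia\,\sigma^+>0$), deduce $\int_J\phia\,\sigma=\int_J\psia\,\sigma<0$ — that is part~(i) — together with the endpoint sign relations the construction needs. Then set
\[
\bar a:=\inf\Bigl\{x\in(\ell,\,\bar a_1]\;:\;\textstyle\int_{(x,r)}\phia(y)\,\sigma_J(dy)\le0\Bigr\},
\]
where $\bar a_1$ is the left end of the interior $\bar J_1$; since part~(ii) for $(J_1,\bar J_1)$ and $\sigma_J\le\sigma_{J_1}$ force $\bar a_1$ into this set (and part~(ii) for $(J_2,\bar J_2)$ together with $\sigma_J\le\sigma_{J_2}$ forces $\bar a_2$ into it as well), $\bar a$ is well defined, $\bar a\le\min(a_1,\bar a_2)$, hence $J\subseteq\bar J:=(\bar a,r)$ and $\bar J_1\cup\bar J_2\subseteq\bar J$, while $\int_{\bar J}\phia\,\sigma_J\le0$ with equality when $\bar a>\ell$: that is part~(ii). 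For part~(iv), fix $x\in\bar J$ and split
\[
\int_{\bar J}\!\Ga(x,y)\sigma_J(dy)=\int_{\bar J_1}\!\Ga(x,y)\sigma_{J_1}(dy)+\int_{\bar J_2}\!\Ga(x,y)\sigma_{J_2}(dy)-\int_{\bar J_1\cap\bar J_2}\!\Ga(x,y)\sigma_J^+(dy)+\int_{\bar J\setminus(\bar J_1\cup\bar J_2)}\!\Ga(x,y)\sigma_J(dy);
\]
the first two terms are $\le0$ by part~(iv) for the two given pairs, and, taking $k(x)\ge0$ with $k(x)\phia(\bar a_1)=\Ga(x,\bar a_1)$ and using $k(x)\phia(y)\ge\Ga(x,y)$ for $y\le\bar a_1$, $k(x)\phia(y)\le\Ga(x,y)$ for $y\ge\bar a_1$, the remaining two terms reduce to $k(x)\bigl(-\int_{\bar J_1\cap\bar J_2}\phia\,\sigma_J^++\int_{\bar J\setminus(\bar J_1\cup\bar J_2)}\phia\,\sigma_J\bigr)\le0$, which is the $\phia$-version of the identity that produced part~(ii) — exactly the computation of \autoref{lem:casoizq} with $\psia$ replaced by $\phia$.

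The only place the proof can go wrong is bookkeeping: under the reflection every monotonicity direction, every $\le/\ge$, and the identity of $\sigma$, $\sigma_J$, $\sigma_J^+$ and $\sigma^+$ on each of the pieces $J_1$, $(b_1,a_2)$, $J_2$, $\bar J_1\cap\bar J_2$, $\bar J\setminus(\bar J_1\cup\bar J_2)$ has to be re-checked carefully. There is no new conceptual ingredient; once the signs are tracked, items (i), (ii) and (iv) of \autoref{cond:continuation} follow verbatim from the corresponding steps of \autoref{lem:casoizq}, and item~(iii) is vacuous since $\sup\bar J=r$.
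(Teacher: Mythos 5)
Your proposal is correct and takes essentially the same approach as the paper: the paper's own proof of this lemma is literally the one-line remark ``Analogous to the previous lemma,'' and what you have written is precisely that reflected argument, with the right identification that clause (iii) of \autoref{cond:continuation} (rather than (ii)) becomes vacuous because $\sup\bar{J}=r$, the correct $\phia$-for-$\psia$ substitution in the definition of $\bar{a}$ and in the comparison function $k(x)$, and the same four-term decomposition of $\int_{\bar{J}}\Ga(x,y)\sigma_J(dy)$ used in \autoref{lem:casoizq}.
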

\begin{proof}
Analogous to the previous lemma.

\end{proof}

\begin{lem} \label{lem:caso2lados}
Under the assumptions of this section, consider $J_1=(\ell,b_1)$, $J_2=(a_2,r)$ such that: $b_1<a_2$; and $\al g(x)\geq 0$ for $x$ in $(b_1,a_2)$. Let ${\bar{J}_1}=(\ell,{\bar{b}_1})$ and ${\bar{J}_2}=({\bar{a}_2},r)$ intervals such that the two pairs of intervals $(J_1,{\bar{J}_1})$, $(J_2,{\bar{J}_2})$ satisfy \autoref{cond:continuation}. If ${\bar{J}_1}\cap {\bar{J}_2}\neq \emptyset$ then for all $x\in \I$,
$$\int_{\I}\Ga(x,y)\sigma(dy)\leq 0.$$
\end{lem}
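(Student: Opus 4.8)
The plan is to rewrite $\int_{\I}\Ga(x,y)\sigma(dy)$ as the combination of the three integrals that appear in \autoref{cond:continuation} for the pairs $(J_1,\bar J_1)$ and $(J_2,\bar J_2)$ — via an inclusion–exclusion decomposition of the measure $\sigma$ — and then to argue by cases on the position of $x$. First note the geometry: since $\bar J_1=(\ell,\bar b_1)$, $\bar J_2=(\bar a_2,r)$ and $\bar J_1\cap\bar J_2\neq\emptyset$, we have $\bar a_2<\bar b_1$, hence $\bar J_1\cup\bar J_2=\I$. The degenerate situations are disposed of at once: if $\bar b_1=r$ then $\bar J_1=\I$, and since $\sigma_{J_1}$ agrees with $\sigma$ on $J_1$ and with $\sigma^+$ off $J_1$, the measure $\sigma_{J_1}-\sigma$ is nonnegative, so using $\Ga\geq 0$ and part (\ref{iv}) of \autoref{cond:continuation} for $(J_1,\bar J_1)$ one gets $\int_{\I}\Ga(x,y)\sigma(dy)\leq\int_{\I}\Ga(x,y)\sigma_{J_1}(dy)\leq 0$; the case $\bar a_2=\ell$ is symmetric. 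So from now on assume $\ell<\bar a_2<\bar b_1<r$.

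The central step is the identity of measures on $\I$
\[
\sigma \;=\; \sigma_{J_1}\ind{\bar J_1}\;+\;\sigma_{J_2}\ind{\bar J_2}\;-\;\sigma^+\ind{\bar J_1\cap\bar J_2},
\]
which I would verify region by region on the partition $\I=(\ell,\bar a_2]\cup(\bar a_2,\bar b_1)\cup[\bar b_1,r)$. The facts one needs are $\bar a_2\leq a_2$ and $b_1\leq\bar b_1$ (because $J_i\subseteq\bar J_i$), $b_1<a_2$, and $\al g\geq 0$ on $(b_1,a_2)$: these force $\sigma_{J_1}=\sigma$ on $(\ell,\bar a_2]$, $\sigma_{J_2}=\sigma$ on $[\bar b_1,r)$, and, on $(\bar a_2,\bar b_1)=\bar J_1\cap\bar J_2$, the collapse $\sigma_{J_1}+\sigma_{J_2}-\sigma^+=\sigma$ (checked separately on the sub-pieces lying in $J_1$, in $J_2$, and in $[b_1,a_2]$, on the last of which $\sigma\geq 0$ and $\sigma^+=\sigma$). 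Integrating against $\Ga(x,\cdot)$ then gives
\[
\int_{\I}\Ga(x,y)\sigma(dy)=\int_{\bar J_1}\Ga(x,y)\sigma_{J_1}(dy)+\int_{\bar J_2}\Ga(x,y)\sigma_{J_2}(dy)-\int_{\bar J_1\cap\bar J_2}\Ga(x,y)\sigma^+(dy).
\]

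It remains to show the right-hand side is $\leq 0$ for every $x\in\I$, splitting according to the three pieces of the partition. If $x\in\bar J_1\cap\bar J_2$, the first two integrals are $\leq 0$ by (\ref{iv}) of \autoref{cond:continuation} and the third is nonnegative, so the sum is $\leq 0$. If $x\leq\bar a_2$, then $x\leq y$ for all $y\in\bar J_2$, hence $\Ga(x,y)=\wa^{-1}\psia(x)\phia(y)$ there and the second integral equals $\wa^{-1}\psia(x)\int_{\bar J_2}\phia(y)\sigma_{J_2}(dy)=0$ by (\ref{ii}) for $(J_2,\bar J_2)$ (here $\inf\bar J_2=\bar a_2\neq\ell$); the first integral is $\leq 0$ by (\ref{iv}) since $x\in\bar J_1$, and the third is nonnegative. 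If $x\geq\bar b_1$, symmetrically $\Ga(x,y)=\wa^{-1}\psia(y)\phia(x)$ for $y\in\bar J_1$, so the first integral equals $\wa^{-1}\phia(x)\int_{\bar J_1}\psia(y)\sigma_{J_1}(dy)=0$ by (\ref{iii}) for $(J_1,\bar J_1)$ (here $\sup\bar J_1=\bar b_1\neq r$), while the second is $\leq 0$ by (\ref{iv}). Since $\I=\bar J_1\cup\bar J_2$ is exactly the union of these three pieces, all $x$ are covered and the proof is complete.

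I expect the only genuine work to be the verification of the displayed measure identity: one has to keep track of the possible orderings among $b_1,\bar a_2,a_2,\bar b_1$ and of where $\al g$ changes sign, to be sure that on each sub-interval the combination $\sigma_{J_1}\ind{\bar J_1}+\sigma_{J_2}\ind{\bar J_2}-\sigma^+\ind{\bar J_1\cap\bar J_2}$ really reduces to $\sigma$. The point that is genuinely different from the preceding lemmas is that, when $x$ lies outside one of the two enlarged intervals, the corresponding integral is annihilated not by (\ref{iv}) but by the harmonicity-type conditions (\ref{ii}) and (\ref{iii}).
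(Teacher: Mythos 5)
Your proof is correct and follows the same route as the paper: the paper's (very terse) proof consists precisely of the inclusion--exclusion decomposition you display, followed by the assertion that its three terms are non-positive. You have simply supplied the details the paper leaves implicit --- the region-by-region verification of the measure identity, the treatment of the degenerate cases $\bar b_1=r$ and $\bar a_2=\ell$, and the case analysis on $x$ that uses (\ref{iv}) when $x$ lies in the relevant enlarged interval and the vanishing conditions (\ref{ii})/(\ref{iii}) when it does not.
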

\begin{proof}
Consider the following decomposition of the integral
\begin{align*}
\int_{\I}\Ga(x,y)\sigma(dy)&=\int_{{\bar{J}_1}}\Ga(x,y)\sigma_{J_1}(dy)+\int_{{\bar{J}_2}}\Ga(x,y)\sigma_{J_2}(dy)\\
&\qquad -\int_{{\bar{J}_1}\cap {\bar{J}_2}} \Ga(x,y)\sigma^+(dy).
\end{align*}
Observing that the three terms on the right-hand side are non-positive, the lemma is proved.

\end{proof}

Now we state the algorithm to find the continuation region in the OSP corresponding to \eqref{eq:ospChap3}.
\begin{algorithm} \label{algor}
(Starting from a subset of the continuation region, in subsequent steps, increase the considered subset until finding the actual continuation region)

\begin{itemize}
\item[BS.]\emph{(base step)} Consider disjoint intervals $J_1,\ldots,J_n \subseteq \I$ such that
$$\left\{x\in \I:\al g(x)<0\right\}=\bigcup_{i=1}^n J_i.$$
Consider for each $i$,  ${\bar{J}_i}$ such that $(J_i,{\bar{J}_i})$ satisfies \autoref{cond:continuation} (this can be done in virtue of \autoref{lem:pasobase}). Define 
$$C=\left\{(J_i,{\bar{J}_i}):i=1\ldots n\right\},$$
and go to the iterative step (IS) with $C_1$.

\item[IS.]\emph{(iterative step)} At this step we assume given a set $C$ of pair of intervals satisfying \autoref{cond:continuation}. We assume the notation\footnote{We remark that at different moments the algorithm execute this step, the notation refers to different objects, e.g. the set $C$ is not always the same set.}
\bd
C=\{(J_i=(a_i,b_i),{\bar{J}_i}=({\bar{a}_i},{\bar{b}_i}))\colon i=1 \ldots n\},
\ed
with $a_i < a_j$ if $i<j$ (the intervals are ordered) and $b_i<a_{i+1}$ (the intervals are disjoint)

\begin{itemize}
\item If $C$ is empty, the algorithm is finished and the continuation region is empty.

\item Else, if for some $j$, ${\bar{J}_j}=\I$, the algorithm is finished and the continuation region is $\I$.

\item Else, if the intervals ${\bar{J}_i}$ are pairwise disjoint, the algorithm is finished and the continuation region is 
$$\CR=\bigcup_{i=1}^n {\bar{J}_i}$$

\item Else, if ${\bar{a}_j}=\ell$ for some $j>1$, add to $C$ the pair $(J=(\ell,b_j),\bar{J})$ satisfying \autoref{cond:continuation}, and remove from $C$ the pairs $(J_i,{\bar{J}_i})$ for $i=1\ldots j$. Observe that the existence of $\bar{J}$ is proved in \autoref{lem:caso1}. Return to the iterative step (IS).

\item Else, if ${\bar{b}_j}=r$ for some $j<n$, add to $C$ the pair $(J=(a_j,r),\bar{J})$ satisfying \autoref{cond:continuation}, and remove from $C$ the pairs $(J_i,{\bar{J}_i})$ for $i=j\ldots n$ (observe that the existence of $\bar{J}$ is proved in \autoref{lem:caso2}). Return to the iterative step (IS).

\item Else, if for some $j$, ${\bar{J}_j}\cap {\bar{J}_{j+1}}\neq \emptyset$, remove from $C$ the pairs $j$ and $j+1$, and add to $C$ the pair $(J=(a_j,b_j+1),\bar{J})$ satisfying \autoref{cond:continuation} (its existence is guaranteed, depending on the situation, \autoref{lem:casoIntervalosPropios}, \autoref{lem:casoizq}, \autoref{lem:casoder} or \autoref{lem:caso2lados}). Return to the iterative step (IS).
\end{itemize}
\end{itemize}
\end{algorithm}

We are now ready to prove \autoref{teo:diffusionGeneral}.

\begin{proof}[Proof of \autoref{teo:diffusionGeneral}]
Denote by $\CR=\{J_1,\ldots,J_n\}$ the set resulting from \autoref{algor}. It is clearly a disjoint union of intervals and it is easy to see that it satisfies all the conditions stated in the theorem. It remains to prove that this is in fact the continuation region associated with the optimal stopping problem. We use the Dynkin's characterization as the minimal $\desc$-excessive majorant to prove that
\bd
\Va(x):=\int_{\I\setminus \CR}\Ga(x,y)\sigma(dy)
\ed
is the value function. Since $\sigma(dy)$ is non-negative in $\I\setminus \CR$ we have that $\Va$ is $\desc$-excessive. 
%Call $J_i:i=1\ldots n$ the disjoint intervals compounding the set $\CR$. 
For $x\in \I$, we have
\begin{align}
\label{eq:gVa}
g(x)&=\int_{\I}\Ga(x,y)\sigma(dy)\\
&=\Va(x) + \sum_{i=1}^n \int_{J_i}\Ga(x,y)\sigma(dy). \notag
\end{align}
Observe that, on the one hand,
$$\int_{J_i}\Ga(x,y)\sigma(dy)=0 \quad (x \notin J_i),$$
due to the fact that, if $x<J_i$ then  $\Ga(x,y)=\wa^{-1}\psia(x)\phia(y)$ and $\int_{J_i}\phia(y)\sigma(dy)=0$ and, on the other hand,
$$\int_{J_i}\Ga(x,y)\sigma(dy)\leq 0 \quad (x \in J_i).$$
Combining this facts with equation \eqref{eq:gVa}, we conclude that 
$$\Va(x)\geq g(x)\quad (x \in \I),$$
and, in fact, the equality holds for $x \in \I\setminus\CR$,
what can be seen also as an application of \autoref{lem:waeqg}.
We have proved that $\Va$ is a  majorant of $g$. We have, up to now, $\Va(x)\geq \sup_{\tau}\E_x\left(\ea{\tau}g(X_\tau)\right)$. Finally observe that, denoting by $\SR$ the set $\I\setminus\CR$
$$\Va(x)=\E_x\left(\ea{\hit{S}}\Va(X_{\hit{\SR}})\right)=\E_x \left(\ea{\hit{\SR}}g(X_{\hit{\SR}})\right),$$
where the first equality is a consequence of \autoref{lem:Wa}. We conclude that $\Va$ is the value function and that $\SR$ is the stopping region, finishing the proof.

\end{proof}

\subsection{Implementation}
\label{sec:implementation}
To compute in practice the optimal stopping region, following the \autoref{algor}, it can be necessary a computational implementation of some parts of the algorithm. In fact, to solve our examples we have implemented a script in R \cite[see][]{Rsoftware} that receives as input:
\begin{itemize} 
\item function $\al g$;
\item the density of measure $m$;
\item the atoms of measure $m$;
\item functions $\phia$ and $\psia$;
\item two numbers $a$, $b$ that are interpreted as the left and right endpoint of an interval $J$
\end{itemize}
and produce as output two numbers $a'\leq a$, $b'\geq b$ such that $(J,(a',b'))$ satisfy \autoref{cond:continuation}. It is assumed that the interval $J$ given as input satisfies the necessary conditions to ensure the existence of $J'$.

To compute $a'$ and $b'$ we use a discretization of the given functions and compute the corresponding integrals numerically. We follow the iterative procedure presented in the proof of \autoref{lem:pasobase}.

Using this script the examples are easily solved following \autoref{algor}.

\subsection{Brownian motion and polynomial reward}

The previous results are specially suited for non-monotone reward functions. 

\begin{example}[$\desc=2$] Consider a standard Brownian motion $X$ as in \autoref{ex:Brownian}. Consider the reward function $g$ defined by
\bd
g(x):=-(x-2)(x-1)x(x+1)(x+2),
\ed
and the discount factor $\desc=2$. 
To solve the optimal stopping problem \eqref{eq:ospChap3}, by the application of \autoref{algor}, we start by finding the set $\al g(x)<0$. Remember that the infinitesimal generator is given by $Lg(x)=g''(x)/2$. After computations, we find that
$$\{x\colon \al g(x)<0\}=\bigcup_{i=1}^3 J_i,$$
with $J_1\simeq (-2.95,-1.15)$, $J_2\simeq (0,1.15)$ and $J_3 \simeq (2.95,\infty)$. Computing ${\bar{J}_i}$, as is specified in the (base step) of the algorithm in the proof of \autoref{teo:diffusionGeneral}, we find
${\bar{J}_1}\simeq (-3.23,-0.50)$, ${\bar{J}_2}\simeq (-0.36,1.43)$ and ${\bar{J}_3} \simeq (1.78,\infty)$. Observing that the intervals are disjoint we conclude that the continuation region is given by ${\bar{J}_1}\cup {\bar{J}_2} \cup {\bar{J}_3}$. Now, by the application of equation \eqref{eq:VaMasHumana}, we find the value function, which is shown in \autoref{fig:poly-alpha2}. Note that the smooth fit principle holds in the five contact point.
\begin{figure}
 \begin{center} 
\includegraphics[scale=.8]{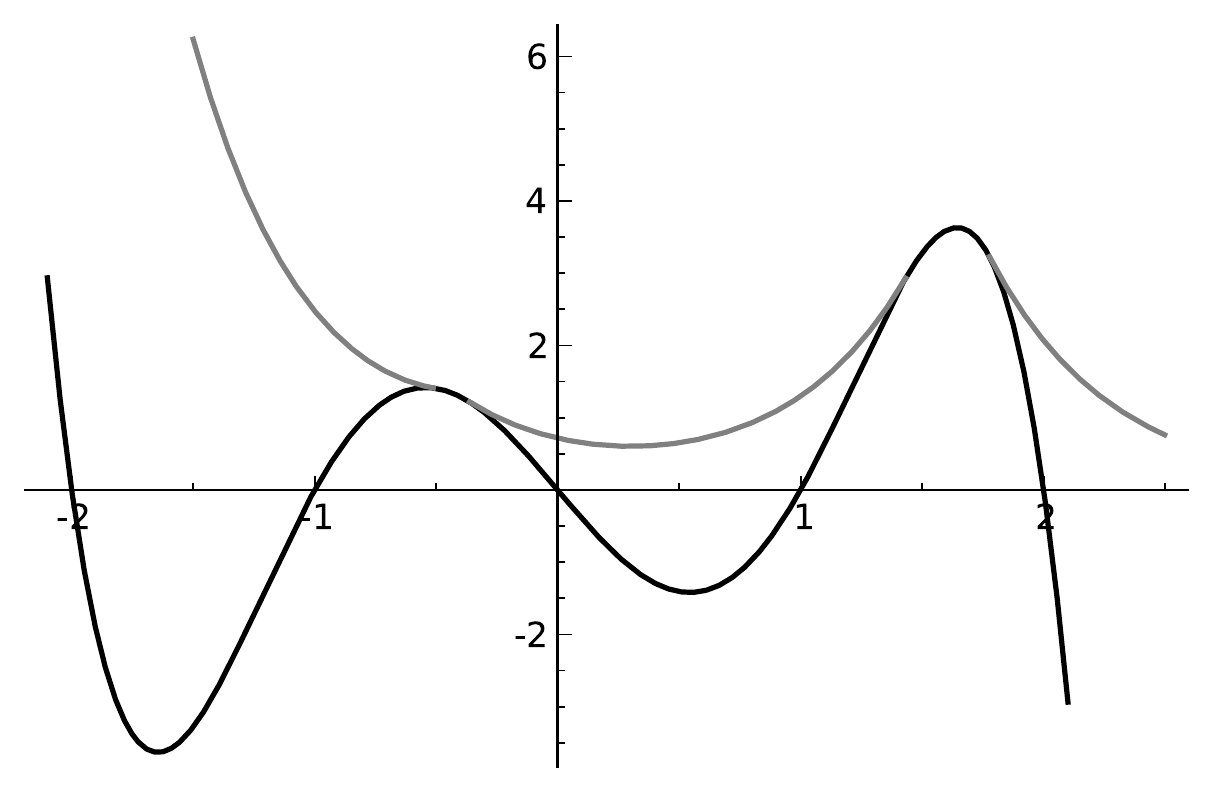}
\caption{\label{fig:poly-alpha2} OSP for the standard BM and a 5th. degree polynomial: $g$ (black), $\Va$ (gray, when different from $g$). Parameter $\desc=2$. In \autoref{fig:poly-alpha2-zoom} zooms of the interesting parts are shown.} 
 \end{center}
\end{figure}

\begin{figure}
 \begin{center} 
\includegraphics[scale=.5]{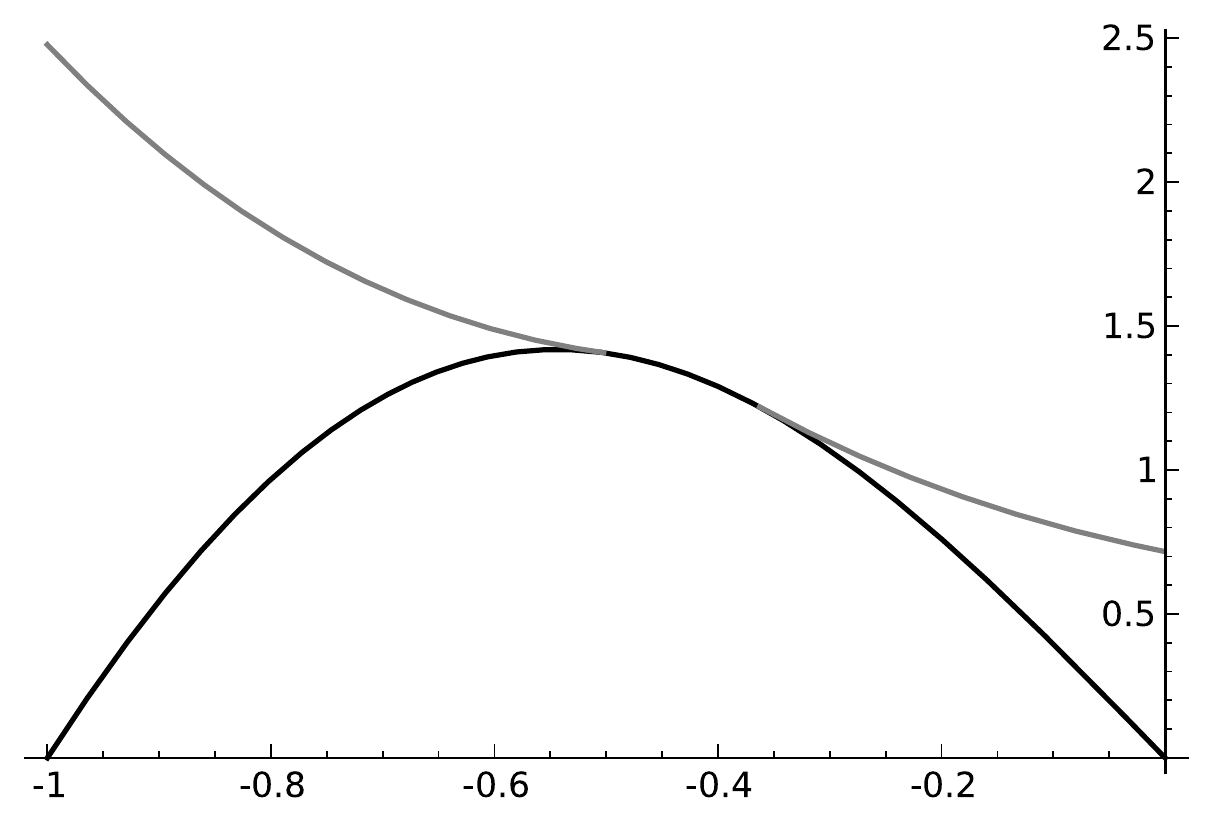}
\includegraphics[scale=.5]{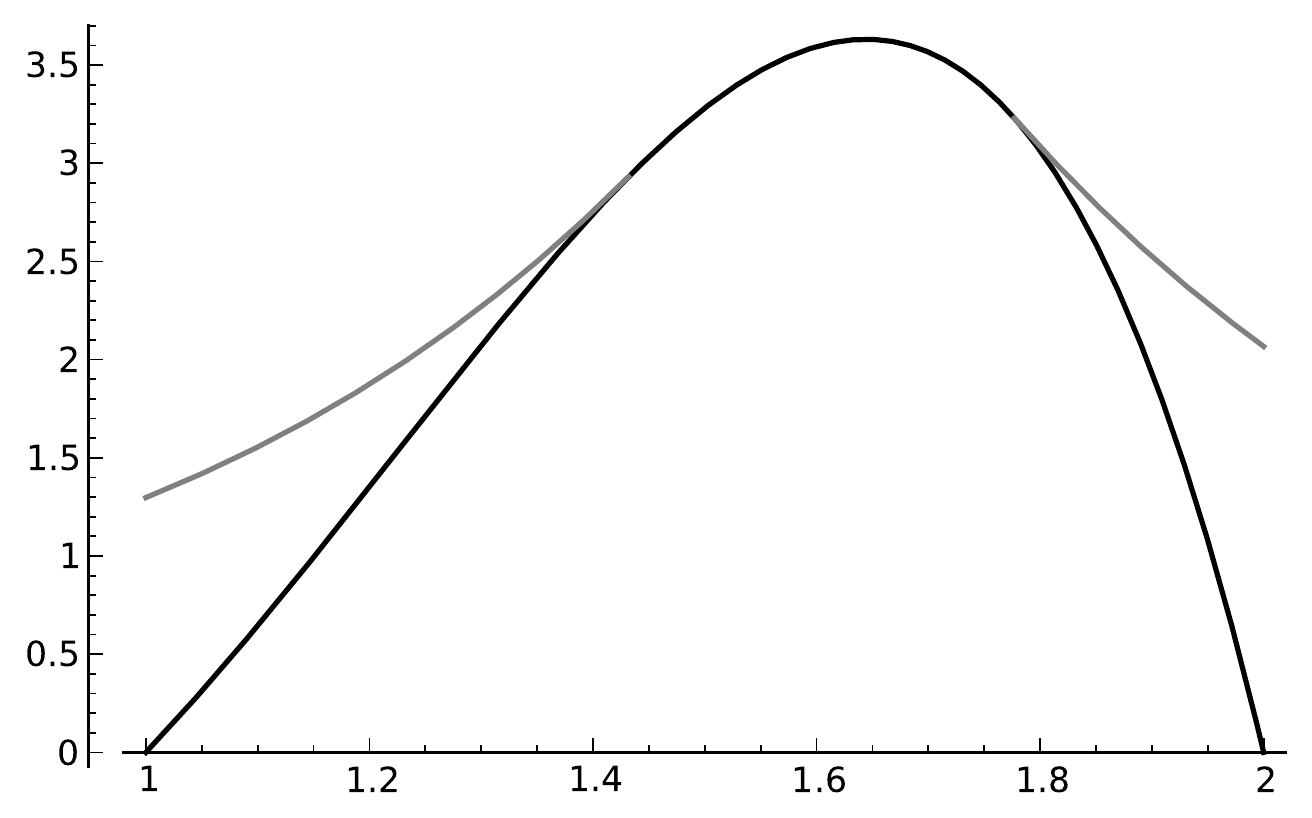}
\caption{\label{fig:poly-alpha2-zoom} Zooms of \autoref{fig:poly-alpha2} to appreciate the smooth fit principle.}
 \end{center}
\end{figure}

\end{example}

\begin{example}[case $\desc=1.5$]
Consider the process and the reward as in the previous example but with a slightly smaller discount, $\desc=1.5$. 
We have again
\[\{x\colon \al g(x)<0\}=\bigcup_{i=1}^3 J_i,
\]
but with $J_1\simeq (-3.21,-1.17)$, $J_2\simeq (0,1.17)$ and $J_3 \simeq (3.21,\infty)$. 
Computing ${\bar{J}_i}$ we obtain ${\bar{J}_1}\simeq (-3.53,-0.31)$, ${\bar{J}_2}\simeq (-0.39,1.46)$ 
and ${\bar{J}_3} \simeq (1.76,\infty)$. In this case ${\bar{J}_1}\cap {\bar{J}_2}\neq \emptyset$, therefore, 
according to the algorithm, we have to consider $J_1\simeq (-3.21,1.17)$, obtaining ${\bar{J}_1}\simeq (-3.53,1.46)$. 
Now we have two disjoint intervals and the algorithm is completed. The continuation region is 
\[ 
\CR\simeq (-3.53,1.46)\cup (1.76,\infty).
\]

\begin{figure}
 \begin{center} 
\includegraphics[scale=.8]{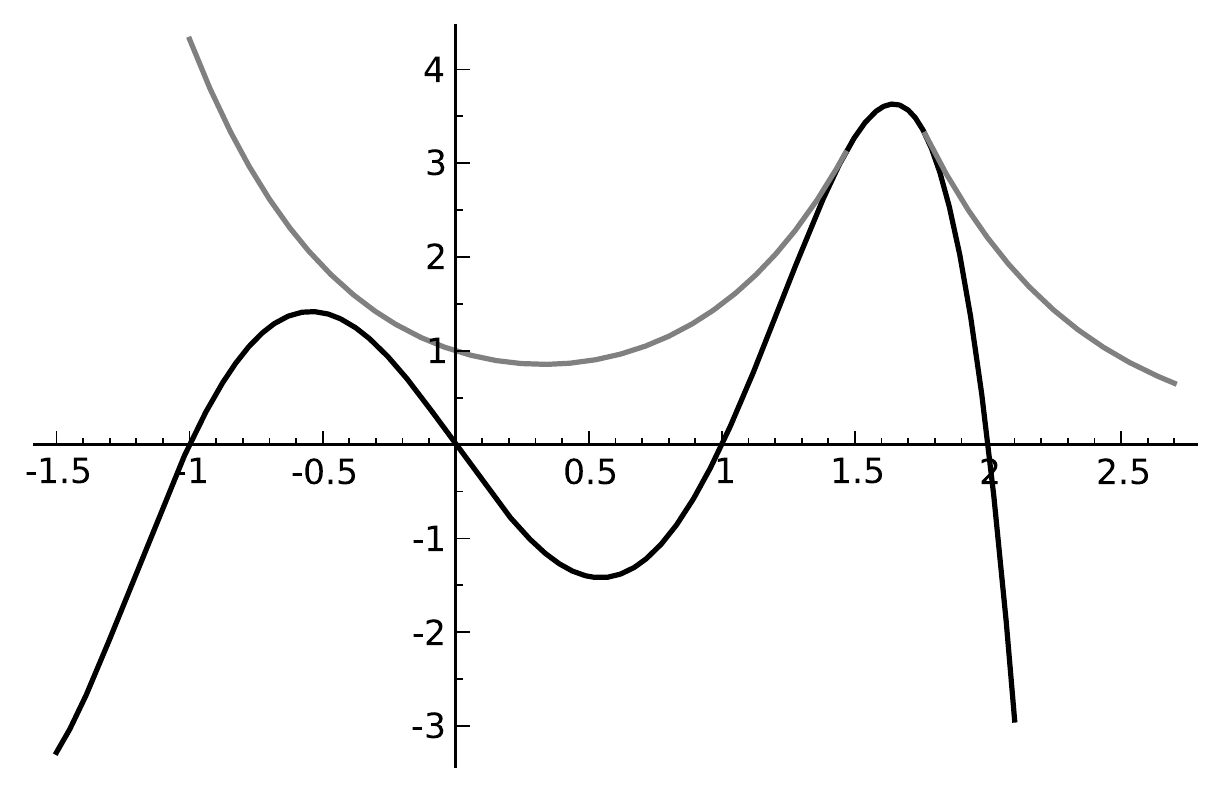}
\caption{\label{fig:poly-alpha1-5} OSP for the standard BM and a 5th. degree polynomial: $g$ (black), $\Va$ (gray, when different from $g$). Parameter $\desc=1.5$. }
 \end{center}
\end{figure}
It can be seen that for $\desc$ small enough the OPS will be left-sided.
\end{example}

\section{More general rewards}

In this section we consider one-dimensional diffusions, as in the rest of the chapter, but we allow less regular reward functions. Our assumption about $\g$ is that there exist a measure $\mug$ such that
\be
\label{eq:mug}
\g(x)=\int_\I \Ga(x,y) \mug(dy),
\ee
where $\Ga(x,y)$ is defined by \eqref{eq:Garepr}. This is motivated by different cases in which the reward $g$ is not regular enough to satisfy the inversion formula \eqref{eq:ginversionCap3}.
%, but considering the derivatives in the wider sense of measures it can be found the measure $\mug$ satisfying \eqref{eq:mug}. 
In these cases, considering the second derivative of the difference of two convex functions as a signed measure, it is possible to obtain a ``generalized'' inversion formula useful for our needs (see \cite{dudley2002real} Problems 11 and 12 of Section 6.3; and see also \cite{protter2005stochastic} p. 218--219).

Just to consider a very simple example, suppose that $X$ is a standard Brownian motion. Consider the function $g\colon \R \to \R$ given by
\bd
g(x):=
\begin{cases}
x,& x<1\\
-x+2,& 1\leq x \leq 2 \\
x-2 & x>2
\end{cases}
\ed
In this case, the differential operator is $Lf=\frac{f''}{2}$ when $f$ is in $\D_L$. The inversion formula \eqref{eq:ginversion} would be
\bd
g(x)=\int_{\R}\Ga(x,y)\al g(y) m(dy)
\ed
where $m(dy)=2 dy$, so the candidate to be $\mug$ is $\al g(y)2 dy$. The derivatives of $g$, in the general sense, would be
\bd
g'(x)=
\begin{cases}
1,& x<1\\
-1,& 1 < x < 2 \\
1 & x>2
\end{cases}
\ed
and the second generalized derivative is the measure $-2\delta_1(dx)+2\delta_2(dx)$. This lead us to consider
\bd
\mug(dy)= \desc g(y)\ind{\R\setminus \{1,2\}}(y) 2 dy +2 \delta_{\{1\}}(dy)-2\delta_{\{2\}}(dy)
\ed
The corresponding computations show that \eqref{eq:mug} holds with the considered measure $\mug$.

\begin{teo} \label{teo:general2}
Consider a one-dimensional diffusion $X$. Consider the function
$g\colon\estados \to \R$ such that  
\be \tag{\ref{eq:mug}} \label{eq:mugTeo}
g(x)=\int_{\I} \Ga(x,y)\mug(dy),
\ee
with $\mug$ a signed measure over $\sigalg$. Assume that $g$ satisfies the conditions for Dynkin's characterization (see \autoref{sec:DynkinChar}).
Suppose that $J_i\colon i=1,\ldots, N$ ($N$ could be $\infty$) are subintervals of $\I$, such that $J_i\cap J_j=\emptyset$ if $i\neq j$ and
\begin{itemize}
  \item $\int_{J_i}\phia(y)\mug(dy)=0$ if there is some $x\in \I$ such that $x<y$ for all $y\in J_i$,
  \item $\int_{J_i}\psia(y)\mug(dy)=0$ if there is some $x\in \I$ such that $x>y$ for all $y\in J_i$.
\end{itemize}
Define $\SR$ by 
\bd
\SR=\I\setminus \cup_{i=1}^N J_i.
\ed
and $\Va\colon \I \to \R$ by
\bd
\Va(x)=\int_{S} \Ga(x,y)\mug(dy). 
\ed
If $\mug(dy)\geq 0$ in $\SR$, and $\Va\geq g$ in $\CR=\cup_{i=1}^N J_i$, then
$\Va$ is the value function associated with the OSP, and $\SR$ is the stopping region.
\end{teo}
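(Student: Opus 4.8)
The plan is to transcribe the argument used for \autoref{teo:twosided} and \autoref{teo:diffusionGeneral}, now working directly with the representing measure $\mug$ in place of $\al g(y)\,m(dy)$; this is in fact simpler, since no regularity extension of $g$ is needed. First I would note that $\Va(x)=\int_{\SR}\Ga(x,y)\mug(dy)$ with $\mug$ non-negative on $\SR$, so by the Riesz decomposition \eqref{eq:alphaexcessive} the function $\Va$ is $\desc$-excessive.

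Next I would show that $\Va$ is a majorant of $g$. Splitting the representation \eqref{eq:mugTeo} over $\CR=\bigcup_{i=1}^N J_i$ and its complement,
\begin{equation*}
g(x)=\int_{\I}\Ga(x,y)\mug(dy)=\Va(x)+\sum_{i=1}^N\int_{J_i}\Ga(x,y)\mug(dy).
\end{equation*}
For $x\in\SR$ each interval $J_i$ lies entirely to one side of $x$, so by \eqref{eq:Garepr} the $i$-th term equals $\wa^{-1}\psia(x)\int_{J_i}\phia(y)\mug(dy)$ or $\wa^{-1}\phia(x)\int_{J_i}\psia(y)\mug(dy)$, and both vanish by hypothesis; hence $\Va(x)=g(x)$ for $x\in\SR$. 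This is precisely \autoref{lem:waeqg} applied to the signed measure $\mug$. Together with the standing assumption $\Va\ge g$ on $\CR$, this shows $\Va$ is an $\desc$-excessive majorant of $g$, so Dynkin's characterization of the value function as the least $\desc$-excessive majorant yields $\Va(x)\ge\sup_\tau\Ex{x}{\ea{\tau}g(X_\tau)}$ for all $x$.

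For the reverse inequality I would apply \autoref{lem:Wa} to the positive measure $\mug|_{\SR}$ and the set $\SR=\I\setminus\bigcup_i J_i$, obtaining $\Va(x)=\Ex{x}{\ea{\hit{\SR}}\Va(X_{\hit{\SR}})}$; since the diffusion has continuous paths, $X_{\hit{\SR}}\in\SR$, where $\Va=g$, so the right-hand side equals $\Ex{x}{\ea{\hit{\SR}}g(X_{\hit{\SR}})}\le\sup_\tau\Ex{x}{\ea{\tau}g(X_\tau)}$. Comparing the two inequalities gives equality throughout, so $\Va$ is the value function and $\SR$ is a stopping region, with $\hit{\SR}$ an optimal stopping time.

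The only point requiring genuine care is the measure-theoretic bookkeeping for the signed measure: to split $\int_{J_i}\Ga(x,y)\mug(dy)$ and pull $\psia(x)$ or $\phia(x)$ out, and to interchange the integral with the expectation inside \autoref{lem:Wa} and \autoref{lem:waeqg}, one needs $\phia$ and $\psia$ to be absolutely integrable against the total variation $|\mug|$ on each $J_i$ and on $\SR$. This is implicit in \eqref{eq:mugTeo} being well-posed, but I would make it an explicit standing hypothesis; with that in hand, every remaining step is a verbatim repetition of the continuous-path arguments already established in this chapter, so no genuinely new difficulty arises.
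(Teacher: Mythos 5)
Your proposal follows the paper's own proof essentially verbatim: excessivity of $\Va$ from the Riesz representation with $\mug\geq 0$ on $\SR$, equality $\Va=g$ on $\SR$ via \autoref{lem:waeqg}, the majorant property from the standing hypothesis on $\CR$ combined with Dynkin's characterization, and the reverse inequality from \autoref{lem:Wa} together with continuity of paths. The only (minor) added value is your explicit integrability caveat for $\phia,\psia$ against $|\mug|$, which the paper leaves implicit in the well-posedness of \eqref{eq:mug}.
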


\begin{remark} \label{remark:formaVgeneral2} With the same arguments given in \autoref{remark:formaVgeneral} we obtain the alternative representation for $\Va$, given in \eqref{eq:VaMasHumana}:
\bd
\Va(x)=
\begin{cases}
g(x)&\text{for $x \notin \CR$},\\
k_1^i\phia(x)+k_2^i\psia(x) &\text{for $x \in J_i\colon i=1\ldots N$};
\end{cases}
\ed
where, denoting $a_i=\inf J_i$ and $b_i=\sup J_i$
\begin{itemize}
\item $k_1^i=0$ and $k_2^i=g(b_i)/\psia(b_i)$ if there is not $x\in \I$ such that $x<y$ for all $y\in J_i$;
\item $k_1=g(a_i)/\phia(a_i)$ and $k_2=0$ if there is not $x\in \I$ such that $x>y$ for all $y\in J_i$;
\item in the other cases
 $$k_1^i=\frac{g(b_i)\psia(a_i)-g(a_i)\psia(b_i)}{\psia(a_i)\phia(b_i)-\psia(b_i)\phia(a_i)},$$
and 
 $$k_2^i=\frac{g(a_i)\phia(b_i)-g(b_i)\phia(a_i)}{\psia(a_i)\phia(b_i)-\psia(b_i)\phia(a_i)}.$$
 \end{itemize}
\end{remark}

\begin{proof}
The strategy for the proof is to verify that $\Va$ is the minimal $\desc$-excessive function that dominates the reward function $\g$, then, from Dynkin's characterization, follows that $\Va$ is the optimal expected reward.

By the definition of $\Va$, and taking into account that $\mug$ is a non-negative measure in $S$, we conclude that $\Va$ is an $\desc$-excessive function. Applying \autoref{lem:waeqg} with $\Wa:=\Va$, we conclude that $\Va(x)$ and $g(x)$ are equal for $x\in S$, which in addition to the hypothesis $\Va(x)\geq g(x)$ for all $x\in S^c$ allow us to conclude that $\Va$ is a majorant of the reward. So far, we know
\bd
\sup_\tau \Ex{x}{\ea{\tau}g(X_\tau)}\leq \Va(x). 
\ed
From \autoref{lem:waeqg} --in the first equality-- we get
\begin{align*}
\Va(x)&=\Ex{x}{\ea{\hit{\SR}}g(X_{\hit{\SR}})}\\
& \leq \sup_\tau \Ex{x}{\ea{\tau}g(X_\tau)}, 
\end{align*}
that proves the other inequality holds as well. From the previous equation we also conclude that $\SR$ is the stopping region.

\end{proof}

Comparing \autoref{teo:diffusionGeneral} and \autoref{teo:general2}, it should be emphasized that the former gives a characterization of the solution and a method to find it, while the latter is just a verification theorem, which of course, also suggests a method to find the solution. 
However, \autoref{teo:general2} has less restrictive hypothesis and, although we do not include it here, an algorithm to find the continuation region may be developed, at least when the region in which the measure $\mug$ is negative, is a finite union of intervals; in fact, \autoref{algor} would be a particular case of this algorithm when considering $\mug(dy)=\al g(y)m(dy)$. 

\subsection{Brownian motion with drift and $g(x)=|x|$}
\label{ex:absx2}
As in \autoref{ex:absx1} we consider $X$ to be Brownian motion with drift $\mu$ and the reward function $g(x)=|x|$. This process has a Green function with respect to the reference measure $m(dx)=2e^{2\mu x} dx$ given by
\bd
\Ga(x,y)=
\begin{cases}
\wa^{-1}e^{-(\gamma+\mu)x}e^{(\gamma-\mu)y},\qquad y\geq x,\\
\wa^{-1}e^{-(\gamma+\mu)y}e^{(\gamma-\mu)x},\qquad y\leq x.
\end{cases}
\ed
where $\gamma=\sqrt{2\desc+\mu^2}$ and the Wronskian is $\wa=2\gamma$. The differential operator is $Lf(x)=\mu f'(x)+f''(x)/2$

For functions $f$ in the domain of the extended infinitesimal generator we would have 
\bd
f(x)=\int_{\estados}\Ga(x,y)(-\Aa f(y)) m(dy)
\ed
with $-\Aa g(x)=\desc f(x)-Lf(x)$. Suppose we can apply this formula to $f(x)=|x|$, interpreting the derivatives in the extended sense of measures, we would have
\bd
|x|=\int_{\R^*}\Ga(x,y) \mu(dy)
\ed
with
\bd
\mu(dy)= (-\desc y + \mu)2 e^{2\mu y}\ind{\{y<0\}}(y) dy +2 \delta_{\{0\}}(dy)+(\desc y - \mu)2 e^{2\mu y} \ind{\{y>0\}}(y) dy.
\ed
It can be checked that the previous formula actually holds. We can apply \autoref{teo:general2}. Assuming that the set $S$ is of the form $S=(-\infty,x_\ell)\cup(x_r,\infty)$ for some $x_\ell < 0 < x_r$, we need to find $x_\ell,x_r$ such that
\bd
\begin{cases}
\int_S \Ga(x_\ell,y)\mu(dy)=-x_\ell,\\
\int_S \Ga(x_\ell,y)\mu(dy)=x_r,
\end{cases}
\ed
or what is the same 
\bd
\begin{cases}
\int_{(x_\ell,x_r)} \Ga(x_\ell,y)\mu(dy)=0,\\
\int_{(x_\ell,x_r)} \Ga(x_r,y)\mu(dy)=0,
\end{cases}
\ed
which is also equivalent to
\be
\label{eq:absx}
\begin{cases}
\int_{(x_\ell,x_r)} \phia(y)\mu(dy)=0,\\
\int_{(x_\ell,x_r)} \psia(y)\mu(dy)=0.
\end{cases}
\ee
It can be seen that these equations are equivalent with the ones found in \autoref{ex:absx1}. When solving particular cases (with concrete parameter values) it is easy to verify that the region found is indeed the optimal stopping region.

We follow with some numerical examples. To do the numerical computations we have used the implementation in R, presented in \autoref{sec:implementation} with minor changes.

\begin{example}
Consider the discount $\desc=1$ and the drift $\mu=0$, solving numerically the system of equations \eqref{eq:absx} we find
$x_\ell\simeq -0.69264$, $x_r\simeq 0.69264$, a graphic of the solution is shown in \autoref{fig:absxmu0}.
\begin{figure}
\begin{center}
\includegraphics[scale=.8]{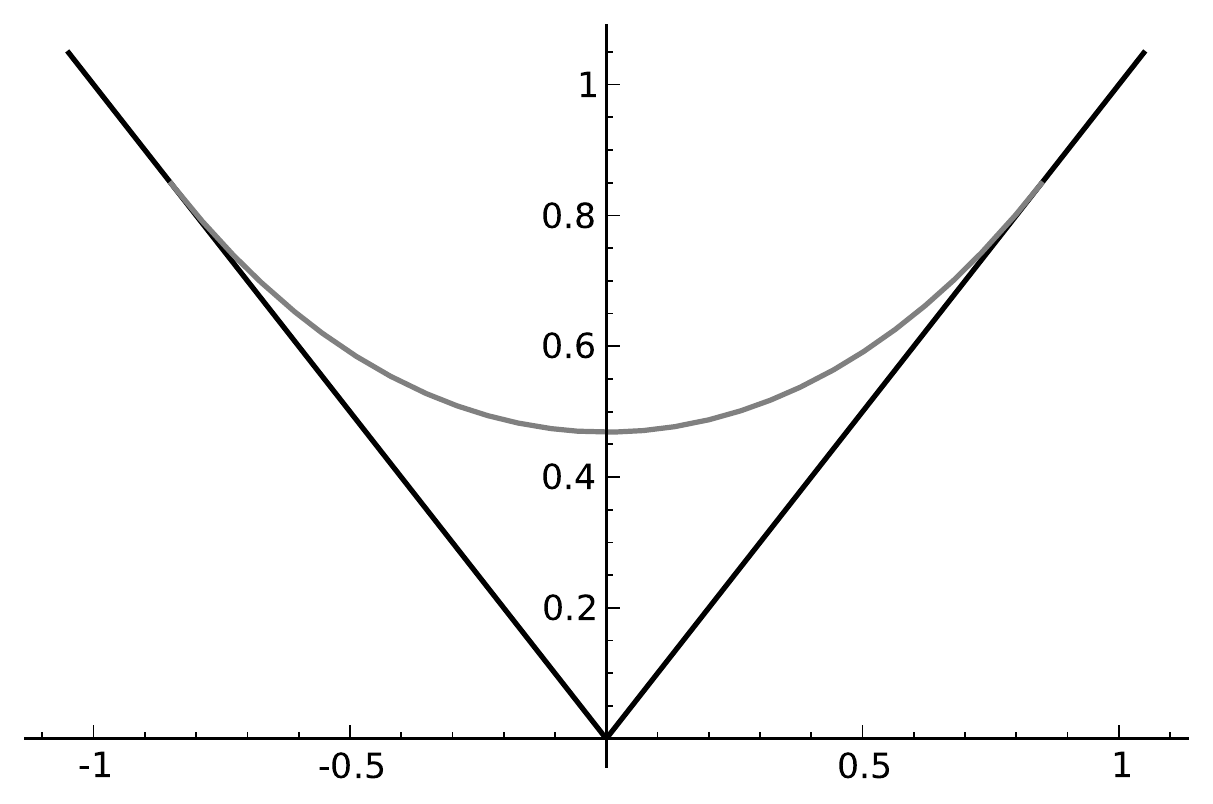}
\caption{\label{fig:absxmu0} OSP for the standard BM and $g(x)=|x|$: $g$ (black), $V_1$ (gray, when different from $g$).}
\end{center}
\end{figure}
\end{example}

\begin{example}
Now consider the same discount $\desc=1$ but a positive drift $\mu=1$, solving numerically the system of equations \eqref{eq:absx} we find 
$x_\ell\simeq -0.737$, $x_r\simeq 1.373$, a graphic of the solution is shown in \autoref{fig:absxmu1}.
\begin{figure}
\begin{center}
\includegraphics[scale=.8]{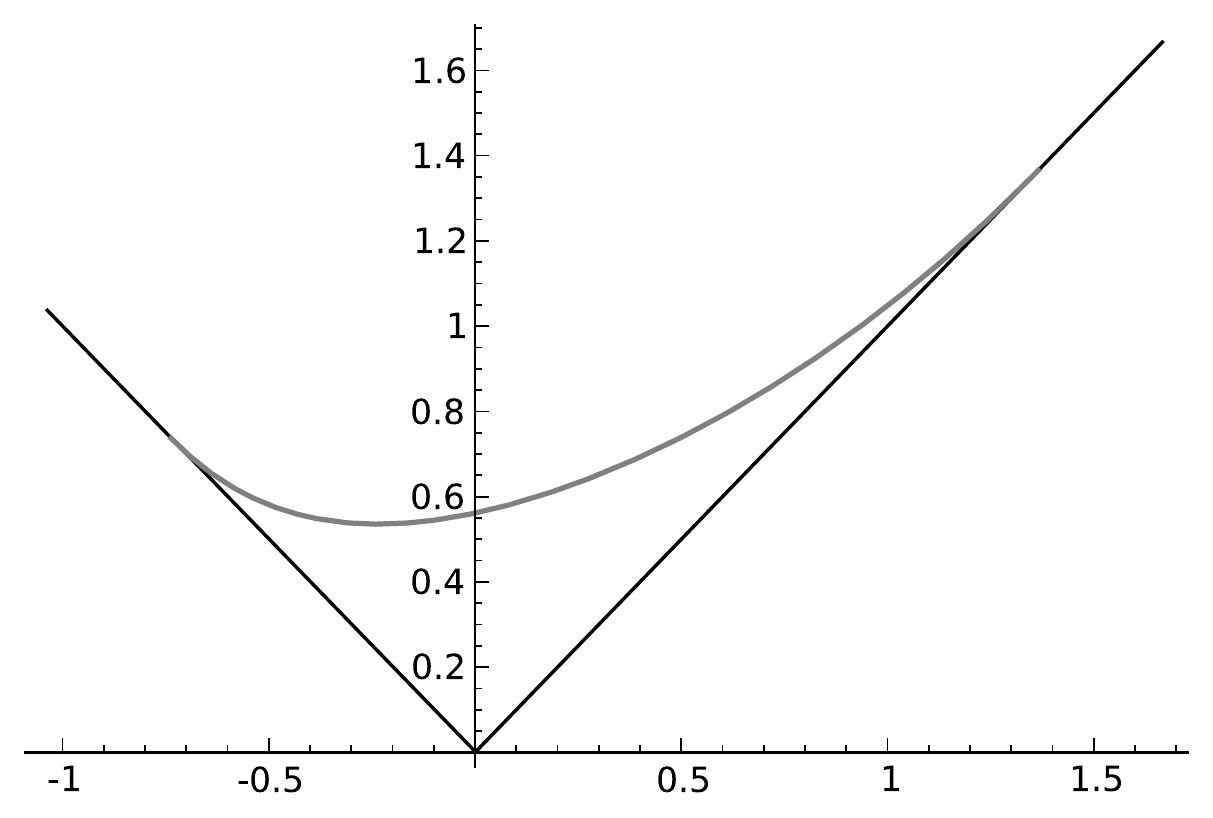}
\caption{\label{fig:absxmu1}OSP for the BM with drift $\mu=1$ and $g(x)=|x|$: $g$ (black), $V_1$ (gray, when different from $g$).}
\end{center}
\end{figure}
\end{example}

\begin{example}
Considering $\desc=1$ and a negative drift $\mu=3$ we find $x_\ell\simeq -3.158$ and $x_r\simeq 1.037$. \autoref{fig:absxmu-3} shows the solution.

\begin{figure}
\begin{center}
\includegraphics[scale=.8]{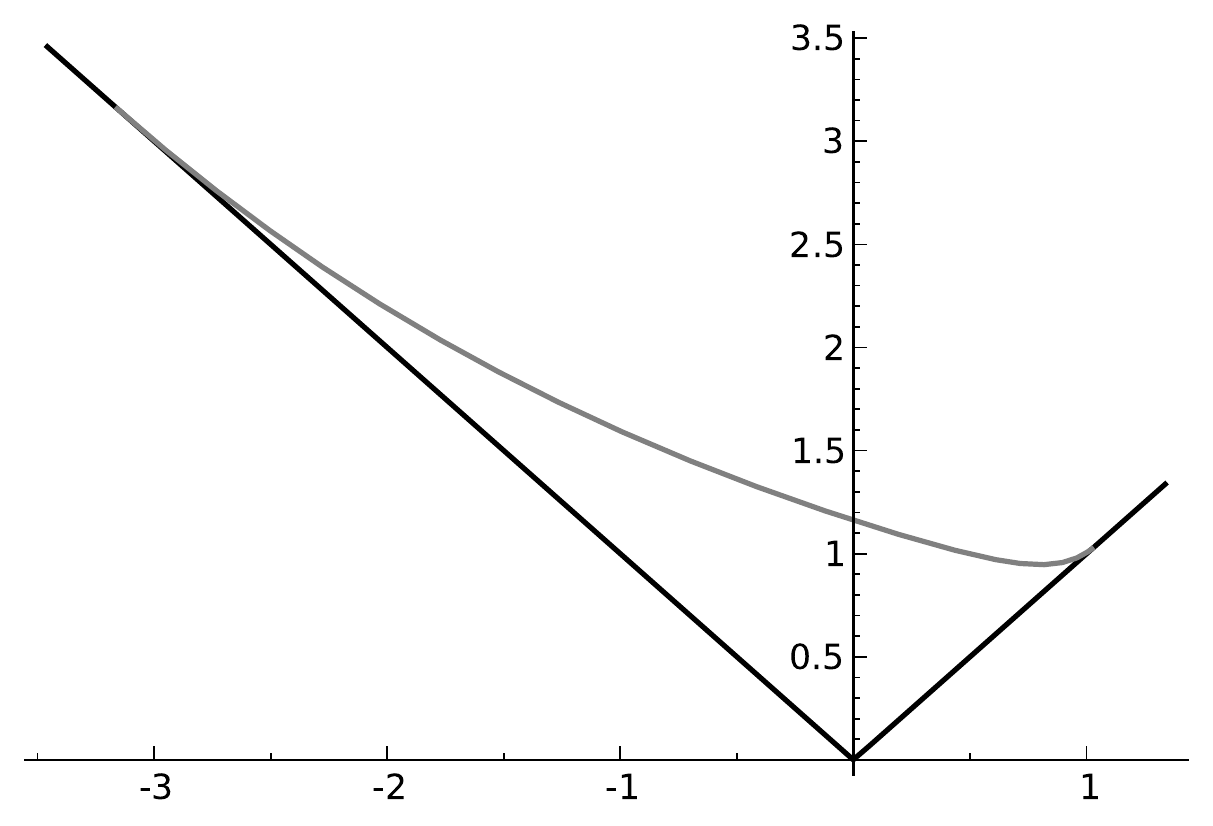}
\includegraphics[scale=.6]{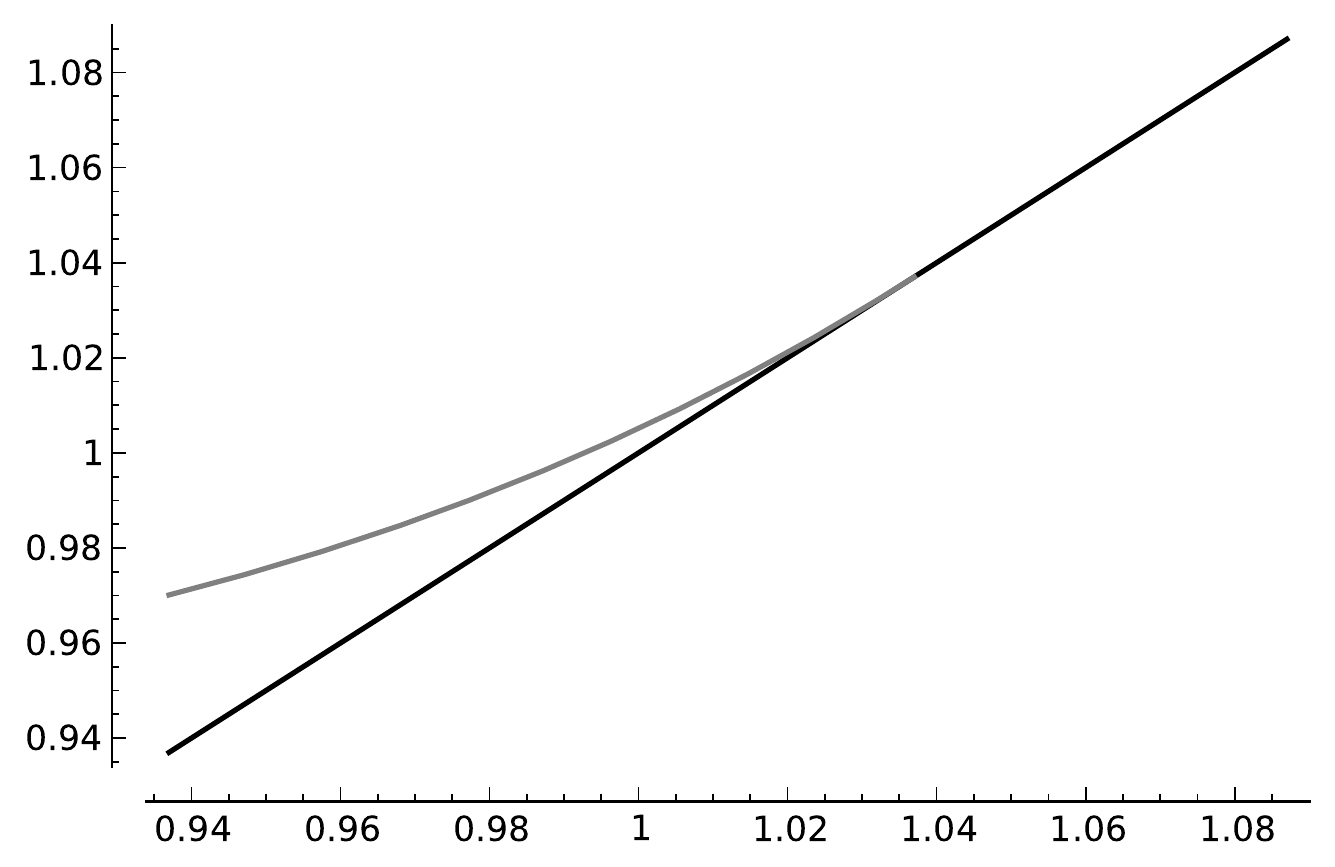}
\caption{\label{fig:absxmu-3}OSP for the BM with drift $\mu=-3$ and $g(x)=|x|$: $g$ (black), $V_1$ (gray, when different from $g$). The second graphic is a zoom to appreciate the smooth fitting.}
\end{center}
\end{figure}
\end{example}

\subsection{Example: Other non-differentiable reward}
Consider the OSP with reward $g\colon \R \to \R$ given by 
\bd
g(x)=
\begin{cases}
x,& x<1,\\
-x+2,& 1\leq x \leq 2, \\
x-2 & x>2.
\end{cases}
\ed
This is the function already presented in the introduction of this section and it satisfies \eqref{eq:mugTeo} with $\mug$ given by
\bd
\mug(dy)=
2 \desc g(y)  \ind{\R\setminus \{1,2\}}(y) dy +2 \delta_{\{1\}}(dy)-2\delta_{\{2\}}(dy).
\ed
Consider the discount factor $\desc=1$. The measure $\mug$ is negative in $(-\infty,0)$ and in $\{2\}$. Computing exactly in the first case, and by numerical approximation in the second (by following a variant of \autoref{algor}), we manage to find two disjoint intervals $J_1\simeq(-\infty,1/\sqrt2)$ and $J_2\simeq(1.15,2.85)$ that satisfy the conditions of \autoref{teo:general2}. 
For $\Va$, we have  the expression given in \autoref{remark:formaVgeneral2}, which considering $\psia(x)=e^{\sqrt{2\desc}x}$ and $\phia(x)=e^{-\sqrt{2\desc}x}$ in the particular case $\desc=1$, renders\footnote{We approximate the roots.}
\bd
V_1(x)=
\begin{cases}
k_2^1 e^{\sqrt2 x},& x<\frac1{\sqrt2},\\
x,& \frac1{\sqrt2} \leq x \leq 1,\\
-x+2,& 1<x\leq 1.15,\\
k_1^2 e^{-\sqrt2 x} + k_2^2 e^{\sqrt2 x},& 1.15<x<2.85,\\
x-2,&x\geq 2.85;
\end{cases}
\ed 
with $k_2^1= \frac1{e\sqrt2}\simeq 0.26$, $k_1^2\simeq 3.96$ and $k_2^2\simeq 0.013$.
\begin{figure}
\begin{center}
\includegraphics[scale=.9]{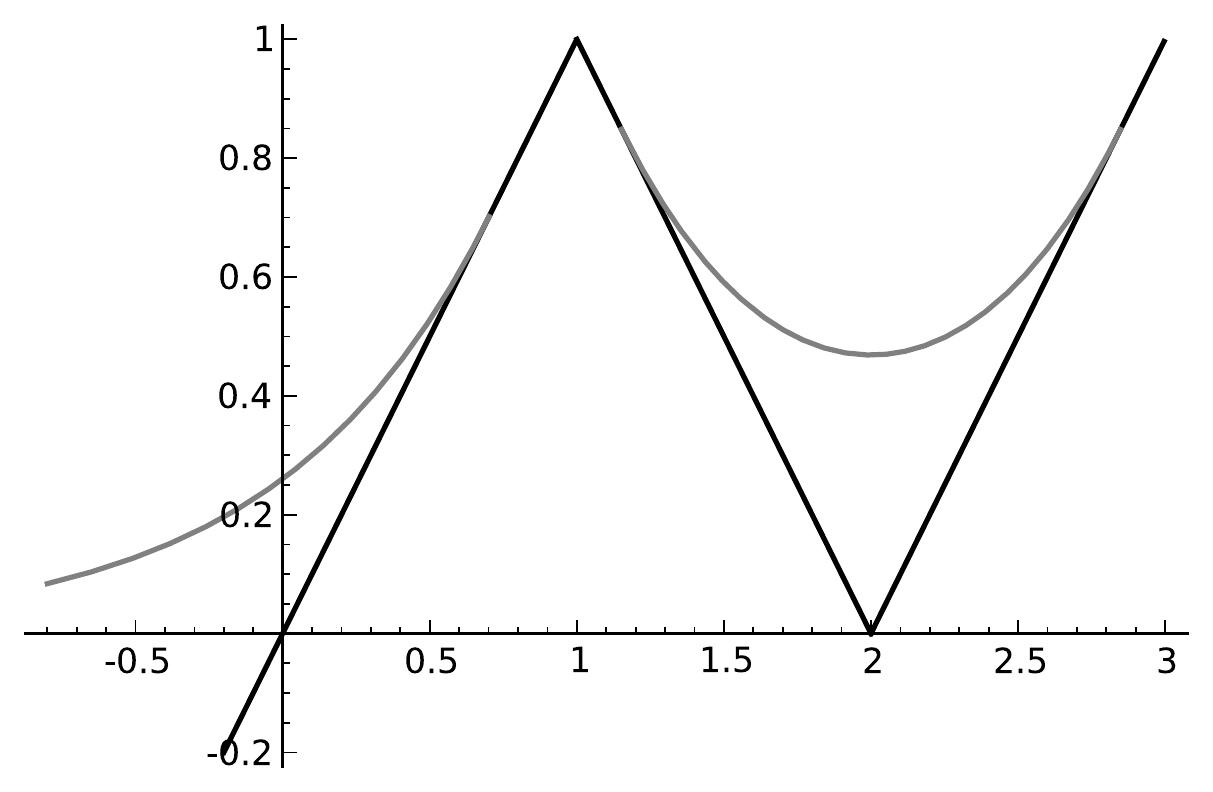}
\end{center}
\caption{\label{fig:mug} OSP for the standard BM and irregular reward: $g$ (black), $V_1$ (gray, when different from $g$).}
\end{figure}
In \autoref{fig:mug} we show the reward function $g$ and the value function $V_1$.
\chapter{Optimal stopping for multidimensional continuous Markov processes}
\label{chap:contMarkov}

\section{Introduction}
In this chapter we present results on optimal stopping for Markov processes with continuous sample paths taking values in general state spaces. We consider a standard Markov process, as 
%outlined, laid out
presented in \autoref{def:standardMarkov}, with the additional assumption of having continuous sample paths. We remember that the state space is an abstract topological semi-compact set $\estados$ equipped with the Borel $\sigma$-algebra $\sigalg$. 
The optimal stopping problem being considered, as in the previous chapters, is to find the stopping time $\tau^*$ and the value function $\Va$ satisfying
\be
\label{eq:ospChap4}
\Va(x)=\Ex{x}{\ea{\tau^*} \g(X_{\tau^*})} = \sup_{\tau}\Ex{x}{\ea{\tau} \g(X_{\tau})},
\ee
where the supremum is taken over all stopping times.

By the discounted version of the Dynkin's characterization, stated in \autoref{sec:DynkinChar}, the value function $\Va$ is the smallest $\desc$-excessive function dominating the reward function $g$, while the optimal stopping time is the hitting time of the stopping region $\SR$ given by:
\bd
\SR:=\{x\in \estados\colon \Va(x)=g(x)\}.
\ed
Combining this with the fact that 
\bd
x\mapsto \int_{\estados}f(y)\Ga(x,dy)
\ed
is an $\desc$-excessive function if $f\colon \estados\to \R$ is a non-negative function, we manage to prove a verification theorem to test the solution of the OSP when the reward function has a representation of the form:
\bd
g(x)=\int_{\estados}f(y)\Ga(x,dy).
\ed
Results in \autoref{chap:markov} suggest that the class of functions having this representation is fairly large.

Optimal stopping problems in which the process is a multidimensional diffusion \citep[see][]{stroock1979multidimensional} are a relevant particular case of the processes considered in this chapter. 
The optimal stopping problem of a multidimensional process is, in general, a hard issue, being the exception the problems for which a closed solution is known --usually problems in which the process and the reward function have certain type structure that allow to reduce it to a one-dimensional problem.--
One of the first work in this direction was \cite{margrabe1978value}, in which the author provides an explicit formula for the price of the right of changing an asset by another at the expiration date of the contract or at a time that can be choose, all this in the context of a Black-Scholes market. 
In the same line \cite{gerber2006martingale} solve the problem of pricing American  options on two stocks, where the prices are driven by geometric Brownian motions. 
A generalization of this result was provided in \cite{fajardo2006pricing}, considering Lévy driven stock prices. In some sense, the work by \cite{dubinsSheppSh} on optimal stopping for the maximum of a Bessel process, can be seen as a problem regarding multidimensional process, as the Bessel process itself has its origin in the multidimensional Brownian motion. In the pricing of Russian options, \cite{shepp93} deal with a two-dimensional diffusion; later, in \cite{shepp94}, the authors find the way of reducing the problem to a one-dimensional problem, obtaining an easier solution.
An interesting object to study, regarding optimal stopping of multidimensional processes, are the contracts on indexes that are constructed by linear combinations of different stock prices; similar to this is the problem considered in \cite{oksHu}.
The recent paper on optimal stopping by \cite{christensen2011harmonic} includes the treatment of multidimensional diffusions presenting a method based on harmonic functions. Another work that considers multidimensional processes is the Firth's PhD thesis \citep{firth2005thesis}, in which the problem of pricing multi-asset American options is studied.

\section{Main results}
%\subsection{Optimal stopping for general continuous paths processes}
We denote by $\hit{S}$ the hitting time of the set $S$, defined by
\bd \hit{S}:=\inf\{t\geq 0\colon X_t\in S\}. \ed
We start by stating our main theorem.

\begin{teo} \label{teo:general}
Consider a standard Markov process $X$ with continuous sample paths. Consider a reward function $g\colon\estados \to \R$ such that  there exists $f\colon \estados \to  \R$ that satisfies
\bd
%\label{eq:inversionRn}
g(x)=\int_{\estados} f(y)\Ga(x,dy)\quad (x \in \estados).
\ed
Assume as well that function $g$ satisfies the conditions for Dynkin's characterization (see \autoref{sec:DynkinChar}).
Consider $S\in \sigalg$ such that: $f(x)\geq 0$ for all $x \in S$; and $\Va\colon\estados \to \R$ defined by
\begin{equation*}
\Va(x):=\int_{S} f(y)\Ga(x,dy),
\end{equation*}
satisfies
\begin{itemize}
\item $\Va(x)\geq g(x)$ for all $x\in \estados \setminus S$, and
\item $\Va(x)=g(x)$ for all $x \in \partial S.$
\end{itemize}
Then, $S$ is the stopping region of the OSP, being $\hit{S}$
the optimal stopping time and $\Va$ the value function.
\end{teo}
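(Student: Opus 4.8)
The plan is to check that $\Va$ meets the two requirements in Dynkin's characterization (see \autoref{sec:DynkinChar}): that it is a $\desc$-excessive majorant of $g$, and that it is the \emph{smallest} such one. The second point will be obtained, exactly as in the proofs of \autoref{teo:twosided} and \autoref{teo:diffusionGeneral}, by exhibiting $\Va$ as the expected discounted reward associated with the hitting time $\hit{S}$.

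First I would observe that $\Va$ is $\desc$-excessive: by \eqref{eq:RafGa} we may write $\Va(x)=\int_{\estados}\bigl(f\ind{S}\bigr)(y)\,\Ga(x,dy)=\Ra\bigl(f\ind{S}\bigr)(x)$, and since $f\ind{S}\geq 0$ by hypothesis, the resolvent $\Ra$ of a non-negative measurable function is $\desc$-excessive (\autoref{sec:excessive}). Next I would prove that $\Va=g$ on all of $S$, which is the step that genuinely uses the boundary hypothesis $\Va=g$ on $\partial S$. Put $W:=g-\Va$; since $g(x)=\int_{\estados}f(y)\,\Ga(x,dy)$, we have $W(x)=\int_{\estados\setminus S}f(y)\,\Ga(x,dy)$, and \autoref{lem:generalMin} applied to $f$ and the set $\estados\setminus S$ gives $W(x)=\Ex{x}{\ea{\hit{\estados\setminus S}}\,W(X_{\hit{\estados\setminus S}})}$. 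For $x$ in the interior of $S$, continuity of the sample paths forces $X_{\hit{\estados\setminus S}}\in\overline S\cap\overline{\estados\setminus S}=\partial S$ on $\{\hit{\estados\setminus S}<\infty\}$ (while on $\{\hit{\estados\setminus S}=\infty\}$ the integrand vanishes by the usual convention), and $W\equiv 0$ on $\partial S$ because $\Va=g$ there; hence $W(x)=0$. Together with $\Va=g$ on $\partial S$ this yields $\Va=g$ on $S$, and combined with the hypothesis $\Va\geq g$ on $\estados\setminus S$ we conclude $\Va\geq g$ everywhere.

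It then follows from Dynkin's characterization that $\Va\geq V^{*}$, where $V^{*}(x):=\sup_{\tau}\Ex{x}{\ea{\tau}g(X_{\tau})}$. For the reverse inequality I would apply \autoref{lem:generalMin} to $f\ind{S}$ and the set $S$, obtaining $\Va(x)=\Ex{x}{\ea{\hit{S}}\,\Va(X_{\hit{S}})}$; since $X_{\hit{S}}\in S\cup\partial S$ by path continuity, where $\Va=g$ by the previous step, this equals $\Ex{x}{\ea{\hit{S}}\,g(X_{\hit{S}})}\leq V^{*}(x)$. Hence $\Va=V^{*}$ is the value function, $\hit{S}$ attains the supremum so it is an optimal stopping time, and $S\subseteq\{x:\Va(x)=g(x)\}$ with stopping at $\hit{S}$ optimal, so $S$ is (a version of) the stopping region.

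The main obstacle is the middle step: deducing $\Va=g$ on the interior of $S$ from the boundary condition alone. This rests on the harmonic-type identity of \autoref{harmonic}/\autoref{lem:generalMin} together with the continuity of the paths, which is what guarantees that the process leaves $S$ precisely through $\partial S$. One should also keep track of the integrability requirement $\int_{\estados\setminus S}|f(y)|\,\Ga(x,dy)<\infty$ needed to invoke \autoref{lem:generalMin} (implicit in the representation of $g$) and of the fact that $\hit{S}$ is a bona fide stopping time for the standard Markov process $X$. Everything else just reproduces the pattern already used in the one-dimensional theorems.
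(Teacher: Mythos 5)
Your proposal is correct and follows essentially the same route as the paper: the paper isolates your middle step (showing $g-\Va=\int_{\estados\setminus S}f(y)\Ga(x,dy)$ vanishes on $S$ via \autoref{lem:generalMin}, the boundary condition and path continuity) as a separate lemma, and then concludes exactly as you do by combining Dynkin's characterization with the identity $\Va(x)=\Ex{x}{\ea{\hit{S}}g(X_{\hit{S}})}$. The only cosmetic difference is that you split off the case $x\in S\cap\partial S$ explicitly, which the paper absorbs into the same computation.
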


\begin{remark}
This theorem is a generalization, to topological (including multidimensional) spaces, of \autoref{teo:verif1} in the case in which the inversion formula \eqref{eq:ginversion} holds.
\end{remark}
In the practical use of the previous theorem, one can think the function $f$ as $\desc g-Ag$ with $Ag$ as the infinitesimal generator of $g$.
Before proving the theorem, we state and prove a previous lemma, whose result we find interesting in itself.
\begin{lem}
\label{multimainlemma}
Consider a standard Markov process $X$ with continuous sample paths. Let $\Wa\colon\estados \to \R$ be defined by
\begin{equation}
\label{eq:defW}
\Wa(x):=\int_{S} f(y) \Ga(x,dy).
\end{equation}
and $g\colon\estados \to \R$ be defined by
\bd
g(x):=\int_{\estados} f(y) \Ga(x,dy),
\ed
where $f\colon \estados \to \R$ is some $\sigalg$-measurable function. If $g(x)=\Wa(x)$ for all $x\in \partial S$, then, $g(x)=\Wa(x)$ for all $x\in S$. Furthermore, $\Wa$ satisfies
\be \label{eq:WA}
\Wa(x)=\Ex{x}{ e^{-\desc \hit{S}}g(X_{\hit{S}})}. 
\ee
\end{lem}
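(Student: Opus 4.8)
The plan is to invoke \autoref{lem:generalMin} twice — once with the set $S$ and once with its complement $S^c:=\estados\setminus S$ — and then to use the continuity of the sample paths to locate $X$ on $\partial S$ at the two hitting times involved. Throughout I assume, as is implicit in the statement (and is exactly what is needed to apply \autoref{lem:generalMin}), that $\int_{\estados}|f(y)|\Ga(x,dy)<\infty$ for every $x$, so that $g$ and $\Wa$ are finite and the function $h(x):=g(x)-\Wa(x)=\int_{S^c}f(y)\Ga(x,dy)$ is well defined; by hypothesis $h\equiv 0$ on $\partial S$.

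First I would establish that $g=\Wa$ on $S$, equivalently $h\equiv 0$ on $S$. Applying \autoref{lem:generalMin} with the set $S^c$ gives $h(x)=\Ex{x}{\ea{\hit{S^c}}h(X_{\hit{S^c}})}$, with the usual convention that the integrand vanishes on $\{\hit{S^c}=\infty\}$. Fix $x$ in $\mathrm{int}(S)$, the interior of $S$. Because $\mathrm{int}(S)$ is open and the paths are continuous, $\hit{S^c}>0$, so on $\{\hit{S^c}<\infty\}$ one has $X_t\in S$ for all $t<\hit{S^c}$ and hence, by continuity, $X_{\hit{S^c}}\in\overline{S}$; on the other hand there are times $t_n\downarrow\hit{S^c}$ with $X_{t_n}\in S^c$, so by right-continuity $X_{\hit{S^c}}\in\overline{S^c}$ as well. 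Therefore $X_{\hit{S^c}}\in\overline S\cap\overline{S^c}=\partial S$, where $h$ vanishes, and we conclude $h(x)=0$. Since $S=\mathrm{int}(S)\sqcup(S\cap\partial S)$ and $h=0$ on $\partial S$ by hypothesis, this gives $h\equiv 0$ on $S$; combined with the hypothesis it gives $g=\Wa$ on $\overline S=S\cup\partial S$.

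For the identity \eqref{eq:WA}, I would apply \autoref{lem:generalMin} with the set $S$ to get $\Wa(x)=\Ex{x}{\ea{\hit S}\Wa(X_{\hit S})}$. It then suffices to check that $X_{\hit S}\in\overline S$ on $\{\hit S<\infty\}$: this is immediate if $x\in S$ (then $\hit S=0$), and if $x\notin S$ it follows from continuity, since for each $\epsilon>0$ there is $t\in[\hit S,\hit S+\epsilon)$ with $X_t\in S$, whence $X_{\hit S}=\lim_n X_{t_n}\in\overline S$ for a suitable sequence $t_n\downarrow\hit S$. Using the first part, $\Wa=g$ on $\overline S$, so $\Wa(X_{\hit S})=g(X_{\hit S})$ on $\{\hit S<\infty\}$, and \eqref{eq:WA} follows.

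I expect the only real obstacle to be the topological bookkeeping in the second paragraph: one has to argue rigorously — and this is where the hypothesis of continuous sample paths is essential — that a path started strictly inside $S$ can leave $S$ only through $\partial S$, and, symmetrically, that a path started off $\overline S$ can reach $\overline S$ only through $\partial S$. Everything else is a mechanical application of \autoref{lem:generalMin} together with the convention $\ea{\infty}=0$ on $\{\hit{\cdot}=\infty\}$ already used in the proof of \autoref{harmonic}, which makes the corresponding boundary terms disappear.
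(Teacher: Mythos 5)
Your proof is correct and follows essentially the same route as the paper: both arguments apply \autoref{lem:generalMin} to $S^c$ (to kill the term $\int_{S^c}f(y)\Ga(x,dy)$ by locating $X_{\hit{S^c}}$ on $\partial S$, where the boundary hypothesis makes it vanish) and then to $S$ (to obtain \eqref{eq:WA} via $\Wa(X_{\hit S})=g(X_{\hit S})$). Your treatment is merely a bit more explicit about the topological bookkeeping, separating interior points of $S$ from points of $S\cap\partial S$, which the paper glosses over.
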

\begin{proof}
First observe that from the fact that $g(x)=\Wa(x)$ for all $x\in \partial S$ it follows that
\begin{equation}
\label{eq:auxborde}
\int_{S^c} f(y)\Ga(x,dy)=0 \quad (x \in \partial{S}),
\end{equation}
where $S^c:=\estados\setminus S$. We can now move on to prove
 that $g(x)=\Wa(x)$ for $x\in S$. To confirm this, consider the following equalities:
\begin{align*}
g(x)&= \int_{\estados} f(y)\Ga(x,dy)\\
&=\int_{S} f(y) \Ga(x,dy)+\int_{S^c} f(y)\Ga(x,dy)\\
&= \Wa(x) +\int_{S^c} f(y) \Ga(x,dy),
\end{align*}
and note that we must prove that the second term on the right-hand side vanishes. Given that $\int_{S^c} f(y) \Ga(x,dy)$ is $F_{S^c}$, in the notation of \autoref{lem:generalMin}, and by the application of this result, we are able to conclude that for $x\in S$
\bd
 \int_{S^c} f(y) \Ga(x,dy) = \Ex{x}{e^{-\desc \hit{S^c}} \int_{S^c} f(y)  \Ga(X_{\hit{S^c}},dy)}.
\ed
Bearing \eqref{eq:auxborde} in mind, the integral inside the expected value above on the right-hand side vanishes, as $X_{\hit{S^c}}$ is in $\partial{S}$. 
Therefore, the formula on the right-hand side vanishes, thus completing the proof. 

The validity of \eqref{eq:WA} is a direct consequence of \autoref{lem:generalMin}, considering that $\Wa=F_S$ and 
taking into account that $g(x)=\Wa(x)$ for $x\in \partial S$.

\end{proof}

\begin{proof}[Proof of \autoref{teo:general}]
%as in the proof for the one-dimensional case
As in the proof for the one-dimensional case, this one entails proving that $\Va$ is the minimal $\desc$-excessive function that dominates the reward function $g$. By Dynkin's characterization, this implies that $\Va$ is the optimal expected reward.

By the definition of $\Va$, and taking into account that $f$ is a non-negative function in $S$, we deduce that $\Va$ is an $\desc$-excessive function (see \autoref{sec:excessive}). 
Applying \autoref{multimainlemma} with $\Wa:=\Va$, we get  $\Va(x)=g(x)$ for $x\in S$, which in addition to the hypothesis $\Va(x)\geq g(x)$ for all $x\in S^c$, yield that $\Va$ is a majorant of the reward. 
So far, we have established that
\bd
\sup_\tau \Ex{x}{ e^{-\desc \tau}g(X_\tau)}\leq \Va(x). 
\ed
By \autoref{multimainlemma} we get 
\begin{align*}
\Va(x)&=\Ex{x}{e^{-\desc \hit{S}}g(X_{\hit{S}})}\\
& \leq \sup_\tau \Ex{x}{e^{-\desc \tau}g(X_\tau)}, 
\end{align*}
then, we conclude that the desired equality holds.

\end{proof}

The following corollary of \autoref{teo:general} provides a practical way of using this result.

\begin{cor}
\label{mainmulti}
Consider a standard Markov process $X$ with continuous sample paths. 
Consider a reward function $g\colon\estados \to \R$ that belongs to the domain $\Da$
of the extended infinitesimal generator associated with the $\desc$-killed process and satisfying
\begin{itemize}
\item $\lim_{t\to \infty}\Ex{x}{\ea{t}g(X_t)}=0$ and
\item $\Ex{x}{\int_0^{\infty} \ea{s}|\Aa g(X_s)| ds}<\infty$.
\end{itemize} 
Assume as well that function $g$ satisfies the conditions for Dynkin's characterization (see \autoref{sec:DynkinChar}).
Suppose that $S\in \sigalg$ is a set satisfying $\Aa g(x)\leq 0$ for all $x \in S$
and define $\Va\colon\estados \to \R$ by:
\begin{equation*}
\Va(x):=\int_{S} (-\Aa g(y))\Ga(x,dy). 
\end{equation*}
If
\begin{itemize}
\item $\Va(x)\geq g(x)$ for all $x\in \estados \setminus S;$
\item $\Va(x)=g(x)$ for all $x \in \partial S;$
\end{itemize}
then $S$ is the stopping region of the OSP, being $\hit{S}$ the optimal stopping time and $\Va$ the value function.
\end{cor}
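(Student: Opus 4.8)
The plan is to deduce this corollary directly from \autoref{teo:general} by exhibiting the Green-kernel representation of $g$ that the latter theorem demands. Concretely, I would set $f:=-\Aa g$ (an $\sigalg$-measurable function by the very definition of the extended infinitesimal generator of the $\desc$-killed process) and verify that $g(x)=\int_{\estados}f(y)\,\Ga(x,dy)$ for every $x\in\estados$; once this is in place the hypotheses of \autoref{teo:general} hold verbatim and the conclusion follows.

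For the representation I would invoke the inversion formula \eqref{eq:invExtended} established in \autoref{chap:markov}. Since $g$ belongs to the domain $\Da$ of the extended infinitesimal generator of the $\desc$-killed process, the process $\ea{t}g(X_t)-g(x)-\int_0^t\ea{s}\Aa g(X_s)\,ds$ is a $\P_x$-martingale, so $\Ex{x}{\ea{t}g(X_t)}-g(x)-\Ex{x}{\int_0^t\ea{s}\Aa g(X_s)\,ds}=0$. The two standing hypotheses, namely that the expectations $\Ex{x}{\ea{t}g(X_t)}$ vanish as $t\to\infty$ and that $\Ex{x}{\int_0^{\infty}\ea{s}|\Aa g(X_s)|\,ds}<\infty$, are exactly what is needed to let $t\to\infty$ (Lebesgue dominated convergence for the integral term, the first term tending to zero), giving $g(x)=\Ra(-\Aa g)(x)$; and since the second hypothesis also yields $\int_{\estados}|\Aa g(y)|\,\Ga(x,dy)<\infty$, Fubini together with \eqref{eq:RafGa} converts this into $g(x)=\int_{\estados}(-\Aa g(y))\,\Ga(x,dy)=\int_{\estados}f(y)\,\Ga(x,dy)$, as required.

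With the representation secured, the remainder is a matching of hypotheses: the assumption $\Aa g(x)\le 0$ for $x\in S$ says precisely that $f(x)\ge 0$ on $S$; the function $\Va(x)=\int_{S}(-\Aa g(y))\,\Ga(x,dy)=\int_{S}f(y)\,\Ga(x,dy)$ is exactly the candidate considered in \autoref{teo:general}; the conditions $\Va(x)\ge g(x)$ on $\estados\setminus S$ and $\Va(x)=g(x)$ on $\partial S$ are carried over unchanged; and $g$ is assumed to satisfy the conditions for Dynkin's characterization. Hence \autoref{teo:general} applies and delivers that $S$ is the stopping region of the OSP, $\hit{S}$ the optimal stopping time, and $\Va$ the value function.

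I do not anticipate a genuine obstacle: the substance is entirely contained in \autoref{teo:general} and in the already-proved inversion formula \eqref{eq:invExtended}. The only point deserving a line of care is the legitimacy of passing to the limit $t\to\infty$ and of applying Fubini, i.e.\ checking that the two decay/integrability conditions indeed make $-\Aa g$ an admissible ``charge'' $f$ in the sense of \autoref{teo:general}; but this is precisely the computation recorded around \eqref{eq:invExtended}, so no new argument is needed and the proof is short.
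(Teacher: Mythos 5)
Your proposal is correct and coincides with the paper's own proof: the paper likewise invokes the inversion formula \eqref{eq:invExtended} (valid under the two decay/integrability hypotheses, as established in the preliminaries) to write $g(x)=\int_{\estados}(-\Aa g(y))\,\Ga(x,dy)$, and then applies \autoref{teo:general} with $f:=-\Aa g$. Your extra care about the limit $t\to\infty$ and Fubini simply reproduces the computation already recorded around \eqref{eq:invExtended}, so nothing is missing.
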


\begin{proof}
Since $g$ is in the domain of the extended infinitesimal generator associated with the killed process and the additional hypotheses we made about $g$, we know that \eqref{eq:invExtended} holds, that is 
\bd
g(x)=\int_{\estados}-\Aa g(y) \Ga(x,dy)\qquad (x\in \estados).
\ed
Therefore, we are in conditions to apply the previous theorem with $f(y):=-\Aa g(y)$ to complete the proof.

\end{proof}

%\section{Examples}
\section{3-dimensional Brownian motion}
Consider a three dimensional Brownian motion $X$ starting from $\ve$ (i.e. $X_t=\ve +B_t$ with $\{B_t\}$ a three dimensional Brownian motion). The differential operator of $X$ for $g\in  \CC^2(\R^3)$ is given by
\bd
Lg(\ve)=\sum_{i=1}^{3} g_{i}(\ve) + \frac{1}{2} \sum_{i=1}^3 g_{ii}(\ve) 
\ed
where $g_i$ and $g_{ii}$ denote the first order and second order partial derivative with respect to the $i$-st coordinate. The Green measure of $X$ is given by 
\bd
\Ga(\ve,d \we)=\frac{c}{\|\ve-\we\|}e^{-\sqrt{2\desc}\|\ve-\we\|}d\we.
\ed
for some constant $c$ \citep[see][p.306]{pinsky}.

\begin{example}
Consider the OSP, consisting in finding the stopping time $\tau^*$ such that, for every $\ve \in \R^3$
\bd
\Ex{\ve}{\ea{\tau^*}g(X_{\tau^*})}=\sup_{\tau}\Ex{\ve}{\ea{\tau}g(X_{\tau})},
\ed
with $g\colon\R^3 \to \R$ defined by
\bd
g(\ve)=\|\ve\|^2=\vx^2+\vy^2+\vz^2,
\ed
where $\ve$ is the vector $(\vx,\vy,\vz)$ in $\R^3$. For our particular reward function we have $Lg(\ve)=3$. It can be seen that $g$ satisfies the hypothesis of \autoref{mainmulti} and $-\Aa g(\ve)=\al g(\ve)$. 
We need to find the stopping region $\SR$, or equivalently the continuation region $\CR=\R^3\setminus \SR$. 
Observe that, in order to verify the hypothesis of $\Va(x)=g(x)$ in $\partial \SR$, we need $\CR$ to fulfill
\be \label{eq:sphere}
\int_\CR \al g(\we) \Ga(\ve,d\we)=0
\ee
for all $\ve \in \partial \CR$. 
The symmetry of the problem suggests us to consider $\CR$ as a ball centred in (0,0,0). We need to find out the radius. This lead us to look for $r>0$ such that, considering $\CR=\{\ve\colon\|\ve\|<r\}$ the equation \eqref{eq:sphere} holds for $\ve\colon \|\ve\|=r$.
To simplify the computations we consider $\ve=(0,0,r)$, but it is easy to see that this selection does not change the problem.
Now we solve the equation
\bd
\int_{\{\|\we\|<r\}} \frac{1}{\|\ve-\we\|}e^{- \sqrt{2\desc} \|\ve-\we\| } (\desc \|\we\|^2-3) d\we=0.
\ed
Considering $\se=\we-\ve$ the integral becomes
\bd
\int_{\{ \se \in \CR'\}} \frac{1}{\|\se\|}e^{- \sqrt{2\desc} \|\se\| } (\desc \|\se\|^2+2\desc r\sz+\desc r^2-3) d\se.
\ed
where $\CR'$ is the interior of a sphere with center in $(0,0,-r)$ and radius $r$. 
We consider spherical coordinates $(\rho,\theta,\phi)$, where $\rho=\|s\|$ is the radius, 
$\theta=\arccos(\sz/\rho)$ is the inclination, and $\phi=\arctan(\sy/\sx)$ is the azimuth. 
We have $\sz=\rho \cos(\theta)$, and $d\se=\rho^2 \sin (\theta) d\rho\, d\theta\, d\phi$; 
we obtain that the previous integral is
\bd
\int_0^{2\pi} d\phi \int_{\pi/2}^{\pi} d\theta \int_0^{-2\,r\cos(\theta)} \frac{1}{\rho}e^{- \sqrt{2\desc} \rho } (\desc \rho^2+ 2 \desc r \rho \cos(\theta)+\desc r^2-3)\rho^2 \sin(\theta) d\rho.
\ed
and doing the computations we conclude that this is equal to
\bd
{4 \pi r^3}\left(3-\sqrt{2\desc}\, r - e^{-2 \sqrt{2\desc}\,r}(\sqrt{2a}\,r+3) \right).
\ed
In order to find the positive solution of the equation we need to solve
\bd
3-\sqrt{2\desc}\, r - e^{-2 \sqrt{2\desc}\,r}(\sqrt{2a}\,r+3)=0;
\ed
calling $z=\sqrt{2\desc}\,r$ and doing computations we obtain the equivalent equation
\bd
(1-e^{-2 z})z=(1+e^{-2 z})3;
\ed
multiplying by $e^z$ the previous equation it can be easily concluded that $z$ is a solution of
\bd
\tanh(z)=\frac{z}{3}.
\ed
We conclude that the continuation region associated with the optimal stopping is the sphere centered in (0,0,0)
with radius $r=z/\sqrt{2\desc}$. This problem is equivalent to the one solved in \ref{ex:bessel}, since the process 
$\|X\|$ is a 3-dimensional Bessel process; in fact, the obtained solutions  agree.
\end{example}

\section{The ideas for a converse result} 
\label{sec:converse}

The main result of this chapter, \autoref{teo:general}, gives a number of sufficient conditions on a certain function $\Va$ in order to be the value function associated with the OSP. We would like to find out when those conditions are actually necessary.

For example, in the context of one-dimensional diffusions, we have \autoref{teo:diffusionGeneral}, which can also be written as follows:

\noindent \textbf{Theorem 3.8.} (Alternative statement.)
\emph{Consider a one-dimensional diffusion $X$ satisfying the corresponding assumptions. Assume that the reward function $g\colon \I \to \R$ satisfy
\bd
g(x)=\int_{\I} \Ga(x,y)\al g(y) m(dy).
\ed
Then:
$\SR$ is the stopping region and $\Va$ is the value function for the OSP, if and only if:
\begin{itemize}
\item $\Va(x)=\int_{\SR}\Ga(x,y)\al g(y) m(dy)$;
\item $\al g(x) \geq 0$ for  $x\in \SR$;
\item $\Va(x)=g(x)$ for  $x\in  \partial  \SR$;
\item $\Va(x) \geq g(x)$ for $x\in  \CR,\ (\CR:= \I\setminus \SR)$.
\end{itemize}
}

This alternative statement shows that for one-dimensional diffusions a converse result holds.

We say that the process $X$ \emph{has a Green function} if there exist a measure $m$ over $(\estados,\sigalg)$ and a function $\Ga(x,y)$, jointly measurable in $x$ and $y$, such that 
$$\Ga(x,H)=\int_H \Ga(x,y)m(dy).$$
The measure $m$ is called a \emph{reference measure} and $\Ga(x,y)$ is the \emph{Green function}. It is not an easy problem to determine whether a given process has Green function. Nevertheless, this happens in the most important cases, particularly in all the examples considered in this work. \cite[see][]{blumenthal-ge,kw,dynkin:1969}. 
%we have changed (with Eddie) it happens by this happens

The definition of infinitesimal generator $L$ given for one-dimensional diffusion also makes sense for standard Markov processes \cite[see, for instance][]{revuz}. In fact, for functions in the domain $\D_L$ of the infinitesimal generator (and also for other functions) the following inversion formula holds:
\begin{equation}
\label{eq:firstInvFormula}
\int_\estados\Ga(x,y) \al g(y)m(dy)=g(y).
\end{equation}

We say that the operator $L$ is \emph{local} if $Lf(x)=Lg(x)$ provided that $f=g$ in a neighbourhood of $x$. This is the case when $L$ is a differential operator \cite[see][]{oksendal}. In this situation we can extend the meaning of $L$, as we did for one-dimensional diffusions.

Riesz decomposition states that any $\alpha$-excessive function $\Va$ can be represented by 
\be \label{eq:converseVari} 
\Va(x)=\int \Ga(x,y) \sigma(dy)+h(x),
\ee
where $\sigma$ is a positive Radon measure and $h$ is an $\desc$-harmonic function, which under mild regularity conditions are unique. \cite[see][Theorem 2, and Proposition 13.1]{kw}.

It also can be seen, under the made assumptions, that if $u$ is given by
\be \label{eq:alu}
u(x):=\int f(y) \Ga(x,dy) +h(x),
\ee
and $f$ is continuous at $x$, then $\al u(x)=f(x)$.

Consider then a standard Markov process $X$ such that:
\begin{itemize}
\item has a Green function and the reference measure $m(dx)$ has no atoms;
\item the infinitesimal operator $L$ is local;
\item Riesz representation holds.
\end{itemize}
Consider a reward function $g$ such that:
\begin{itemize}
\item satisfies \eqref{eq:firstInvFormula};
\item $\al g$ is a continuous function; 
\item  satisfies the hypotheses for Dynkin's characterization.
\end{itemize}
Under the made assumptions, if the solution to the OSP is given by the stopping region $\SR$ and the value function $\Va$ we know that
\begin{itemize}
\item $\Va=g$ in $\SR$ (and also in $\partial \SR$);
\item $\Va\geq g$ in $\CR$;
\item $\al g$ is non-negative in $\SR$.
\end{itemize}

Therefore, if we could prove that 
\bd
\Va(x)=\int_{\SR}\al g(y) \Ga(x,y)m(dy)
\ed
we would have the desired converse result. We know that $\Va$ is an $\desc$-excessive function, then \eqref{eq:converseVari} holds for some measure $\sigma$. 
If $\sigma(dy)$ is absolutely continuous with respect to $m(dy)$ we could express
\bd
\Va(x)=\int_{\estados} \Ga(x,y) f_\sigma (y) m(dy)+h(x),
\ed
where $f_\sigma$ is the Radon-Nikodim derivative of $\sigma$ with respect to $m$.
We also know that $\Va$ is $\desc$-harmonic in the continuation region, and then $\sigma$ does not charge that set \cite[see][Theorem 12.1]{dynkin:1969}. Therefore, we may choose $f_\sigma$ to be 0 in the continuation region. Finally using the fact that $L$ is a local operator and $g=\Va$ in $\SR$ we obtain (assuming that $\SR$ is an open set without loss of generality)
\bd
\al \Va(x)= \al g(x) \quad (x\in \SR).
\ed
On the other hand, if we may assume $f_\sigma$ to be continuous in $\SR$, we obtain, by \eqref{eq:alu}, that
\bd
\al \Va(x) = f_\sigma(x) \quad (x\in \SR);
\ed
concluding that $f_\sigma=\al g$ in $\SR$. We still need to prove that $h$ in the representation of $\Va$ vanishes.

Although we do not have a complete proof of the fact that $h$ vanishes, we think it may be not difficult to prove it --perhaps with some additional hypothesis--. The assumption we find hard to justify, but we conjecture its validity, is the existence of $f_\sigma$.

In the one-dimensional case the representation of the Green function as the product of the fundamental solutions of $\al g(x)=0$ provides an alternative way to obtain this same result.
\chapter{Optimal stopping for strong Markov processes with one-sided jumps}
\label{chap:jumps}

\section{Introduction}
Through this chapter we consider standard Markov processes with only positive jumps or with only negative jumps. We denote by $\I$ its state space, which we assume to be an interval of $\R$ with left endpoint $\ell$ and right endpoint $r$. We use the notation $\Ig{a}$ to refer to the set $\I\cap \{x\colon x>a\}$, and $\Ige{a}$, $\Il{a}$, $\Ile{a}$ are used in the same sense.

In the case of positive jumps we study optimal stopping problems in which the stopping region is of the form $\Ige{x^*}$ (right-sided) and we develop the theory in detail. 
In the case of negative jumps we consider optimal stopping problems in which the 
stopping region is of the form $\Ile{x^*}$ but we only state the main theorem, 
the proofs being analogous to the previous case. 

The most studied subclass of Markov processes with jumps is the class of Lévy processes, which are processes with independent, stationary increments. Comprehensive treatment of Lévy processes can be found in the books by \cite{bertoin,applebaum,kyprianou2006introductory,sato1999levy}.
In recent years, several works on optimal stopping for jump processes have been developed, mostly regarding application to option pricing. In this direction it can be remarked the books by \cite{cont-tan,boyarchenko2002non,kyprianou2005exotic}, and the Surya's PhD thesis \citep{surya2007thesis}. The book \cite{boy-lev} also includes optimal stopping for Lévy processes. 

Consider a standard Markov process $X$ with state space $\I$, an interval of $\R$. Denoting by $\Delta X_t$ the difference between $X_t$ and $X_{t^-}$ ($\Delta X_t=X_t-\lim_{t\to s^-}X_s$),
we consider processes satisfying $\Delta X_t\geq 0$ for all $t$, also called spectrally-positive or processes satisfying $\Delta X_t\leq 0$ for all $t$, called spectrally-negative. 
A spectrally-positive process satisfies the following condition: if at some time $t$ the process is in the state $x$ and in a posterior time $s$ the process is in the state $y$, with $y$ less than $x$, 
then for every $z$ such that $y<z<x$ there exists an intermediate time such that the process is in the state $z$; in other words the process do not have negative jumps. 
%We also assume that $X$ has Green function $\Ga(x,y)$ and denote by $m$ its reference measure.

The optimal stopping problem we consider in this chapter is the same already considered in the previous ones: to find the stopping time $\tau^*$ and the value function $\Va$ satisfying
\be
\label{eq:ospChap5}
\Va(x)=\Ex{x}{\ea{\tau^*} \g(X_{\tau^*})} = \sup_{\tau}\Ex{x}{\ea{\tau} \g(X_{\tau})},
\ee
where the supremum is taken over all stopping times. 

Particular cases of this problem --with specifics reward or with specifics processes-- were solved. About optimal stopping for spectrally one-sided processes we may cite \cite{avram2004exit,chan}. 
Some works as \cite{darling,mordeckiOSPOLP,boyarchenko2002perpetual,nov-shir} solve optimal stopping problems expressing its solution in terms of the maximum of the process.
The works by \cite{alili-kyp} \cite{christensen2009note} study the validity of the smooth fit principle for jump processes. 
The article \cite{mordecki-salminen} provides a verification theorem for optimal stopping of Hunt processes departing from the Riesz representation of $\desc$-excessive functions.
Wienner-Hopf factorization techniques are used by \cite{surya} and \cite{deligiannidis2009optimal} to characterize the solution of the OSP. The articles \cite{pham1997optimal} and \cite{mordeckiOSDJ} consider the problem of pricing American options for diffusions with jumps.
In the articles \cite{kyp-surya} and \cite{novikov2007solution} are solved the problems with reward $g(x)=({x^+})^n$, and $g(x)=({x^+})^k\colon k>0$ respectively, giving its solution in terms of the roots of the Appel polynomial.
In the recent article by \cite{ChristensenSalminenBao} the authors characterize the solution of a general optimal stopping problem for strong Markov processes using the characterization of $\desc$-excessive functions as expected supremum.

The main theorem we present in this chapter has the following consequence: Assume that:
\begin{itemize}
\item X is a spectrally positive standard Markov process.
\item The reward function $g$ satisfies 
\bd
\g(x)=\int_{\I}-\Aa g(y)\Ga(x,dy)\qquad (x\in\I).
\ed
\item $x^*$ is a solution of 
\bd
\g(x^*)=\int_{\Ig{x^*}}\Ga(x^*,y)\al \g(y)dy
\ed
such that
\begin{itemize}
\item for $x\geq x^*$, $-\Aa g(y)\geq 0$ and
\item $\Va$ defined by 
\bd
\Va(x):=\int_{\Ig{x^*}}-\Aa g(y)\Ga(x,dy),
\ed
satisfies $\Va(x)\geq g(x)$ for $x<x^*$.
\end{itemize}
\end{itemize} 
Then the optimal stopping problem is right-sided with optimal threshold $x^*$ and $\Va$ is the actual value function.
Note that this result is analogous to \autoref{teo:verif1}, being the main differences that for one-dimensional diffusions:
\begin{itemize}
\item $\Ga(x,dy)=\Ga(x,y)m(dy)$, where $\Ga(x,y)$ can be represented in terms of $\phia$ and $\psia$; and
\item $-\Aa g(y) = \al g(y)$.
\end{itemize}

As an application of the obtained results, we consider the optimal stopping problem associated with the pricing of an American put option on a spectrally-negative Lévy market finding a generalization of the results obtained by \cite{chan}. 

At the end of the chapter we solve an OSP whose underlying process is a diffusion with jumps, actually it is a Lévy-driven Ornstein-Uhlenbeck process. The consideration of this process is motivated on prices of energy \cite[see][]{electricity}. Up to our knowledge, this is the first concrete optimal stopping problem solved for a jump-process that is not a Lévy process.

\section{Main results}
We start with a useful lemma concerning the Green kernel of spectrally-positive processes.
\begin{lem} \label{lem:Gsaltos}
Let $X$ be a standard Markov process without negative jumps. If $z<x$ and $H$ is a Borel set such that  $y< z$ for all $y$ in $H$, then
$$\Ga(x,H)=\Ex{x}{ \ea{\hit{z}}} \Ga(z,H),$$
in other words
the ratio between $\Ga(x,H)$ and $\Ga(z,H)$ is independent of $H$.
\end{lem}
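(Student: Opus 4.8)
The plan is to exploit the absence of negative jumps: since $z<x$ and the process has only positive jumps, starting from $x$ the process cannot reach any point of $H$ (all of which lie strictly below $z$) without first passing through $z$, because the path, being spectrally positive, cannot jump over $z$ downwards. Hence $\hit{z}\le \hit{H}$ under $\P_x$, and in fact the process must hit $z$ before spending any time below $z$. This is the analogue, for the downward direction, of the argument used in \autoref{harmonic}, and I would structure the proof in exactly the same way.

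First I would write, by the definition of the Green kernel,
\begin{equation*}
\Ga(x,H)=\Ex{x}{\int_0^\infty \ea{t}\ind{H}(X_t)\,dt}.
\end{equation*}
Because $y<z$ for all $y\in H$ and the paths have no negative jumps, on the event $\{\hit{z}=\infty\}$ the process never enters $H$, so $\ind{H}(X_t)=0$ for all $t$; and on $\{\hit{z}<\infty\}$ we have $\ind{H}(X_t)=0$ for $t<\hit{z}$. Therefore
\begin{equation*}
\Ga(x,H)=\Ex{x}{\int_{\hit{z}}^\infty \ea{t}\ind{H}(X_t)\,dt\,\ind{\{\hit{z}<\infty\}}}.
\end{equation*}
Then I would perform the change of variable $t\mapsto t+\hit{z}$, pull out $\ea{\hit{z}}$, condition on $\F_{\hit{z}}$, and apply the strong Markov property exactly as in the proof of \autoref{harmonic}, using that $X_{\hit{z}}=z$ (here the spectral positivity is again essential: the process hits the level $z$ continuously, with no overshoot). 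This yields
\begin{equation*}
\Ga(x,H)=\Ex{x}{\ea{\hit{z}}\,\Ga(X_{\hit{z}},H)}=\Ex{x}{\ea{\hit{z}}}\,\Ga(z,H),
\end{equation*}
which is the claim; the final statement about the ratio being independent of $H$ is then immediate, since the factor $\Ex{x}{\ea{\hit{z}}}$ does not depend on $H$.

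The only genuine subtlety—and the step I would be most careful about—is the claim that $X_{\hit{z}}=z$ on $\{\hit{z}<\infty\}$, i.e. that a spectrally-positive process started above $z$ reaches the level $z$ without undershoot. This follows from the no-negative-jump property together with right-continuity of paths and left limits (the process is standard), by the intermediate-value type argument spelled out in the chapter's introduction: if $X_t>z$ and $X_s<z$ for some $s>t$, then for every level in between—in particular $z$ itself—there is an intermediate time at which the process equals it. One must also address the case where $z$ is reached by a left limit but the relevant regularity of $z$ (needed so that $\hit{z}$ is a genuine first hitting time and $X_{\hit{z}}=z$ a.s.) is a standing hypothesis in this setting; I would invoke the standard Markov property and quasi-left-continuity for this. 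Everything else is a verbatim repetition of the manipulations in \autoref{harmonic}.
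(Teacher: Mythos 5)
Your proof is correct and follows essentially the same route as the paper's: both rest on the observation that, in the absence of negative jumps, the process started at $x>z$ must pass through $z$ with no undershoot before entering $H$, and then apply the strong Markov property at $\hit{z}$ so that the discount factor $\ea{\hit{z}}$ factors out. The only (cosmetic) difference is that the paper phrases this as a first-passage decomposition of $\P_x(X_t\in H)$ over the law of $\hit{z}$ and then integrates the discount against it, whereas you split the discounted occupation-time integral at $\hit{z}$ exactly as in \autoref{harmonic}; the two computations are interchangeable.
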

\begin{proof}
Since the process does not have negative jumps every path hits any intermediate state to go from $x$ to $H$. In other words, we know that for any trajectory
beginning from $x$ and such that $X_t \in H$ there exists some $s<t$ satisfying
$X_s=x^*$; hence
\begin{align*}
\P_x(X_t \in H)&=\int_0^t  \P_{x}(X_{t}\in H|X_s=x^*)\P_x (\hit{x^*}\in ds)\\
&=\int_0^t  \P_{x^*}(X_{t-s}\in H)\P_x (\hit{x^*}\in ds).
\end{align*}
Using the previous formula we obtain that
 \begin{align*}
  \Ga(x,H)&=\int_0^\infty \ea{t}\P_x(X_t \in H) dt\\%\P_x(X_t\in dy)dt 
&= \int_0^\infty \ea{t} \left(\int_0^t  \P_{x^*}(X_{t-s}\in H)\P_x (\hit{x^*}\in ds)\right) dt\\ 
&= \int_0^\infty   \left( \int_s^\infty \ea{t}\P_{x^*}(X_{t-s}\in H) dt\right) \P_x(\hit{x^*}\in ds),
\end{align*}
where in the last equality we have changed the integration order; for the integral on the right-hand side we have
\begin{align*}
\int_s^\infty \ea{t}\P_{x^*}(X_{t-s}\in H) dt&= \ea{s} \int_s^\infty \ea{(t-s)}\P_{x^*}(X_{t-s}\in H) dt\\
&=\ea{s} \int_0^\infty \ea{t}\P_{x^*}(X_{t}\in H) dt\\
&=\ea{s} \Ga(x^*,H),
\end{align*}
obtaining that
\begin{align*}
  \Ga(x,H)&= \Ga(x^*,H) \int_0^\infty \ea{s} \P_x(\hit{x^*}\in ds)\\
  &=  \Ga(x^*,H) \Ex{x}{\ea{\hit{x^*}}}
\end{align*}
to conclude the proof
 
\end{proof}

\begin{lem}\label{lem:hunt}
Consider a spectrally-positive standard Markov process  $X$. Assume for all $x\in \I$
\begin{equation*}
 g(x)=\int_{\I}f(y)\Ga(x,dy)
\end{equation*}
and suppose $x^*$ is such that
\begin{equation*}
 g(x^*)=\int_{\Ig{x^*}}f(y)\Ga(x^*,dy).
\end{equation*}
Then for all $x\in \Ig{x^*}$ we have
\begin{equation*}
 g(x)=\int_{\Ig{x^*}}f(y) \Ga(x,dy).
\end{equation*}
\end{lem}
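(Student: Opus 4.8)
Proof proposal (plan). The idea is to split the representation of $g$ at the threshold $x^*$ and show that the part of the integral coming from $\Ile{x^*}$ vanishes for $x>x^*$. For every $x\in\I$ write
$$g(x)=\int_{\Ig{x^*}}f(y)\Ga(x,dy)+F(x),\qquad F(x):=\int_{\Ile{x^*}}f(y)\Ga(x,dy),$$
the absolute convergence of the second integral being inherited from the standing assumption that $g(x)=\int_\I f(y)\Ga(x,dy)$ is given by an absolutely convergent integral. With this notation the hypothesis on $x^*$ reads precisely $F(x^*)=0$, and the conclusion to be proved is that $F(x)=0$ for all $x\in\Ig{x^*}$.

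First I would apply \autoref{lem:generalMin} with the set $S:=\Ile{x^*}$ (which lies in $\sigalg$ and satisfies the required integrability), obtaining
$$F(x)=\Ex{x}{\ea{\hit{\Ile{x^*}}}F\big(X_{\hit{\Ile{x^*}}}\big)}\qquad(x\in\I).$$
The crucial step — and the only place where spectral positivity is used — is the claim that for $x\in\Ig{x^*}$ one has $\hit{\Ile{x^*}}=\hit{x^*}$, and $X_{\hit{\Ile{x^*}}}=x^*$ on $\{\hit{x^*}<\infty\}$, $\P_x$-a.s. The inequality $\hit{\Ile{x^*}}\le\hit{x^*}$ is immediate since $x^*\in\Ile{x^*}$. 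For the reverse inequality, set $\tau:=\hit{\Ile{x^*}}$ and work on $\{\tau<\infty\}$: since $X_0=x>x^*$, right-continuity forces $\tau>0$ and, because $X_s>x^*$ for every $s<\tau$ while $\Ile{x^*}$ is closed, one gets $X_{\tau^-}\ge x^*$ and $X_\tau\le x^*$; hence $\Delta X_\tau=X_\tau-X_{\tau^-}\le 0$, and spectral positivity ($\Delta X_t\ge 0$ for all $t$) forces $\Delta X_\tau=0$ and $X_\tau=x^*$, so $\tau\ge\hit{x^*}$. (On $\{\hit{x^*}=\infty\}$ the factor $\ea{\hit{x^*}}$ is read as $0$, using the usual convention.) Substituting this into the displayed identity gives $F(x)=\Ex{x}{\ea{\hit{x^*}}}\,F(x^*)=0$, and therefore $g(x)=\int_{\Ig{x^*}}f(y)\Ga(x,dy)$ for all $x\in\Ig{x^*}$, as desired.

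I expect the main obstacle to be the path-regularity argument ruling out a downward overshoot of the level $x^*$, i.e. the verification that $X$ can only enter $\Ile{x^*}$ through the point $x^*$; this requires combining the ``standard'' hypothesis (right-continuous paths with left-hand limits) with $\Delta X_t\ge 0$. Everything else is routine bookkeeping with \autoref{lem:generalMin}. An alternative route would be to argue directly via \autoref{lem:Gsaltos}, applied with $z=x^*$ to Borel sets $H\subseteq\Il{x^*}$, but then the possible atom of $\Ga(x,\cdot)$ at $x^*$ would have to be handled separately, whereas using $S=\Ile{x^*}$ together with $\hit{\Ile{x^*}}=\hit{x^*}$ disposes of it in one stroke.
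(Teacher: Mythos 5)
Your proof is correct, and it reaches the conclusion by a mildly but genuinely different route from the paper's. Both arguments start from the same decomposition $g(x)=\int_{\Ig{x^*}}f(y)\Ga(x,dy)+F(x)$ with $F(x):=\int_{\Ile{x^*}}f(y)\Ga(x,dy)$, and both reduce the claim to $F(x)=\Ex{x}{\ea{\hit{x^*}}}F(x^*)=0$; the difference is which lemma carries the load. The paper invokes \autoref{lem:Gsaltos} to identify the two measures $H\mapsto\Ga(x,H)$ and $H\mapsto\Ex{x}{\ea{\hit{x^*}}}\Ga(x^*,H)$ on $\Ile{x^*}$ and then integrates $f$ against them, the no-negative-jumps property being absorbed into the proof of that lemma. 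You instead apply \autoref{lem:generalMin} with $S=\Ile{x^*}$ to get $F(x)=\Ex{x}{\ea{\hit{S}}F(X_{\hit{S}})}$ and then argue directly on the paths that $X_{\hit{S}}=x^*$ a.s.\ (no downward overshoot), which is exactly the same underlying fact expressed at the level of trajectories rather than of Green measures. Your version has one small advantage that you yourself flag: \autoref{lem:Gsaltos} is stated for sets $H$ with $y<z$ for all $y\in H$, i.e.\ for $H\subseteq\Il{x^*}$, whereas the paper applies it to $\Ile{x^*}$, so a possible atom of $\Ga(x,\cdot)$ at $x^*$ is glossed over there (harmless, since the proof of \autoref{lem:Gsaltos} extends verbatim to $H\subseteq\Ile{z}$, but your treatment via the closed set $\Ile{x^*}$ and the identity $\hit{\Ile{x^*}}=\hit{x^*}$ disposes of it cleanly). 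Your path-regularity argument (right-continuity gives $X_\tau\le x^*$, left limits and $X_s>x^*$ for $s<\tau$ give $X_{\tau^-}\ge x^*$, and $\Delta X_\tau\ge 0$ forces $X_\tau=x^*$) is sound for a standard Markov process.
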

\begin{proof}
First observe that, from the definition of $\g$ and the equation defining $x^*$ we conclude that
\begin{equation}
\label{eq:hip}
\int_{\Ile{x^*}}f(y)\Ga(x^*,dy)=0.
\end{equation}
Using the definition of $\g$ we get
\begin{align*}
 g(x)&=\int_{\I}f(y) \Ga(x,dy)\\
&=\int_{\Ile{x^*}}f(y)\Ga(x,dy)+\int_{\Ig{x^*}}f(y)\Ga(x,dy).
\end{align*}
It remains to be proven that, if  $x>x^*$, the first term on the right-hand side of the previous equation vanishes; to do this, consider $x>x^*$, by \autoref{lem:Gsaltos}, we deduce that 
$$H\mapsto \Ga(x,H) \quad \mbox{and} \quad H\mapsto \Ex{x}{\ea{\hit{x^*}}}\Ga(x^*,H)$$
are the same measure in $\Ile{x^*}$; therefore
\begin{align*}
\int_{\Ile{x^*}}f(y)\Ga(x,dy)=\Ex{x}{\ea{\hit{x^*}}} \int_{\Ile{x^*}}f(y)\Ga(x^*,dy),
\end{align*}
and vanishes by equation \eqref{eq:hip}.

\end{proof}

\begin{teo}
\label{teo:huntVerif} Consider a spectrally-positive standard Markov process $X$, 
and $\gi:\I\mapsto \R$ such that
\be \label{eq5:gintf}
 \gi(x)=\int_{\I}f(y)\Ga(x,dy).
\ee
Assume $x^*$ is a root of 
\be \label{eq:x*saltos}
\gi(x^*)=\int_{\Ig{x^*}}f(y)\Ga(x^*,dy),
%\gi(x^*)=\int_{\Ig{x^*}}f(y)\Ga(x^*,dy),
\ee
such that $f(x)\geq 0$ for all $x$ in $\Ig{x^*}$. Define 
\bd 
\Va(x):=\int_{\Ig{x^*}} f(y) \Ga(x,dy).
\ed
Consider the reward function $\g$ that satisfies the conditions for Dynkin's characterization (see \autoref{sec:DynkinChar}), such that $\g(x)=\gi(x)$ for $x\geq x^*$. If $\Va(x) \geq g(x)$ for all $x$ in $\Ile{x^*}$, then the optimal stopping
problem \eqref{eq:ospChap5} with reward function $\g$ and discount rate $\desc$ is right-sided, 
$x^*$ is an optimal threshold and $\Va$ is the value function.
\end{teo}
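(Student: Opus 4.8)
The plan is to reproduce, in the spectrally-positive setting, the scheme used for \autoref{teo:verif1}: first exhibit $\Va$ as a $\desc$-excessive majorant of $\g$, so that Dynkin's characterization (see \autoref{sec:DynkinChar}) gives $\Va(x)\ge\sup_\tau\Ex{x}{\ea{\tau}\g(X_\tau)}$; then identify $\Va$ with the expected discounted payoff of $\tau^*:=\hit{\Ige{x^*}}$, which yields the reverse inequality and pins down the stopping region. The first step is to check that $\Va$ is $\desc$-excessive: setting $h:=f\ind{\Ig{x^*}}$, the hypothesis $f(x)\ge 0$ on $\Ig{x^*}$ makes $h$ a non-negative $\sigalg$-measurable function, and since $\Va(x)=\int h(y)\Ga(x,dy)$ is finite by assumption, \eqref{eq:RafGa} gives $\Va=\Ra h$, whence $\Va$ is $\desc$-excessive by the fact recalled in \autoref{sec:excessive}.

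Next I would prove that $\Va=\g$ on $\Ige{x^*}$. For $x=x^*$ this is precisely \eqref{eq:x*saltos} together with $\g(x^*)=\gi(x^*)$. For $x>x^*$, I would apply \autoref{lem:hunt} to $\gi$ (which satisfies \eqref{eq5:gintf}) and to the root $x^*$ of \eqref{eq:x*saltos}, obtaining $\gi(x)=\int_{\Ig{x^*}}f(y)\Ga(x,dy)=\Va(x)$; since $\g=\gi$ on $\Ige{x^*}$ this gives $\Va=\g$ there. This is the only place where spectral positivity enters, through \autoref{lem:Gsaltos} inside \autoref{lem:hunt}: because $X$ has no negative jumps, the restriction of $\Ga(x,\cdot)$ to $\Ile{x^*}$ factors through $\hit{x^*}$ and the corresponding part of the integral vanishes for $x>x^*$. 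Combined with the standing hypothesis $\Va(x)\ge\g(x)$ on $\Ile{x^*}$, this shows $\Va\ge\g$ on all of $\I$; Dynkin's characterization of the value function as the minimal $\desc$-excessive majorant of $\g$ then yields $\Va(x)\ge\sup_\tau\Ex{x}{\ea{\tau}\g(X_\tau)}$.

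For the reverse inequality I would put $S:=\Ige{x^*}$, so that $\hit{S}=\tau^*$, and observe that $\Va=F_S$ in the notation of \autoref{lem:generalMin} applied to the non-negative integrand $h$. That lemma gives $\Va(x)=\Ex{x}{\ea{\hit{S}}\Va(X_{\hit{S}})}$; since $X_{\hit{S}}\in S$ by right-continuity and $\Va=\g$ on $S$ by the previous step, this reads $\Va(x)=\Ex{x}{\ea{\tau^*}\g(X_{\tau^*})}\le\sup_\tau\Ex{x}{\ea{\tau}\g(X_\tau)}$. Comparing the two inequalities forces equality, so $\Va$ is the value function, $\tau^*=\hit{\Ige{x^*}}$ is an optimal stopping time, and problem \eqref{eq:ospChap5} is right-sided with threshold $x^*$.

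Structurally the argument is routine once \autoref{lem:Gsaltos} and \autoref{lem:hunt} are available, so the points that will need care are bookkeeping rather than conceptual: one must keep track of the finiteness and measurability required to invoke the excessive-function statement of \autoref{sec:excessive} and to apply \autoref{lem:generalMin} with $S=\Ige{x^*}$ (all of which follow from $\Va$ being a finite-valued function and $h\ge 0$), and one must check that the upward overshoot of $X$ across $x^*$ at time $\tau^*$ is harmless. The latter is exactly where having proved $\Va=\g$ on the whole half-line $\Ige{x^*}$, and not merely at the point $x^*$, is essential: it guarantees $\Va(X_{\tau^*})=\g(X_{\tau^*})$ regardless of where the process lands, so no genuinely new obstacle arises beyond the spectrally-positive lemmas that are already established.
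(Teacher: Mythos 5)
Your proposal is correct and follows essentially the same route as the paper's proof: excessivity of $\Va$ from $f\ge 0$ on $\Ig{x^*}$, the identity $\Va=\g$ on $\Ige{x^*}$ via \autoref{lem:hunt} (with the value at $x^*$ coming from \eqref{eq:x*saltos}), Dynkin's characterization for one inequality, and \autoref{lem:generalMin} plus right-continuity of the paths for the other. The only cosmetic difference is that you apply \autoref{lem:generalMin} to the closed set $\Ige{x^*}$ with integrand $f\ind{\Ig{x^*}}$ while the paper uses the open set $\Ig{x^*}$ directly; both give the same conclusion.
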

\begin{remark}
This theorem is analogous to \autoref{teo:verif1}. Observe that function $f$ takes the part of $\al g$ and function $\gi$ is given in the \autoref{cond:RRC}. The difference in this case is that $\Ga(x,dy)$ no necessarily has a representation as $\Ga(x,y)m(dy)$ and there are not functions $\phia$ and $\psia$.
\end{remark}
\begin{remark}
It is also interesting to compare this result with Theorem 3.1 in \cite{mordecki-salminen}. In that theorem there is a condition $\Va=g$ for $x \geq x^*$, while we just need $\Va(x^*)=g(x^*)$. This difference is a consequence of the fact that in our theorem we consider spectrally-positive processes (see \autoref{lem:hunt}).
\end{remark}

\begin{proof}
By hypothesis $f(y)$ is non-negative for $y$ in $\Ig{x^*}$, then we have that $\Va$ is an $\desc$-excessive function. 
By the application of \autoref{lem:hunt} we deduce that $\Va(x)$ coincides with $\gi(x)$ for $x$ in $\Ig{x^*}$, therefore also coincides with $\g$. 
By hypothesis we obtain that $\Va$ dominates $\g$ in $\Ile{x^*}$. So $\Va$ is a majorant of $\g$ and, by Dynkin's characterization of the value function, we conclude that 
\begin{equation*}
\Va(x) \geq \sup_{\tau} \Ex{x}{e^{-\desc \tau}\g(X_\tau)}.
\end{equation*}
To conclude that the other inequality also holds, we apply \autoref{lem:generalMin} with $B:=\Ig{x^*}$ and $F_B:=\Va$ 
obtaining that 
\bd
\Va(x)=\Ex{x}{e^{-\desc \hit{\Ig{x^*}}}\Va(X_{\hit{\Ig{x^*}}})}.
\ed
Since the trajectories are right continuous, it gathers that $X_{\hit{\Ig{x^*}}}$ belongs to $\Ige{x^*}$, 
the region in which $\Va$ and $\g$ coincide; therefore
\bd
\Va(x)=\Ex{x}{e^{-\desc \hit{\Ig{x^*}}}g(X_{\hit{\Ig{x^*}}})},
\ed
proving 
\bd
\Va(x) \leq \sup_{\tau} \Ex{x}{e^{-\desc \tau}\g(X_\tau)}.
\ed
We have proved the desired equality concluding that the optimal stopping
problem is right-sided with threshold $x^*$.

\end{proof}

\begin{cor}
\label{cor:mainHuntVerif} Consider a strong Markov process $X$ with no negative jumps
and a reward function $\g\colon\estados \to \R$ that belongs to the domain $\Da$
of the extended infinitesimal generator associated with the $\desc$-killed process and satisfying
\begin{itemize}
\item $\lim_{t\to \infty}\Ex{x}{\ea{t}g(X_t)}=0$ and
\item $\Ex{x}{\int_0^{\infty} \ea{s}|\Aa g(X_s)| ds}<\infty$.
\end{itemize} 
Assume as well that function $g$ satisfies the conditions for Dynkin's characterization (see \autoref{sec:DynkinChar}). Suppose that $x^*\in \I$ is a solution of
\begin{equation*}
g(x^*)=\int_{\Ig{x^*}}(-\Aa \g(y))\Ga(x^*,dy),
\end{equation*}
such that $\Aa \g(x) \leq 0$ for all $x$ in $\Ig{x^*}.$
Define 
\begin{equation*}
\Va(x)=\int_{\Ig{x^*}} (-\Aa \g(y)) \Ga(x,dy).
\end{equation*}

If $\Va(x) \geq \g(x)$ for all $x\leq x^*$ then the OSP \eqref{eq:osp} is right-sided, $x^*$ is an optimal threshold and $\Va$ is
the value function.
\end{cor}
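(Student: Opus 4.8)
The plan is to deduce \autoref{cor:mainHuntVerif} directly from \autoref{teo:huntVerif} by taking the function $f$ in that theorem to be $-\Aa g$ and the auxiliary function $\gi$ to be $g$ itself; the whole argument is then a matter of checking that each hypothesis of \autoref{teo:huntVerif} is met.

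First I would invoke the inversion formula for functions in the domain of the extended infinitesimal generator of the $\desc$-killed process. Since $g\in\Da$ and, by assumption, $\lim_{t\to\infty}\Ex{x}{\ea{t}g(X_t)}=0$ and $\Ex{x}{\int_0^{\infty}\ea{s}|\Aa g(X_s)|\,ds}<\infty$, we are exactly in the setting in which equation \eqref{eq:invExtended} was derived, so
\[
g(x)=\int_{\estados}\bigl(-\Aa g(y)\bigr)\,\Ga(x,dy)\qquad(x\in\estados).
\]
Recalling that here $\estados=\I$, this is precisely hypothesis \eqref{eq5:gintf} of \autoref{teo:huntVerif} for the choice $f(y):=-\Aa g(y)$ and $\gi:=g$.

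Next I would verify the remaining hypotheses one by one. The equation defining $x^*$ in the corollary, $g(x^*)=\int_{\Ig{x^*}}(-\Aa g(y))\,\Ga(x^*,dy)$, is exactly \eqref{eq:x*saltos} for this $f$ and $\gi$. The sign condition $\Aa g(x)\le 0$ for $x\in\Ig{x^*}$ is the requirement $f(x)\ge 0$ on $\Ig{x^*}$. The function $\Va$ defined in the corollary coincides verbatim with the one in \autoref{teo:huntVerif}. Since $\gi=g$, the condition $\g(x)=\gi(x)$ for $x\ge x^*$ holds trivially, and $g$ is assumed to satisfy the conditions for Dynkin's characterization. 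Finally, the hypothesis $\Va(x)\ge g(x)$ for all $x\le x^*$ is exactly $\Va(x)\ge g(x)$ on $\Ile{x^*}$. Applying \autoref{teo:huntVerif} then gives that the OSP \eqref{eq:osp} is right-sided, that $x^*$ is an optimal threshold, and that $\Va$ is the value function.

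There is no genuine obstacle here beyond ensuring the applicability of \eqref{eq:invExtended}, which is guaranteed precisely by the two displayed limit/integrability assumptions on $g$; the corollary is essentially a translation of \autoref{teo:huntVerif} into the language of the extended generator. The only point worth a word of care is that, unlike the one-dimensional diffusion results, we have neither a density representation $\Ga(x,dy)=\Ga(x,y)m(dy)$ nor the fundamental solutions $\phia,\psia$ available, so one must rely on the proof of \autoref{teo:huntVerif} as stated — which uses only \autoref{lem:hunt} (exploiting the absence of negative jumps via \autoref{lem:Gsaltos}) and \autoref{lem:generalMin} — rather than on any diffusion-specific machinery.
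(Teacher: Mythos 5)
Your proposal is correct and is essentially the paper's own proof: both invoke the inversion formula \eqref{eq:invExtended}, justified by membership in $\Da$ together with the two displayed limit and integrability conditions, and then apply \autoref{teo:huntVerif} with $f:=-\Aa g$ and $\gi:=g$. Your version merely spells out the hypothesis-by-hypothesis verification that the paper leaves implicit.
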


\begin{proof}
Observe that, by the assumptions on $\g$, for all $x$ in $\I$, \eqref{eq:invExtended} holds, that is,
\bd
g(x)=\int_{\I}(-\Aa \g(y))\Ga(x^*,dy).
\ed
So, all the hypotheses of the previous theorem are fulfilled with $f(x):=-\Aa \g(x)$ and $\gi:=g$,  this result leading to the thesis.

\end{proof}

The analogous result of \autoref{teo:huntVerif} for spectrally negative processes is as follows:
\begin{teo}
\label{teo:huntVerifnegative} Consider a spectrally-negative standard Markov process $X$, and $\gi:\I\mapsto \R$ such that \eqref{eq5:gintf} holds. Assume $x^*$ is a root of 
\begin{equation*}
\gi(x^*)=\int_{\Il{x^*}}f(y)\Ga(x^*,dy),
%\gi(x^*)=\int_{\Ig{x^*}}f(y)\Ga(x^*,dy),
\end{equation*}
such that $f(x)\geq 0$ for all $x$ in $\Il{x^*}$. Define 
\begin{equation*}
\Va(x)=\int_{\Il{x^*}} f(y) \Ga(x,dy).
\end{equation*}
Consider the reward function $\g$ that satisfies the conditions for Dynkin's characterization (see \autoref{sec:DynkinChar}), such that $\g(x)=\gi(x)$ for $x\leq x^*$. If $\Va(x) \geq g(x)$ for all $x$ in $\Ige{x^*}$, then the optimal stopping
problem \eqref{eq:ospChap5} with reward function $\g$ and discount rate $\desc$ is left-sided, 
$x^*$ is an optimal threshold and $\Va$ is the value function.
\end{teo}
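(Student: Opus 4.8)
The plan is to mirror the proof of \autoref{teo:huntVerif} verbatim, replacing the right-sided objects by their left-sided counterparts, since the only structural fact we used about the process was its one-sidedness and the corresponding hitting behaviour. First I would establish the left-sided analogue of \autoref{lem:Gsaltos}: if $X$ is spectrally negative, $z>x$ and $H$ is a Borel set with $y>z$ for all $y\in H$, then $\Ga(x,H)=\Ex{x}{\ea{\hit{z}}}\Ga(z,H)$. The proof is identical to that of \autoref{lem:Gsaltos} after reflecting the order: since $X$ has no positive jumps, any path from $x$ up into $H$ must pass through the intermediate level $z$, so $\P_x(X_t\in H)=\int_0^t \P_{z}(X_{t-s}\in H)\,\P_x(\hit{z}\in ds)$, and the same change of integration order and factorisation of $\ea{t}$ gives the claim. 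This is the left-sided version of \autoref{lem:hunt} as well: under \eqref{eq5:gintf} and the hypothesis $\gi(x^*)=\int_{\Il{x^*}}f(y)\Ga(x^*,dy)$, one gets $\int_{\Ige{x^*}}f(y)\Ga(x^*,dy)=0$, hence for $x<x^*$ the tail integral $\int_{\Ige{x^*}}f(y)\Ga(x,dy)=\Ex{x}{\ea{\hit{x^*}}}\int_{\Ige{x^*}}f(y)\Ga(x^*,dy)=0$, so $\gi(x)=\int_{\Il{x^*}}f(y)\Ga(x,dy)$ for all $x\le x^*$.

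With these two lemmas in hand, the proof of the theorem itself proceeds exactly as for \autoref{teo:huntVerif}. Since $f(y)\ge 0$ on $\Il{x^*}$ and $\Va(x)=\int_{\Il{x^*}}f(y)\Ga(x,dy)$ is of the form $\Ra$ applied to a non-negative function restricted to a set, $\Va$ is $\desc$-excessive (it is $\int_{\estados} f\ind{\Il{x^*}}\,\Ga(x,dy)$, which is $\desc$-excessive by the results of \autoref{sec:excessive}). By the left-sided \autoref{lem:hunt}, $\Va$ coincides with $\gi$, and hence with $\g$, on $\Ile{x^*}$; by hypothesis $\Va\ge \g$ on $\Ige{x^*}$; therefore $\Va$ is an $\desc$-excessive majorant of $\g$, and Dynkin's characterisation yields $\Va(x)\ge \sup_\tau\Ex{x}{\ea{\tau}\g(X_\tau)}$. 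For the reverse inequality, apply \autoref{lem:generalMin} with $B:=\Il{x^*}$ and $F_B:=\Va$ to get $\Va(x)=\Ex{x}{\ea{\hit{\Il{x^*}}}\Va(X_{\hit{\Il{x^*}}})}$; by right-continuity of the paths $X_{\hit{\Il{x^*}}}\in\Ile{x^*}$, the region where $\Va=\g$, so $\Va(x)=\Ex{x}{\ea{\hit{\Il{x^*}}}\g(X_{\hit{\Il{x^*}}})}\le \sup_\tau\Ex{x}{\ea{\tau}\g(X_\tau)}$. Combining the two inequalities gives equality, so the problem is left-sided with threshold $x^*$ and $\Va$ is the value function.

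The one subtlety worth spelling out — the analogue of the \autoref{mordecki-salminen} remark — is why the single-point matching condition $\gi(x^*)=\int_{\Il{x^*}}f(y)\Ga(x^*,dy)$ suffices rather than requiring $\Va=\g$ on all of $\Ile{x^*}$: this is precisely the content of the left-sided \autoref{lem:hunt}, which propagates equality at $x^*$ to the whole half-line $\Ile{x^*}$ using the no-positive-jumps property. In the hitting-time step one must also check that $\hit{\Il{x^*}}$ lands in $\Ile{x^*}$ and not merely in its closure; because the process has only negative jumps it cannot jump over $x^*$ from below, and since it is right-continuous it enters $\Il{x^*}$ either continuously (hitting $x^*$) or by a downward jump, so in all cases $X_{\hit{\Il{x^*}}}\le x^*$, which is all we need. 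I do not anticipate a genuine obstacle here: the argument is a symmetric reflection of the spectrally-positive case, and every ingredient — the Green-kernel factorisation, \autoref{lem:generalMin}, Dynkin's characterisation, and $\desc$-excessivity of resolvent-type functions — has already been established in the required generality. The only care needed is bookkeeping with the inequalities $<,\le,>,\ge$ when reflecting the order of the real line.
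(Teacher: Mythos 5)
Your proposal is correct and is exactly the argument the paper intends: the thesis explicitly omits the proof of \autoref{teo:huntVerifnegative}, stating that it is analogous to the spectrally-positive case, and your write-up is precisely that order-reversed mirror of the proof of \autoref{teo:huntVerif}, including the left-sided versions of \autoref{lem:Gsaltos} and \autoref{lem:hunt} and the right-continuity argument showing $X_{\hit{\Il{x^*}}}\in\Ile{x^*}$. No gaps.
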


\section{Applications}
\subsection{American put option on a Lévy market}
A particularly interesting subclass of the kind of processes with which we are dealing in this chapter are the Lévy processes in $\R$. A right continuous with left hand limits process $X$ is said to be a Lévy process provided that for every $s,t\geq 0$ the increment $X_{t+s}-X_t$ 
\begin{itemize}
\item is independent of the process $\{X_v\}_{0\leq v\leq t}$ and
\item has the same law as $X_s$.
\end{itemize}
It can be seen, as a consequence of the definition, that every Lévy process satisfy $\P(X_0=0)=1$.

Lévy-Khintchine representation for Lévy processes in $\R$ states that every Lévy process can be characterized by a triplet 
$(a, \sigma, \Pi)$, with $a\in \R$, $\sigma\geq 0$ and $\Pi$ a measure supported in $\R\setminus 0$ that satisfies
\be \label{eq:jumpmeasurecond}
\int_{\R\setminus\{0\}}\min\{x^2,1\}\Pi(dx)<\infty.
\ee
The relation between the Lévy process and the \emph{characteristic triplet} is the fact that for every $z\in i\R$
\be
\Ex{}{e^{z X_t}}=e^{t \Psi(z)}
\ee
where the so called \emph{characteristic exponent} $\Psi(z)$ is given by
\bd
\Psi(z)= a z + \frac12 \sigma^2 z^2 + \int_{\R}\left(e^{z x}-1-z x\ind{|x|<1}\right) \Pi(dx).
\ed
In this case we have
\bd X_t=X_0 + a t+\sigma B_t +J_t \ed
with
\bd J_t=\int_{(0,t]\times\{|x|\geq 1\}}x\mu^J(\omega,dt,dx)+\int_{(0,t]\times\{|x|<1\}}x\left(\mu^X(\omega,dt,dx)-\nu(dt,dx)\right). \ed
The random measure $\mu^X$ used in the previous formula is defined by \citep[see][Proposition 1.16]{jacod1987limit}
\be \label{eq:defPoissRandMeasure}
\mu^X(\omega,dt,dx)=\sum_s \ind{\{\Delta X_s(\omega)\neq 0\}}\delta_{(s,\Delta X_s(\omega))}(dt,dx)
\ee
($\Delta X_t=X_t-lim_{t\to s^-}X_s$), and $\nu$, the compensator measure, is in this case $\nu(dt,dx)=dt\Pi(dx)$.

\begin{lem} \label{lem:levy}
Given a Lévy process $\{X_t\}$ with characteristic triplet $(a,\sigma,\Pi)$ and a function $h\colon \R \to \R$ such that $h$ is bounded, twice differentiable with continuous and bounded derivatives. Then
\bd
h(x)=\int_{(0,\infty)}\ea{t}\Ex{x}{\desc h(X_t)-Lh(X_t)}dt
\ed
where $Lh$ is given by
\be
\label{eq:LLevy}
Lh(x)=a h'(x) + \frac{\sigma^2}2 h''(x) + \int_{\R}\left(h(x+y)-h(x)-\ind{|y|<1}yh'(x)\right) \Pi(dy).
\ee

\end{lem}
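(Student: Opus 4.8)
The plan is to recognize this as a direct application of Dynkin's formula in the form \eqref{eq:dynFormMarkov}, combined with a limiting argument analogous to the one used in \autoref{propInversion2}, together with It\^o's formula for L\'evy processes to identify the infinitesimal generator as the operator $L$ given in \eqref{eq:LLevy}. First I would observe that for $h$ bounded with bounded continuous first and second derivatives, It\^o's formula (applied to the semimartingale decomposition $X_t = X_0 + at + \sigma B_t + J_t$ with the compensated jump measure $\mu^X - \nu$) shows that
\bd
\ea{t}h(X_t) - h(X_0) - \int_0^t \ea{s}\bigl(Lh(X_s) - \desc h(X_s)\bigr)\,ds
\ed
is a martingale under $\P_x$, where $Lh$ is exactly the expression in \eqref{eq:LLevy}; the integrability required for the stochastic integrals to be true martingales follows from the boundedness of $h,h',h''$ together with the L\'evy measure condition \eqref{eq:jumpmeasurecond} (splitting the jump part into $|y|<1$, where the second-order Taylor remainder is controlled by $\tfrac12\|h''\|_\infty y^2$, and $|y|\geq 1$, where $h(x+y)-h(x)$ is bounded by $2\|h\|_\infty$ and $\Pi(\{|y|\geq 1\})<\infty$).

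Next I would take expectations and rearrange to obtain, for every $t\geq 0$,
\bd
h(x) = \Ex{x}{\ea{t}h(X_t)} + \Ex{x}{\int_0^t \ea{s}\bigl(\desc h(X_s) - Lh(X_s)\bigr)\,ds}.
\ed
Then I would let $t\to\infty$. Since $h$ is bounded and $\desc>0$, the boundary term satisfies $|\Ex{x}{\ea{t}h(X_t)}|\leq \|h\|_\infty\, \ea{t}\to 0$. For the integral term, the integrand is dominated in absolute value by $\ea{s}(\desc\|h\|_\infty + \|Lh\|_\infty)$, which is integrable on $[0,\infty)$ — here $\|Lh\|_\infty<\infty$ because $h,h',h''$ are bounded and the jump integral in \eqref{eq:LLevy} is uniformly bounded by the same splitting argument as above — so dominated convergence (after Fubini, justified by the same bound) gives
\bd
h(x) = \int_0^\infty \ea{s}\Ex{x}{\desc h(X_s) - Lh(X_s)}\,ds,
\ed
which is the claimed identity.

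I expect the main obstacle to be the careful justification of It\^o's formula and the martingale property in the jump setting — specifically, verifying that the compensated Poisson integral $\int_{(0,t]\times\{|y|<1\}} \bigl(h(X_{s^-}+y)-h(X_{s^-})-yh'(X_{s^-})\bigr)(\mu^X-\nu)(ds,dy)$ is a genuine (square-integrable) martingale rather than merely a local one, which again reduces to the estimate $|h(x+y)-h(x)-yh'(x)|\leq \tfrac12\|h''\|_\infty\, y^2$ and \eqref{eq:jumpmeasurecond}. Everything else is routine: the identification of $Lh$ with the generator, the boundary-term estimate, and the dominated-convergence passage are all immediate from boundedness of $h$ and its derivatives. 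An alternative, slightly slicker route would be to note that under these hypotheses $h$ lies in the domain $\D_L$ of the infinitesimal generator with $Lh$ as computed, so that $h = \Ra(\desc h - Lh)$ by the inversion property of resolvent and generator, and then invoke \eqref{eq:RafGa}; but since the excerpt has developed the L\'evy-It\^o machinery explicitly, I would present the direct It\^o-plus-limiting proof to keep the argument self-contained.
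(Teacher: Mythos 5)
Your proposal is correct and follows essentially the same route as the paper: It\^o's formula applied to $\ea{s}h(X_s)$ with the L\'evy--It\^o decomposition, identification of the compensated jump integral as a genuine square-integrable martingale via the estimate $|h(x+y)-h(x)-yh'(x)\ind{|y|<1}|\leq K\min\{1,y^2\}$ together with \eqref{eq:jumpmeasurecond}, followed by taking expectations and letting $t\to\infty$ with dominated convergence. The paper carries out the same steps (citing Theorem 1.33 of Jacod--Shiryaev for the martingale property), so no gap remains.
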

\begin{proof}
We apply Ito's formula \cite[see][p. 82]{protter2005stochastic} to $f\colon f(s,x)=\ea{s}h(x)$ and $Y_t=(t,X_t)$, obtaining that
\begin{align} \label{eq:itosemimartingale}
\notag  \ea{t}h(X_t)-h(X_0)&=\int_{(0,t]}-\desc \ea{s}h(X_{s^-})ds+\int_{(0,t]}\ea{s}h'(X_{s^-})dX_s\\
\notag &\quad+\frac12 \int_{(0,t]}\ea{s}h''(X_{s^-})\sigma^2 dt\\
 &\quad+\sum_{0< s\leq t}\ea{t}\left(h(X_s)-h(X_{s^-})-h'(X_{s^-})\Delta X_s \right).
\end{align}
Before taking expectation in the previous formula we analyse the second and the last term on its right-hand side. For the second term we have
\begin{align*} \int_{(0,t]}\ea{s}h'(X_{s^-})dX_s&=\int_{(0,t]}\ea{s}a h'(X_{s^-})ds+\int_{(0,t]}\ea{s}\sigma h'(X_{s^-})dB_s\\
&\qquad +\int_{(0,t]}\ea{s}h'(X_{s^-})dJ_s\\
&=\int_{(0,t]}\ea{s}a h'(X_{s^-})ds+\int_{(0,t]}\ea{s}\sigma h'(X_{s^-})dB_s\\
&\qquad +\int_{(0,t]\times \{0<|y|<1\}}\ea{s}h'(X_{s^-})y \left(\mu^X(\omega,dt,dy)-\nu(dt,dy)\right)\\
&\qquad +\int_{(0,t]\times \{|y|\geq 1\}}\ea{s}h'(X_{s^-})y \mu^X(\omega,dt,dy);
\end{align*}
while for the last last term we have
\begin{align*}
&\sum_{0< s\leq t}\ea{t}\left(g(X_s)-h(X_{s^-})-h'(X_{s^-})\Delta X_s \right)\\
&\quad =\int_{(0,t]\times \R^*}\ea{s}\left(h(X_{s^-}+y)-h(X_{s^-})-h'(X_{s^-})y\right)\mu^{X}(\omega,ds,dy)
%&\quad =\int_{(0,t]\times \R^*}\ea{s}\left(h(X_{s^-}+x)-h(X_{s^-})-h'(X_{s^-})x\right)\left(\mu^{X}(\omega,ds,dx)-\nu(dt,dx)\right)\\
%&\qquad + \int_{(0,t]\times \R^*}\ea{s}\left(h(X_{s^-}+x)-g(X_{s^-})-h'(X_{s^-})x\right)\nu(dt,dx).
\end{align*}
Going back to \eqref{eq:itosemimartingale} with the previous computations in mind, we obtain that
\begin{align} \label{eq:itosemimartingale2}
\notag & \ea{t}h(X_t)-h(X_0)=\int_{(0,t]} \ea{s}\left(-\desc h(X_{s^-})+ah'(X_{s^-})+\frac{\sigma^2}2 h''(X_{s^-})\right)ds \\
&\qquad +\int_{(0,t]}\ea{s}\sigma h'(X_{s^-})dB_s\\
\notag &\qquad +\int_{(0,t]\times \{0<|y|<1\}}\ea{s}h'(X_{s^-})y \left(\mu^X(\omega,dt,dy)-\nu(dt,dy)\right)\\
%&\qquad +\int_{(0,t]\times \{|x|>1\}}\ea{s}h'(X_{s^-})x \mu^X(\omega,dt,dx);\\
\notag &\qquad +\int_{(0,t]\times \R^*}\ea{s}\left(h(X_{s^-}+y)-h(X_{s^-})-h'(X_{s^-})y\ind{\{0<|y|<1\}}\right)\mu^{X}(\omega,ds,dy)%\left(\mu^{X}(\omega,ds,dx)-\nu(dt,dx)\right)\\
%&\qquad + \int_{(0,t]\times \R^*}\ea{s}\left(g(X_{s^-}+x)-g(X_{s^-})-g'(X_{s^-})x\right)\nu(dt,dx).
\end{align}
Now we take the expectation and then we take the limit as $t\to \infty$: Regarding to the left-hand side, we have
\bd
\lim_{t\to \infty} \Ex{x}{\ea{t}h(X_t)-h(X_0)}=-h(x)
\ed
as $h$ is bounded, $\ea{t}\to 0$, and $\Ex{x}{X_0}=x$. About the right-hand side in \eqref{eq:itosemimartingale2}, we analyse each term:
\begin{align*}
& \lim_{t\to \infty}\Ex{x}{\int_{(0,t]} \ea{s}\left(-\desc h(X_{s^-})+ah'(X_{s^-})+\frac{\sigma^2}2 h''(X_{s^-})\right)ds }\\
& \qquad \qquad \qquad=\int_{(0,t]} \ea{s}\Ex{x}{-\desc h(X_{s^-})+ah'(X_{s^-})+\frac{\sigma^2}2 h''(X_{s^-})}ds,
\end{align*}
since $h$, $h'$, and $h''$ are bounded functions; the second term vanishes, as it is an integral with respect to a martingale of a predictable integrable function; with respect to the sum of the third and the fourth terms, observe that, before taking the expectation and the limit, we can rewrite it as
\begin{align*}
&\int_{(0,t]\times \R^*}\ea{s}\left(h(X_{s^-}+y)-h(X_{s^-})-h'(X_{s^-})y\ind{\{0<|y|<1\}}\right)\nu(ds,dy)\\
&\qquad +\int_{(0,t]\times \R^*}\ea{s}\left(h(X_{s^-}+y)-h(X_{s^-})\right) \left(\mu^X(\omega,dt,dy)-\nu(dt,dy)\right),
%&\qquad +\int_{(0,t]\times \{|x|>1\}}\ea{s}h'(X_{s^-})x \mu^X(\omega,dt,dx);\\
\end{align*}
which after taking the expectation and the limit, we will see, becomes
\bd
\int_{(0,\infty)}\ea{s}\Ex{x}{\int_{\R^*}\left(h(X_{s^-}+y)-h(X_{s^-})-h'(X_{s^-})y\ind{\{0<|y|<1\}}\right)\Pi(dy)}dt
\ed
and we would have completed the proof.

We still have to justify
\begin{align*}
&\lim_{t\to \infty} \Ex{x}{\int_{(0,t]\times \R^*}\ea{s}\left(h(X_{s^-}+y)-h(X_{s^-})-h'(X_{s^-})y\ind{\{0<|y|<1\}}\right)\nu(ds,dy)}\\
&\quad= \int_{(0,\infty)}\ea{s}\Ex{x}{\int_{\R^*}\left(h(X_{s^-}+y)-h(X_{s^-})-h'(X_{s^-})y\ind{\{0<|y|<1\}}\right)\Pi(dy)}dt,
\end{align*}
and
\bd
\lim_{t\to \infty}\Ex{x}{\int_{(0,t]\times \R^*}\ea{s}\left(h(X_{s^-}+x)-h(X_{s^-})\right) \left(\mu^X-\nu\right)}=0.
\ed
To prove the former equality one can observe that, by Taylor formula and because of the boundedness of $h$ and $h'$, there exist a constant $K$ such that
\bd
|h(X_{s^-}+y)-h(X_{s^-})-h'(X_{s^-})y\ind{\{0<|y|<1\}}|<K \min\{1,y^2\};
\ed
then, considering \eqref{eq:jumpmeasurecond}, and $\nu(ds,dy)=\Pi(dy)ds$, the equality follows by application of Fubini's theorem. The latter equality is a direct consequence of  
\bd
M_t=\int_{(0,t]\times \R^*}\ea{s}\left(h(X_{s^-}+y)-h(X_{s^-})\right) \left(\mu^X-\nu\right)
\ed
being a martingale, fact that can be seen as an application of Theorem 1.33 in \cite{jacod1987limit}, since 
\bd
H_t=\int_{(0,t]\times \R^*}\left(\ea{s}\left(h(X_{s^-}+y)-h(X_{s^-})\right)\right)^2 \nu(ds,dy)
\ed
is an integrable increasing process; to see that $H$ is in fact integrable we need to check that
\bd
\int_{\R^*}\left(h(X_{s^-}+y)-h(X_{s^-})\right)^2\Pi(dy)
\ed
is bounded, what can be done similarly as above, by observing that there exist some constant $K$ such that $\left(h(X_{s^-}+y)-h(X_{s^-})\right)^2<K\min\{1,x^2\}$.

\end{proof}

%Observe that in this case $Lf(x)$ depends only on values of $f(y)$ for $y\in (x-\delta, \infty)$. 
%This fact give us the possibility of defining $Lf(x)$ for functions that are not in the domain of the infinitesimal generator as we did in the case of one-dimensional diffusions. We say that $\g$ satisfies the right regularity condition (RRC) for $x$ if there exists $\tilde{g}$ such that $\tilde{g}(y)=g(y)$ for $y\geq x$ and 
%\bd
%\int_{\R}\al \tilde{g}(y) \Ga(x,dy)=\tilde{g}(x) \qquad (x\in \R).
%\ed
%Now we state a relaxed version of \autoref{teo:huntVerif}
%\begin{teo}
%\label{teo:LevyVerif} Consider a Levy process $X$ without negative jumps,
%and $\g:\I\mapsto \R$ satisfying the RRC for $x^*$ a solution of 
%\begin{equation*}
%g(x)=\int_{(x,\infty)}\al \g(y)\Ga(x,dy),
%\end{equation*}
%such that $\al \g(x)\geq 0$ for all $x>x^*$. Define 
%\begin{equation*}
%\Va(x)=\int_{(x^*,\infty)}\al \g(y) \Ga(x,dy).
%\end{equation*}
%If $\Va(x) \geq \g(x)$ for all $x<x^*$, then the optimal stopping
%problem \eqref{eq:osp} with reward function $\g$ and discount $\desc$ is right-sided, 
%$x^*$ is an optimal threshold and $\Va$ is the value function.
%\end{teo}

%\subsection{American put option on Lévy market}
Consider the optimal stopping problem \eqref{eq:ospChap5} where $\{X_t\}_{t\geq 0}$ is a Lévy process with only negative jumps, i.e. $\Pi$ is supported in $(-\infty,0)$, and the reward function $\g\colon \R \to \R$ defined by $\g(x)=(K-e^x)^+$. 
We remark that this is the kind of optimal stopping problems one has to solve to price an American put option.

First, we observe that the optimal stopping region is contained in the set $\{x:g(x)>0\}=(-\infty,\ln(K))$, where the reward function is $\g(x)=K-e^x$. 
We may construct a function $\gi$ bounded and with continuous second derivatives, the derivatives also bounded, such that $\gi(x)=\g(x)$ for $x<\ln(K)$. 
Applying \autoref{lem:levy} to $\gi$ and using Fubini's theorem we conclude that \eqref{eq5:gintf} is fulfilled with $f\colon f(x)=\desc \gi(x)-L \gi(x)$. Assume $x^*<\ln(K)$ is a solution of
$$\gi(x^*)=\int_{(-\infty,x^*)} \al \gi(y) \Ga(x^*,y)dy,$$
such that $\al \g(x)\geq 0$ for $x<x^*$. 
According to \autoref{teo:huntVerifnegative}, 
we have that $\Va$, defined by
$$\Va(x)=\int_{(-\infty,x^*)} \al \gi(y) \Ga(x,y)dy,$$
is the value function of the optimal stopping problem providing that $\Va(x)\geq \g(x)$ for $x>x^*$. 
Even though the definition of $\Va$ is given in terms of $\gi$, it is in fact independent on the extension of $\g$ chosen, since, due to the restriction on the jumps of the process, the measure $\Pi$ is supported in $(-\infty,0)$, so, for $x<\ln(K)$
\begin{align*}
\al \gi(x)&= \desc \g(x) - a \g'(x) - \frac{\sigma^2}2 g''(x) \\
&\qquad - \int_{(-\infty,0)}\left(g(x+y)-g(x)-y\ind{|y|<1}g'(x)\right) \Pi(dy).
\end{align*}
Substituting in the previous equation $g(x)$ by $K-e^x$, we obtain that
\begin{align*}
\al \gi(x)&= \desc K -\desc e^x + a e^x + \frac{\sigma^2}2 e^x \\
&\qquad - e^x \int_{(-\infty,0)}\left( 1-e^y+y\ind{|y|<1} \right)\Pi(dy)\\
&=\desc K-(\desc-\Psi(1))e^x.
\end{align*}
Considering this previous equality, we have the representation for the value function
\bd
\Va(x)=\int_{(-\infty,x^*)}\left(\desc K-(\desc-\Psi(1))e^x \right) \Ga(x,y)dy.
\ed
For some financial applications it is assumed risk-neutral  market, i.e. the process $e^{-rt}X_t$ is a martingale, what is equivalent to $\desc-\Psi(1)=0$. Under this assumption the value function would be
\bd
\Va(x)=\desc K\int_{(-\infty,x^*)}\Ga(x,y)dy,
\ed
obtaining formula (9.5.2) in \cite[p. 207]{chan}.
\subsection{Lévy-driven Ornstein-Uhlenbeck with positive jumps}

Let $X$ be a Lévy-driven Ornstein-Uhlenbeck process; i.e. a process satisfying the stochastic differential equation
\be \label{eq:orn-uhl-sde}
dX_t=-\gamma X_{t^-}dt+dL_t,
\ee
where $\{L_t\}$ is a Lévy process.
The consideration of this process is motivated by its application to model electricity markets \cite[see][]{electricity}.
The only solution of the equation \eqref{eq:orn-uhl-sde} is \citep[see][]{novikov2006levyornstein}
\be \label{eq:orn-uhl-sde-sol}
X_t=e^{-\gamma t}\left(\int_0^t e^{\gamma s}dL_s+X_0\right).
\ee
In our example we consider $L_t= \sigma B_t+ J_t$ where $\{J_t\}$ is a compound Poisson process with rate $\lambda$ and jumps with exponential distribution of parameter $\beta$; i.e.
\bd
J_t=\sum_{i=1}^{N_t}Y_i,
\ed
with $\{N_t\}$ a Poisson process with rate $\lambda$ and $Y_i$ independent identically distributed random variables, with exponential distribution of parameter $\beta$. Observe that there are only positive jumps.

We aim to solve the optimal stopping problem \eqref{eq:ospChap5} with reward function $g\colon g(x)=x^+$, i.e. to find the stopping time $\tau^*$ and the value function $\Va$ such that
\bd
\Va(x)=\Ex{x}{\ea{\tau^*}{X_{\tau^*}}^+}=\sup_{\tau}\left(\Ex{x}{\ea{\tau}{X_{\tau}}^+}\right).
\ed
In order to solve this problem we apply \autoref{teo:huntVerif} with $\gi\colon \gi(x)=x$; hence we need to find $f$
satisfying \eqref{eq5:gintf}. Consider the following equalities
\begin{align}\label{eq:itoO-U}
\ea{t}X_t-X_0&=\int_{(0,t]}X_{s^-}(-\desc \ea{s})ds+\int_{(0,t]}\ea{s}dX_s\\
\notag &=-\int_{(0,t]}X_{s^-} (\desc+\gamma) \ea{s}ds +\int_{(0,t]}\ea{s} \sigma dB_s+\int_{(0,t]}\ea{s} dJ_s.
\end{align}
The expected value of the integral with respect to $\{B_s\}$ vanishes. Concerning the integral with respect to the jump process $\{J_s\}$, we can write it in terms of the jump measure $\mu$ -defined in \eqref{eq:defPoissRandMeasure}- as 
\begin{align*}
\int_{(0,t]}\ea{s} dJ_s&=\int_{(0,t]\times \R}\ea{s} y \mu(\omega,ds,dy)\\
&=\int_{(0,t]\times \R}\ea{s} y (\mu(\omega,ds,dy)-\nu(ds,dy)) +\int_{(0,t]\times \R}\ea{s} y \nu(ds,dy),
\end{align*}
where $\nu$, the compensator of $\mu$, in this case is given by 
\bd
\nu(ds,dy)=\lambda\beta \ind{\{y>0\}}e^{-\beta y}dy ds.
\ed
From the application of Corollary 4.6 in \cite{kyprianou2006introductory}, it follows that
\bd
M_t=\int_{(0,t]\times \R}\ea{s} y (\mu(\omega,ds,dy)-\nu(ds,dy))
\ed
is a martingale, then $\Ex{x}{M_t}=\Ex{x}{M_0}=0$. It follow that
\begin{align*}
\Ex{x}{\int_{(0,t]}\ea{s} dJ_s}&=\int_{(0,t]\times \R}\ea{s} y \nu(ds,dy)\\
&=\int_{(0,t]}\int_{\R^+} \ea{s} y \lambda\beta e^{-\beta y}dy ds\\
&=\int_{(0,t]} \ea{s} \frac\lambda\beta  ds
\end{align*}
Taking the expectation in \eqref{eq:itoO-U} we obtain that
\be \label{eq:itoO-U-expectation}
\Ex{x}{\ea{t}X_t}-x=-\Ex{x}{\int_{(0,t]}\left(X_{s^-} (\desc+\gamma)-\frac\lambda\beta \right)\ea{s}ds}.
\ee
Using \eqref{eq:orn-uhl-sde-sol} we compute $\Ex{x}{X_t}$:
\begin{align*}
\Ex{x}{X_t}&=\Ex{x}{e^{-\gamma t}\left(\int_0^t e^{\gamma s}dL_s+X_0\right)}\\
&= e^{-\gamma t}\left(\Ex{x}{\int_0^t e^{\gamma s}\sigma dB_s}+\Ex{x}{\int_0^t e^{\gamma s}\sigma dJ_s} + x\right)\\
&= (1-e^{-\gamma t})\frac\lambda{\beta\gamma}+ x e^{-\gamma t},
\end{align*}
concluding that $\lim_{t\to \infty} \ea{t}\Ex{x}{X_t}=0$. With similar arguments we obtain $\Ex{x}{|X_t|}\leq \frac{1}{\sqrt{\pi \gamma}} + \frac{\lambda t}{\beta}$. 
We can change the order between the expectation and the integral on the right-hand side of \eqref{eq:itoO-U-expectation}. Taking the limit as $t\to \infty$ in \eqref{eq:itoO-U-expectation} we obtain that
\bd
-x=-\int_0^\infty \Ex{x}{X_s(\desc+\gamma)-\frac\lambda\beta}\ea{s}ds.
\ed
The previous equality can be written in terms of the Green kernel by
\be \label{eq:inversionO-U}
x=\int_{\R} \left(y(\desc+\gamma)-\frac\lambda\beta\right)\Ga(x,dy)
\ee
which is \eqref{eq5:gintf} with 
\be \label{eq:fparainveOrnstein}
f(y)=y(\desc+\gamma)-\frac\lambda\beta.
\ee
Now we move on to find the Green kernel of the process.

It can be seen that for the considered process there exist a function $\Ga(x,y)$ such that $\Ga(x,dy)=\Ga(x,y)dy$. As we can not find $\Ga(x,y)$ explicitly we compute its Fourier transform,
\begin{align*}
\hat{\Ga}(x,z)&=\int_{-\infty}^\infty e^{izy}\Ga(x,y)dy\\
&= \int_0^\infty \ea{t} \int_{-\infty}^\infty e^{izy} \P_x(X_t\in dy) dt\\
&= \int_0^\infty \ea{t} \Ex{x}{e^{izX_t}} dt.
\end{align*}
We need to compute $\Ex{x}{e^{izX_t}}$. In order to do that we apply Dynkin's formula to $u(x)=e^{izx}$. We have
\[u'(x)=iz u(x)\quad \text{and} \quad u''(x)=-z^2 u(x).\]
and
\begin{align*}
Lu(x)&=-\gamma x iz u(x)+\frac{-z^2}{2} u(x)+u(x) \lambda \beta \int_0^\infty \left(e^{izy}-1\right)e^{-\beta y}dy\\
&= u(x)\left(-\gamma xiz+\frac{-z^2}{2}+\frac{\lambda \beta}{\beta-i z}-\lambda \right)\\
&= u(x)\left(-\gamma xiz+\frac{-z^2}{2}+\frac{iz \lambda}{\beta-i z} \right).
\end{align*}
By Dynkin's formula we obtain that
\begin{align*}
\Ex{x}{e^{izX_t}}-e^{izx}&=\Ex{x}{\int_0^t u(X_s)\left(-\gamma X_s iz+\frac{-z^2}{2}+\frac{iz \lambda}{\beta-i z} \right) ds} 
\end{align*}
Denoting by $h(x,t,z)=\Ex{x}{e^{i z X_t}}=\Ex{x}{u(X_t)}$ we have 
\[h_z(x,t,z)=\Ex{x}{i X_t u(X_t)}\]
 and the previous equation is
\be \label{eq:hxtz}
h(x,t,z)-e^{izx}=\int_0^t -\gamma z h_z(x,s,z) + \left(-\frac{z^2}{2}+\frac{\lambda i z}{\beta-iz}\right)h(x,s,z)ds.
\ee
Instead of solving the previous equation we try to find directly 
$\hat{\Ga}(x,z)$. Remember that
\begin{align*}
\hat{\Ga}(x,z)&= \int_0^\infty \ea{t} h(x,t,z) dt.
\end{align*}
Taking Laplace transforms in \eqref{eq:hxtz} we obtain that
\begin{align*}
\hat{\Ga}(x,z)-e^{izx}/\desc &= \int_0^\infty ds \int_s^\infty \left(-\gamma z h_z(x,s,z) \ea{t}+ \left(-\frac{z^2}{2}+\frac{\lambda i z}{\beta-iz}\right)h(x,s,z)\ea{t} \right) dt\\
&= \frac{1}{\desc} \int_0^\infty \left(-\gamma z h_z(x,s,z) \ea{s}+ \left(-\frac{z^2}{2}+\frac{\lambda i z}{\beta-iz}\right)h(x,s,z)\ea{s}\right)  ds,
\end{align*}
which is equivalent to
\bd
\desc \hat{\Ga}(x,z) - e^{izx} = -\gamma z \frac{\partial \hat{\Ga}}{\partial z}(x,z)+\left(-\frac{z^2}{2}+\frac{\lambda i z}{\beta-iz}\right) \hat{\Ga}(x,z)
\ed
and to 
\be
\label{eq:ecuacion}
 \left(\desc+\frac{z^2}{2}-\frac{\lambda i z}{\beta-iz}\right)  \hat{\Ga}(x,z)+\gamma z \frac{\partial \hat{\Ga}}{\partial z}(x,z)  = e^{izx}.
\ee

About the initial condition, observe that $\hat{\Ga}$ satisfies
\begin{align*}
\hat{\Ga}(x,0)&=\int_{-\infty}^\infty \Ga(x,dy)\\
&= \int_0^\infty \ea{t} \int_{-\infty}^\infty \P_x(X_t\in dy) dt\\
&= \int_0^\infty \ea{t} dt = \frac{1}{\desc}.
\end{align*}

We solve explicitly  \eqref{eq:ecuacion}: Let us start by solving the homogeneous equation
\bd
 \left(\desc+\frac{z^2}{2}-\frac{\lambda i z}{\beta-iz}\right)  H(z)+\gamma z H_z(z)  = 0,
\ed
obtaining that
\bd
\frac{H_z(z)}{H(z)}  = -\frac{\left(\desc+\frac{z^2}{2}-\frac{\lambda i z}{\beta-iz}\right)}{ \gamma z },
\ed
and
\bd
\log(H(z))=\frac{-\desc}{\gamma}\log(|z|)+\frac{-1}{4\gamma}z^2-\frac{\lambda}{\gamma}\log(\beta-iz),
\ed
then
\bd
H(z)=e^{-\frac{1}{4\gamma}z^2}|z|^{-\frac{\desc}{\gamma}}(\beta-iz)^{-\frac{\lambda}{\gamma}}.
\ed

The solution of \eqref{eq:ecuacion} is given by
\begin{align}
\notag \hat{\Ga}(x,z)&=\frac{H(z)}{\gamma}\int_{0}^z  \frac{e^{i\zeta x}}{\zeta H(\zeta)}d\zeta\\ 
\label{eq:Ggorro} &=\frac{1}{\gamma}\left(e^{-\frac{1}{4\gamma}z^2}|z|^{-\frac{\desc}{\gamma}}(\beta-iz)^{-\frac{\lambda}{\gamma}}\right)\int_{0}^z  e^{i\zeta x}e^{\frac{1}{4\gamma}\zeta^2} \frac{|\zeta|^{\frac{\desc}{\gamma}}}{\zeta}(\beta-i\zeta)^\frac{\lambda}{\gamma} d\zeta
\end{align}
for $z\neq 0$.
Observe that $H(z)\to \infty$ as $z\to 0$, being equivalent to $|z|^{-\frac{\desc}{\gamma}}\beta^{-\frac{\lambda}{\gamma}}$. On the other hand
\bd
\int_{0}^z  \frac{e^{i\zeta x}}{\zeta H(\zeta)}d\zeta \to 0\quad (z\to 0)
\ed
since the integral is convergent. We can use l'H\^opital rule to compute the limit of $\hat{\Ga}(x,z)$ when $z$ goes to 0. We obtain that
\begin{align*}
\lim_{z\to 0}\hat{\Ga}(x,z)&=\lim_{z\to 0} \frac{H(z)}{\gamma}\int_{0}^z  \frac{e^{i\zeta x}}{\zeta H(\zeta)}d\zeta \\
&=\lim_{z\to 0} \frac{1}{\gamma}\left(|z|^{-\frac{\desc}{\gamma}}\beta^{-\frac{\lambda}{\gamma}}\right)\int_{0}^z  \frac{e^{i\zeta x}}{\zeta H(\zeta)}d\zeta \\
&=\lim_{z\to 0} \frac{1}{\gamma}\frac{\int_{0}^z  \frac{e^{i\zeta x}}{\zeta H(\zeta)}d\zeta }{|z|^{\frac{\desc}{\gamma}}\beta^{\frac{\lambda}{\gamma}}} \\
&=\lim_{z\to 0} \frac{1}{\gamma}\frac{e^{i z x}e^{\frac{1}{4\gamma} z^2} |z|^{\frac{\desc}{\gamma}-1}(\beta-i z)^\frac{\lambda}{\gamma} }{\frac{\desc}{\gamma} |z|^{\frac{\desc}{\gamma}-1}\beta^{\frac{\lambda}{\gamma}}}  =\frac{1}{\desc}
\end{align*}
concluding that the solution we found satisfies the initial condition.

We have obtained an expression for $\hat{\Ga}(x,z)$, which allow us, for particular values of the parameters, to compute a discretization of $\hat{\Ga}(x,z)$. From this discretization, using the discrete Fourier transform, we find a discretization of $\Ga(x,y)$ (we have written an R script to do this, see \autoref{numerical}) necessary to solve equation \eqref{eq:x*saltos} in \autoref{teo:huntVerif}.

\begin{example}[$\beta=\desc=\gamma=\lambda=1$]
Consider the process already presented with parameters $\beta=\desc=\gamma=\lambda=1$. Equation \eqref{eq:Ggorro} is 
\begin{align*}
\hat{\Ga}(x,z)&=\left(e^{-\frac{1}{4}z^2}|z|^{-1}(\beta-iz)^{-1}\right)\int_{0}^z  e^{i\zeta x}e^{\frac{1}{4}\zeta^2}\frac{|\zeta|}{\zeta} (\beta-i\zeta) d\zeta\\
&=\left(e^{-\frac{1}{4}z^2}z^{-1}(\beta-iz)^{-1}\right) \\
&\qquad \left(i \sqrt{\pi} e^{x^2} (\beta-2x)\left(\erf\left(x-\frac{iz}{2}\right)-\erf(x)\right) 
-2 i (e^{izx+\frac{1}{4}z^2}-1)\right)
\end{align*}

Remember that we are considering the reward function $\g(x)=x^+$. To solve numerically equation \eqref{eq:x*saltos} we use: $\tilde{g}(x)=x$; function $f$ given in \eqref{eq:fparainveOrnstein}; and the discretization of $\Ga(x,y)$ obtained numerically as described above. The solution we found is $x^*= 1.1442$. \autoref{fig:ornstein-jumps} shows some points of the value function, obtained numerically by the formula
\bd
\Va(x)=\int_{x^*}^\infty \Ga(x,y)f(y) dy.
\ed
We also include in the plot the reward function (continuous line). Observe that for $x<x^*$ (in the continuation region) $\Va>g$ and the hypothesis of \autoref{teo:huntVerif} is fulfilled.
\begin{figure}
\begin{center}
\includegraphics[scale=.6]{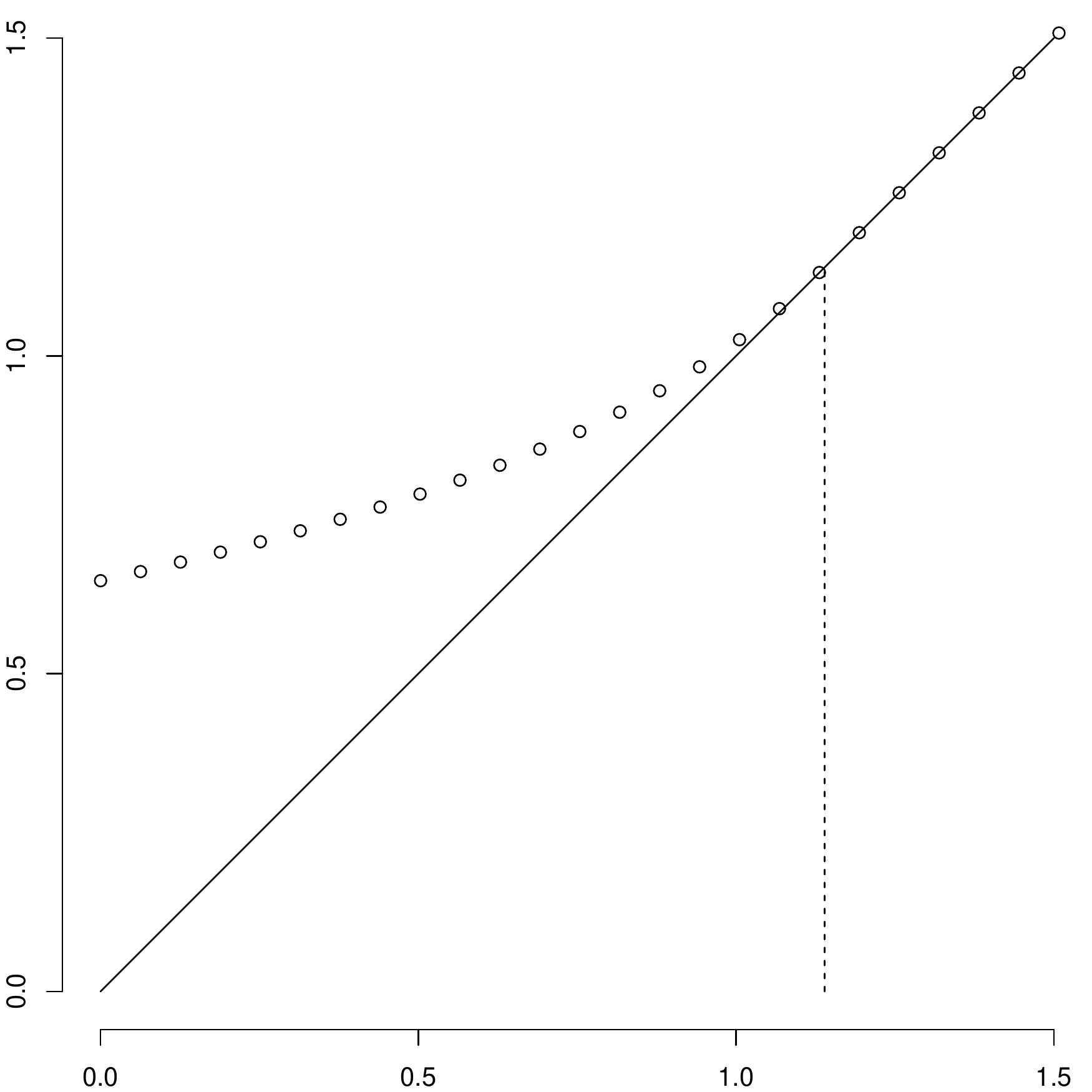}
\caption{\label{fig:ornstein-jumps} OSP for the Ornstein-Ulhenbeck with jumps. $g$ (continuous line), $V_1$ (circles).}
\end{center}
\end{figure}
\end{example}

\begin{remarks} (i) This example gives, up to our knowledge, the first explicit solution to an optimal stopping problem for a process with jumps that is not a L\'evy process. (ii) We find interesting in this example the way in which the theoretical results, the Fourier methods and computational power gathers.
\end{remarks} 

\begin{example}[$\desc=\gamma=1$ and $\lambda=0$] 
In this example we consider the process $X$ already presented with parameter $\lambda=0$, i.e. with no jumps and the same reward function $\g(x)=x^+$. This problem was solved by \cite{taylor}.

We have 
\begin{align*}
\hat{\Ga}(x,z)&=\left(e^{-\frac{1}{4}z^2}|z|^{-1}\right)\int_{0}^z  \frac{e^{i\zeta x}|\zeta|e^{\frac{1}{4}\zeta^2}}{\zeta} d\zeta\\
&=i \sqrt{\pi} e^{-\frac{1}{4}z^2}z^{-1} e^{x^2} \left(\erf\left(x-\frac{iz}{2}\right)-\erf(x)\right)
\end{align*}
As in the previous example, we solve numerically equation \eqref{eq:x*saltos} obtaining that $x^*\simeq 0.5939$.  \autoref{fig:ornstein} shows some points of the value function obtained numerically by the formula:
\bd
\Va(x)=\int_{x^*}^\infty \Ga(x,y)f(y) dy;
\ed
we also include in the plot the reward function (continuous line) to show that in the stopping region they coincide and also to verify that $\Va$ is a majorant of $g$ (hypothesis of \autoref{teo:huntVerif}). The obtained threshold is in accordance with the result obtained by Taylor.
\begin{figure}
\begin{center}
\includegraphics[scale=.6]{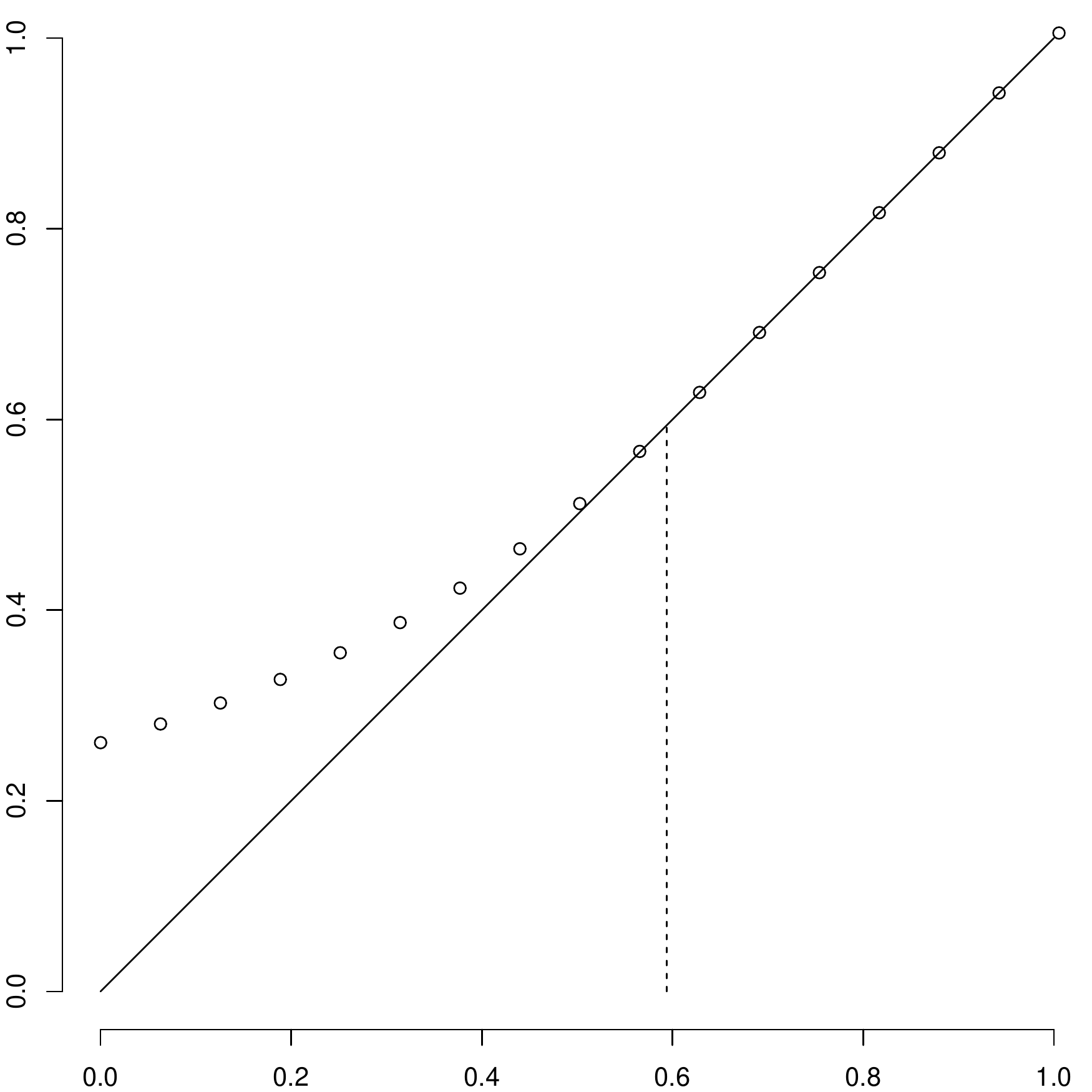}
\caption{\label{fig:ornstein} OSP for the Ornstein-Ulhenbeck process. $g$ (continuous line), $V_1$ (circles).}
\end{center}
\end{figure}
\end{example}

\appendix
\chapter{Numerical method to compute $\Ga(x,y)$}
\label{numerical}
In this section we show how to use Fourier methods to find a numerical approximation of the Green function. In our approach it is completely necessary to know $\Ga(x,y)$ in order to solve concrete optimal stopping problems. We remember the discrete Fourier transform of the vector $(v_0,\ldots , v_{n-1})$ is the vector $(w_0,\ldots , w_{n-1})$ such that
\be \label{eq:dft}
w_k=\sum_{j=0}^{n-1}v_j e^{-i2\pi \frac{j}{n}k}.
\ee

As we have seen in the examples, sometimes we do not know the Green function $\Ga(x,y)$ but we can find the transformed function 
\bd
\hat{\Ga}(x,z)=\int_{-\infty}^{\infty} e^{izy}\Ga(x,y)dy.
\ed
To recover $\Ga(x,y)$ from $\hat{\Ga}(x,z)$ we have
\bd
\Ga(x,y)=\frac{1}{2\pi}\int_{-\infty}^{\infty} e^{-izy}\hat{\Ga}(x,z)dz.
\ed

Departing from a discrete vector $\left(\hat{\Ga}(x,z_0),\ldots,\hat{\Ga}(x,z_{n-1})\right)$ we can use the discrete Fourier transform to find an approximation of $\Ga(x,y_0),\ldots,\Ga(x,y_{n-1})$. To do this observe that, assuming that the integral is convergent and with $A$ sufficiently large we have
\begin{align*}
\int_{-\infty}^{\infty} e^{-izy}\hat{\Ga}(x,z)dz &\simeq \int_{-A/2}^{A/2} e^{-izy}\hat{\Ga}(x,z)dz\\
&\simeq \frac{A}{n} \sum_{j=0}^{n-1} e^{-iz_j y}\hat{\Ga}(x,z_j) 
\end{align*}
where $z_j=-\frac{A}{2}+j\frac{A}{n}.$  Consider $y_k=2\pi \frac{k}{A}$; we have
\begin{align*}
\Ga(x,y_k)&\simeq \frac{1}{2\pi}\frac{A}{n} \sum_{j=0}^{n-1} e^{-iz_j y_k}\hat{\Ga}(x,z_j) \\
&\simeq \frac{1}{2\pi}\frac{A}{n} \sum_{j=0}^{n-1} e^{-i(-\frac{A}{2}+j\frac{A}{n})(2\pi \frac{k}{A})}\hat{\Ga}(x,z_j)\\
&\simeq \frac{1}{2\pi}\frac{A}{n} e^{i\pi k} \sum_{j=0}^{n-1} e^{-i 2\pi \frac{j}{n}k} \hat{\Ga}(x,z_j)
\end{align*}
From equation \eqref{eq:dft}, considering $v_j=\hat{\Ga}(x,z_j)$, for $j=0,\ldots,n-1$, we have
\bd
\Ga(x,y_k)\simeq \frac{1}{2\pi}\frac{A}{n} e^{i\pi k} w_k
\ed
for $k=0,\ldots,n-1$, where $(w_0,\ldots,w_{n-1})$ is the discrete Fourier transform of $(v_0,\ldots,v_{n-1})$

%\addcontentsline{toc}{chapter}{Bibliography} 
%\bibliographystyle{plainnat}
%\bibliographystyle{chicago}
%\bibliography{tesis.bib}

\end{document}